\documentclass[11pt,reqno,twoside]{amsart}

\usepackage[T1]{fontenc}
\usepackage[utf8]{inputenc}
\usepackage{lmodern}
\usepackage{microtype}

\usepackage[
a4paper,
inner=0.8in,
outer=0.8in,
top=1.2in,
bottom=1.4in
]{geometry}

\usepackage{amsmath,amssymb,amsthm,mathtools}
\usepackage{bm}
\usepackage{mathrsfs}
\usepackage{dsfont}
\usepackage{cite}

\usepackage{graphicx,tikz,caption,subcaption}
\usepackage{enumitem}

\usepackage[
colorlinks=true,
linkcolor=blue!50!black,
citecolor=green!40!black,
urlcolor=magenta!60!black
]{hyperref}
\usepackage[nameinlink,capitalize]{cleveref}

\numberwithin{equation}{section}

\theoremstyle{plain}
\newtheorem{theorem}{Theorem}[section]

\theoremstyle{plain}
\newtheorem*{theoremA}{Theorem A}
\newtheorem*{theoremB}{Theorem B}
\newtheorem*{theoremC}{Theorem C}

\theoremstyle{definition}
\newtheorem{lemma}[theorem]{Lemma}
\newtheorem{proposition}[theorem]{Proposition}
\newtheorem{corollary}[theorem]{Corollary}

\theoremstyle{remark}
\newtheorem{remark}[theorem]{Remark}

\title[The sharp constant in the second-order Hydrogen Uncertainty Principle]
{The Sharp Constant in the Second-Order Hydrogen Uncertainty Principle:
Spectral Characterization and Quantitative Approximation}

\author[M. Jleli]{Mohamed Jleli}
\address{(M. Jleli) Department of Mathematics, College of Science, King Saud University, Riyadh 11451, Saudi Arabia}
\email{jleli@ksu.edu.sa}

\author[B. Samet]{Bessem Samet}
\address{(B. Samet) Department of Mathematics, College of Science, King Saud University, Riyadh 11451, Saudi Arabia}
\email{bessem.samet@gmail.com}

\date{}

\begin{document}

\begin{abstract}
In this work, we study the sharp constant \(\mathcal H_d\) in the
second-order Hydrogen Uncertainty Principle in every dimension \(d\ge2\).
Using a spectral basis built from generalized Laguerre polynomials,
together with a linearization procedure and spherical harmonic
decomposition, we establish a spectral characterization of
\(\mathcal H_d\). More precisely, we show that
\[
\mathcal H_d
=
\frac12\min\{d+1,c_d^*\},
\]
where \(c_d^*\in(0,d+3)\) is the unique parameter such that
\[
\|S_{c_d^*}^{(d)}\|_{\ell^2\to\ell^2}=1.
\]
Here, for \(0<c<d+3\), \(S_c^{(d)}\) is an explicit positive
Hilbert--Schmidt operator on \(\ell^2(\mathbb N_0)\).
This characterization reveals a competition between the radial sector,
which yields the value \(d+1\), and the first non-radial sector, encoded
by the spectral parameter \(c_d^*\). For \(d\ge4\), we recover the known
identity
\[
\mathcal H_d=\frac{d+1}{2}
\]
through a spectral argument, giving an alternative proof of the results
of Cazacu, Flynn and Lam \cite{CazacuFlynnLam} for \(d\ge5\) and Huang
and Ye \cite{HuangYe} for \(d=4\). For \(d\in\{2,3\}\), this characterization yields
\[
\mathcal H_d=\frac{c_d^*}{2}.
\]
We then exploit this spectral formulation to derive an explicit finite
recursive scheme yielding effective two-sided bounds for
\(\mathcal H_2\) and \(\mathcal H_3\), substantially improving the
estimates of Huang and Ye \cite{HuangYe}.
\end{abstract}

\subjclass[2020]{81S07, 26D10, 46E35, 47A30}

\keywords{Hydrogen uncertainty principle, sharp constants, spectral characterization, Hilbert--Schmidt operators, Laguerre functions}

\maketitle

\section{Introduction and main results}

Uncertainty principles form a fundamental theme in harmonic analysis and
mathematical physics; see, for instance, \cite{Fefferman,FollandSitaram}
and the references therein. Originating in quantum mechanics, they provide
quantitative lower bounds relating different measures of concentration of
a function. Beyond their physical interpretation, uncertainty principles
are closely connected with several classical families of functional
inequalities and naturally lead to questions concerning sharp constants
and extremal functions.

The Hydrogen Uncertainty Principle is distinguished by the singular weight
\(|x|^{-1}\), which reflects the spatial dependence of the Coulomb
potential in hydrogenic systems; see
\cite{FrohlichLiebLoss,CazacuFlynnLam}. This connection accounts for
both its terminology and its relevance in mathematical physics,
particularly in questions related to the energy and stability of
hydrogenic atoms; see \cite{FrohlichLiebLoss,LiebSeiringer}.

The first-order Hydrogen Uncertainty Principle states that, for \(d\ge2\)
and \(u\in H^1(\mathbb R^d)\),
\[
\left(
\int_{\mathbb R^d}|\nabla u|^2\,dx
\right)
\left(
\int_{\mathbb R^d}|u|^2\,dx
\right)
\ge
\frac{(d-1)^2}{4}
\left(
\int_{\mathbb R^d}\frac{|u|^2}{|x|}\,dx
\right)^2.
\]
The constant \((d-1)^2/4\) is sharp, and equality is attained by
functions of the form
\[
u(x)=a e^{-b|x|},
\qquad
a\in\mathbb R\setminus\{0\},\quad b>0;
\]
see, for instance, \cite{CazacuFlynnLam,Frank2011}.
This inequality can be viewed as a particular case of the
Caffarelli--Kohn--Nirenberg family of weighted interpolation inequalities
introduced in \cite{CaffarelliKohnNirenberg}. We refer to
\cite{CatrinaWang,CatrinaCosta,Costa2008,CazacuFlynnLam2021,
CazacuFlynnLamLu2024} and the references therein for results concerning
sharp constants, extremal functions, refinements, and stability.

We now turn to the second-order Hydrogen Uncertainty Principle, which is
the main subject of this paper. For \(d\ge2\), let \(\mathcal H_d\)
denote the optimal constant in
\begin{equation}\label{eq:HyUP}
\left(
\int_{\mathbb R^d}|\Delta u|^2\,dx
\right)
\left(
\int_{\mathbb R^d}|\nabla u|^2\,dx
\right)
\ge
\mathcal H_d^2
\left(
\int_{\mathbb R^d}
\frac{|\nabla u|^2}{|x|}\,dx
\right)^2,
\qquad
u\in W^{2,2}(\mathbb R^d).
\end{equation}
Equivalently,
\begin{equation}\label{variational-HN}
\mathcal H_d
=
\inf_{u\in W^{2,2}(\mathbb R^d)\setminus\{0\}}
\frac{
\left(
\displaystyle\int_{\mathbb R^d}|\Delta u|^2\,dx
\right)^{1/2}
\left(
\displaystyle\int_{\mathbb R^d}|\nabla u|^2\,dx
\right)^{1/2}
}{
\displaystyle\int_{\mathbb R^d}
\frac{|\nabla u|^2}{|x|}\,dx
}.
\end{equation}

The following result was proved by Cazacu, Flynn and Lam
\cite{CazacuFlynnLam}.

\begin{theoremA}
Let \(d\ge5\). Then
\begin{equation}\label{HN-N5}
\mathcal H_d=\frac{d+1}{2}.
\end{equation}
Moreover, equality in \eqref{eq:HyUP} is attained by functions of the form
\[
u(x)=a(1+b|x|)e^{-b|x|},
\qquad
a\in\mathbb R\setminus\{0\},\quad b>0.
\]
\end{theoremA}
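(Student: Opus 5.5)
The plan is to decompose $u$ into spherical harmonics and reduce \eqref{eq:HyUP} to a family of one-dimensional weighted inequalities, one for each degree $k\ge0$.

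\textbf{Reduction to sectors.} Write $u(x)=\sum_{k,j} f_{k,j}(r)Y_{k,j}(\sigma)$ with $r=|x|$, where the $Y_{k,j}$ are orthonormal spherical harmonics of degree $k$ and $c_k=k(k+d-2)$. Each of the three integrals splits as a sum over $(k,j)$:
\[
\int|\nabla u|^2=\sum\int_0^\infty\Big(f'^2+\frac{c_k}{r^2}f^2\Big)r^{d-1}dr,\qquad
\int\frac{|\nabla u|^2}{|x|}=\sum\int_0^\infty\Big(f'^2+\frac{c_k}{r^2}f^2\Big)r^{d-2}dr,
\]
\[
\int|\Delta u|^2=\sum\int_0^\infty\Big(f''+\frac{d-1}{r}f'-\frac{c_k}{r^2}f\Big)^2r^{d-1}dr.
\]
Set $A=\sum A_k$, $B=\sum B_k$, $C=\sum C_k$ for the three sums. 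Suppose that for every $k$,
\[
A_kB_k\ge \Big(\frac{d+1}{2}\Big)^2C_k^2 .
\]
Then Cauchy--Schwarz gives $\sum C_k\le\frac{2}{d+1}\sum\sqrt{A_kB_k}\le\frac{2}{d+1}\sqrt{AB}$, which is the inequality \eqref{eq:HyUP} with constant $(d+1)/2$. The task is therefore reduced to the one-dimensional inequality in each sector.

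\textbf{Radial sector $k=0$.} Put $g=f'$. Then $B_0=\int g^2r^{d-1}$, $C_0=\int g^2r^{d-2}$, and $A_0=\int (g'+\frac{d-1}{r}g)^2r^{d-1}$. Expanding $A_0$ and integrating the cross term by parts gives
\[
A_0=\int g'^2r^{d-1}+(d-1)\int g^2r^{d-3}.
\]
For the function $g$ this is the first-order hydrogen uncertainty principle with the extra angular term, i.e.\ the $k=1$ sector of that principle in dimension $d$. That inequality can be proved by the standard trick. Integrate $2gg'r^{d-2}$ by parts, which shows $(d-2)\int g^2r^{d-3}=-\int 2gg'r^{d-2}$, then combine with a $\lambda$-weighted square expansion:
\[
0\le\int(g'+\lambda g)^2r^{d-1}=\int g'^2r^{d-1}-\lambda(d-1)\int g^2r^{d-2}+\lambda^2\int g^2r^{d-1}.
\]
Adding the $(d-1)\int g^2 r^{d-3}$ term more carefully, one should get $\lambda(d+1)C_0\le A_0+\lambda^2B_0$. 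This uses the ground state $g=re^{-br}$, which corresponds to $u=(1+br)e^{-br}$. Optimizing over $\lambda$ yields $A_0B_0\ge\frac{(d+1)^2}{4}C_0^2$, with equality exactly for the stated extremals. To confirm the constant, I would check the ground-state computation $g=re^{-r}$ against the form $-(r^{d-1}g')'+(d-1)r^{d-3}g$.

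\textbf{Non-radial sectors $k\ge1$.} This is the main obstacle. For each $k$ I would again try a quadratic-form argument: show $A_k+\lambda^2B_k-\lambda(d+1)C_k\ge0$, where scaling $r\mapsto r/\lambda$ reduces this to $\lambda=1$. In this sector $A_k$ expands as
\[
A_k=\int f''^2r^{d-1}+\alpha_k\int f'^2r^{d-3}+\beta_k\int f^2r^{d-5},
\]
with $\alpha_k,\beta_k$ explicit after integration by parts; for $d\ge5$ one has $\beta_k\ge0$. For the $k=1$ sector, I would compare against the radial case via Hardy-type inequalities $\int f'^2r^{d-3}\ge\frac{(d-2)^2}{4}\int f^2r^{d-5}$. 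The goal is to show that the extra terms $c_k\int f^2r^{d-3}$ in $B_k,C_k$ and the terms $\alpha_k,\beta_k$ in $A_k$ only help, so that the sector-$k$ form dominates the radial form applied to a suitable substitute function, for instance $g=f'$ plus a multiple of $f/r$.

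The hard step is that $c_kf^2/r^2$ appears in $C_k$ with a weight making it large near the origin. It must be absorbed using $\alpha_k,\beta_k$ and the Hardy inequality, and this works precisely when $d\ge5$; it is where the dimension restriction enters. I would first attempt the absorption with the explicit substitution $g=f'-\frac{k}{r}f$, which factors the $k$-th sector Laplacian. Failing that, I would treat $k=1$ by a direct ODE/ground-state argument and $k\ge2$ by monotonicity in $c_k$.
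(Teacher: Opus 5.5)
Your sector reduction and the Cauchy--Schwarz recombination are fine, and the radial sector is right in outline. The square you actually display, $\int_0^\infty(g'+\lambda g)^2r^{d-1}\,dr$, only gives the constant $d-1$, because it never uses the term $(d-1)\int r^{d-3}g^2$. The square adapted to the ground state $g=re^{-\lambda r}$ is
\[
\int_0^\infty r^{d-3}\bigl(rg'+(\lambda r-1)g\bigr)^2\,dr=A_0+\lambda^2B_0-\lambda(d+1)C_0,
\]
which is the identity of Lemma~\ref{L3.3} (there with $\lambda=1$).

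The genuine gap is the non-radial sectors. You never establish $A_k+B_k\ge(d+1)C_k$ for any $k\ge1$, and this is the whole content of the theorem beyond the elementary radial computation. What you offer there is a menu of strategies, with two unsupported claims:
\begin{itemize}
\item that the absorption ``works precisely when $d\ge5$''. The $k=1$ inequality in fact also holds for $d=4$, so this can at best describe a method, not the problem.
\item that $k\ge2$ follows ``by monotonicity in $c_k$''. This is not evident, since $c_k$ enters the denominator $C_k$ as well as $A_k$ and $B_k$.
\end{itemize}

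Your own first idea closes the gap. Write $f=r^kv$, so that $f'-\frac kr f=r^kv'$, and set $\nu=d+2k$. The sector forms then become exactly \eqref{def-Adk}--\eqref{def-Jdk}:
\begin{align*}
A_k&=\int_0^\infty r^{\nu-1}|v''|^2\,dr+(\nu-1)\int_0^\infty r^{\nu-3}|v'|^2\,dr,\qquad B_k=\int_0^\infty r^{\nu-1}|v'|^2\,dr,\\
C_k&=\int_0^\infty r^{\nu-2}|v'|^2\,dr+k\int_0^\infty r^{\nu-4}|v|^2\,dr.
\end{align*}
No $c_kf^2/r^2$ term remains to be absorbed, only $k\int r^{\nu-4}v^2$. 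Apart from that term this is the radial problem in dimension $\nu$. The identity above, with $d$ replaced by $\nu$ and applied to $v'$, gives
\[
A_k+B_k\ge(\nu+1)\int_0^\infty r^{\nu-2}|v'|^2\,dr .
\]
The weighted Hardy inequality $\int r^{\nu-4}v^2\le\frac{4}{(\nu-3)^2}\int r^{\nu-2}|v'|^2$ then yields
\[
A_k+B_k\ge\frac{(\nu+1)(\nu-3)^2}{(\nu-3)^2+4k}\,C_k .
\]
This is at least $(d+1)C_k$ if and only if $(d+2k-3)^2\ge2(d+1)$. That condition is automatic for $k\ge2$, and for $k=1$ it holds exactly when $d\ge5$.

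This is the Cazacu--Flynn--Lam argument. The paper uses it only for $k\ge2$ (Lemma~\ref{L3.4}). It handles $k=1$ instead through the Laguerre spectral reduction and the Schur test $\|S^{(d)}_{d+1}\|_{\ell^2\to\ell^2}\le\frac{2(d+1)}{d(d-1)}<1$. That route is heavier but also covers $d=4$, where the Hardy bound for $k=1$ gives only $63/13<5$.
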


Their argument relies on spherical harmonic decomposition and on
sectorwise estimates obtained through weighted Hardy inequalities. For
\(d\ge5\), the resulting Hardy-based lower bounds are minimized by the
radial sector \(k=0\), yielding \eqref{HN-N5}. For \(2\le d\le4\),
however, the corresponding lower bound in the sector \(k=1\) is strictly
smaller than the radial value, and this argument therefore does not yield
the expected sharp constant. In the same paper, Cazacu, Flynn and Lam
\cite{CazacuFlynnLam} conjectured that Theorem~A remains valid in the
lower dimensions \(2\le d\le4\).

Recently, Chen and Tang \cite{ChenTang} disproved this conjecture
in dimensions \(d=2\) and \(d=3\). Their result can be stated as follows.

\begin{theoremB}
The following bounds hold:
\[
\frac12
\le
\mathcal H_2
\le
\frac{\sqrt3}{2},
\]
and
\[
\frac32
\le
\mathcal H_3
\le
\frac{2\sqrt{21}}{5}.
\]
In particular,
\[
\mathcal H_d<\frac{d+1}{2},
\qquad
d\in\{2,3\}.
\]
\end{theoremB}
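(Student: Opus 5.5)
The plan splits Theorem~B into two parts: upper bounds by explicit test functions in the first non-radial sector \(k=1\), and lower bounds by sectorwise Hardy-type estimates. The strict inequality \(\mathcal H_d<\frac{d+1}{2}\) is immediate from the upper bounds, since \(\sqrt3/2<3/2\) and \(2\sqrt{21}/5\approx1.833<2\).

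\emph{Upper bounds.} I would take \(u(x)=f(r)\,\frac{x_1}{r}\) with \(r=|x|\), i.e.\ a degree-one spherical harmonic times a radial profile. With \(\lambda_1=d-1\), one has
\[
\Delta u=\Big(f''+\tfrac{d-1}{r}f'-\tfrac{d-1}{r^2}f\Big)\tfrac{x_1}{r},
\qquad
|\nabla u|^2 \text{ averaging to } \Big(f'^2+\tfrac{d-1}{r^2}f^2\Big)\cdot\tfrac1d \text{ on spheres}.
\]
So all three integrals reduce to one-dimensional integrals against \(r^{d-1}\,dr\). A natural trial family is \(f(r)=r\,e^{-r}\) or \(f(r)=r(1+\alpha r)e^{-r}\). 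The case \(\alpha=0\) mirrors the \(k=0\) extremal \((1+r)e^{-r}\) after differentiation. All the integrals are Gamma integrals, so the Rayleigh-type quotient becomes an explicit algebraic function of \(\alpha\) and \(d\). I expect one choice (possibly optimized in \(\alpha\)) to produce exactly \(\sqrt3/2\) for \(d=2\) and \(2\sqrt{21}/5\) for \(d=3\); the radicals suggest a square root of a ratio of rational Gamma values. Scaling invariance of the quotient means the decay rate \(b\) can be fixed to \(1\). One must also check that \(u\in W^{2,2}\), which near \(0\) holds because \(f\sim r\) makes \(u\) smooth.

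\emph{Lower bounds.} I would expand \(u=\sum_{k,j}f_{k,j}(r)Y_{k,j}\) in spherical harmonics. All three quadratic forms decouple across sectors, and by Cauchy--Schwarz it suffices to prove the inequality sectorwise with a uniform constant. In each sector I would set \(g=f'\) and handle the \(f/r\) term, bounding \(\int|\nabla u|^2/r\) by \(\big(\int|\nabla u|^2\big)^{1/2}\big(\int|\nabla u|^2/r^2\big)^{1/2}\) and then \(\int |\nabla u|^2/r^2\) by \(C_k\int|\Delta u|^2\) via weighted Rellich/Hardy inequalities. That gives \(\mathcal H_d\ge \min_k C_k^{-1/2}\). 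Alternatively, following Cazacu--Flynn--Lam, I would apply the first-order hydrogen inequality to the components \(\partial_i u\), which gives constant \(\frac{d-1}{2}\): this is \(\frac12\) for \(d=2\) and \(1\) for \(d=3\). Hence for \(d=3\) a sharper sector-\(1\) Hardy estimate is needed to reach \(\frac32\), while \(d=2\) already yields \(\frac12\).

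\emph{Main obstacle.} The main obstacle is the \(k=1\) sector in \(d=3\): controlling the cross term \(\frac{d-1}{r^2}f^2\) requires an optimal weighted Hardy inequality for \(f\) vanishing at the origin. I would try completing the square with a power weight \(r^{\beta}\) and optimizing \(\beta\), checking that the resulting constant is at least \(\frac32\). Sectors \(k\ge2\) then follow because the angular terms grow with \(k\).
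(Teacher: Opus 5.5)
Your upper bounds are correct, and you do not need to optimize in \(\alpha\). Take \(u=x_1e^{-|x|}\), i.e.\ \(f=re^{-r}\), or \(v=e^{-r}\) in the paper's notation \(u=rv(r)Y_1\) with \(\nu=d+2\). The sector-one forms then give \(A=9/8\), \(B=3/8\), \(J=3/4\) for \(d=2\), and \(A=7/4\), \(B=3/4\), \(J=5/8\) for \(d=3\). This yields exactly \(\sqrt3/2\) and \(2\sqrt{21}/5\). One small correction: \(u\) is not smooth at the origin, but \(D^2u\) is bounded there, which is enough for \(W^{2,2}\).

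Your \(d=2\) lower bound is also correct. You apply the first-order inequality to each \(\partial_iu\), then use \(\sum_i\|\nabla\partial_iu\|_2^2=\|\Delta u\|_2^2\) and Cauchy--Schwarz. This gives exactly \((d-1)/2=1/2\). It is a simpler route than the one the paper attributes to Chen--Tang, namely the Cazacu--Flynn--Lam sector bound \(\mathcal C_{d,k}\ge(\nu+1)(\nu-3)^2/((\nu-3)^2+4k)\), \(\nu=d+2k\), evaluated at \(k=1\).

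The genuine gap is \(\mathcal H_3\ge 3/2\), and neither of your concrete routes can reach it. The componentwise route gives only \(1\). The route \(J\le\|\nabla u\|_2\,\||x|^{-1}\nabla u\|_2\) followed by a gradient Rellich bound is capped below \(3/2\). In the sector \(k=1\) of \(d=3\), with \(u=rv(r)Y_1\) and \(g=v'\):
\begin{itemize}
\item \(\int|\nabla u|^2|x|^{-2}=\int r^2g^2+2\int v^2\le 9\int r^2g^2\);
\item \(\int|\Delta u|^2=\int r^4g'^2+4\int r^2g^2\ge\frac{25}{4}\int r^2g^2\).
\end{itemize}
Both Hardy steps are asymptotically sharp along \(g\approx r^{-3/2}\). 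So the best Rellich constant in that sector is \(25/36\), and your route yields at most \(\mathcal H_3\ge5/6\). In \(d=2\) this route is vacuous, since \(\int|\nabla u|^2|x|^{-2}=\infty\) whenever \(\nabla u(0)\ne0\). Completing squares with power weights, as you describe it, has the same defect. Such squares are dilation-homogeneous, while \(A\), \(B\), \(J\) have three different homogeneities. They must therefore be recombined by Cauchy--Schwarz, and that recombination is exactly where the loss occurs.

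The missing idea is to linearize first. By dilations, \(\mathcal H_3\ge3/2\) is equivalent to \(A+B\ge3J\) (Proposition~\ref{PR2.2}). Then use, in each sector, the inhomogeneous identity
\[
A_{d,k}(v)+B_{d,k}(v)-(\nu+1)\int_0^\infty r^{\nu-2}|g|^2\,dr=\int_0^\infty r^{\nu-3}\bigl|rg'+(r-1)g\bigr|^2\,dr,\qquad g=v',\quad \nu=d+2k .
\]
This is the computation of Lemma~\ref{L3.3} with \(d\) replaced by \(\nu\); its ground state is \(g=re^{-r}\), not a power of \(r\). Combine it with the weighted Hardy inequality \(\int r^{\nu-2}|v'|^2\ge\frac{(\nu-3)^2}{4}\int r^{\nu-4}|v|^2\), which absorbs the angular term \(k\int r^{\nu-4}|v|^2\) of \(J_{d,k}\). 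The result is \(\mathcal C_{d,k}\ge(\nu+1)(\nu-3)^2/((\nu-3)^2+4k)\), which equals \(3\) for \(d=3\), \(k=1\). Together with \(\mathcal C_{3,0}=4\) and Lemma~\ref{L3.4} for \(k\ge2\), summing over sectors gives \(A+B\ge3J\), and optimizing the dilation gives \(\mathcal H_3\ge3/2\). Lemma~\ref{L3.4}, rather than your heuristic that ``angular terms grow with \(k\)'', is what disposes of the sectors \(k\ge2\).
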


The lower bounds are obtained from the sectorwise estimates of Cazacu,
Flynn and Lam by minimizing the resulting bounds over the spherical
harmonic degree, whereas the upper bounds are obtained by evaluating the
variational quotient at a suitable test function in the first non-radial
sector \(k=1\).

More recently, Huang and Ye \cite{HuangYe} gave a unified treatment of
the second-order Hydrogen Uncertainty Principle in dimensions \(d\ge4\),
recovering Theorem~A for \(d\ge5\) and proving that it remains valid in
the previously open case \(d=4\). They also improved the lower bounds of
Chen and Tang \cite{ChenTang} in dimensions \(d=2\) and \(d=3\), and
established the following result.

\begin{theoremC}
The following bounds hold:
\begin{equation}\label{Estimate-Huang-Ye-H2}
\frac{3+6\sqrt2}{14}
\le
\mathcal H_2
\le
\frac{\sqrt3}{2},
\end{equation}
and
\begin{equation}\label{Estimate-Huang-Ye-H3}
\frac74
\le
\mathcal H_3
\le
\frac{2\sqrt{21}}{5}.
\end{equation}
Moreover, the optimal constants \(\mathcal H_2\) and \(\mathcal H_3\)
are attained in the closures of \(C_c^\infty(\mathbb R^2)\) and
\(C_c^\infty(\mathbb R^3)\), respectively, with respect to the norm
\[
u\longmapsto
\|\Delta u\|_2+\|\nabla u\|_2.
\]
\end{theoremC}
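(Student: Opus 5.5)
The plan is to reduce everything to one-dimensional weighted problems via the spherical harmonic decomposition, treat the lower bounds sector by sector, take the upper bounds from explicit test functions in the sector $k=1$, and obtain attainment by a compactness argument based on scaling. Write
\[
u(x)=\sum_{k\ge0}\sum_{j} f_{k,j}(r)\,Y_{k,j}(\sigma),\qquad r=|x|,\ \sigma=x/|x|,
\]
with $c_k=k(k+d-2)$. By orthogonality, each of the three quantities in \eqref{eq:HyUP} splits into a sum of sectorwise quantities:
\[
\|\nabla u_k\|_2^2=\int_0^\infty\Big(f'^2+\frac{c_k}{r^2}f^2\Big)r^{d-1}dr,
\qquad
\|\Delta u_k\|_2^2=\int_0^\infty\Big(f''+\frac{d-1}{r}f'-\frac{c_k}{r^2}f\Big)^2r^{d-1}dr,
\]
and the weighted term is the same as $\|\nabla u_k\|_2^2$ but with the extra factor $r^{-1}$.

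For the lower bounds, suppose we have constants $C_k$ with
\[
\|\Delta u_k\|_2\,\|\nabla u_k\|_2\ge C_k\int\frac{|\nabla u_k|^2}{|x|}\,dx \quad\text{for every } k.
\]
Summing and using $\sum_k a_kb_k\le(\sum_k a_k^2)^{1/2}(\sum_k b_k^2)^{1/2}$ gives $\mathcal H_d\ge\inf_k C_k$. To produce $C_k$, I would integrate by parts:
\[
\int\frac{|\nabla u|^2}{|x|}=-\int\frac{u\,\Delta u}{|x|}+\frac12\int u^2\,\Delta\Big(\frac1{|x|}\Big),
\]
where $\Delta(1/|x|)=(3-d)|x|^{-3}$ for $d=2$ and is a negative multiple of the Dirac mass for $d=3$. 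I would then apply Cauchy–Schwarz to the first term together with sectorwise Hardy inequalities bounding $\|u_k/|x|\|_2$ and $\|u_k/|x|^{3/2}\|_2$ by $\|\nabla u_k\|_2$ and the weighted gradient term.

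The plain version of this argument only reproduces Theorem~B. To reach $\frac74$ and $\frac{3+6\sqrt2}{14}$, I would insert a free parameter $t$: write $\int|\nabla u_k|^2/|x|$ as $\langle \Delta u_k,\,-(u_k/r+t\,r\,\partial_r u_k/r)\rangle$ plus explicit remainders, and bound the second factor by $\|\nabla u_k\|_2$ with sharp one-dimensional weighted Hardy constants depending on $c_k$. Optimizing over $t$ should give $C_k$, which increases in $k$ for $k\ge1$, so the binding constraint is $\min(C_0,C_1)=C_1$. \textbf{This is the step I expect to be the main obstacle.} It requires choosing the right multiplier and computing the resulting best constant in the $k=1$ sector exactly; the surd $\sqrt2$ strongly suggests that a quadratic optimization in $t$ appears there.

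For the upper bounds, I would take $u(x)=x_1\,g(|x|)$ with $g(r)=(1+br)e^{-r}$ or a similar polynomial times an exponential, compute the three integrals in closed form via Gamma integrals, and optimize the free parameter. This should reproduce $\frac{\sqrt3}{2}$ and $\frac{2\sqrt{21}}{5}$.

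For attainment, note that the quotient \eqref{variational-HN} is invariant under $u\mapsto \lambda u(\mu\,\cdot)$. Take a minimizing sequence and normalize so that $\|\nabla u_n\|_2=\|\Delta u_n\|_2=1$. Setting $v_n=\nabla u_n$, the sequence $(v_n)$ is bounded in $H^1$, so we may pass to a weak limit $v$. The weighted term $\int|v_n|^2/|x|$ is compact on $H^1$:
\begin{itemize}
\item the tail satisfies $\int_{|x|>R}|v_n|^2/|x|\le R^{-1}$;
\item near the origin, for $d\in\{2,3\}$, Hölder's inequality with $v_n\in L^p$ for some $p>2$ gives $\int_{|x|<\varepsilon}|v_n|^2/|x|\to0$ uniformly in $n$ as $\varepsilon\to0$;
\item on annuli, Rellich's theorem gives strong convergence.
\end{itemize}
Hence the denominators converge to $\int|v|^2/|x|$, which is nonzero because the quotient stays bounded. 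By weak lower semicontinuity of the numerator, the limit $u$, which lies in the closure of $C_c^\infty$ for the norm $\|\Delta u\|_2+\|\nabla u\|_2$, is a minimizer.
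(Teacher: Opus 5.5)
The weak point is the lower-bound step, which you yourself flag as the main obstacle. It is the only part of Theorem C that goes beyond Theorem B, and the proposal leaves it at ``should give'': no multiplier, no treatment of the remainders, no computed constants. Your reduction $\mathcal H_d\ge\inf_k C_k$ via Cauchy--Schwarz across sectors is fine.

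Filling the missing step is not routine. Take $d=3$ and the sector $k=1$, where $u(0)=0$, and set $w=-(u/|x|+t\,\partial_r u)$. Your integration-by-parts identity, together with the Rellich--Pohozaev identity $\int\Delta u\,\partial_r u\,dx=\int|\partial_r u|^2/|x|\,dx$ (valid in $d=3$), gives
$\langle\Delta u,w\rangle=J(u)-t\int|\partial_r u|^2/|x|\,dx$ and $\|w\|_2^2=(1-t)\|u/|x|\|_2^2+t^2\|\partial_r u\|_2^2$.

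\begin{itemize}
\item For $t<0$ you can drop the remainder, but the bound on $\|w\|_2$ only gets worse than at $t=0$.
\item For $0<t\le\tfrac12$ you must absorb the remainder via $\int|\partial_r u|^2/|x|\le J(u)$. Using $\|\nabla u\|_2^2=\|\partial_r u\|_2^2+2\|u/|x|\|_2^2$ and the sharp sectorial Hardy bound $\|u/|x|\|_2^2\le\tfrac49\|\nabla u\|_2^2$, you get exactly $\|w\|_2\le\tfrac{2-t}{3}\|\nabla u\|_2$. Hence $\|\Delta u\|_2\|\nabla u\|_2\ge\tfrac{3(1-t)}{2-t}J(u)$, which is maximal at $t=0$ with value $\tfrac32$.
\item For $t>\tfrac12$ the resulting constant is below $1$.
\end{itemize}

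So bounding the pieces separately, each one sharply, returns Theorem B and not $\tfrac74$. You need a mechanism that couples the terms, and the proposal does not supply one.

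Two smaller points. In $d=2$, $\Delta(1/|x|)=|x|^{-3}$ is not locally integrable, so your identity fails in the radial sector; that sector is handled by the sum-of-squares identity of Lemma~\ref{L3.3}. The monotonicity of $C_k$ in $k$ is unproved; Lemma~\ref{L3.4} handles all $k\ge2$ directly instead.

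The paper does not prove Theorem C itself: it is quoted from Huang--Ye, and its upper bounds are used as input in Lemma~\ref{L6.5}. Its framework does, however, supply the missing coupling.

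\begin{itemize}
\item The $k=1$ sector reduces to $E_d(y)\ge c\,(D(y)+G_d(y))$ on $H_0^1(0,\infty)$.
\item The Laguerre functions $\psi_n^{(d)}$ diagonalize $E_d$ and $D$ simultaneously.
\item The odd modes are harmless (Lemma~\ref{L5.6}), and the even modes turn the inequality into $\|S_c^{(d)}\|\le1$ (Corollary~\ref{C5.11}).
\item That norm bound is checked by a Schur test with an explicit positive sequence (Lemma~\ref{L6.3} with $m=0$).
\end{itemize}

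The scalar checks $\mathbb F_1^{(2)}(\tfrac53)-\mathfrak b_0^{(2)}(\tfrac53)=\tfrac13>0$ and $\mathbb F_1^{(3)}(\tfrac{18}{5})-\mathfrak b_0^{(3)}(\tfrac{18}{5})=\tfrac{8}{35}>0$ give $c_2^*>\tfrac53$ and $c_3^*>\tfrac{18}{5}$. By Theorem~\ref{thm:spectral-characterization} this yields $\mathcal H_2>\tfrac56>\tfrac{3+6\sqrt2}{14}$ and $\mathcal H_3>\tfrac95>\tfrac74$, without using Theorem C.

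The rest of your plan works.
\begin{itemize}
\item Upper bounds: your family with $b=0$, i.e.\ $u=x_1e^{-|x|}$, already gives them exactly. With $v=e^{-r}$ one finds $A_{2,1}=\tfrac98$, $B_{2,1}=\tfrac38$, $J_{2,1}=\tfrac34$ and $A_{3,1}=\tfrac74$, $B_{3,1}=\tfrac34$, $J_{3,1}=\tfrac58$. So $\sqrt{AB}/J$ equals $\tfrac{\sqrt3}{2}$ and $\tfrac{2\sqrt{21}}{5}$.
\item Attainment: the compactness argument is sound. Add only that the closure of $\{\nabla\phi:\phi\in C_c^\infty(\mathbb R^d)\}$ in $H^1(\mathbb R^d;\mathbb R^d)$ is a closed subspace, hence weakly closed. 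So the weak limit is $\nabla u$ for some $u$ in the required closure.
\end{itemize}
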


Their approach is based on an equivalent linearized formulation combined
with spherical harmonic decomposition. In dimensions \(d=2\) and \(d=3\),
the determination of the sharp constant is reduced to a one-dimensional
variational problem associated with the first non-radial sector \(k=1\).
The improved lower bounds are then obtained through suitable
transformations of the radial profile together with weighted Hardy-type
inequalities and auxiliary one-dimensional estimates. 

In this paper, we establish a spectral characterization of the sharp
constant \(\mathcal H_d\) in every dimension \(d\ge2\). For \(d\ge4\),
this characterization recovers the known identity
\[
\mathcal H_d=\frac{d+1}{2}
\]
through an alternative, purely spectral approach. In dimensions \(d=2\)
and \(d=3\), it leads to substantially sharper two-sided bounds for
\(\mathcal H_2\) and \(\mathcal H_3\) than those previously available.

Before stating our main results, we introduce some notation that will be
used throughout the paper. We set
\[
\mathbb N=\{1,2,3,\ldots\},
\qquad
\mathbb N_0=\mathbb N\cup\{0\},
\]
and denote by \(\|\cdot\|_2\) the \(L^2\)-norm over the underlying
domain whenever the latter is clear from the context.

If \(E\) and \(F\) are normed spaces and \(T:E\to F\) is a bounded
linear operator, we write
\[
\|T\|_{E\to F}
=
\sup_{x\in E\setminus\{0\}}
\frac{\|Tx\|_F}{\|x\|_E}.
\]

We denote by
\[
\ell^2(\mathbb N_0)
=
\left\{
a=(a_j)_{j\ge0}:
a_j\in\mathbb R,\quad
\sum_{j=0}^\infty |a_j|^2<\infty
\right\}
\]
the usual Hilbert space, with inner product
\[
\langle a,b\rangle_{\ell^2}
=
\sum_{j=0}^\infty a_jb_j
\]
and norm
\[
\|a\|_{\ell^2}
=
\left(
\sum_{j=0}^\infty |a_j|^2
\right)^{1/2}.
\]
We denote by \((e_j)_{j\ge0}\) its canonical orthonormal basis.

For a bounded linear operator
\[
T:\ell^2(\mathbb N_0)\to\ell^2(\mathbb N_0),
\]
we write
\[
T=(t_{jk})_{j,k\ge0},
\qquad
t_{jk}
=
\langle Te_k,e_j\rangle_{\ell^2},
\qquad
j,k\in\mathbb N_0,
\]
for its matrix representation with respect to \((e_j)_{j\ge0}\).
If \(T\) is Hilbert--Schmidt, then
\[
\|T\|_{\mathrm{HS}}^2
=
\sum_{j,k=0}^\infty |t_{jk}|^2.
\]
Finally, \(T\) is said to be positive if it is self-adjoint and
\[
\langle Ta,a\rangle_{\ell^2}\ge0,
\qquad
a\in\ell^2(\mathbb N_0).
\]

For \(d\ge2\) and \(j\in\mathbb N_0\), set
\begin{equation}\label{eq-hj-gammaj}
h_j^{(d)}
=
\frac{(d-1)^2}{2^{d+1}}
\sum_{m=0}^j
\frac{
\Gamma(2m+d+1)
}{
\Gamma(2m+1)(4m+d-1)(4m+d+3)
},
\end{equation}
where \(\Gamma\) denotes the Gamma function.

For \(0<c<d+3\), define
\begin{equation}\label{eq-gammajd}
\gamma_j^{(d)}(c)
=
\frac{
2^{d+3}c(4j+d+3)^2\Gamma(2j+1)
}{
(d-1)^2(4j+d+3-c)\Gamma(2j+d+3)
},
\qquad
j\in\mathbb N_0.
\end{equation}

We then introduce the matrix
\begin{equation}\label{eq-Sc}
S_c^{(d)}
=
\left(s_{jk}^{(d)}(c)\right)_{j,k\ge0},
\end{equation}
where
\begin{equation}\label{eq-sjk}
s_{jk}^{(d)}(c)
=
\sqrt{\gamma_j^{(d)}(c)}\,
h_{\min\{j,k\}}^{(d)}\,
\sqrt{\gamma_k^{(d)}(c)},
\qquad
j,k\in\mathbb N_0.
\end{equation}
As shown in Lemma~\ref{L5.8}, this matrix defines a positive
Hilbert--Schmidt operator
\[
S_c^{(d)}:\ell^2(\mathbb N_0)\longrightarrow\ell^2(\mathbb N_0),
\]
and hence a bounded compact operator. In particular,
\[
\left(S_c^{(d)}a\right)_j
=
\sum_{k=0}^\infty
s_{jk}^{(d)}(c)a_k,
\qquad
j\in\mathbb N_0,
\]
for every \(a=(a_k)_{k\ge0}\in\ell^2(\mathbb N_0)\).

We are now in a position to state our first main result.

\begin{theorem}\label{thm:spectral-characterization}
Let \(d\ge2\). There exists a unique \(c_d^*\in(0,d+3)\) such that
\[
\|S_{c_d^*}^{(d)}\|_{\ell^2\to\ell^2}=1.
\]
Moreover,
\begin{equation}\label{Hd-characterization}
\mathcal H_d
=
\frac12\min\{d+1,c_d^*\}.
\end{equation}
\end{theorem}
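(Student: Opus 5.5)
I plan to follow the route sketched in the abstract: linearize, decompose into spherical harmonics, and diagonalize each sector in a Laguerre basis. \textbf{Step 1 (linearization).} By AM--GM with a scaling parameter, and using that \(u\mapsto u(\lambda\cdot)\) scales the three integrals differently, \(\mathcal H_d\) equals the largest \(\frac c2\) such that
\[
\int|\Delta u|^2+\int|\nabla u|^2\ \ge\ c\int\frac{|\nabla u|^2}{|x|},
\qquad u\in W^{2,2}(\mathbb R^d).
\]
Indeed, \(ab\ge\frac12\inf_\lambda(\lambda a+\lambda^{-1}b)\), and the scaling makes the two formulations equivalent.

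\textbf{Step 2 (sector decomposition).} Expand \(u=\sum_{k,m}f_{k,m}(r)Y_{k,m}\). All three quadratic forms decouple across sectors, so the best \(c\) is the infimum over \(k\) of sectorwise constants \(c_k\). For the radial sector \(k=0\), the exponential profile \((1+r)e^{-r}\) and the known one-dimensional Hardy-type argument should give \(c_0=d+1\). For \(k\ge2\), I would show \(c_k\ge c_1\), or at least \(c_k\ge\min(d+1,c_1)\), via monotonicity in the centrifugal term \(k(k+d-2)/r^2\). The \(|\nabla u|^2\) weight contains \(k(k+d-2)f^2/r^2\) and the \(\Delta\) term grows faster in \(k\), so the quotient \((\text{LHS})/(\text{RHS})\) should increase with \(k\). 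This reduces the theorem to identifying \(c_1\).

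\textbf{Step 3 (the sector \(k=1\) in a Laguerre basis).} I would substitute \(g=f'\) or a suitable combination \(g=f'+\alpha f/r\) adapted to \(k=1\), so that the \(|\nabla u|^2\) form becomes a weighted \(L^2\) norm. I would then expand \(g\) in generalized Laguerre functions \(L_j^{(d)}(2r)e^{-r}\) (appropriately weighted by \(r^{d-1}\)), which diagonalize the operator \(-\Delta+1\) restricted to the sector, up to the \(1/r\) weight. Because of the \(j\)-dependent normalizations \(\Gamma(2j+d+3)/\Gamma(2j+1)\) and the factors \(4j+d+3\), I expect the relevant basis to consist of even-indexed Laguerre functions, or a sum/difference pairing.

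In this basis, the inequality \(A\ge c\,B\) becomes a comparison \(D-cB\ge0\) with \(D\) diagonal. Writing \(D-cB=\) (diagonal with entries \(\propto(4j+d+3-c)\)) minus a rank-structured remainder, the remainder should have the Green's-function form \(h_{\min(j,k)}\). This form is typical of the inverse of a tridiagonal (second-difference) operator arising from \(\frac{d}{dr}\) in Laguerre coordinates. After conjugating by the diagonal \(\sqrt{\gamma_j(c)}\), positivity of the form becomes \(\|S_c^{(d)}\|\le1\). Hence \(c\le c_1\) if and only if \(\|S_c\|\le1\), provided the map \(c\mapsto\|S_c\|\) is understood.

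\textbf{Step 4 (uniqueness of \(c_d^*\)).} Each \(\gamma_j(c)=\mathrm{const}_j\cdot c/(4j+d+3-c)\) is strictly increasing and continuous on \((0,d+3)\). Since the \(h\)-kernel is entrywise positive, \(S_c\) is entrywise increasing; by Perron-type monotonicity for positive operators, \(\|S_c\|\) is then continuous and strictly increasing. It tends to \(0\) as \(c\to0\), and \(\|S_c\|\ge s_{00}(c)\to\infty\) as \(c\to d+3\) (since \(\gamma_0\) blows up). Continuity in operator norm follows from Hilbert--Schmidt dominated convergence on compact subintervals, using Lemma~\ref{L5.8}. Together these give a unique \(c_d^*\) with \(\|S_{c_d^*}\|=1\), and then \(c_1=c_d^*\), so \(\mathcal H_d=\frac12\min(d+1,c_d^*)\).

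\textbf{Main obstacle.} The hard step is Step 3: computing the matrix of \(\int|\nabla u|^2/|x|\) (or of the linearized form) in the Laguerre basis and showing it factors exactly as diagonal plus the \(\min\)-kernel \(h_{\min(j,k)}\). This includes checking the recurrence \(h_j-h_{j-1}\) against the Gamma ratios, and verifying density and attainment issues for the identification \(c_1=c_d^*\) (closure of finite expansions). The monotonicity in \(k\) from Step 2 is the secondary risk.
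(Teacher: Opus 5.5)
Your outline has the same architecture as the paper: scaling linearization, spherical-harmonic splitting, reduction to $k=1$, a Laguerre basis, a $\min$-kernel matrix, and the Perron/Hilbert--Schmidt argument for $c\mapsto\|S_c^{(d)}\|_{\ell^2\to\ell^2}$, which is essentially Lemma~\ref{L5.9}. But Step~3 has a genuine gap: the even-indexed Laguerre modes are not enough, and the odd ones do not drop out.

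In the paper one writes $u=rv(r)Y_{1,\ell}$ and $y=r^{(d+1)/2}v'$; this is your $g=f'-f/r$ with $f=rv$, times $r^{(d-1)/2}$. The sector quotient then becomes $E_d(y)/(D(y)+G_d(y))$. The \emph{full} family $\psi_n^{(d)}=r^{(d+3)/2}e^{-r}L_n^{(d+2)}(2r)$, $n\ge0$, is needed for completeness. It diagonalizes $E_d$ and $D$ simultaneously, with generalized eigenvalues $2n+d+3$. Only the nonlocal term $G_d(y)=\int_0^\infty r^{d-2}\bigl|\int_r^\infty y(s)s^{-(d+1)/2}\,ds\bigr|^2\,dr=\int_0^\infty r^{d-2}|v|^2\,dr$ is off-diagonal. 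Its matrix $g_{n,m}^{(d)}$ vanishes for $n+m$ odd, because its generating function is invariant under $(z,w)\mapsto(-z,-w)$. So the problem splits into two blocks:
\begin{itemize}
\item an even block, which is exactly $S_c^{(d)}$;
\item an odd block of the same $\min$-kernel type, built from $\widehat h_j^{(d)}$, $\widehat\gamma_j^{(d)}(c)$ and diagonal weights $4j+d+5-c$.
\end{itemize}
Test functions from the even block only give $\mathcal C_{d,1}\le c_d^*$, i.e.\ the upper bound $\mathcal H_d\le\frac12\min\{d+1,c_d^*\}$. For the lower bound you need a separate argument that the odd block never binds. The paper supplies it in Lemma~\ref{L5.6}: a Schur test at $c=d+1$ with the explicit positive weight $p_j=\frac{j+1}{4j+d+5}\sqrt{\widehat\gamma_j^{(d)}(d+1)}$. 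This weight satisfies $\sum_k\widehat s_{j,k}^{(d)}(d+1)p_k=\frac1d p_j$, so the odd form is at most $\frac1d$ times its diagonal part; monotonicity in $c$ then extends this to all $0<c\le d+1$.

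This control is only available for $c\le d+1$. What one actually proves is $\mathcal C_{d,1}\ge c\iff\|S_c^{(d)}\|_{\ell^2\to\ell^2}\le1$ for $0<c\le d+1$ (Corollary~\ref{C5.11}). Hence $\min\{d+1,\mathcal C_{d,1}\}=\min\{d+1,c_d^*\}$, which is all the theorem needs. Your concluding claim $c_1=c_d^*$ is therefore not justified as stated. For $d\ge4$, where $c_d^*>d+1$, it would require comparing $c_d^*$ with the threshold of the odd block, which is not established.

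A secondary gap is Step~2 for $k\ge2$. ``The quotient should increase with $k$'' is not an argument, since $A$, $B$, $J$ and the admissible profiles all change with $k$. The paper instead invokes the Hardy-based sectorwise bound of Cazacu--Flynn--Lam, $\mathcal C_{d,k}\ge\frac{(d+2k+1)(d+2k-3)^2}{(d+2k-3)^2+4k}>d+1$ for $k\ge2$. This suffices precisely because only $\min\{d+1,\mathcal C_{d,1}\}$ matters.

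Finally, you leave the identity $g_{2j,2k}^{(d)}=\frac{(4j+d+3)(4k+d+3)}{(d-1)^2}h_{\min\{j,k\}}^{(d)}$ as an expectation. The paper obtains it from a closed-form generating function for $\Phi_n^{(d)}(r)=\int_r^\infty se^{-s}L_n^{(d+2)}(2s)\,ds$, followed by coefficient extraction and a telescoping sum. It is the computational heart of the proof, not a routine check.
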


Theorem~\ref{thm:spectral-characterization} provides a unified spectral
characterization of the sharp constant \(\mathcal H_d\) in every
dimension \(d\ge2\), reducing its determination to the spectral analysis
of the explicit positive compact operator \(S_c^{(d)}\). In particular,
\eqref{Hd-characterization} reveals a competition between the radial
value \(d+1\) and the spectral quantity \(c_d^*\) associated with the
first non-radial sector.

Our approach starts from a linearized formulation of
\eqref{variational-HN}, followed by spherical harmonic decomposition.
The higher spherical harmonic sectors are shown not to contribute to the
minimum, reducing the problem to a competition between the radial sector
and the first non-radial sector. The latter is then reduced to a
one-dimensional variational problem. The main new ingredient is a further
spectral reduction based on a family constructed from generalized
Laguerre polynomials. This basis diagonalizes the local quadratic forms
arising in the one-dimensional problem. After separating the even and
odd components of the resulting expansion, the odd component is shown
not to affect the optimal constant, whereas the even component leads to
the positive compact operator \(S_c^{(d)}\). The resulting discrete
spectral formulation yields \eqref{Hd-characterization}.

As a first consequence of Theorem~\ref{thm:spectral-characterization},
we recover the sharp constant in dimensions \(d\ge4\).

\begin{corollary}\label{cor:Hd-dge4}
Let \(d\ge4\). Then
\[
c_d^*>d+1,
\]
and consequently
\[
\mathcal H_d=\frac{d+1}{2}.
\]
\end{corollary}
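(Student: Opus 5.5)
The plan is to combine the uniqueness statement of Theorem~\ref{thm:spectral-characterization} with a monotonicity property in \(c\), so that everything reduces to the single estimate
\[
\|S_{d+1}^{(d)}\|_{\ell^2\to\ell^2}<1,\qquad d\ge4 .
\]
Once \(c_d^*>d+1\) is known, formula \eqref{Hd-characterization} gives \(\mathcal H_d=\frac12\min\{d+1,c_d^*\}=\frac{d+1}{2}\) immediately.

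\emph{Step 1: monotonicity.} For each fixed \(j\), the map \(c\mapsto \gamma_j^{(d)}(c)=\mathrm{const}_j\cdot c/(4j+d+3-c)\) is strictly increasing on \((0,d+3)\). All entries \(s^{(d)}_{jk}(c)\) are nonnegative, since \(h^{(d)}_m>0\). For a symmetric matrix with nonnegative entries, the operator norm equals \(\sup\{\langle S a,a\rangle:\ \|a\|_{\ell^2}=1,\ a\ge0\}\), because replacing \(a\) by \(|a|\) can only increase the quadratic form. Hence \(c\mapsto\|S^{(d)}_c\|\) is nondecreasing, and in fact strictly increasing, since it is attained at a top eigenvector, which is compact and positive with no zero entries by Perron--Frobenius type reasoning. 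Consequently, \(\|S^{(d)}_{d+1}\|<1=\|S^{(d)}_{c_d^*}\|\) forces \(c_d^*>d+1\). This is presumably also how uniqueness in Theorem~\ref{thm:spectral-characterization} was obtained, so I may simply cite it.

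\emph{Step 2: a computable upper bound.} Since \(S^{(d)}_{d+1}\) is positive, its norm is at most its trace:
\[
\|S^{(d)}_{d+1}\|\le \operatorname{tr} S^{(d)}_{d+1}=\sum_{j\ge0}\gamma^{(d)}_j(d+1)\,h^{(d)}_j .
\]
At \(c=d+1\) one has
\[
\gamma_j^{(d)}(d+1)=\frac{2^{d+2}(d+1)(4j+d+3)^2\,\Gamma(2j+1)}{(d-1)^2(2j+1)\,\Gamma(2j+d+3)}
\]
and \(h_j^{(d)}\asymp j^{d-1}\). Thus the summand decays like \(j^{-2}\), and the trace is finite. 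I would bound \(h_j^{(d)}\) from above by comparing each term \(\Gamma(2m+d+1)/\Gamma(2m+1)\) with \((2m+d)^{d}\)-type products, then sum explicitly (or telescopically) to get an elementary bound on the trace. If the trace is too crude, I would instead use the Hilbert--Schmidt norm, or better a Schur test with weights \(w_j=\sqrt{\gamma_j}\,(j+1)^{\alpha}\) chosen to match the \(\min\{j,k\}^{d-1}\) structure. The Schur test only requires \(\sum_k s_{jk}w_k\le \theta w_j\) with \(\theta<1\), and it reduces to two one-sided sums that can be estimated by integrals.

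\emph{Step 3: uniformity in \(d\).} I would handle \(d=4\) (and perhaps \(d=5\)) by explicit evaluation of finitely many terms plus a tail bound, since this is where the margin is smallest: at \(d=4\) the radial and first non-radial sectors are expected to be close. For large \(d\), I would show that the chosen bound decreases in \(d\), or at least stays below \(1\). This should be easy because the prefactor \((d-1)^2\) in \(h_j\) cancels against \((d-1)^2\) in \(\gamma_j\), while the Gamma ratios \(\Gamma(2m+d+1)\Gamma(2j+1)/(\Gamma(2m+1)\Gamma(2j+d+3))\) force strong decay away from the diagonal.

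\emph{Main obstacle.} I expect the hard part to be the sharp estimate at \(d=4\). A crude trace or Hilbert--Schmidt bound may exceed \(1\), since the true norm could be fairly close to \(1\). If so, I would split \(S=P_NSP_N+R_N\), where \(P_N\) projects onto the first \(N\) coordinates. The finite block \(P_NSP_N\) can be bounded rigorously, for instance by Gershgorin or by an explicit Cholesky/determinant check that \(I-P_NSP_N\) is positive definite. The remainder \(R_N\) would be controlled by its Hilbert--Schmidt norm using the \(j^{-2}\) tail asymptotics, and the two pieces combined through a \(2\times2\) block norm inequality.
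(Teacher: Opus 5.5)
\textbf{Verdict: there is a genuine gap.} The whole corollary rests on the estimate $\|S_{d+1}^{(d)}\|_{\ell^2\to\ell^2}<1$ for every $d\ge4$, and your proposal does not prove it.

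Your Step~1 is correct and is exactly how the paper concludes, via Lemma~\ref{L5.9}: if $c_d^*\le d+1$, then $1=\|S_{c_d^*}^{(d)}\|\le\|S_{d+1}^{(d)}\|$.

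For the key estimate you offer a menu of tools, and the one you actually set up fails at $d=4$, exactly where you feared. At $d=4$ one has the closed form $h_j^{(4)}=\frac{3(j+1)^2(j+2)(j+3)}{8(4j+7)}$, hence
\[
s_{jj}^{(4)}(5)=\frac{5(j+1)(4j+7)}{3(2j+1)^2(2j+3)(2j+5)},
\]
giving $7/9,\ 22/189,\ 1/21,\dots$. Already $\sum_{j=0}^{6}s_{jj}^{(4)}(5)>1$, and the full trace is about $1.06$.

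Your unweighted Schur test, $w_j=\sqrt{\gamma_j^{(d)}}$ (your $\alpha=0$), is no better. Since
\[
\sum_k h_{\min\{j,k\}}\gamma_k=\sum_{m\le j}\varepsilon_m\sum_{k\ge m}\gamma_k ,
\]
the Schur ratio increases to $\operatorname{tr}S_{d+1}^{(d)}$ as $j\to\infty$, so its constant is the trace. For $\alpha>0$ the tail sums $\sum_{k\ge m}\gamma_k(k+1)^\alpha$ have no closed form, so you would be estimating dimension by dimension. The block-plus-remainder fallback also covers at most finitely many $d$. Step~3 gives no mechanism uniform in $d$.

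The Hilbert--Schmidt norm at $d=4$ does appear numerically to be about $0.82$. However, turning that into a bound for all $d\ge4$ needs uniform control of $h_j^{(d)}$ that the proposal only gestures at.

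\textbf{The missing idea} is a weight for which both sums in the Schur test telescope. The paper takes
\[
p_j=\frac{2j+1}{4j+d+3}\sqrt{\gamma_j^{(d)}(d+1)} .
\]
\begin{itemize}
\item With $r_k=\Gamma(2k+1)/\Gamma(2k+d+1)$, one checks $\sqrt{\gamma_k^{(d)}(d+1)}\,p_k=\frac{2^{d+2}(d+1)}{d(d-1)^2}(r_k-r_{k+1})$, so $\sum_{k\ge m}\sqrt{\gamma_k}\,p_k$ is a constant multiple of $r_m$.
\item This factor exactly cancels the Gamma ratio in $\varepsilon_m^{(d)}$. What remains is $\frac{2(d+1)}{d}\sum_{m\le j}\frac{1}{(4m+d-1)(4m+d+3)}$, which telescopes again.
\end{itemize}
The result is
\[
\sum_{k\ge0}s_{jk}^{(d)}(d+1)\,p_k=\frac{2(d+1)(j+1)}{d(d-1)(2j+1)}\,p_j\le\frac{2(d+1)}{d(d-1)}\,p_j .
\]
The Schur test then gives $\|S_{d+1}^{(d)}\|\le\frac{2(d+1)}{d(d-1)}$. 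This is $<1$ exactly when $d^2-3d-2>0$, so it covers all $d\ge4$ at once, with no numerics and no case-by-case treatment of $d$.
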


In contrast, for \(d=2\) and \(d=3\), the spectral parameter \(c_d^*\)
lies strictly below the radial value \(d+1\), and therefore determines
the sharp constant.

\begin{corollary}\label{cor:Hd-d23}
Let \(d\in\{2,3\}\). Then
\[
c_d^*<d+1,
\]
and consequently
\begin{equation}\label{Hd-d-2-3}
\mathcal H_d=\frac{c_d^*}{2}.
\end{equation}
\end{corollary}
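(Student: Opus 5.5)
Given Theorem~\ref{thm:spectral-characterization}, proving Corollary~\ref{cor:Hd-d23} reduces to the strict inequality \(c_d^*<d+1\) for \(d\in\{2,3\}\); the identity \(\mathcal H_d=c_d^*/2\) then follows at once from \eqref{Hd-characterization}. The plan rests on monotonicity in \(c\). Each \(\gamma_j^{(d)}(c)\) is strictly increasing in \(c\) on \((0,d+3)\), since \(c/(4j+d+3-c)\) is increasing, and the kernel \(h_{\min\{j,k\}}^{(d)}\) has positive entries. Hence \(c\mapsto\|S_c^{(d)}\|\) is strictly increasing. This strict monotonicity is presumably the mechanism behind the uniqueness of \(c_d^*\) in the theorem, and I would reprove it quickly if it is not isolated as a lemma. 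Consequently, \(c_d^*<d+1\) is equivalent to
\[
\|S_{d+1}^{(d)}\|_{\ell^2\to\ell^2}>1 ,
\]
which is admissible because \(d+1<d+3\).

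Since \(S_{d+1}^{(d)}\) is positive and self-adjoint, it suffices to exhibit one finitely supported vector \(a\) with \(\langle S_{d+1}^{(d)}a,a\rangle>\|a\|^2\). One could first try \(a=e_0\), which gives the explicit number
\[
s_{00}^{(d)}(d+1)=\gamma_0^{(d)}(d+1)\,h_0^{(d)} .
\]
For \(d=2\) it equals \(\frac{2^5\cdot 3\cdot 25}{1\cdot 2\cdot 24}\cdot\frac{1}{8}\cdot\frac{2}{1\cdot 5}=\frac{5}{2}>1\). This already suffices, provided I have correctly transcribed the definitions \eqref{eq-hj-gammaj} and \eqref{eq-gammajd}.

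For \(d=3\) the same entry is \(\gamma_0^{(3)}(4)\,h_0^{(3)}\). Here
\[
\gamma_0^{(3)}(4)=\frac{2^6\cdot 4\cdot 36}{4\cdot 2\cdot\Gamma(6)}=\frac{48}{5},
\qquad
h_0^{(3)}=\frac{4}{16}\cdot\frac{6}{2\cdot 6}=\frac18 .
\]
Their product is \(6/5>1\). So the \(1\times1\) truncation works in both dimensions, and the corollary follows.

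The main obstacle is purely arithmetic: confirming these explicit values carefully, because a slip in the Gamma factors could place the \(e_0\) test below \(1\). If the \(e_0\) test falls short, the fallback is to take the leading \(2\times2\) or \(3\times3\) principal block of \(S_{d+1}^{(d)}\) and show that its largest eigenvalue exceeds \(1\). Since compressions do not increase the norm, this still yields \(\|S_{d+1}^{(d)}\|\ge\lambda_{\max}(\text{block})>1\) via the Rayleigh quotient.
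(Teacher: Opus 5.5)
Your proposal is correct and is essentially the paper's proof: the paper also bounds $\|S_{d+1}^{(d)}\|_{\ell^2\to\ell^2}$ from below by $\langle S_{d+1}^{(d)}e_0,e_0\rangle_{\ell^2}=\gamma_0^{(d)}(d+1)h_0^{(d)}=\frac{2(d+3)}{(d-1)(d+2)}$. This gives $5/2$ and $6/5$, confirming your arithmetic, after which the paper invokes the strict monotonicity of $c\mapsto\|S_c^{(d)}\|_{\ell^2\to\ell^2}$ (Lemma~\ref{L5.9}) and \eqref{Hd-characterization}, so your fallback to larger principal blocks is not needed.
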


Corollary~\ref{cor:Hd-d23} reduces the determination of the sharp
constants \(\mathcal H_2\) and \(\mathcal H_3\) to that of the spectral
parameters \(c_2^*\) and \(c_3^*\). We next develop an explicit
finite-step scheme for approximating these parameters.

For \(d\ge2\), set
\begin{equation}\label{eq-hminus1-d}
h_{-1}^{(d)}=0,
\end{equation}
and, for \(j\in\mathbb N_0\), define
\begin{equation}\label{eq-epsilon-d}
\varepsilon_j^{(d)}
=
h_j^{(d)}-h_{j-1}^{(d)}.
\end{equation}
By \eqref{eq-hj-gammaj},
\begin{equation}\label{exp-epsilon-md}
\varepsilon_j^{(d)}
=
\frac{(d-1)^2}{2^{d+1}}
\frac{
\Gamma(2j+d+1)
}{
\Gamma(2j+1)(4j+d-1)(4j+d+3)
}
>0,
\qquad
j\in\mathbb N_0.
\end{equation}
Using the recurrence relation
\begin{equation}\label{eq-gamma-functional-relation}
\Gamma(z+1)=z\Gamma(z),
\qquad z>0,
\end{equation}
we obtain, in particular,
\begin{equation}\label{eq-epsilon-d-explicit}
\varepsilon_j^{(2)}
=
\frac{(j+1)(2j+1)}
{4(4j+1)(4j+5)},
\qquad
\varepsilon_j^{(3)}
=
\frac{j+1}{8},
\qquad
j\in\mathbb N_0.
\end{equation}

For \(d\in\{2,3\}\) and \(0<c<d+1\), define the sequences
\[
\bigl(\mathbb A_j^{(d)}(c)\bigr)_{j\in\mathbb N_0},
\qquad
\bigl(\mathbb F_j^{(d)}(c)\bigr)_{j\in\mathbb N_0},
\]
by
\begin{equation}\label{eq-shooting-initial-d}
\mathbb A_0^{(d)}(c)=1,
\qquad
\mathbb F_0^{(d)}(c)=\frac{1}{h_0^{(d)}},
\end{equation}
and, for \(j\in\mathbb N_0\),
\begin{align}
\mathbb F_{j+1}^{(d)}(c)
&=
\mathbb F_j^{(d)}(c)
-
\gamma_j^{(d)}(c)\mathbb A_j^{(d)}(c),
\label{eq-shooting-F-d}\\
\mathbb A_{j+1}^{(d)}(c)
&=
\mathbb A_j^{(d)}(c)
+
\varepsilon_{j+1}^{(d)}
\mathbb F_{j+1}^{(d)}(c).
\label{eq-shooting-A-d}
\end{align}
In particular,
\[
\mathbb F_0^{(2)}(c)=20,
\qquad
\mathbb F_0^{(3)}(c)=8.
\]

For \(m\in\mathbb N_0\), set
\begin{equation}\label{eq-frakb-d}
\mathfrak b_m^{(d)}(c)
=
\begin{cases}
\displaystyle
\frac{
8c(4m+5)(4m+9)
}{
(m+2)(2m+3)
\left(
16m^2+56m+45-(4m+7)c
\right)
},
& d=2,
\\[4mm]
\displaystyle
\frac{
16c
}{
(m+2)
\left(
24m^2+96m+90-(6m+11)c
\right)
},
& d=3,
\end{cases}
\qquad
0<c<d+1.
\end{equation}
The denominators in \eqref{eq-frakb-d} are strictly positive for
\(m\in\mathbb N_0\) and \(0<c<d+1\).

For \(d\in\{2,3\}\) and \(m\in\mathbb N_0\), we then consider the scalar
equation
\begin{equation}\label{eq-cdm-def}
\mathbb F_{m+1}^{(d)}(c)
=
\mathfrak b_m^{(d)}(c)
\mathbb A_m^{(d)}(c),
\qquad
0<c<d+1.
\end{equation}

We now state our second main result.

\begin{theorem}\label{thm:quantitative-estimate}
The following statements hold:
\begin{itemize}
	\item[\rm(i)] For each \(d\in\{2,3\}\) and \(m\in\mathbb N_0\),
	\eqref{eq-cdm-def} admits a smallest solution in \((0,d+1)\),
	denoted by \(\underline c_{d,m}\).

	\item[\rm(ii)] For each \(d\in\{2,3\}\),
	\begin{equation}\label{eq-cdm-convergence}
	\underline c_{d,m}\uparrow c_d^*
	\qquad\text{as }m\to\infty.
	\end{equation}

	\item[\rm(iii)] For every \(m\in\mathbb N\),
	\begin{align}
	0<c_2^*-\underline c_{2,m}
	&<
	\frac{2}{15m^3},
	\label{eq-c2-effective-error}\\
	0<c_3^*-\underline c_{3,m}
	&<
	\frac{27}{100m^4}.
	\label{eq-c3-effective-error}
	\end{align}
\end{itemize}
\end{theorem}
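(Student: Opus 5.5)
The plan is to read \eqref{eq-shooting-F-d}--\eqref{eq-shooting-A-d} as a shooting scheme for the eigenvalue problem \(S_c^{(d)}a=a\). Write \(S_c^{(d)}=\Gamma_c^{1/2}H\Gamma_c^{1/2}\), where \(\Gamma_c=\mathrm{diag}(\gamma_j^{(d)}(c))\) and \(H=(h^{(d)}_{\min\{j,k\}})\). Setting \(A=\Gamma_c^{-1/2}a\), the equation becomes \(A=H\Gamma_cA\), that is,
\[
A_j=\sum_k h_{\min\{j,k\}}\gamma_kA_k .
\]
Introduce the tail sums \(F_j=\sum_{k\ge j}\gamma_kA_k\). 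Differencing in \(j\) and using \(h_j-h_{j-1}=\varepsilon_j\), this is equivalent to
\[
A_{j+1}-A_j=\varepsilon_{j+1}F_{j+1},\qquad F_{j+1}=F_j-\gamma_jA_j,\qquad A_0=h_0F_0,
\]
which is exactly the recursion above with the normalisation \(A_0=1\), together with the boundary condition at infinity \(F_j\to0\) (summability). Since \(S_c^{(d)}\) has positive entries and is compact, its norm is a simple Perron eigenvalue with a positive eigenvector. Since each \(\gamma_j^{(d)}(c)\) is increasing in \(c\), \(\|S_c^{(d)}\|\) is increasing in \(c\). Hence \(c_d^*\) from Theorem~\ref{thm:spectral-characterization} is the first \(c\) for which the shooting solution \((\mathbb A_j,\mathbb F_j)\) has \(\mathbb A_j>0\) for all \(j\) and \(\mathbb F_j\downarrow0\). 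By Corollary~\ref{cor:Hd-d23}, this \(c\) lies in \((0,d+1)\).

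First I would prove a discrete Sturm comparison. For \(c<c_d^*\) the shooting solution stays positive with \(\mathbb F_j>0\) for all \(j\); this is subcritical, since \(\mathbb F\) does not reach \(0\). For \(c>c_d^*\), \(\mathbb F_j\) becomes negative at a finite index. The argument is a Picone-type identity comparing with the Perron eigenvector at \(c_d^*\), using the monotonicity of \(\gamma_j\) in \(c\). By induction, \(\mathbb A_j(c)\) and \(\mathbb F_j(c)\) are polynomials in the rational functions \(\gamma_0,\dots,\gamma_j\). The recursion shows that \(\partial_c\mathbb F_{j}\le0\) and \(\partial_c\mathbb A_j\le0\) as long as the solution stays positive.

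Next I would interpret \(\mathfrak b_m^{(d)}(c)\mathbb A_m\) as a computable lower bound for the exact tail \(\sum_{k>m}\gamma_k\mathbb A_k\) at the critical value. At \(c_d^*\) the eigenvector has \(\mathbb A_k\ge\mathbb A_m\) for \(k\ge m\), and \(\mathbb F_{m+1}=\sum_{k\ge m+1}\gamma_k\mathbb A_k\). Bounding \(\mathbb A_k\) from below by \(\mathbb A_m\) plus the first correction \(\varepsilon_{m+1}\mathbb F_{m+1}\), and summing the explicit \(\gamma_k\) (rational in \(k\) for \(d=2,3\) by \eqref{eq-gamma-functional-relation}), should give precisely the rational expression \eqref{eq-frakb-d}. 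One would check this by a telescoping identity of the form \(\gamma_k=\beta_k-\beta_{k+1}\), with \(\beta_k\) rational.

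This gives \(\Phi_m(c):=\mathbb F_{m+1}(c)-\mathfrak b_m(c)\mathbb A_m(c)\ge0\) at \(c=c_d^*\), with strict inequality once the lower bound is shown to be strict. On the other hand, \(\Phi_m(c)\to-\infty\) or becomes negative as \(c\uparrow d+1\)... I would recheck the signs here: either \(\Phi_m\) changes sign on \((0,c_d^*)\), or the inequalities run in the opposite direction. The intended conclusion is that the smallest root \(\underline c_{d,m}\) exists by continuity and lies strictly below \(c_d^*\), which proves (i) and the positivity part of (iii). Monotonicity \(\underline c_{d,m}\le\underline c_{d,m+1}\) would follow by showing that \(\Phi_{m+1}\) has the same sign as \(\Phi_m\) at \(\underline c_{d,m}\), using one step of the recursion and the fact that \(\mathfrak b_{m+1}\) is a sharper tail bound. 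Convergence to \(c_d^*\) follows because the truncation error in the tail bound tends to \(0\).

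For (iii), I would linearise at \(c_d^*\):
\[
c_d^*-\underline c_{d,m}\le\frac{\Phi_m(c_d^*)}{\min_{[\underline c_{d,m},c_d^*]}|\partial_c\Phi_m|}.
\]
The numerator is the neglected part of the tail, of size \(\sum_{k>m}\gamma_k(\mathbb A_k-\mathbb A_m-\cdots)\). Here \(\gamma_k\sim k^{-(d+1)}\cdot k^{2}\) and \(\varepsilon_k\sim k^{d-1}\), which should produce decay \(m^{-3}\) for \(d=2\) and \(m^{-4}\) for \(d=3\). The denominator is bounded below by a positive constant using \(\partial_c\gamma_j\ge\gamma_j/c\). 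Making the constants \(2/15\) and \(27/100\) explicit, valid for all \(m\ge1\) and not just asymptotically, is the main obstacle. I would first try crude monotone bounds on \(\mathbb A_j,\mathbb F_j\) (e.g. \(\mathbb F_j\le\mathbb F_0\) and \(\mathbb A_j\le 1+\mathbb F_0h_j\)). If those are too lossy, I would use a direct numerical check of finitely many small \(m\) combined with the asymptotic bound for large \(m\).
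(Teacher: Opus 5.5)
Your reading of the recursion as a shooting scheme for $S_c^{(d)}a=a$ and your mean-value linearization in (iii) agree with the paper. However, the step that carries the theorem is missing, and, as you suspect, you have its direction reversed. At $c=c_d^*$ the quantity $\mathfrak b_m^{(d)}\mathbb A_m^{(d)}$ is an \emph{upper} bound for the exact tail $\mathbb F_{m+1}^{(d)}(c_d^*)=\sum_{k>m}\gamma_k^{(d)}\mathbb A_k^{(d)}$. Proving this needs a comparison argument that your proposal does not contain.

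In the paper, $\mathfrak b_m^{(d)}=R_{m+1}^{(d,m)}/a_m^{(d,m)}$ comes from an explicit supersolution tail. For $d=2$ it is $a_j=(j+\alpha_{2,m})/(4j+5)$, $R_j=4(5-4\alpha_{2,m})/((j+1)(2j+1))$, satisfying $a_j-a_{j-1}=\varepsilon_jR_j$ and the \emph{inequality} $\gamma_ja_j\le R_j-R_{j+1}$ for $j\ge m+1$. Glue $\lambda\mathbb A_j$ for $j\le m$ to $a_j$ for $j>m$, with $\lambda=a_m/\mathbb A_m$. Whenever $\mathbb F_{m+1}>\mathfrak b_m\mathbb A_m$, this gives a vector $p>0$ with $S_c^{(d)}p<p$ entrywise. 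The Schur test and pairing with the positive Perron eigenvector then give $c<c_d^*$ (Lemma~\ref{L6.3}). The one-step computation of Lemma~\ref{L6.4} excludes $\Phi_{d,m}(c_d^*)=0$. Together these yield $\Phi_{d,m}(c_d^*)<0<\lim_{c\to0^+}\Phi_{d,m}(c)$, and this sign change is what produces a root strictly below $c_d^*$.

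Your suggested derivation cannot reach $\mathfrak b_m$, for two reasons:
\begin{itemize}
\item Because of the factor $4k+d+3-c$, $\gamma_k^{(d)}(c)$ admits no rational telescoping $\beta_k-\beta_{k+1}$; this is why the paper needs an inequality rather than an identity.
\item Bounding $\mathbb A_k\ge\mathbb A_m$ yields $\mathbb F_{m+1}/\mathbb A_m>\sum_{k>m}\gamma_k$ at $c_d^*$, which is a \emph{lower} bound. The left side decreases in $c$ and the right side increases, so the resulting equation has no root in $(0,c_d^*)$ at all.
\end{itemize}
In the paper that lower bound serves only as the other half of the sandwich $0<-\Phi_{d,m}(c_d^*)<\mathfrak b_m-\sum_{k>m}\gamma_k$. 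This sandwich is the source of the rates $m^{-3}$ and $m^{-4}$; note that $\gamma_k\sim k^{-(d+1)}$ and $\varepsilon_k\sim k^{d-2}$, not the exponents you wrote.

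Part (iii) has two further gaps. First, you work with $\mathbb F_{m+1}-\mathfrak b_m\mathbb A_m$, whose $c$-derivative has no evident sign, since the term $-\mathfrak b_m\partial_c\mathbb A_m$ is positive. The paper instead uses the ratio $\mathbb F_{m+1}/\mathbb A_m-\mathfrak b_m$. A discrete Wronskian identity gives $\partial_c(\mathbb F_{m+1}/\mathbb A_m)=-\sum_{j\le m}(\gamma_j)'\mathbb A_j^2/\mathbb A_m^2$ (Lemma~\ref{L6.6}). This yields strict monotonicity, uniqueness of the root, and the slope bound $-\Phi_{d,m}'>(\gamma_0)'/\mathbb A_m^2$.

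Second, that slope bound is only useful with an upper bound on $\mathbb A_m$ that is uniform in $m$. Your crude bound $\mathbb A_m\le 1+\mathbb F_0h_m$ grows like $m$ for $d=2$ and like $m^2$ for $d=3$. Through this route it gives slopes only of order $m^{-2}$ and $m^{-4}$, turning your error bounds into $O(m^{-1})$ and $O(1)$. Checking finitely many $m$ numerically cannot repair this, because the uniform bound is needed precisely for large $m$. The paper obtains $\mathbb A_m<e^{1/\kappa_d}$ (with $\kappa_2=4$, $\kappa_3=3$) on $(\underline c_{d,m},c_d^*)$ from the fact that $\Phi_{d,j}(c)<0$ there for all $j<m$. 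This gives $\mathbb A_{j+1}<\mathbb A_j(1+\varepsilon_{j+1}\mathfrak b_j)$ with $\varepsilon_{j+1}\mathfrak b_j<1/(\kappa_d(j+1)(j+2))$ (Lemma~\ref{L6.8}). This step, too, depends on the correct sign of $\Phi_{d,j}$ to the right of its root.
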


Theorem~\ref{thm:quantitative-estimate} provides a constructive
approximation of the critical spectral parameters \(c_2^*\) and \(c_3^*\).
For each fixed \(m\), the quantities
\(\mathbb A_m^{(d)}(c)\) and \(\mathbb F_{m+1}^{(d)}(c)\) are obtained
after finitely many steps of the recursion
\eqref{eq-shooting-initial-d}--\eqref{eq-shooting-A-d}, and
\(\underline c_{d,m}\) is then determined as the smallest solution of
the scalar equation \eqref{eq-cdm-def} in the known interval
\((0,d+1)\). Thus, its computation does not require prior knowledge of
\(c_d^*\). The resulting sequence provides strictly increasing rigorous
lower bounds converging to \(c_d^*\), while
\eqref{eq-c2-effective-error}--\eqref{eq-c3-effective-error} give
explicit error estimates of order \(m^{-3}\) in dimension \(2\) and
\(m^{-4}\) in dimension \(3\).

As an immediate consequence of Corollary~\ref{cor:Hd-d23} and
Theorem~\ref{thm:quantitative-estimate}, we obtain effective two-sided
bounds for the sharp constants.

\begin{corollary}\label{cor:effective-Hd}
For every \(m\in\mathbb N\),
\begin{align}
\frac{\underline c_{2,m}}{2}
<
\mathcal H_2
&<
\frac{\underline c_{2,m}}{2}
+
\frac{1}{15m^3},
\label{eq-effective-H2}\\
\frac{\underline c_{3,m}}{2}
<
\mathcal H_3
&<
\frac{\underline c_{3,m}}{2}
+
\frac{27}{200m^4}.
\label{eq-effective-H3}
\end{align}
\end{corollary}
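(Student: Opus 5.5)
The statement to prove is Corollary~\ref{cor:effective-Hd}. It follows directly from Corollary~\ref{cor:Hd-d23} and Theorem~\ref{thm:quantitative-estimate}; the only work is converting bounds on \(c_d^*\) into bounds on \(\mathcal H_d\) by dividing by \(2\). Fix \(m\in\mathbb N\) and \(d\in\{2,3\}\). By Corollary~\ref{cor:Hd-d23},
\[
\mathcal H_d=\frac{c_d^*}{2},
\]
so every two-sided estimate on \(c_d^*\) translates into one on \(\mathcal H_d\) with all quantities halved.

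For the lower bounds, I use part (iii) of Theorem~\ref{thm:quantitative-estimate}. It gives \(c_d^*-\underline c_{d,m}>0\) for every \(m\in\mathbb N\), hence \(\underline c_{d,m}<c_d^*\). Dividing by \(2\) yields
\[
\frac{\underline c_{d,m}}{2}<\mathcal H_d.
\]
Part (ii), monotone convergence \(\underline c_{d,m}\uparrow c_d^*\), also gives \(\underline c_{d,m}\le c_d^*\). Strictness, however, is exactly the left inequality in \eqref{eq-c2-effective-error}--\eqref{eq-c3-effective-error}, so I would cite those.

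For the upper bounds, the right inequalities in \eqref{eq-c2-effective-error} and \eqref{eq-c3-effective-error} give
\[
c_2^*<\underline c_{2,m}+\frac{2}{15m^3},
\qquad
c_3^*<\underline c_{3,m}+\frac{27}{100m^4}.
\]
Halving these gives \(\mathcal H_2<\underline c_{2,m}/2+1/(15m^3)\) and \(\mathcal H_3<\underline c_{3,m}/2+27/(200m^4)\). These are precisely \eqref{eq-effective-H2} and \eqref{eq-effective-H3}.

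There is no real obstacle here; all the difficulty sits in Theorem~\ref{thm:quantitative-estimate}(iii) and Corollary~\ref{cor:Hd-d23}, which I take as given. The one point to check is that \(\underline c_{d,m}\) is well defined for every \(m\in\mathbb N\). This holds by part (i), since \(\mathbb N\subset\mathbb N_0\).
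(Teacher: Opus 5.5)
Your proposal is correct and follows the paper's own proof: it combines \(\mathcal H_d=c_d^*/2\) from Corollary~\ref{cor:Hd-d23} with the two-sided error bounds \eqref{eq-c2-effective-error}--\eqref{eq-c3-effective-error} of Theorem~\ref{thm:quantitative-estimate}(iii) and halves everything. The constants \(1/(15m^3)\) and \(27/(200m^4)\) come out exactly as stated.
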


Thus, \eqref{eq-effective-H2}--\eqref{eq-effective-H3} provide
computable two-sided approximations of the sharp constants
\(\mathcal H_2\) and \(\mathcal H_3\). The lower bounds are obtained
after finitely many explicit recursive steps, while the corresponding
upper bounds follow from the explicit error estimates. Consequently,
the scheme yields approximations of \(\mathcal H_2\) and
\(\mathcal H_3\) with arbitrarily prescribed accuracy.

To illustrate the effectiveness of the recursive scheme, we apply
Corollary~\ref{cor:effective-Hd} with \(m=2\). In this case, the scalar
equations \eqref{eq-cdm-def} reduce to explicit algebraic equations,
leading to the following numerical bounds for the sharp constants.

\begin{corollary}\label{cor:numerical-Hd}
The sharp constants \(\mathcal H_2\) and \(\mathcal H_3\) satisfy
\begin{equation}\label{H2-est-m2}
0.8445023006
<
\mathcal H_2
<
0.8528356340
\end{equation}
and
\begin{equation}\label{H3-est-m2}
1.8233104714
<
\mathcal H_3
<
1.8317479715.
\end{equation}
\end{corollary}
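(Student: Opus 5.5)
The statement is a direct consequence of Corollary~\ref{cor:effective-Hd}; the only task is to turn its two-sided bounds into decimals at \(m=2\). The steps are as follows.

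First, run the recursion \eqref{eq-shooting-initial-d}--\eqref{eq-shooting-A-d} symbolically in \(c\) for \(d\in\{2,3\}\). For \(d=2\), take \(h_0^{(2)}=1/20\) and the \(\varepsilon_j^{(2)}\) from \eqref{eq-epsilon-d-explicit}; for \(d=3\), take \(h_0^{(3)}=1/8\) and \(\varepsilon_j^{(3)}=(j+1)/8\). Each \(\gamma_j^{(d)}(c)\) from \eqref{eq-gammajd} is a rational function of the form \(\kappa_j c/(4j+d+3-c)\) with explicit rational \(\kappa_j\). After two steps, \(\mathbb A_2^{(d)}(c)\) and \(\mathbb F_3^{(d)}(c)\) are explicit rational functions of \(c\) with poles only at \(c=d+3,d+7,d+11\), all outside \((0,d+1)\).

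Next, clear denominators in \eqref{eq-cdm-def} with \(m=2\), using \(\mathfrak b_2^{(d)}(c)\) from \eqref{eq-frakb-d}. This gives a polynomial equation \(P_d(c)=0\) of low degree with rational coefficients. All factors removed are positive on \((0,d+1)\), and the sign of each factor can be checked directly.

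By Theorem~\ref{thm:quantitative-estimate}(i), \(\underline c_{d,2}\) is the smallest root of \(P_d\) in \((0,d+1)\). To pin it down rigorously, locate it by a sign change of \(P_d\) at two rational points. These should be near \(c\approx1.689\) for \(d=2\) and \(c\approx3.6466\) for \(d=3\). Also confirm that \(P_d\) has no root in \((0,\cdot)\) below the lower point, for instance via Sturm sequences or Descartes' rule on a shifted variable. This yields certified enclosures \(\underline c_{2,2}>1.6890046012\) and \(\underline c_{3,2}>3.6466209428\).

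Finally, insert these enclosures into \eqref{eq-effective-H2}--\eqref{eq-effective-H3} with \(m=2\). The error terms are \(1/(15\cdot 8)=1/120\approx0.0083333333\) and \(27/(200\cdot16)=27/3200\approx0.0084375\), so
\[
\tfrac12\underline c_{2,2}+\tfrac1{120}<0.8528356340,
\qquad
\tfrac12\underline c_{3,2}+\tfrac{27}{3200}<1.8317479715,
\]
given the matching upper enclosures of the roots. The stated numbers are consistent with this: the difference of the two bounds is \(0.0083333334\) for \(d=2\) and \(0.0084375001\) for \(d=3\). So I also need an upper enclosure of each root accurate to about \(10^{-10}\), with the upper bounds rounded up.

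The main obstacle is the certification of the root location. One must show that the chosen root is the \emph{smallest} in \((0,d+1)\) and enclose it to ten digits with exact rational arithmetic. I would do this with interval evaluation of \(P_d\) at rational endpoints, together with a Sturm-count argument on \((0,\text{lower endpoint})\).
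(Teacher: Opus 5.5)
Your plan is correct and follows the paper's proof: compute \(\mathbb A_2^{(d)}(c)\) and \(\mathbb F_3^{(d)}(c)\) explicitly, clear the nonvanishing denominators in \eqref{eq-cdm-def} with \(m=2\) to get a quartic, bracket its root between two ten-digit rational points by a sign change, and insert the enclosure into Corollary~\ref{cor:effective-Hd}. The only difference is how the bracketed root is certified to be the smallest one: the paper does not use a Sturm count but shows the quartic is strictly decreasing on \((0,d+1)\) (third derivative negative there, second derivative positive and first derivative negative at \(c=d+1\)), which gives uniqueness of the zero directly.
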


Compared with Theorem~C, already for \(m=2\),
Corollary~\ref{cor:numerical-Hd} yields substantially sharper two-sided
bounds than \eqref{Estimate-Huang-Ye-H2} and
\eqref{Estimate-Huang-Ye-H3}. Indeed, for \(d=2\),
\[
\frac{3+6\sqrt2}{14}
<
0.8445023006
<
\mathcal H_2
<
0.8528356340
<
\frac{\sqrt3}{2}.
\]
Moreover, the width of the new enclosing interval is less than one fifth
of that in \eqref{Estimate-Huang-Ye-H2}.

Similarly, for \(d=3\),
\[
\frac74
<
1.8233104714
<
\mathcal H_3
<
1.8317479715
<
\frac{2\sqrt{21}}{5},
\]
and the width of the new enclosing interval is less than one ninth of
that in \eqref{Estimate-Huang-Ye-H3}.

Sharper bounds can of course be obtained by taking larger values of
\(m\) in Corollary~\ref{cor:effective-Hd}. The choice \(m=2\) in
Corollary~\ref{cor:numerical-Hd} is intended only to provide a concrete
quantitative illustration of the improvement achieved by the recursive
scheme over the previously known estimates.

The rest of the paper is organized as follows. In Section~\ref{sec2}, we
introduce the functional framework associated with \eqref{eq:HyUP} and
derive an equivalent linearized variational formulation. In
Section~\ref{sec3}, we develop the spherical harmonic decomposition and
reduce the problem to the relevant angular sectors. In
Section~\ref{sec4}, we carry out the one-dimensional reduction of the
first non-radial sector. In Section~\ref{sec5}, we introduce the
Laguerre-based spectral basis, derive the discrete operator
\(S_c^{(d)}\), and prove Theorem~\ref{thm:spectral-characterization}
together with Corollaries~\ref{cor:Hd-dge4} and~\ref{cor:Hd-d23}.
Finally, in Section~\ref{sec6}, we develop the finite recursive scheme
for the critical spectral parameters \(c_2^*\) and \(c_3^*\), prove
Theorem~\ref{thm:quantitative-estimate}, and establish
Corollaries~\ref{cor:effective-Hd} and~\ref{cor:numerical-Hd}.

\section{Functional framework and linearization}\label{sec2}

Throughout this section, let \(d\ge2\). We begin by introducing the
natural energy space associated with the quantities appearing in
\eqref{eq:HyUP}. For \(u\in C_c^\infty(\mathbb R^d)\), set
\[
A(u)=\|\Delta u\|_2^2,
\qquad
B(u)=\|\nabla u\|_2^2,
\qquad
J(u)=\int_{\mathbb R^d}
\frac{|\nabla u(x)|^2}{|x|}\,dx,
\]
and
\[
\mathcal N_d(u)
=
\left(
A(u)+B(u)
\right)^{1/2}.
\]
We define \(\mathcal X_d\) as the completion of
\(C_c^\infty(\mathbb R^d)\) with respect to the norm \(\mathcal N_d\),
and use the same notation \(A\) and \(B\) for their natural extensions
to \(\mathcal X_d\).

Since \(\mathcal H_d>0\), \eqref{eq:HyUP} gives
\[
J(u)
\le
\frac{1}{\mathcal H_d}\sqrt{A(u)B(u)}
\le
\frac{1}{2\mathcal H_d}\mathcal N_d(u)^2,
\qquad
u\in C_c^\infty(\mathbb R^d).
\]
Consequently,
\[
\left\|
\frac{\nabla u}{|x|^{1/2}}
\right\|_2
\le
\frac{1}{\sqrt{2\mathcal H_d}}\,
\mathcal N_d(u),
\qquad
u\in C_c^\infty(\mathbb R^d).
\]
Thus the map
\[
u\longmapsto \frac{\nabla u}{|x|^{1/2}}
\]
extends continuously from \(\mathcal X_d\) into
\(L^2(\mathbb R^d;\mathbb R^d)\). Accordingly, \(J\) extends to
\(\mathcal X_d\) by
\[
J(u)
=
\left\|
\frac{\nabla u}{|x|^{1/2}}
\right\|_2^2,
\qquad
u\in\mathcal X_d,
\]
and we keep the same notation for this extension.

\begin{lemma}\label{L2.1}
The sharp constant \(\mathcal H_d\) admits the variational representation
\begin{equation}\label{Hd-Xd-representation}
\mathcal H_d
=
\inf_{u\in\mathcal X_d\setminus\{0\}}
\frac{\sqrt{A(u)B(u)}}{J(u)}.
\end{equation}
\end{lemma}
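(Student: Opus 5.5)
Let \(\mathcal H_d^{\mathcal X}\) denote the right-hand side of \eqref{Hd-Xd-representation}. The plan is to prove the two inequalities \(\mathcal H_d^{\mathcal X}\le\mathcal H_d\) and \(\mathcal H_d^{\mathcal X}\ge\mathcal H_d\) separately, comparing both with the infimum over \(C_c^\infty(\mathbb R^d)\setminus\{0\}\).

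\textbf{Step 1: \(\mathcal H_d\) equals the infimum over \(C_c^\infty\).} The infimum in \eqref{variational-HN} over \(W^{2,2}\) is at most the infimum over \(C_c^\infty\), since \(C_c^\infty\subset W^{2,2}\). For the reverse direction, take \(u\in W^{2,2}\) with \(u\neq0\) and \(\nabla u\not\equiv0\), and approximate it in \(W^{2,2}\) by \(u_n\in C_c^\infty\). Then \(A(u_n)\to A(u)\) and \(B(u_n)\to B(u)\). The denominator \(J(u_n)\) converges to \(J(u)\) because the weight \(|x|^{-1}\) is controlled. Away from the origin, \(|x|^{-1}\le1\) on \(\{|x|\ge1\}\). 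Near the origin we use the Hardy–Sobolev-type bound \(\int_{|x|<1}|\nabla v|^2/|x|\le C\|v\|_{W^{2,2}}^2\), which holds for \(d\ge2\) since \(\nabla v\in H^1\). Two routes give this bound: Hardy's inequality \(\int|w|^2/|x|^2\le C\|\nabla w\|_2^2\) for \(d\ge3\) combined with \(|x|^{-1}\le \tfrac12(|x|^{-2}+1)\); or, for \(d=2\), the Sobolev embedding \(H^1\hookrightarrow L^p\) for every \(p<\infty\) together with Hölder and \(|x|^{-1}\in L^{q}_{loc}\) for \(q<2\). Hence the quotient passes to the limit. If \(\nabla u\equiv0\) then \(u\) is constant and in \(L^2\), so \(u=0\), which is excluded.

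\textbf{Step 2: \(\mathcal H_d^{\mathcal X}\le\mathcal H_d\).} This holds because \(C_c^\infty\setminus\{0\}\subset\mathcal X_d\setminus\{0\}\). On \(C_c^\infty\) the extended functionals coincide with the original ones, and \(J(u)>0\) for \(u\neq0\).

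\textbf{Step 3: \(\mathcal H_d^{\mathcal X}\ge\mathcal H_d\).} Take \(u\in\mathcal X_d\setminus\{0\}\) and \(u_n\in C_c^\infty\) with \(\mathcal N_d(u_n-u)\to0\). By construction of the extensions, \(A(u_n)\to A(u)\) and \(B(u_n)\to B(u)\). Also \(J(u_n)\to J(u)\), because \(\nabla u_n/|x|^{1/2}\to \nabla u/|x|^{1/2}\) in \(L^2\) by the continuity estimate established just before the lemma. Passing to the limit in \(\sqrt{A(u_n)B(u_n)}\ge\mathcal H_d J(u_n)\) gives \(\sqrt{A(u)B(u)}\ge \mathcal H_dJ(u)\), and so the quotient is at least \(\mathcal H_d\) whenever \(J(u)>0\).

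\textbf{Main obstacle: the degenerate case \(J(u)=0\).} We must rule out \(J(u)=0\) for \(u\neq0\) in the abstract completion, and this is the step needing care. If \(J(u)=0\), then \(\nabla u/|x|^{1/2}=0\) in \(L^2\), so the limit \(\nabla u\) vanishes. The element \(u\) of the completion is identified with the \(L^2\)-limits of the pair \((\Delta u_n,\nabla u_n)\). Since \(\Delta u_n=\operatorname{div}\nabla u_n\), we get \(\Delta u=\operatorname{div}\nabla u=0\) distributionally, so \(A(u)=B(u)=0\) and \(\mathcal N_d(u)=0\), i.e. \(u=0\). Thus \(J>0\) on \(\mathcal X_d\setminus\{0\}\) and the quotient is well defined. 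Combining Steps 1–3 yields \eqref{Hd-Xd-representation}.
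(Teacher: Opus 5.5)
Your proof is correct and follows essentially the same route as the paper: density of \(C_c^\infty(\mathbb R^d)\), continuity of \(A\), \(B\), \(J\), passage to the limit in the inequality, and the nondegeneracy \(J(u)>0\) on \(\mathcal X_d\setminus\{0\}\). The differences are minor. For \(\inf_{\mathcal X_d}\le\mathcal H_d\) you prove directly, via Hardy/Sobolev, that \(J\) is continuous on \(W^{2,2}(\mathbb R^d)\), whereas the paper invokes the continuous embedding \(W^{2,2}\hookrightarrow\mathcal X_d\) coming from \(\mathcal N_d\le\|\cdot\|_{W^{2,2}}\). Your distributional argument \(\Delta u=\operatorname{div}\nabla u=0\) also spells out the step ``\(\nabla u=0\), hence \(A(u)=0\)'' that the paper leaves implicit.
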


\begin{proof}
Since \(C_c^\infty(\mathbb R^d)\) is dense in
\(W^{2,2}(\mathbb R^d)\) and
\[
\mathcal N_d(u)
\le
\|u\|_{W^{2,2}(\mathbb R^d)},
\qquad
u\in C_c^\infty(\mathbb R^d),
\]
the identity map on \(C_c^\infty(\mathbb R^d)\) extends continuously to
an embedding
\[
W^{2,2}(\mathbb R^d)\hookrightarrow\mathcal X_d.
\]
Hence, by \eqref{variational-HN},
\[
\inf_{u\in\mathcal X_d\setminus\{0\}}
\frac{\sqrt{A(u)B(u)}}{J(u)}
\le
\mathcal H_d.
\]

Conversely, \eqref{eq:HyUP} holds on
\(C_c^\infty(\mathbb R^d)\), and the quantities \(A\), \(B\), and \(J\)
extend continuously to \(\mathcal X_d\). Therefore,
\[
\sqrt{A(u)B(u)}
\ge
\mathcal H_d J(u),
\qquad
u\in\mathcal X_d.
\]
Moreover, if \(J(u)=0\), then \(\nabla u=0\), and hence
\(A(u)=B(u)=0\), so \(u=0\) in \(\mathcal X_d\). Thus
\[
J(u)>0,
\qquad
u\in\mathcal X_d\setminus\{0\}.
\]
Consequently,
\[
\frac{\sqrt{A(u)B(u)}}{J(u)}
\ge
\mathcal H_d,
\qquad
u\in\mathcal X_d\setminus\{0\}.
\]
Taking the infimum yields the reverse inequality and proves
\eqref{Hd-Xd-representation}.
\end{proof}

We next derive an equivalent linearized formulation of
\eqref{Hd-Xd-representation}. Define
\begin{equation}\label{eq-Cd}
\mathcal C_d
=
\inf_{u\in\mathcal X_d\setminus\{0\}}
\frac{A(u)+B(u)}{J(u)}.
\end{equation}

\begin{proposition}\label{PR2.2}
The constants \(\mathcal H_d\) and \(\mathcal C_d\) satisfy
\begin{equation}\label{Cd-Hd-relation}
\mathcal C_d=2\mathcal H_d.
\end{equation}
\end{proposition}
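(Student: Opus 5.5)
The plan is to compare the two quotients directly, using the AM--GM inequality for one direction and a scaling argument for the other. Both are carried out on \(\mathcal X_d\), relying on Lemma~\ref{L2.1}.

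\emph{Lower bound \(\mathcal C_d\ge 2\mathcal H_d\).} For \(u\in\mathcal X_d\setminus\{0\}\), AM--GM gives \(A(u)+B(u)\ge 2\sqrt{A(u)B(u)}\). Lemma~\ref{L2.1} then gives \(2\sqrt{A(u)B(u)}\ge 2\mathcal H_d J(u)\). Since \(J(u)>0\) (shown in the proof of Lemma~\ref{L2.1}), dividing yields \((A(u)+B(u))/J(u)\ge 2\mathcal H_d\). Taking the infimum over \(u\) proves the bound.

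\emph{Upper bound \(\mathcal C_d\le 2\mathcal H_d\).} Here the idea is dilation invariance. For \(u\in C_c^\infty(\mathbb R^d)\) and \(\lambda>0\), set \(u_\lambda(x)=u(\lambda x)\). Direct changes of variables give
\[
A(u_\lambda)=\lambda^{4-d}A(u),\qquad B(u_\lambda)=\lambda^{2-d}B(u),\qquad J(u_\lambda)=\lambda^{3-d}J(u).
\]
Hence
\[
\frac{A(u_\lambda)+B(u_\lambda)}{J(u_\lambda)}=\frac{\lambda A(u)+\lambda^{-1}B(u)}{J(u)}.
\]
When \(A(u),B(u)>0\), choosing \(\lambda=\sqrt{B(u)/A(u)}\) makes the right-hand side equal to \(2\sqrt{A(u)B(u)}/J(u)\). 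So \(\mathcal C_d\le 2\sqrt{A(u)B(u)}/J(u)\) for every nonzero \(u\in C_c^\infty(\mathbb R^d)\). For nonzero compactly supported \(u\), both \(A(u)\) and \(B(u)\) are positive, since a compactly supported harmonic or constant function vanishes.

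It then remains to pass from \(C_c^\infty\) to the full infimum. By \eqref{variational-HN} together with the density of \(C_c^\infty\) in \(W^{2,2}\), the infimum over \(C_c^\infty\setminus\{0\}\) of \(\sqrt{AB}/J\) equals \(\mathcal H_d\). This works because \(A\), \(B\), \(J\) are continuous with respect to the \(W^{2,2}\) norm, \(J\) being controlled by \(\mathcal N_d\) as shown above. Taking the infimum in the previous inequality gives \(\mathcal C_d\le 2\mathcal H_d\).

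The only step that needs real care is this density/continuity one, specifically ensuring that the ratio is continuous along approximating sequences. Because \(J(u)>0\) for \(u\ne0\) and all three functionals are continuous, the ratio converges and no difficulty arises. Alternatively, one can avoid density altogether by applying the scaling directly on \(\mathcal X_d\), after checking that dilations act as isomorphisms of \(\mathcal X_d\). Either route closes the argument and yields \eqref{Cd-Hd-relation}.
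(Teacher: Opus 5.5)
Your proof is correct and follows the paper's argument: AM--GM together with Lemma~\ref{L2.1} gives \(\mathcal C_d\ge 2\mathcal H_d\), and the dilation \(u\mapsto u(\lambda\,\cdot)\), optimized at \(\lambda=\sqrt{B(u)/A(u)}\), gives \(\mathcal C_d\le 2\mathcal H_d\). The only difference is minor: you carry out the scaling on \(C_c^\infty(\mathbb R^d)\) and reach \(\mathcal H_d\) through density in \(W^{2,2}(\mathbb R^d)\), whereas the paper extends the dilations to \(\mathcal X_d\) and applies Lemma~\ref{L2.1} directly; both routes are valid.
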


\begin{proof}
Let \(u\in\mathcal X_d\setminus\{0\}\). For \(\lambda>0\), the dilation
\(u(x)\mapsto u(\lambda x)\), initially defined on
\(C_c^\infty(\mathbb R^d)\), extends continuously to \(\mathcal X_d\);
we denote the resulting element by \(u_\lambda\). We have
\[
A(u_\lambda)=\lambda^{4-d}A(u),
\qquad
B(u_\lambda)=\lambda^{2-d}B(u),
\qquad
J(u_\lambda)=\lambda^{3-d}J(u).
\]
Hence
\[
\frac{A(u_\lambda)+B(u_\lambda)}{J(u_\lambda)}
=
\frac{\lambda A(u)+\lambda^{-1}B(u)}{J(u)}.
\]
Since \(u\ne0\), the inequality
\[
\sqrt{A(u)B(u)}\ge \mathcal H_d J(u)>0
\]
implies that \(A(u)>0\) and \(B(u)>0\). The right-hand side is therefore
minimized at
\[
\lambda^*
=
\sqrt{\frac{B(u)}{A(u)}},
\]
and
\[
\inf_{\lambda>0}
\frac{A(u_\lambda)+B(u_\lambda)}{J(u_\lambda)}
=
2\frac{\sqrt{A(u)B(u)}}{J(u)}.
\]
Taking the infimum over \(u\in\mathcal X_d\setminus\{0\}\) and using
\eqref{Hd-Xd-representation} gives
\[
\mathcal C_d\le2\mathcal H_d.
\]

Conversely, we have
\[
\frac{A(u)+B(u)}{J(u)}
\ge
2\frac{\sqrt{A(u)B(u)}}{J(u)},
\qquad
u\in\mathcal X_d\setminus\{0\}.
\]
Taking the infimum and using \eqref{Hd-Xd-representation}, we obtain
\[
\mathcal C_d\ge2\mathcal H_d.
\]
Thus \eqref{Cd-Hd-relation} follows.
\end{proof}

In view of Proposition~\ref{PR2.2}, determining \(\mathcal H_d\) is
equivalent to determining the linearized constant \(\mathcal C_d\).
We next analyze \(\mathcal C_d\) by means of spherical harmonic
decomposition.

\section{Spherical harmonic reduction}\label{sec3}

Throughout this section, let \(d\ge2\). For \(k\in\mathbb N_0\), let
\(\mathcal Y_k(\mathbb S^{d-1})\) denote the space of real spherical
harmonics of degree \(k\) on \(\mathbb S^{d-1}\), and set
\[
m_{d,k}
=
\dim\mathcal Y_k(\mathbb S^{d-1}).
\]
For each \(k\in\mathbb N_0\), choose an
\(L^2(\mathbb S^{d-1})\)-orthonormal basis
\[
\{Y_{k,\ell}\}_{\ell=1}^{m_{d,k}}
\]
of \(\mathcal Y_k(\mathbb S^{d-1})\). We recall the following standard
properties; see, for instance, \cite{Muller1966}. For
\(k\in\mathbb N_0\) and \(1\le\ell\le m_{d,k}\),
\[
-\Delta_{\mathbb S^{d-1}}Y_{k,\ell}
=
k(k+d-2)Y_{k,\ell}.
\]
Moreover, for \(k,k'\in\mathbb N_0\),
\(1\le\ell\le m_{d,k}\), and
\(1\le\ell'\le m_{d,k'}\),
\[
\int_{\mathbb S^{d-1}}
Y_{k,\ell}(\omega)Y_{k',\ell'}(\omega)\,d\omega
=
\delta_{kk'}\delta_{\ell\ell'},
\]
where \(\delta_{ij}\) denotes the Kronecker delta. By integration by
parts on \(\mathbb S^{d-1}\),
\[
\int_{\mathbb S^{d-1}}
\nabla_{\mathbb S^{d-1}}Y_{k,\ell}(\omega)\cdot
\nabla_{\mathbb S^{d-1}}Y_{k',\ell'}(\omega)\,d\omega
=
k(k+d-2)\delta_{kk'}\delta_{\ell\ell'}.
\]
Finally,
\[
\left\{
Y_{k,\ell}:
k\in\mathbb N_0,\ 
1\le\ell\le m_{d,k}
\right\}
\]
is a complete orthonormal system in \(L^2(\mathbb S^{d-1})\).

Let \(u\in C_c^\infty(\mathbb R^d)\). By spherical harmonic
decomposition and the regularity at the origin of the corresponding
radial coefficients; see, for instance, \cite{DoLamLu2026}, \(u\) admits
the expansion
\begin{equation}\label{exp-u-SH}
u(r\omega)
=
\sum_{k=0}^{\infty}
\sum_{\ell=1}^{m_{d,k}}
r^k v_{k,\ell}(r)Y_{k,\ell}(\omega),
\qquad
r>0,\quad \omega\in\mathbb S^{d-1},
\end{equation}
where
\[
v_{k,\ell}\in C_c^\infty([0,\infty)),
\qquad
k\in\mathbb N_0,\quad
1\le\ell\le m_{d,k}.
\]

For \(k\in\mathbb N_0\), set
\[
\nu_{d,k}=d+2k.
\]
For a radial profile \(v\) for which the following integrals are finite,
define
\begin{equation}\label{def-Adk}
A_{d,k}(v)
=
\int_0^\infty
r^{\nu_{d,k}-1}|v''(r)|^2\,dr
+
(\nu_{d,k}-1)
\int_0^\infty
r^{\nu_{d,k}-3}|v'(r)|^2\,dr,
\end{equation}
and
\begin{equation}\label{def-Bdk}
B_{d,k}(v)
=
\int_0^\infty
r^{\nu_{d,k}-1}|v'(r)|^2\,dr.
\end{equation}
For \(k=0\), set
\[
J_{d,0}(v)
=
\int_0^\infty
r^{d-2}|v'(r)|^2\,dr,
\]
while, for \(k\in\mathbb N\), define
\begin{equation}\label{def-Jdk}
J_{d,k}(v)
=
\int_0^\infty
r^{\nu_{d,k}-2}|v'(r)|^2\,dr
+
k\int_0^\infty
r^{\nu_{d,k}-4}|v(r)|^2\,dr.
\end{equation}

\begin{lemma}\label{L3.1}
Let \(u\in C_c^\infty(\mathbb R^d)\), and let
\((v_{k,\ell})\) be the radial coefficients in the expansion
\eqref{exp-u-SH}. Then
\[
B(u)
=
\sum_{k=0}^\infty
\sum_{\ell=1}^{m_{d,k}}
B_{d,k}(v_{k,\ell}),
\]
\[
A(u)
=
\sum_{k=0}^\infty
\sum_{\ell=1}^{m_{d,k}}
A_{d,k}(v_{k,\ell}),
\]
and
\[
J(u)
=
\sum_{k=0}^\infty
\sum_{\ell=1}^{m_{d,k}}
J_{d,k}(v_{k,\ell}).
\]
\end{lemma}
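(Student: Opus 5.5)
The plan is to compute each quadratic form mode by mode, in three steps: (i) orthogonality of the spherical harmonics kills all cross terms between different pairs \((k,\ell)\); (ii) this reduces each form to a one-dimensional integral in the profile \(f_{k,\ell}(r)=r^kv_{k,\ell}(r)\); (iii) substituting \(f=r^kv\) and integrating by parts once in \(r\) produces the stated forms \(A_{d,k},B_{d,k},J_{d,k}\).

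\textbf{Step 1: orthogonality.} Fix \(r>0\). The expansion \eqref{exp-u-SH} converges in \(L^2(\mathbb S^{d-1})\), and so do the expansions of \(\partial_ru\), \(\nabla_{\mathbb S^{d-1}}u\) and \(\Delta u\), since \(u\) is smooth. In polar coordinates,
\[
|\nabla u|^2=|\partial_r u|^2+r^{-2}|\nabla_{\mathbb S^{d-1}}u|^2,
\]
the radial and tangential parts being pointwise orthogonal. By the orthonormality relations recalled above for \(Y_{k,\ell}\) and for \(\nabla_{\mathbb S^{d-1}}Y_{k,\ell}\), Parseval on each sphere gives, with \(\lambda_k=k(k+d-2)\),
\[
\int_{\mathbb S^{d-1}}|\nabla u(r\omega)|^2\,d\omega
=\sum_{k,\ell}\Bigl(|f_{k,\ell}'(r)|^2+\lambda_k r^{-2}|f_{k,\ell}(r)|^2\Bigr).
\]
Integrating against \(r^{d-1}\,dr\) (for \(B\)) and against \(r^{d-2}\,dr\) (for \(J\)) is justified by Tonelli, since all terms are nonnegative. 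For \(A\), I will use that \(r^kY_{k,\ell}\) is a harmonic homogeneous polynomial, so \(\nabla v\cdot\nabla(r^kY_{k,\ell})=v'(r)\,k\,r^{k-1}Y_{k,\ell}\). This gives
\[
\Delta\bigl(v\,r^kY_{k,\ell}\bigr)=r^kY_{k,\ell}\Bigl(v''+\frac{\nu_{d,k}-1}{r}v'\Bigr),
\]
and orthonormality again yields
\[
A(u)=\sum_{k,\ell}\int_0^\infty r^{\nu_{d,k}-1}\Bigl|v_{k,\ell}''+\frac{\nu_{d,k}-1}{r}v_{k,\ell}'\Bigr|^2\,dr .
\]

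\textbf{Step 2: one-dimensional identities.} Write \(\nu=\nu_{d,k}\) and \(v=v_{k,\ell}\), and expand
\[
|f'|^2=k^2r^{2k-2}v^2+2kr^{2k-1}vv'+r^{2k}v'^2.
\]

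For \(B\), the cross term is handled by
\[
2k\int_0^\infty r^{\nu-2}vv'\,dr=-k(\nu-2)\int_0^\infty r^{\nu-3}v^2\,dr .
\]
The boundary term \(r^{\nu-2}v^2\) vanishes at \(0\) for \(k\ge1\), since then \(\nu-2\ge2\); for \(k=0\) there is no cross term at all. The total coefficient of \(\int_0^\infty r^{\nu-3}v^2\,dr\) is then
\[
k^2-k(2k+d-2)+k(k+d-2)=0,
\]
so only \(\int_0^\infty r^{\nu-1}v'^2\,dr=B_{d,k}(v)\) survives.

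For \(J\), the weight is one power lower. The cross term becomes \(-k(\nu-3)\int_0^\infty r^{\nu-4}v^2\,dr\), with boundary term \(r^{\nu-3}v^2\to0\) for \(k\ge1\) since \(\nu-3\ge1\). The total coefficient is
\[
k^2-k(2k+d-3)+k(k+d-2)=k,
\]
which gives exactly \eqref{def-Jdk}. The case \(k=0\) is immediate.

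For \(A\), expand the square. The cross term is
\[
(\nu-1)\int_0^\infty r^{\nu-2}\bigl(v'^2\bigr)'\,dr=-(\nu-1)(\nu-2)\int_0^\infty r^{\nu-3}v'^2\,dr,
\]
and the total coefficient of \(\int_0^\infty r^{\nu-3}v'^2\,dr\) is \((\nu-1)^2-(\nu-1)(\nu-2)=\nu-1\). This is precisely \eqref{def-Adk}.

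\textbf{Main obstacle: boundary terms at \(r=0\) and convergence.} The delicate point is the behaviour at \(r=0\). At \(\infty\) nothing happens, because \(v\) has compact support. At \(0\), the boundary term \(r^{\nu-2}v'(r)^2\) in \(A\) is harmless unless \(\nu=2\), that is \(d=2\), \(k=0\). In that case it equals \(v'(0)^2\), and I will invoke the regularity of the radial coefficients from \cite{DoLamLu2026}: the profiles are smooth functions of \(r^2\), so \(v'(0)=0\). The same regularity guarantees finiteness of every integral appearing above, for instance \(r^{\nu-4}v^2\) with \(\nu-4\ge-1\) and \(v\) bounded. It also guarantees finiteness of \(\int_0^\infty r^{\nu-3}v'^2\,dr\) when \(\nu=2\), since then \(v'=O(r)\). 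Summation over \((k,\ell)\) is legitimate because every summand is nonnegative.
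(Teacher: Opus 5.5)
Your proposal is correct and follows the paper's argument: orthogonality of the spherical harmonics, then one radial integration by parts for each form. The paper only asserts this and defers the computation to \cite[Lemma~3.1]{CazacuFlynnLam}, whereas you write it out. One small slip: for $k\ge1$ you actually have $\nu_{d,k}-4=d+2k-4\ge0$, not merely $\ge-1$, and it is this that makes $\int_0^\infty r^{\nu_{d,k}-4}v^2\,dr$ finite, since boundedness of $v$ alone would not suffice against a weight $r^{-1}$.
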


\begin{proof}
The identities follow from the orthogonality of the spherical harmonics,
together with integration by parts in the radial variable; see also
\cite[Lemma~3.1]{CazacuFlynnLam}.
\end{proof}

For \(k\in\mathbb N_0\), let \(\mathscr D_{d,k}\) denote the set of all
radial profiles \(v_{k,\ell}\) arising in the spherical harmonic
expansion \eqref{exp-u-SH} as \(u\) ranges over
\(C_c^\infty(\mathbb R^d)\). We define the corresponding sectorial
constant by
\begin{equation}\label{def-Cdk}
\mathcal C_{d,k}
=
\inf_{v\in\mathscr D_{d,k}\setminus\{0\}}
\frac{A_{d,k}(v)+B_{d,k}(v)}{J_{d,k}(v)}.
\end{equation}

\begin{proposition}\label{P3.2}
The constant \(\mathcal C_d\) defined in \eqref{eq-Cd} satisfies
\begin{equation}\label{Cd-sector-reduction}
\mathcal C_d
=
\inf_{k\in\mathbb N_0}\mathcal C_{d,k}.
\end{equation}
\end{proposition}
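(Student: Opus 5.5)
We prove the two inequalities separately. First we replace the infimum over \(\mathcal X_d\) in \eqref{eq-Cd} by an infimum over \(C_c^\infty(\mathbb R^d)\). This is legitimate because \(A\), \(B\) and \(J\) are continuous on \(\mathcal X_d\) with respect to \(\mathcal N_d\), and \(C_c^\infty(\mathbb R^d)\) is dense there. Since \(J(u)>0\) for \(u\neq0\), the quotient is continuous on \(\mathcal X_d\setminus\{0\}\). Hence
\[
\mathcal C_d=\inf_{u\in C_c^\infty(\mathbb R^d)\setminus\{0\}}\frac{A(u)+B(u)}{J(u)}.
\]

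\emph{Lower bound.} Fix \(u\in C_c^\infty(\mathbb R^d)\setminus\{0\}\) and set \(\mathcal C^\sharp=\inf_{k}\mathcal C_{d,k}\). Every coefficient \(v_{k,\ell}\) in \eqref{exp-u-SH} belongs to \(\mathscr D_{d,k}\). By \eqref{def-Cdk}, each one satisfies
\[
A_{d,k}(v_{k,\ell})+B_{d,k}(v_{k,\ell})\ge \mathcal C_{d,k}J_{d,k}(v_{k,\ell})\ge \mathcal C^\sharp J_{d,k}(v_{k,\ell}).
\]
This holds trivially when \(v_{k,\ell}=0\), since then both sides vanish. Summing over \(k\) and \(\ell\) and applying Lemma~\ref{L3.1} gives \(A(u)+B(u)\ge\mathcal C^\sharp J(u)\). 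All series involved have nonnegative terms, so summing termwise is harmless. Taking the infimum over \(u\) yields \(\mathcal C_d\ge\mathcal C^\sharp\).

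\emph{Upper bound.} Fix \(k\) and \(v\in\mathscr D_{d,k}\setminus\{0\}\). By definition, \(v=v_{k,\ell_0}\) for some \(w\in C_c^\infty(\mathbb R^d)\) and some index \(\ell_0\). We want a test function in \(C_c^\infty(\mathbb R^d)\) whose expansion consists of the single term \(r^kv(r)Y_{k,1}(\omega)\). Then Lemma~\ref{L3.1} gives
\[
\frac{A(u)+B(u)}{J(u)}=\frac{A_{d,k}(v)+B_{d,k}(v)}{J_{d,k}(v)},
\]
which shows \(\mathcal C_d\le\mathcal C_{d,k}\). Taking the infimum over \(v\) and \(k\) then completes the proof.

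To build this test function we project \(w\) onto a single harmonic. Define
\[
u(x)=\Bigl(\int_{\mathbb S^{d-1}}w(|x|\sigma)Y_{k,\ell_0}(\sigma)\,d\sigma\Bigr)Y_{k,\ell_0}(x/|x|)=|x|^kv(|x|)Y_{k,\ell_0}(x/|x|).
\]
Note that \(u\) carries the label \(\ell_0\) rather than \(\ell=1\), but this is immaterial because the functionals depend only on \(k\).

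The main obstacle is showing that \(u\in C_c^\infty(\mathbb R^d)\), in particular that \(u\) is smooth at the origin. Away from the origin, smoothness and compact support are immediate. At the origin, we use that the projection onto \(\mathcal Y_k\) is a convolution on \(\mathrm{SO}(d)\) against the zonal harmonic. Concretely, \(u(x)=\int_{\mathrm{SO}(d)}w(gx)\,\phi(g)\,dg\) for a suitable smooth function \(\phi\) on the group, and averaging smooth functions over rotations preserves smoothness. This is exactly the regularity fact from \cite{DoLamLu2026} underlying \eqref{exp-u-SH}, namely that \(v_{k,\ell}\in C_c^\infty([0,\infty))\) and \(r^kv(r)Y(\omega)\) is smooth. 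We would cite it directly.

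If one prefers to avoid this, an alternative is to define \(\mathscr D_{d,k}\)-admissible test functions directly as \(|x|^kv(|x|)Y(x/|x|)\). Smoothness then follows because \(|x|^kY(x/|x|)\) is a harmonic polynomial and \(v\), being a profile arising from a smooth function, is an even smooth function of \(r\), so \(v(|x|)\) is smooth.
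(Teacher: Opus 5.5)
Your proposal is correct and follows the paper's proof essentially step for step: you reduce to \(C_c^\infty(\mathbb R^d)\) by density, obtain the lower bound termwise via Lemma~\ref{L3.1}, and obtain the upper bound from the single-harmonic test function \(r^kv(r)Y_{k,\ell}(\omega)\), whose smoothness the paper also simply takes from \cite{DoLamLu2026}. The only inaccuracy is in your heuristic aside: averaging against the zonal harmonic gives the projection onto all of \(\mathcal Y_k(\mathbb S^{d-1})\), not onto \(\operatorname{span}\{Y_{k,\ell_0}\}\); for \(d\ge3\) you would need the matrix coefficient \(\phi(g)=m_{d,k}\langle Y_{k,\ell_0}\circ g,Y_{k,\ell_0}\rangle_{L^2(\mathbb S^{d-1})}\) together with Schur orthogonality, and for \(d=2\), \(k\ge1\), no \(\phi\) on \(\mathrm{SO}(2)\) can work because such averages commute with rotations, so your second route (showing \(v(r)=r^{-k}\int_{\mathbb S^{d-1}}w(r\sigma)Y_{k,\ell_0}(\sigma)\,d\sigma\) is smooth and even, hence \(v(r)=h(r^2)\) with \(h\) smooth) is the sound justification.
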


\begin{proof}
By the density of \(C_c^\infty(\mathbb R^d)\) in \(\mathcal X_d\),
\[
\mathcal C_d
=
\inf_{u\in C_c^\infty(\mathbb R^d)\setminus\{0\}}
\frac{A(u)+B(u)}{J(u)}.
\]
Let \(u\in C_c^\infty(\mathbb R^d)\setminus\{0\}\). By
Lemma~\ref{L3.1} and \eqref{def-Cdk},
\[
\begin{aligned}
A(u)+B(u)
&=
\sum_{k=0}^\infty
\sum_{\ell=1}^{m_{d,k}}
\left(
A_{d,k}(v_{k,\ell})
+
B_{d,k}(v_{k,\ell})
\right)
\\
&\ge
\left(
\inf_{k\in\mathbb N_0}\mathcal C_{d,k}
\right)
\sum_{k=0}^\infty
\sum_{\ell=1}^{m_{d,k}}
J_{d,k}(v_{k,\ell})
\\
&=
\left(
\inf_{k\in\mathbb N_0}\mathcal C_{d,k}
\right)J(u).
\end{aligned}
\]
Hence
\[
\mathcal C_d
\ge
\inf_{k\in\mathbb N_0}\mathcal C_{d,k}.
\]

Conversely, let \(k\in\mathbb N_0\) and
\(v\in\mathscr D_{d,k}\setminus\{0\}\). By the definition of
\(\mathscr D_{d,k}\), \(v\) occurs as a coefficient of some
\(u\in C_c^\infty(\mathbb R^d)\). Taking the corresponding spherical
harmonic projection, for some \(1\le\ell\le m_{d,k}\) the function
\[
\widetilde u(r\omega)
=
r^k v(r)Y_{k,\ell}(\omega)
\]
belongs to \(C_c^\infty(\mathbb R^d)\). By Lemma~\ref{L3.1},
\[
A(\widetilde u)=A_{d,k}(v),
\qquad
B(\widetilde u)=B_{d,k}(v),
\qquad
J(\widetilde u)=J_{d,k}(v).
\]
Therefore,
\[
\mathcal C_d
\le
\frac{A_{d,k}(v)+B_{d,k}(v)}{J_{d,k}(v)}.
\]
Taking the infimum first over
\(v\in\mathscr D_{d,k}\setminus\{0\}\) and then over
\(k\in\mathbb N_0\) yields the reverse inequality.
\end{proof}

\begin{lemma}\label{L3.3}
The radial sector satisfies
\begin{equation}\label{Cd0-value}
\mathcal C_{d,0}=d+1.
\end{equation}
\end{lemma}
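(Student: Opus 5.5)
The plan is to prove the lower bound \(\mathcal C_{d,0}\ge d+1\) by a completing-the-square (ground state substitution) argument in the variable \(w=v'\), and then to show sharpness with the expected extremal profile \(v(r)=(1+r)e^{-r}\) from Theorem~A. For \(k=0\) we have \(\nu_{d,0}=d\). Writing \(w=v'\), the quotient in \eqref{def-Cdk} becomes
\[
\frac{\displaystyle\int_0^\infty r^{d-1}|w'|^2\,dr+(d-1)\int_0^\infty r^{d-3}w^2\,dr+\int_0^\infty r^{d-1}w^2\,dr}{\displaystyle\int_0^\infty r^{d-2}w^2\,dr}.
\]
The candidate extremal has \(w=-re^{-r}\), so \(w'/w=\varphi(r):=r^{-1}-1\). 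This suggests expanding the nonnegative quantity
\[
0\le\int_0^\infty r^{d-1}\bigl(w'-\varphi w\bigr)^2\,dr .
\]

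For the lower bound, I handle the cross term by writing \(-2\int r^{d-1}\varphi ww'=-\int r^{d-1}\varphi\,(w^2)'\) and integrating by parts. This gives \(\int (r^{d-1}\varphi)'w^2=\int\bigl((d-2)r^{d-3}-(d-1)r^{d-2}\bigr)w^2\). For \(v\in\mathscr D_{d,0}\), the profile \(v\) is smooth and even-type at \(0\) with compact support, so \(w(0)=0\) and the boundary term \(r^{d-1}\varphi w^2=(r^{d-2}-r^{d-1})w^2\) vanishes at both ends for \(d\ge2\). Expanding \(r^{d-1}\varphi^2=r^{d-3}-2r^{d-2}+r^{d-1}\) and collecting terms, I obtain
\[
\int_0^\infty r^{d-1}|w'|^2\,dr\ge\int_0^\infty\Bigl(-(d-1)r^{d-3}+(d+1)r^{d-2}-r^{d-1}\Bigr)w^2\,dr .
\]
Adding \((d-1)\int r^{d-3}w^2+\int r^{d-1}w^2\) to both sides yields \(A_{d,0}(v)+B_{d,0}(v)\ge(d+1)J_{d,0}(v)\), and hence \(\mathcal C_{d,0}\ge d+1\). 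Near \(r=0\) the integrals \(\int r^{d-3}w^2\) are finite because \(w=O(r)\).

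For the upper bound, equality holds in the square exactly when \(w=Cre^{-r}\). A direct computation with \(v_0=(1+r)e^{-r}\) gives the quotient \(d+1\): all three sides reduce to Gamma integrals \(\int r^{s}e^{-2r}\,dr\), and since the square term vanishes the identity is automatic. However, \(v_0\notin\mathscr D_{d,0}\), since it is neither compactly supported nor smooth at the origin; it has an \(r^3\) term. I would therefore approximate it. First cut off at infinity with \(\eta(r/R)\); the errors are exponentially small in \(R\). Then modify near \(0\), replacing \(v_0\) on \([0,\delta]\) by a smooth even function matching to second order, or equivalently mollify \(u_0(x)=v_0(|x|)\), which lies in \(W^{2,2}\) since it is \(C^{2}\) with exponential decay. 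Continuity of \(A,B,J\) on \(\mathcal X_d\) (Section~\ref{sec2}) then gives approximating radial \(C_c^\infty\) functions whose quotients tend to \(d+1\).

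I expect the main (though mild) obstacle to be the justification of this approximation step within the class \(\mathscr D_{d,0}\), namely ensuring that the radial mollification stays radial and that \(J\) converges. The cleanest route is the density of \(C_c^\infty\) radial functions in the radial part of \(W^{2,2}\), combined with the embedding \(W^{2,2}\hookrightarrow\mathcal X_d\) from Lemma~\ref{L2.1}. Radial averaging preserves \(C_c^\infty\) and does not increase the norms, which settles the approximation.
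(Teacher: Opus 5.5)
Your proposal is correct and essentially reproduces the paper's proof. Your square \(\int_0^\infty r^{d-1}(w'-\varphi w)^2\,dr\) with \(\varphi=r^{-1}-1\) is exactly the paper's identity \(A_{d,0}(v)+B_{d,0}(v)-(d+1)J_{d,0}(v)=\int_0^\infty r^{d-3}|rg'+(r-1)g|^2\,dr\); your integration by parts, using \(v'(0)=0\), just spells out the paper's ``direct computation''. Your sharpness step, approximating \(v_0=(1+r)e^{-r}\) by radial \(C_c^\infty\) functions in \(W^{2,2}\) and using the continuity of \(J\) on \(\mathcal X_d\), is the same as the paper's.
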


\begin{proof}
Let \(v\in\mathscr D_{d,0}\), and set \(g=v'\). A direct computation
gives
\[
A_{d,0}(v)+B_{d,0}(v)-(d+1)J_{d,0}(v)
=
\int_0^\infty
r^{d-3}
\left|
rg'(r)+(r-1)g(r)
\right|^2\,dr
\ge0.
\]
Hence
\[
\mathcal C_{d,0}\ge d+1.
\]

For the reverse inequality, let
\[
v_0(r)=(1+r)e^{-r},
\qquad r\ge0,
\]
and set
\[
u_0(x)=v_0(|x|)Y_{0,1}.
\]
Then \(u_0\in W^{2,2}(\mathbb R^d)\), and
\[
rv_0''(r)+(r-1)v_0'(r)=0,
\qquad r>0.
\]
Hence
\[
A_{d,0}(v_0)+B_{d,0}(v_0)
=
(d+1)J_{d,0}(v_0).
\]

By a standard radial approximation, there exists a sequence
\((u_n)\subset C_c^\infty(\mathbb R^d)\) of radial functions such that
\[
u_n\longrightarrow u_0
\qquad\text{in }W^{2,2}(\mathbb R^d)
\quad\text{as }n\to\infty.
\]
Writing
\[
u_n(x)=v_n(|x|)Y_{0,1},
\]
we have
\[
v_n\in\mathscr D_{d,0}.
\]
Moreover,
\[
A_{d,0}(v_n)\longrightarrow A_{d,0}(v_0),
\qquad
B_{d,0}(v_n)\longrightarrow B_{d,0}(v_0),
\qquad
\text{as }n\to\infty,
\]
and, by the continuity of \(J\) on \(\mathcal X_d\),
\[
J_{d,0}(v_n)\longrightarrow J_{d,0}(v_0)
\qquad\text{as }n\to\infty.
\]
Therefore,
\[
\frac{A_{d,0}(v_n)+B_{d,0}(v_n)}
{J_{d,0}(v_n)}
\longrightarrow d+1
\qquad\text{as }n\to\infty.
\]
Hence
\[
\mathcal C_{d,0}\le d+1,
\]
and \eqref{Cd0-value} follows.
\end{proof}

\begin{lemma}\label{L3.4}
For every \(k\ge2\),
\begin{equation}\label{Cdk-lower}
\mathcal C_{d,k}
\ge
\frac{
(d+2k+1)(d+2k-3)^2
}{
(d+2k-3)^2+4k
}
>
d+1.
\end{equation}
\end{lemma}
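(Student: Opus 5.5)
The plan is to prove \eqref{Cdk-lower} in the same way as Lemma~\ref{L3.3}. We bound the numerator \(A_{d,k}+B_{d,k}\) from below, and the denominator \(J_{d,k}\) from above, by the single quantity \(\int_0^\infty r^{\nu-2}|v'|^2\,dr\). Throughout, fix \(k\ge2\), write \(\nu=\nu_{d,k}=d+2k\ge d+4\ge6\), and let \(v\in\mathscr D_{d,k}\setminus\{0\}\). Since \(v\in C_c^\infty([0,\infty))\) and \(\nu\) is large, all boundary terms at \(r=0\) and \(r=\infty\) in the integrations by parts below vanish.

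\emph{Step 1 (numerator).} Set \(g=v'\). I will verify the identity
\[
A_{d,k}(v)+B_{d,k}(v)-(\nu+1)\int_0^\infty r^{\nu-2}|g|^2\,dr
=\int_0^\infty r^{\nu-3}\bigl|rg'+(r-1)g\bigr|^2\,dr\ \ge0,
\]
which is exactly the identity of Lemma~\ref{L3.3} with \(d\) replaced by \(\nu\). To check it, expand the square on the right. The cross term is
\[
\int_0^\infty r^{\nu-2}(r-1)(g^2)'\,dr=-\int_0^\infty\bigl((\nu-1)r^{\nu-2}-(\nu-2)r^{\nu-3}\bigr)g^2\,dr
\]
after integrating by parts. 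Collecting terms, the total weight multiplying \(g^2\) is \(r^{\nu-1}-(\nu+1)r^{\nu-2}+(\nu-1)r^{\nu-3}\). This matches the left-hand side, using the definitions \eqref{def-Adk}--\eqref{def-Bdk}.

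\emph{Step 2 (denominator).} I will use the weighted Hardy inequality
\[
\int_0^\infty r^{\nu-2}|v'|^2\,dr\ge\frac{(\nu-3)^2}{4}\int_0^\infty r^{\nu-4}|v|^2\,dr,
\]
valid since \(\nu>3\). It follows by expanding \(\int_0^\infty r^{\nu-2}\bigl|v'+\tfrac{\nu-3}{2}r^{-1}v\bigr|^2\,dr\ge0\) and integrating by parts, the boundary term \(r^{\nu-3}v^2\) vanishing at \(0\). Inserting this into \eqref{def-Jdk} gives
\[
J_{d,k}(v)\le\Bigl(1+\frac{4k}{(\nu-3)^2}\Bigr)\int_0^\infty r^{\nu-2}|v'|^2\,dr.
\]

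\emph{Step 3 (conclusion).} Dividing the bound of Step~1 by that of Step~2 gives
\[
\frac{A_{d,k}(v)+B_{d,k}(v)}{J_{d,k}(v)}\ge\frac{(\nu+1)(\nu-3)^2}{(\nu-3)^2+4k}.
\]
Taking the infimum over \(v\) yields the first inequality in \eqref{Cdk-lower}. For the strict inequality, cross-multiplying shows it is equivalent to
\[
(\nu+1)(\nu-3)^2>(d+1)\bigl((\nu-3)^2+4k\bigr)\iff 2k(\nu-3)^2>4k(d+1)\iff(\nu-3)^2>2(d+1).
\]
This holds because \(\nu-3=d+2k-3\ge d+1\) and \((d+1)^2>2(d+1)\) for \(d\ge2\).

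The only step requiring real care is the bookkeeping in Step~1's integration by parts, in particular confirming that no boundary contribution survives at \(r=0\). This is safe here because \(r^{\nu-2}g^2\to0\) as \(r\to0\), given \(\nu\ge6\).
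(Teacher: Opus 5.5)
Your proof is correct. The paper does not prove the first inequality itself: it imports it from the sectorwise estimate in \cite[Proof of Theorem~2.3]{CazacuFlynnLam}. It only carries out the algebra for the strict inequality, which is the same computation as yours and reduces to $(d+2k-3)^2>2(d+1)$.

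You instead derive the first inequality directly. You note that $A_{d,k}+B_{d,k}$ has the same form as the radial quantity with $d$ replaced by $\nu=d+2k$. The completing-the-square identity of Lemma~\ref{L3.3} then gives $A_{d,k}(v)+B_{d,k}(v)\ge(\nu+1)\int_0^\infty r^{\nu-2}|v'|^2\,dr$. You then absorb the term $k\int_0^\infty r^{\nu-4}|v|^2\,dr$ in $J_{d,k}$ using the weighted Hardy inequality with constant $(\nu-3)^2/4$. The weights in both integrations by parts are right, and the boundary terms vanish because $v\in C_c^\infty([0,\infty))$ and $\nu>3$. The only point left implicit is that $\int_0^\infty r^{\nu-2}|v'|^2\,dr>0$ for $v\neq0$, which holds because a compactly supported $v$ with $v'\equiv0$ is zero.

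Your route is self-contained and makes the mechanism visible: the only loss relative to the radial sector comes from the Hardy step. Your criterion $(\nu-3)^2>2(d+1)$ also explains what happens at $k=1$, where the same argument applies and the criterion reads $(d-1)^2>2(d+1)$. This holds only for $d\ge5$, which shows why the Hardy-based bound cannot handle the first non-radial sector when $2\le d\le4$. That failure is exactly why the paper needs its spectral analysis of that sector. The paper's route is shorter but rests entirely on the external reference.
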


\begin{proof}
The first inequality follows from the sectorwise estimate established in
\cite[Proof of Theorem~2.3]{CazacuFlynnLam}. For the second one, observe
that
\[
\begin{aligned}
&(d+2k+1)(d+2k-3)^2
-(d+1)\left((d+2k-3)^2+4k\right)
\\
&=
2k\left((d+2k-3)^2-2(d+1)\right)>0,
\end{aligned}
\]
since \(d\ge2\) and \(k\ge2\). This proves \eqref{Cdk-lower}.
\end{proof}

\begin{proposition}\label{PR3.5}
We have
\begin{equation}\label{Cd-critical-sectors}
\mathcal C_d
=
\min\left\{
d+1,\mathcal C_{d,1}
\right\}.
\end{equation}
\end{proposition}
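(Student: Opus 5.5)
This is a direct combination of Proposition~\ref{P3.2}, Lemma~\ref{L3.3} and Lemma~\ref{L3.4}. By Proposition~\ref{P3.2},
\[
\mathcal C_d=\inf_{k\in\mathbb N_0}\mathcal C_{d,k}
=\min\Bigl\{\mathcal C_{d,0},\ \mathcal C_{d,1},\ \inf_{k\ge2}\mathcal C_{d,k}\Bigr\},
\]
and Lemma~\ref{L3.3} gives \(\mathcal C_{d,0}=d+1\). We therefore only need to show that the tail \(\inf_{k\ge2}\mathcal C_{d,k}\) is at least \(d+1\). Once that holds, the tail cannot lower the minimum, since \(d+1\) is already one of the competitors, and \eqref{Cd-critical-sectors} follows.

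For the tail we use Lemma~\ref{L3.4}, which gives, for each \(k\ge2\),
\[
\mathcal C_{d,k}\ge \frac{(d+2k+1)(d+2k-3)^2}{(d+2k-3)^2+4k}>d+1 .
\]
Taking the infimum over \(k\ge2\) yields \(\inf_{k\ge2}\mathcal C_{d,k}\ge d+1\). A non-strict inequality is enough here, so we do not need a uniform gap. One can nonetheless note that the lower bound tends to \(+\infty\) as \(k\to\infty\), because it behaves like \(2k\). Consequently
\[
\mathcal C_d=\min\{d+1,\ \mathcal C_{d,1},\ \inf_{k\ge2}\mathcal C_{d,k}\}=\min\{d+1,\mathcal C_{d,1}\}.
\]

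There is no real obstacle. The only point to check is that \(\mathcal C_{d,1}\) is well defined in \([0,\infty)\), so that the minimum makes sense. This holds because \(J_{d,1}(v)>0\) for every nonzero \(v\in\mathscr D_{d,1}\): the term \(k\int r^{\nu_{d,k}-4}|v|^2\,dr\) in \eqref{def-Jdk} is strictly positive when \(v\neq0\).
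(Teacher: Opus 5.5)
Your proposal is correct and is exactly the paper's argument: it combines the sector reduction of Proposition~\ref{P3.2}, the radial value $\mathcal C_{d,0}=d+1$ from Lemma~\ref{L3.3}, and the bound of Lemma~\ref{L3.4}, which shows that the sectors $k\ge2$ satisfy $\mathcal C_{d,k}>d+1$ and so cannot lower the minimum. Your extra remarks are also correct: a non-strict bound on the tail suffices, and $J_{d,1}>0$ on nonzero profiles so $\mathcal C_{d,1}$ is well defined.
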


\begin{proof}
The conclusion follows immediately from
\eqref{Cd-sector-reduction}, \eqref{Cd0-value}, and
\eqref{Cdk-lower}.
\end{proof}

\section{One-dimensional reduction}\label{sec4}

Throughout this section, let \(d\ge2\). In view of
Proposition~\ref{PR3.5}, it remains to analyze the first non-radial
sector \(k=1\).

For \(y\in H_0^1(0,\infty)\), define
\begin{equation}\label{def-Ed}
E_d(y)
=
\int_0^\infty
\left(
|y'(r)|^2
+
\left(
1+\frac{(d+1)(d+3)}{4r^2}
\right)|y(r)|^2
\right)\,dr,
\end{equation}
\begin{equation}\label{def-D}
D(y)
=
\int_0^\infty
\frac{|y(r)|^2}{r}\,dr,
\end{equation}
and
\begin{equation}\label{def-Gd}
G_d(y)
=
\int_0^\infty
r^{d-2}
\left|
\int_r^\infty
\frac{y(s)}{s^{(d+1)/2}}\,ds
\right|^2\,dr.
\end{equation}

By Hardy's inequality and its weighted version, these quantities are
finite for every \(y\in H_0^1(0,\infty)\). Indeed,
\[
D(y)
\le
\|y\|_2
\left\|\frac{y}{r}\right\|_2
\le
2\|y\|_2\|y'\|_2
\le
\|y\|_{H^1(0,\infty)}^2,
\]
while the weighted Hardy inequality yields
\begin{equation}\label{Gd-bound}
G_d(y)
\le
\frac{4}{(d-1)^2}D(y).
\end{equation}
Moreover, Hardy's inequality gives
\[
\|y\|_{H^1(0,\infty)}^2
\le
E_d(y)
\le
(d+2)^2\|y\|_{H^1(0,\infty)}^2.
\]
Thus \(E_d^{1/2}\) defines a norm on \(H_0^1(0,\infty)\) equivalent to
the usual \(H^1(0,\infty)\)-norm. In particular, \(E_d\), \(D\), and
\(G_d\) are continuous quadratic forms on \(H_0^1(0,\infty)\).

For \(v\in\mathscr D_{d,1}\), define
\[
T_d:\mathscr D_{d,1}\longrightarrow H_0^1(0,\infty),
\qquad
(T_dv)(r)
=
r^{(d+1)/2}v'(r),
\qquad
r>0.
\]

\begin{lemma}\label{L4.1}
For every \(v\in\mathscr D_{d,1}\),
\begin{equation}\label{Td-energy}
E_d(T_dv)
=
A_{d,1}(v)+B_{d,1}(v),
\end{equation}
and
\begin{equation}\label{Td-J}
D(T_dv)+G_d(T_dv)
=
J_{d,1}(v),
\end{equation}
where \(A_{d,1}\), \(B_{d,1}\), and \(J_{d,1}\) are defined by
\eqref{def-Adk}, \eqref{def-Bdk}, and \eqref{def-Jdk},
respectively.
\end{lemma}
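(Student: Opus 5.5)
The plan is to verify both identities by direct computation, using that every \(v\in\mathscr D_{d,1}\) lies in \(C_c^\infty([0,\infty))\). For \(k=1\) we have \(\nu_{d,1}=d+2\), so \eqref{def-Adk}, \eqref{def-Bdk} and \eqref{def-Jdk} read
\[
A_{d,1}(v)=\int_0^\infty r^{d+1}|v''|^2\,dr+(d+1)\int_0^\infty r^{d-1}|v'|^2\,dr,
\qquad
B_{d,1}(v)=\int_0^\infty r^{d+1}|v'|^2\,dr,
\]
\[
J_{d,1}(v)=\int_0^\infty r^{d}|v'|^2\,dr+\int_0^\infty r^{d-2}|v|^2\,dr .
\]

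First I check that \(y=T_dv\) lies in \(H_0^1(0,\infty)\). The function \(y=r^{(d+1)/2}v'\) is compactly supported and vanishes at \(r=0\). Its derivative is
\[
y'=r^{(d+1)/2}v''+\tfrac{d+1}{2}r^{(d-1)/2}v',
\]
which is bounded near \(0\) because \(d\ge2\) and \(v',v''\) are bounded. Hence \(y\in H_0^1(0,\infty)\), so \(T_d\) indeed maps into that space.

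For \eqref{Td-energy}, I identify the three terms of \(E_d(y)\) one at a time.
\begin{itemize}
\item The zero-order term gives \(\int_0^\infty |y|^2\,dr=\int_0^\infty r^{d+1}|v'|^2\,dr=B_{d,1}(v)\).
\item Squaring \(y'\) gives \(r^{d+1}|v''|^2+\tfrac{(d+1)^2}{4}r^{d-1}|v'|^2\) plus the cross term \((d+1)r^dv'v''=\tfrac{d+1}{2}r^d(|v'|^2)'\).
\item Integrating the cross term by parts turns it into \(-\tfrac{d(d+1)}{2}\int_0^\infty r^{d-1}|v'|^2\,dr\).
\end{itemize}
The boundary terms in that integration by parts vanish: at infinity by compact support, and at \(0\) because \(r^d|v'|^2\to0\).

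Adding the Hardy-type term \(\tfrac{(d+1)(d+3)}{4}\int_0^\infty r^{-2}|y|^2\,dr=\tfrac{(d+1)(d+3)}{4}\int_0^\infty r^{d-1}|v'|^2\,dr\), the total coefficient of \(\int_0^\infty r^{d-1}|v'|^2\,dr\) is
\[
\frac{d+1}{4}\bigl((d+1)-2d+(d+3)\bigr)=d+1.
\]
Collecting terms gives exactly \(A_{d,1}(v)+B_{d,1}(v)\).

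For \eqref{Td-J}, the first piece is immediate: \(D(y)=\int_0^\infty r^{-1}\,r^{d+1}|v'|^2\,dr=\int_0^\infty r^{d}|v'|^2\,dr\). For \(G_d\), note that \(y(s)s^{-(d+1)/2}=v'(s)\). Since \(v\) has compact support, this gives
\[
\int_r^\infty \frac{y(s)}{s^{(d+1)/2}}\,ds=-v(r),
\]
and therefore \(G_d(y)=\int_0^\infty r^{d-2}|v|^2\,dr\). Summing the two pieces yields \(J_{d,1}(v)\).

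The only step requiring care is the justification of the vanishing boundary term at \(r=0\) in the integration by parts. This follows from the smoothness of \(v\) on \([0,\infty)\) together with \(d\ge2\).
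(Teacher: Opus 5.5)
Your proposal is correct and follows the paper's proof. The paper also computates \(E_d(T_dv)\) directly, with one integration by parts whose boundary terms vanish, and handles \(G_d\) through the same identity \(\int_r^\infty y(s)s^{-(d+1)/2}\,ds=-v(r)\). You additionally write out the coefficient bookkeeping (\(\tfrac{d+1}{4}\bigl((d+1)-2d+(d+3)\bigr)=d+1\)) and check that \(T_dv\in H_0^1(0,\infty)\); the paper leaves both of these implicit.
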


\begin{proof}
Let \(v\in\mathscr D_{d,1}\), and set
\[
y=T_dv=r^{(d+1)/2}v'.
\]
Using \eqref{def-Ed}, a direct computation and integration by parts,
with vanishing boundary terms, give \eqref{Td-energy}.

Moreover, by \eqref{def-D},
\[
D(y)
=
\int_0^\infty r^d|v'(r)|^2\,dr.
\]
Since \(v\) is compactly supported,
\[
\int_r^\infty
\frac{y(s)}{s^{(d+1)/2}}\,ds
=
\int_r^\infty v'(s)\,ds
=
-v(r).
\]
Therefore, by \eqref{def-Gd},
\[
G_d(y)
=
\int_0^\infty r^{d-2}|v(r)|^2\,dr.
\]
Thus \eqref{Td-J} follows from \eqref{def-Jdk}.
\end{proof}

\begin{remark}\label{RK4.2}
Observe that
\[
C_c^\infty(0,\infty)
\subset
T_d(\mathscr D_{d,1}).
\]
Indeed, let \(y\in C_c^\infty(0,\infty)\) and define
\[
v(r)
=
-\int_r^\infty
\frac{y(s)}{s^{(d+1)/2}}\,ds,
\qquad
r\ge0.
\]
Since \(y\) is compactly supported in \((0,\infty)\), the function \(v\)
is smooth, constant near \(0\), and compactly supported. Moreover,
\[
v'(r)
=
\frac{y(r)}{r^{(d+1)/2}},
\qquad
r>0,
\]
and hence
\[
T_dv=y.
\]
For any \(1\le\ell\le m_{d,1}\), the function
\[
u(r\omega)
=
rv(r)Y_{1,\ell}(\omega)
\]
belongs to \(C_c^\infty(\mathbb R^d)\). Therefore
\[
v\in\mathscr D_{d,1},
\]
which proves the inclusion.
\end{remark}

\begin{proposition}\label{PR4.3}
We have
\begin{equation}\label{Cd1-H01-representation}
\mathcal C_{d,1}
=
\inf_{y\in H_0^1(0,\infty)\setminus\{0\}}
\frac{E_d(y)}{D(y)+G_d(y)}.
\end{equation}
\end{proposition}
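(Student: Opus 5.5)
We prove the two inequalities separately, writing \(Q(y)=E_d(y)/(D(y)+G_d(y))\) for \(y\in H_0^1(0,\infty)\setminus\{0\}\). The comparison between \(\mathscr D_{d,1}\) and \(H_0^1(0,\infty)\) goes through the map \(T_d\): Lemma~\ref{L4.1} transports the quotient, and Remark~\ref{RK4.2} together with density gives the reverse direction.

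\emph{Inequality \(\ge\).} Let \(v\in\mathscr D_{d,1}\setminus\{0\}\). First we check that \(y=T_dv=r^{(d+1)/2}v'\) belongs to \(H_0^1(0,\infty)\). Since \(v\in C_c^\infty([0,\infty))\), the function \(y\) is compactly supported, locally \(H^1\) on \((0,\infty)\), and satisfies \(y(r)=O(r^{(d+1)/2})\) near \(0\); because \((d+1)/2\ge 3/2\), it extends by \(y(0)=0\) to an element of \(H_0^1\). We also need \(y\neq0\). If \(v'\equiv0\), then \(v\) is constant, and compact support forces \(v=0\), a contradiction. By Lemma~\ref{L4.1}, \(Q(T_dv)\) equals the sector quotient \((A_{d,1}(v)+B_{d,1}(v))/J_{d,1}(v)\). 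Taking the infimum over \(v\) gives \(\mathcal C_{d,1}\ge\inf_{H_0^1}Q\).

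\emph{Inequality \(\le\).} The idea is to show that the infimum of \(Q\) over \(H_0^1(0,\infty)\setminus\{0\}\) equals its infimum over \(C_c^\infty(0,\infty)\setminus\{0\}\), and then apply Remark~\ref{RK4.2}. For \(y\in C_c^\infty(0,\infty)\setminus\{0\}\), the remark gives \(y=T_dv\) with \(v\in\mathscr D_{d,1}\), and \(v\neq0\) because \(y\neq0\). Lemma~\ref{L4.1} then yields \(\mathcal C_{d,1}\le Q(y)\).

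For the density step, recall that \(C_c^\infty(0,\infty)\) is dense in \(H_0^1(0,\infty)\). The forms \(E_d\), \(D\) and \(G_d\) are continuous on \(H_0^1(0,\infty)\): this was shown before Lemma~\ref{L4.1}, using Hardy's inequality for \(E_d\) and \(D\), and \eqref{Gd-bound} for \(G_d\), which is a bounded quadratic form dominated by \(D\). Moreover, \(D(y)>0\) for \(y\neq0\). Hence, given \(y\in H_0^1(0,\infty)\setminus\{0\}\) and \(y_n\to y\) in \(H^1\) with \(y_n\in C_c^\infty(0,\infty)\), we have \(y_n\neq0\) for large \(n\) and \(Q(y_n)\to Q(y)\). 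Therefore \(\mathcal C_{d,1}\le Q(y)\) for every such \(y\).

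The only delicate point is the continuity of \(E_d\) on \(H_0^1(0,\infty)\), since its \(r^{-2}\) term needs Hardy's inequality. This was already established in the text, so no step presents a genuine obstacle.
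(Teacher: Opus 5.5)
Your proof is correct and takes the same route as the paper. Lemma~\ref{L4.1} identifies $\mathcal C_{d,1}$ with the infimum of the quotient over $T_d(\mathscr D_{d,1})$, and the inclusions $C_c^\infty(0,\infty)\subset T_d(\mathscr D_{d,1})\subset H_0^1(0,\infty)$ from Remark~\ref{RK4.2}, combined with density and the continuity of $E_d$, $D$ and $G_d$, show that the two infima coincide. You also check that $T_dv\in H_0^1(0,\infty)$ and $T_dv\neq0$ whenever $v\neq0$, which the paper leaves implicit.
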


\begin{proof}
By \eqref{def-Cdk} and Lemma~\ref{L4.1},
\[
\mathcal C_{d,1}
=
\inf_{y\in T_d(\mathscr D_{d,1})\setminus\{0\}}
\frac{E_d(y)}{D(y)+G_d(y)}.
\]
Moreover, by Remark~\ref{RK4.2},
\[
C_c^\infty(0,\infty)
\subset
T_d(\mathscr D_{d,1})
\subset
H_0^1(0,\infty).
\]
Since \(C_c^\infty(0,\infty)\) is dense in \(H_0^1(0,\infty)\), and
\(E_d\), \(D\), and \(G_d\) are continuous quadratic forms on
\(H_0^1(0,\infty)\), the quotient
\[
y\longmapsto
\frac{E_d(y)}{D(y)+G_d(y)}
\]
is continuous on \(H_0^1(0,\infty)\setminus\{0\}\). Therefore,
\[
\inf_{y\in C_c^\infty(0,\infty)\setminus\{0\}}
\frac{E_d(y)}{D(y)+G_d(y)}
=
\inf_{y\in H_0^1(0,\infty)\setminus\{0\}}
\frac{E_d(y)}{D(y)+G_d(y)},
\]
and \eqref{Cd1-H01-representation} follows.
\end{proof}

For \(c>0\), define
\begin{equation}\label{def-Qdc}
Q_{d,c}(y)
=
E_d(y)-cD(y),
\qquad
y\in H_0^1(0,\infty).
\end{equation}

\begin{lemma}\label{L4.4}
Let \(c>0\). The following assertions are equivalent:
\begin{itemize}
\item[(i)] \(\mathcal C_{d,1}\ge c\).
\item[(ii)] \(Q_{d,c}(y)\ge cG_d(y)\) for every
\(y\in H_0^1(0,\infty)\).
\end{itemize}
\end{lemma}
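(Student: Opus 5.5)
This equivalence is a direct unpacking of the representation \eqref{Cd1-H01-representation} from Proposition~\ref{PR4.3}; no new analytic input is needed. The one point to settle first is that the denominator there never vanishes. We have \(D(y)>0\) for every \(y\in H_0^1(0,\infty)\setminus\{0\}\), since \(D(y)=0\) forces \(y=0\) almost everywhere. We also have \(G_d(y)\ge0\), since it is the integral of a square. Hence \(D(y)+G_d(y)>0\) for \(y\neq0\), and the quotient in \eqref{Cd1-H01-representation} is well defined and finite.

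For (i)\(\Rightarrow\)(ii), assume \(\mathcal C_{d,1}\ge c\) and take any \(y\in H_0^1(0,\infty)\).
\begin{itemize}
\item If \(y=0\), both sides of (ii) vanish.
\item If \(y\neq0\), Proposition~\ref{PR4.3} gives \(E_d(y)\ge \mathcal C_{d,1}\,(D(y)+G_d(y))\). Since \(D(y)+G_d(y)>0\), this yields \(E_d(y)\ge c\,(D(y)+G_d(y))\).
\end{itemize}
By \eqref{def-Qdc}, the last inequality rearranges to \(Q_{d,c}(y)=E_d(y)-cD(y)\ge cG_d(y)\), which is (ii).

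For (ii)\(\Rightarrow\)(i), take any \(y\neq0\). Inequality (ii) rearranges to \(E_d(y)\ge c\,(D(y)+G_d(y))\). Dividing by the positive quantity \(D(y)+G_d(y)\) gives \(E_d(y)/(D(y)+G_d(y))\ge c\). Taking the infimum over \(y\in H_0^1(0,\infty)\setminus\{0\}\) and using \eqref{Cd1-H01-representation} gives \(\mathcal C_{d,1}\ge c\).

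There is no genuine obstacle here. The only points needing care are the positivity of the denominator, settled above, and the finiteness of \(E_d\), \(D\) and \(G_d\) on \(H_0^1(0,\infty)\), which was established before \eqref{Gd-bound}. Everything else is Proposition~\ref{PR4.3}. The value of the lemma is that it recasts the sector constant as a one-parameter family of quadratic-form inequalities. These are what the later Laguerre expansion will diagonalize, with \(c\) playing the role of the spectral parameter.
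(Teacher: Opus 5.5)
Your proposal is correct and follows the same route as the paper, which says only that the equivalence follows immediately from the representation \eqref{Cd1-H01-representation} of Proposition~\ref{PR4.3} and the definition \eqref{def-Qdc} of \(Q_{d,c}\). Your explicit check that \(D(y)+G_d(y)>0\) for \(y\neq0\) fills in the one detail the paper leaves implicit.
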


\begin{proof}
The equivalence follows immediately from
\eqref{Cd1-H01-representation} and \eqref{def-Qdc}.
\end{proof}

\section{Spectral decomposition and discrete reduction}\label{sec5}

Throughout this section, let \(d\ge2\). By Lemma~\ref{L4.4},
\(\mathcal C_{d,1}\) is the largest \(c>0\) for which
\[
Q_{d,c}(y)\ge c\,G_d(y),
\qquad
y\in H_0^1(0,\infty).
\]
We introduce a Laguerre-type spectral basis adapted to the quadratic
forms \(E_d\) and \(D\). Expansion with respect to this basis
simultaneously diagonalizes \(E_d\) and \(D\), and hence \(Q_{d,c}\),
while \(G_d\) is represented by a quadratic form on
\(\ell^2(\mathbb N_0)\). This yields an equivalent discrete operator
formulation and leads to the spectral characterization of
\(\mathcal H_d\). We conclude the section with the proof of
Theorem~\ref{thm:spectral-characterization} and its consequences,
Corollaries~\ref{cor:Hd-dge4} and~\ref{cor:Hd-d23}.

\subsection{Laguerre spectral basis}

We recall that, for \(\alpha>-1\), the generalized Laguerre polynomials
\(L_n^{(\alpha)}\), \(n\in\mathbb N_0\), satisfy
\begin{equation}\label{eq:Laguerre-ODE}
x\bigl(L_n^{(\alpha)}\bigr)''(x)
+
(\alpha+1-x)\bigl(L_n^{(\alpha)}\bigr)'(x)
+
nL_n^{(\alpha)}(x)
=
0,
\qquad
x>0,
\end{equation}
and the orthogonality relation
\begin{equation}\label{eq:Laguerre-orthogonality}
\int_0^\infty
x^\alpha e^{-x}
L_n^{(\alpha)}(x)L_m^{(\alpha)}(x)\,dx
=
\frac{\Gamma(n+\alpha+1)}{\Gamma(n+1)}
\delta_{nm},
\qquad
n,m\in\mathbb N_0.
\end{equation}
Moreover, the family
\(\{L_n^{(\alpha)}\}_{n\ge0}\) is complete in
\[
L^2\bigl((0,\infty),x^\alpha e^{-x}\,dx\bigr);
\]
see, for instance, \cite{Szego1975}.

For \(n\in\mathbb N_0\), define
\[
\psi_n^{(d)}(r)
=
r^{(d+3)/2}e^{-r}L_n^{(d+2)}(2r),
\qquad
r>0.
\]
Then
\[
\psi_n^{(d)}\in H_0^1(0,\infty),
\qquad
n\in\mathbb N_0.
\]
By \eqref{eq:Laguerre-ODE},
\begin{equation}\label{L-ode-d}
-\bigl(\psi_n^{(d)}\bigr)''(r)
+
\left(
1+\frac{(d+1)(d+3)}{4r^2}
\right)\psi_n^{(d)}(r)
=
\frac{\lambda_n^{(d)}}{r}\psi_n^{(d)}(r),
\qquad
r>0,
\end{equation}
where
\begin{equation}\label{def-lambda-nd}
\lambda_n^{(d)}
=
2n+d+3.
\end{equation}

Furthermore, by the change of variables \(x=2r\) and
\eqref{eq:Laguerre-orthogonality},
\begin{equation}\label{psi-d-orthogonality}
\int_0^\infty
\frac{\psi_n^{(d)}(r)\psi_m^{(d)}(r)}{r}\,dr
=
\omega_n^{(d)}\delta_{nm},
\qquad
n,m\in\mathbb N_0,
\end{equation}
where
\begin{equation}\label{def-omega-nd}
\omega_n^{(d)}
=
\frac{
\Gamma(n+d+3)
}{
2^{d+3}\Gamma(n+1)
}.
\end{equation}
Finally, the completeness of the generalized Laguerre polynomials implies
that
\begin{equation}\label{psi-d-completeness}
\left\{\psi_n^{(d)}\right\}_{n\ge0}
\quad\text{is complete in}\quad
L^2\left((0,\infty),\frac{dr}{r}\right).
\end{equation}

For \(y\in H_0^1(0,\infty)\), we have
\[
y\in
L^2\left((0,\infty),\frac{dr}{r}\right).
\]
Hence, by \eqref{psi-d-orthogonality} and
\eqref{psi-d-completeness},
\begin{equation}\label{exp-y-psi-d}
y
=
\sum_{n=0}^\infty a_n\psi_n^{(d)}
\qquad\text{in }
L^2\left((0,\infty),\frac{dr}{r}\right),
\end{equation}
where
\begin{equation}\label{def-an-d}
a_n
=
\frac{1}{\omega_n^{(d)}}
\int_0^\infty
\frac{y(r)\psi_n^{(d)}(r)}{r}\,dr,
\qquad
n\in\mathbb N_0.
\end{equation}

\begin{lemma}
Let \(y\in H_0^1(0,\infty)\), and let \((a_n)_{n\ge0}\) be defined by
\eqref{def-an-d}. Then
\begin{align}
D(y)
&=
\sum_{n=0}^\infty
\omega_n^{(d)}|a_n|^2,
\label{D-series-d}
\\
E_d(y)
&=
\sum_{n=0}^\infty
\lambda_n^{(d)}
\omega_n^{(d)}|a_n|^2.
\label{Ed-series}
\end{align}
Moreover, for every \(c>0\),
\begin{equation}\label{Qdc-series}
Q_{d,c}(y)
=
\sum_{n=0}^\infty
\bigl(\lambda_n^{(d)}-c\bigr)
\omega_n^{(d)}|a_n|^2.
\end{equation}
\end{lemma}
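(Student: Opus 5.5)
The plan has three steps: obtain \eqref{D-series-d} as a Parseval identity, then prove \eqref{Ed-series} for compactly supported smooth \(y\) using \eqref{L-ode-d}, and finally pass to general \(y\) by density. The identity \eqref{Qdc-series} then follows at once from \eqref{def-Qdc} by subtraction, since both series converge.

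For \eqref{D-series-d}, note that \(\{\psi_n^{(d)}/\sqrt{\omega_n^{(d)}}\}_{n\ge0}\) is an orthonormal basis of \(L^2((0,\infty),dr/r)\) by \eqref{psi-d-orthogonality}--\eqref{psi-d-completeness}. Moreover \(D(y)=\|y\|^2_{L^2(dr/r)}\), and the coefficient of \(y\) on the normalized vector is \(\sqrt{\omega_n^{(d)}}\,a_n\) by \eqref{def-an-d}. So \eqref{D-series-d} is exactly Parseval's identity.

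For \eqref{Ed-series}, I work first with \(y\in C_c^\infty(0,\infty)\). Set \(\mathcal L y=-y''+\bigl(1+\frac{(d+1)(d+3)}{4r^2}\bigr)y\). Integrating by parts (no boundary terms) gives \(E_d(y)=\int_0^\infty y\,\mathcal L y\,dr\). Define \(f=r\,\mathcal L y\), which again lies in \(C_c^\infty(0,\infty)\subset L^2(dr/r)\). Its coefficients are
\[
\frac{1}{\omega_n^{(d)}}\int_0^\infty \mathcal L y\,\psi_n^{(d)}\,dr
=\frac{1}{\omega_n^{(d)}}\int_0^\infty y\,\mathcal L\psi_n^{(d)}\,dr
=\lambda_n^{(d)}a_n .
\]
The first equality is integration by parts, which is legitimate because \(y\) has compact support away from \(0\) and \(\infty\). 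The second uses \eqref{L-ode-d}. By the Parseval (inner-product) identity in \(L^2(dr/r)\),
\[
E_d(y)=\int_0^\infty \frac{y f}{r}\,dr=\sum_n \omega_n^{(d)}\lambda_n^{(d)}|a_n|^2 .
\]

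To extend to \(y\in H_0^1(0,\infty)\), take \(y_k\in C_c^\infty(0,\infty)\) with \(y_k\to y\) in \(H^1\). As noted earlier, \(E_d^{1/2}\) is a norm equivalent to the \(H^1\) norm, so \(E_d(y_k)\to E_d(y)\) and \(E_d(y_k-y_l)\to0\). Also \(D(y_k-y)\le\|y_k-y\|_{H^1}^2\to0\), so \(y_k\to y\) in \(L^2(dr/r)\). Hence the coefficient sequences converge: \(\sqrt{\omega_n^{(d)}}\,a_n(y_k)\to\sqrt{\omega_n^{(d)}}\,a_n(y)\) in \(\ell^2\). Define the weighted \(\ell^2\) norm \(\|a\|_\lambda^2=\sum_n\lambda_n^{(d)}\omega_n^{(d)}|a_n|^2\). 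The smooth case gives \(\|a(y_k)-a(y_l)\|_\lambda^2=E_d(y_k-y_l)\to0\), so \(a(y_k)\) is Cauchy in this weighted space. Its limit must coincide with \(a(y)\), because convergence in the weighted space implies coordinatewise convergence. Therefore \(\|a(y)\|_\lambda^2=\lim_k E_d(y_k)=E_d(y)\).

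I expect the only delicate point to be this identification of limits in the density step. The smooth case is routine because \(\mathcal L\) is formally self-adjoint and \(y\) is compactly supported in \((0,\infty)\). If needed, one could alternatively use Fatou's lemma for the inequality ``\(\ge\)'', combined with a bilinear polarization argument.
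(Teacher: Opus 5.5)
Your proof is correct, but it takes a different route from the paper's. The paper works directly in the Hilbert space $\bigl(H_0^1(0,\infty),\mathcal E_d\bigr)$, with $\mathcal E_d$ the bilinear form of $E_d$. It checks that $\mathcal E_d(\psi_n^{(d)},\psi_m^{(d)})=\lambda_n^{(d)}\omega_n^{(d)}\delta_{nm}$ via \eqref{L-ode-d}. It then gets completeness of the normalized family in this energy space from \eqref{psi-d-completeness}: if $\mathcal E_d(y,\psi_n^{(d)})=0$ for all $n$, all coefficients vanish. Finally it computes $\mathcal E_d(y,\psi_n^{(d)})=\lambda_n^{(d)}\omega_n^{(d)}a_n$ for general $y\in H_0^1(0,\infty)$ and reads off \eqref{Ed-series} as Parseval's identity there.

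You never leave $L^2((0,\infty),dr/r)$. For $y\in C_c^\infty(0,\infty)$ you move $\mathcal L$ onto $\psi_n^{(d)}$, identify the coefficients of $r\mathcal L y$ as $\lambda_n^{(d)}a_n$, and apply polarized Parseval. You then reach all of $H_0^1(0,\infty)$ by a Cauchy argument in the weighted sequence space, identifying the limit through coordinatewise convergence.

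Your route is a bit more elementary. Every integration by parts is against a function compactly supported in $(0,\infty)$, so you need neither $\psi_n^{(d)}\in H_0^1(0,\infty)$ nor any justification of integrating by parts against a general $H_0^1$ function, as the paper tacitly does. The price is the extra density step.

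The paper's route gives more structure as a by-product: $\{\psi_n^{(d)}/\sqrt{\lambda_n^{(d)}\omega_n^{(d)}}\}$ is an orthonormal basis of the energy space, so \eqref{exp-y-psi-d} converges in $H_0^1(0,\infty)$. This is the form in which the identity is used later, in the even/odd splitting and in the converse part of Lemma~\ref{L5.10}. With your version this follows once you also record $\psi_n^{(d)}\in H_0^1(0,\infty)$ and apply \eqref{Ed-series} to $y-\sum_{n\le N}a_n\psi_n^{(d)}$.
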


\begin{proof}
By \eqref{def-D}, \eqref{psi-d-orthogonality},
\eqref{psi-d-completeness}, and Parseval's identity,
\eqref{D-series-d} follows.

Define
\[
\mathcal E_d(y,z)
=
\int_0^\infty
\left(
y'(r)z'(r)
+
\left(
1+\frac{(d+1)(d+3)}{4r^2}
\right)y(r)z(r)
\right)\,dr,
\qquad
y,z\in H_0^1(0,\infty).
\]
Then
\[
E_d(y)=\mathcal E_d(y,y),
\qquad
y\in H_0^1(0,\infty).
\]
Since \(E_d^{1/2}\) is equivalent to the usual
\(H^1(0,\infty)\)-norm, \(H_0^1(0,\infty)\), endowed with the inner
product \(\mathcal E_d\), is a Hilbert space.

By \eqref{L-ode-d}, \eqref{psi-d-orthogonality}, and integration by parts,
\[
\mathcal E_d
\left(
\psi_n^{(d)},\psi_m^{(d)}
\right)
=
\lambda_n^{(d)}
\omega_n^{(d)}
\delta_{nm},
\qquad
n,m\in\mathbb N_0.
\]
Moreover, the system
\[
\left\{
\frac{\psi_n^{(d)}}
{\sqrt{\lambda_n^{(d)}\omega_n^{(d)}}}
\right\}_{n\ge0}
\]
is complete in \(H_0^1(0,\infty)\) with respect to
\(\mathcal E_d\). Indeed, if \(y\in H_0^1(0,\infty)\) satisfies
\[
\mathcal E_d\left(y,\psi_n^{(d)}\right)=0,
\qquad
n\in\mathbb N_0,
\]
then \eqref{L-ode-d} and integration by parts give
\[
\lambda_n^{(d)}
\int_0^\infty
\frac{y(r)\psi_n^{(d)}(r)}{r}\,dr
=
0,
\qquad
n\in\mathbb N_0.
\]
Since \(\lambda_n^{(d)}>0\), \eqref{psi-d-completeness} yields \(y=0\).

Furthermore, by \eqref{L-ode-d} and \eqref{def-an-d},
\[
\mathcal E_d\left(y,\psi_n^{(d)}\right)
=
\lambda_n^{(d)}
\int_0^\infty
\frac{y(r)\psi_n^{(d)}(r)}{r}\,dr
=
\lambda_n^{(d)}\omega_n^{(d)}a_n.
\]
Parseval's identity in the Hilbert space
\(\bigl(H_0^1(0,\infty),\mathcal E_d\bigr)\) therefore gives
\eqref{Ed-series}. Finally, \eqref{Qdc-series} follows from
\eqref{def-Qdc}, \eqref{D-series-d}, and \eqref{Ed-series}.
\end{proof}

For \(n\in\mathbb N_0\), define
\begin{equation}\label{def-Phi-nd}
\Phi_n^{(d)}(r)
=
\int_r^\infty
s e^{-s}L_n^{(d+2)}(2s)\,ds,
\qquad
r>0,
\end{equation}
and, for \(n,m\in\mathbb N_0\), set
\begin{equation}\label{def-g-nm-d}
g_{n,m}^{(d)}
=
\int_0^\infty
r^{d-2}
\Phi_n^{(d)}(r)\Phi_m^{(d)}(r)\,dr.
\end{equation}

\begin{lemma}
Let \(y\in H_0^1(0,\infty)\), and let \((a_n)_{n\ge0}\) be the
coefficients defined by \eqref{def-an-d}. Then
\begin{equation}\label{Gd-series}
G_d(y)
=
\lim_{N\to\infty}
\sum_{n,m=0}^N
g_{n,m}^{(d)}a_na_m.
\end{equation}
\end{lemma}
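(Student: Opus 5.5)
The strategy is to show that \(G_d\) is a continuous quadratic form on \(L^2((0,\infty),dr/r)\), and then pass to the limit along the partial sums of the expansion \eqref{exp-y-psi-d}. Define the linear map
\[
(\mathcal K y)(r)=\int_r^\infty \frac{y(s)}{s^{(d+1)/2}}\,ds ,
\]
so that \(G_d(y)=\int_0^\infty r^{d-2}|(\mathcal K y)(r)|^2\,dr\). The weighted Hardy inequality \eqref{Gd-bound} gives \(G_d(y)\le \frac{4}{(d-1)^2}D(y)\) for \(y\in H_0^1(0,\infty)\). Its proof uses only that \(y/r^{1/2}\in L^2(0,\infty)\), so it holds for every \(y\in L^2((0,\infty),dr/r)\). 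Consequently \(y\mapsto r^{(d-2)/2}\mathcal K y\) is a bounded linear map from \(L^2((0,\infty),dr/r)\) into \(L^2(0,\infty)\), with norm at most \(2/(d-1)\). If needed, I would re-derive this bound directly: substitute \(s^{-(d+1)/2}y(s)=s^{-d/2}\cdot (y(s)s^{-1/2})\) and apply the classical dual Hardy inequality with weight \(r^{d-2}\), which is valid since \(d\ge2\) makes the constant \(2/(d-1)\) finite.

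Next, I compute \(\mathcal K\psi_n^{(d)}\). Since \(\psi_n^{(d)}(s)s^{-(d+1)/2}=s e^{-s}L_n^{(d+2)}(2s)\), comparing with \eqref{def-Phi-nd} gives \(\mathcal K\psi_n^{(d)}=\Phi_n^{(d)}\). The integrand decays exponentially, so this is well defined, and near \(0\) the function \(\Phi_n^{(d)}\) is bounded, so \(r^{d-2}\Phi_n\Phi_m\) is integrable on \((0,\infty)\) for \(d\ge2\). Thus \(g_{n,m}^{(d)}\) is finite.

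Now set \(y_N=\sum_{n=0}^N a_n\psi_n^{(d)}\). By bilinearity,
\[
G_d(y_N)=\int_0^\infty r^{d-2}\Bigl(\sum_{n=0}^N a_n\Phi_n^{(d)}\Bigr)^2dr=\sum_{n,m=0}^N g_{n,m}^{(d)}a_na_m .
\]
By \eqref{exp-y-psi-d} we have \(y_N\to y\) in \(L^2((0,\infty),dr/r)\). The bound from the first step then yields \(r^{(d-2)/2}\mathcal K y_N\to r^{(d-2)/2}\mathcal K y\) in \(L^2(0,\infty)\), and hence \(G_d(y_N)\to G_d(y)\). This gives \eqref{Gd-series}.

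The main obstacle is the first step: extending \eqref{Gd-bound} from \(H_0^1\) to all of \(L^2((0,\infty),dr/r)\). This is necessary because the \(y_N\) converge only in the weighted \(L^2\) sense, not in \(H^1\). Note that the \(y_N\) do in fact lie in \(H_0^1\). Alternatively, the lemma preceding this one shows that the normalized \(\psi_n\) form an orthogonal basis for \(\mathcal E_d\), so \(y_N\) is the \(\mathcal E_d\)-orthogonal projection of \(y\) and converges to \(y\) in \(H_0^1\). One could therefore simply invoke the continuity of \(G_d\) on \(H_0^1(0,\infty)\) stated in Section~\ref{sec4}, and this is probably the cleanest route.
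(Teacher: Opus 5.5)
Your proposal is correct and follows essentially the paper's argument: take the partial sums $y_N=\sum_{n\le N}a_n\psi_n^{(d)}$, use $D(y_N-y)\to0$ together with the weighted Hardy bound \eqref{Gd-bound} to get $G_d(y_N)\to G_d(y)$, and compute $G_d(y_N)$ from the identity $\int_r^\infty y_N(s)s^{-(d+1)/2}\,ds=\sum_{n\le N}a_n\Phi_n^{(d)}(r)$. The step you flag as the main obstacle, extending \eqref{Gd-bound} to all of $L^2((0,\infty),dr/r)$, is valid but not needed: both $y_N$ and $y$ lie in $H_0^1(0,\infty)$, so $y_N-y$ does too, and the paper simply applies \eqref{Gd-bound} to this difference.
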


\begin{proof}
For \(N\in\mathbb N_0\), set
\[
y_N
=
\sum_{n=0}^N a_n\psi_n^{(d)}.
\]
By  \eqref{def-D} and \eqref{exp-y-psi-d},
\[
D(y_N-y)\longrightarrow0
\qquad\text{as }N\to\infty.
\]
Hence, by \eqref{Gd-bound},
\[
G_d(y_N-y)\longrightarrow0
\qquad\text{as }N\to\infty.
\]
Moreover, by \eqref{def-Gd} and the reverse triangle inequality in
\(L^2((0,\infty),r^{d-2}\,dr)\),
\[
\left|
G_d(y_N)^{1/2}
-
G_d(y)^{1/2}
\right|
\le
G_d(y_N-y)^{1/2}.
\]
Therefore,
\[
G_d(y_N)\longrightarrow G_d(y)
\qquad\text{as }N\to\infty.
\]

On the other hand, by \eqref{def-Phi-nd},
\[
\int_r^\infty
\frac{y_N(s)}{s^{(d+1)/2}}\,ds
=
\sum_{n=0}^N a_n\Phi_n^{(d)}(r).
\]
Thus, by \eqref{def-Gd} and \eqref{def-g-nm-d},
\[
G_d(y_N)
=
\sum_{n,m=0}^N
g_{n,m}^{(d)}a_na_m.
\]
Letting \(N\to\infty\) and using the convergence of \(G_d(y_N)\)
established above yields \eqref{Gd-series}.
\end{proof}

For \(j\in\mathbb N_0\), set
\begin{equation}\label{def-hhat-jd}
\widehat h_j^{(d)}
=
\frac{(d-1)^2}{2^{d+1}}
\sum_{m=0}^j
\frac{
\Gamma(2m+d+2)
}{
\Gamma(2m+2)(4m+d+1)(4m+d+5)
}.
\end{equation}

\begin{lemma}
For all \(j,k\in\mathbb N_0\),
\begin{equation}\label{g-even-odd}
g_{2j,2k+1}^{(d)}
=
g_{2k+1,2j}^{(d)}
=
0,
\end{equation}
\begin{equation}\label{g-even-even}
g_{2j,2k}^{(d)}
=
\frac{
(4j+d+3)(4k+d+3)
}{
(d-1)^2
}
h_{\min\{j,k\}}^{(d)},
\end{equation}
where \(h_j^{(d)}\) is defined by \eqref{eq-hj-gammaj}, and
\begin{equation}\label{g-odd-odd}
g_{2j+1,2k+1}^{(d)}
=
\frac{
(4j+d+5)(4k+d+5)
}{
(d-1)^2
}
\widehat h_{\min\{j,k\}}^{(d)}.
\end{equation}
\end{lemma}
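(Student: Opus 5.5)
The plan is to compute all the numbers \(g_{n,m}^{(d)}\) at once through the generating function of the Laguerre polynomials, rather than evaluating each integral separately. Throughout write \(\alpha=d+2\) and \(|t|,|u|<1\). Recall
\[
\sum_{n\ge0}L_n^{(\alpha)}(x)t^n=(1-t)^{-\alpha-1}\exp\Bigl(-\tfrac{xt}{1-t}\Bigr).
\]
Set \(\beta(t)=(1+t)/(1-t)\). Substituting \(x=2s\) in \eqref{def-Phi-nd} and exchanging sum and integral (justified by absolute convergence for small \(|t|\)) should give
\[
\Phi(t,r):=\sum_{n\ge0}\Phi_n^{(d)}(r)t^n=(1-t)^{-d-3}e^{-\beta(t)r}\Bigl(\frac{r}{\beta(t)}+\frac{1}{\beta(t)^2}\Bigr).
\]
Inserting this into \eqref{def-g-nm-d} gives
\[
\mathcal G(t,u):=\sum_{n,m}g_{n,m}^{(d)}t^nu^m=\int_0^\infty r^{d-2}\Phi(t,r)\Phi(u,r)\,dr .
\]
This integral is a finite combination of Gamma integrals \(\int_0^\infty r^{d-2+p}e^{-(\beta(t)+\beta(u))r}\,dr=\Gamma(d-1+p)(\beta(t)+\beta(u))^{-(d-1+p)}\) with \(p\in\{0,1,2\}\). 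The convergence of these integrals and the exchanges of sums and integrals are routine, since each \(\Phi_n^{(d)}\) decays exponentially and is bounded near \(0\).

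The next step is to simplify \(\mathcal G\) algebraically. The key identity is
\[
\beta(t)+\beta(u)=\frac{2(1-tu)}{(1-t)(1-u)} .
\]
After multiplying through, the prefactors \((1-t)^{-d-3}(1-u)^{-d-3}\) cancel against the powers of \((1-t)(1-u)\) coming from \(\beta\). I expect \(\mathcal G(t,u)\) to become a rational expression in \(t\) and \(u\) with singular factor \((1-tu)^{-(d+1)}\), with numerator built from \((1+t)\), \((1+u)\) and \((1-tu)\).

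The parity statement \eqref{g-even-odd} is equivalent to \(\mathcal G(-t,-u)=\mathcal G(t,u)\), i.e. all coefficients with \(n+m\) odd vanish. I would first check whether the expression obtained already has this symmetry. If it does not, I would combine the terms more carefully, for instance symmetrizing in \((t,u)\mapsto(-t,-u)\). The aim is to show that the odd part cancels exactly.

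Then I would split \(\mathcal G\) into its even–even part (in \(t,u\)) and its odd–odd part. The structure \(h_{\min\{j,k\}}=\sum_{m\le\min\{j,k\}}\varepsilon_m\) in \eqref{g-even-even} suggests the target form. Coefficients of the type \(\sum_{m\le\min(j,k)}\varepsilon_m\) are exactly those of \((1-t^2)^{-1}(1-u^2)^{-1}\sum_m\varepsilon_m(tu)^{2m}\). The extra factors \((4j+d+3)(4k+d+3)\) should come from applying the operator \(\bigl(2t\partial_t+\tfrac{d+3}{2}\bigr)\bigl(2u\partial_u+\tfrac{d+3}{2}\bigr)\) to such a series, up to the normalizing constant \(4/(d-1)^2\).

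So I would aim to verify that the even–even part of \(\mathcal G\) equals
\[
\frac{1}{(d-1)^2}\Bigl(4t\partial_t+d+3\Bigr)\Bigl(4u\partial_u+d+3\Bigr)\Bigl[\frac{1}{(1-t^2)(1-u^2)}\sum_{m\ge0}\varepsilon_m^{(d)}(tu)^{2m}\Bigr].
\]
Here \(\sum_m\varepsilon_m^{(d)}w^{2m}\) should be recognized in closed form from \eqref{exp-epsilon-md}. The factor \(\Gamma(2m+d+1)/\Gamma(2m+1)\) is the even part of the binomial series for \((1-w)^{-(d+1)}\), and the factor \(1/((4m+d-1)(4m+d+3))\) corresponds to an integral operator in \(w\) that inverts \((4w\partial_w+d-1)(4w\partial_w+d+3)/4\). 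The natural way to verify the identity is therefore in the opposite direction. I would apply the inverse operators \((4t\partial_t+d+3)^{-1}\) and \((4u\partial_u+d+3)^{-1}\) (Euler-type integrations) to the closed form of the even–even part of \(\mathcal G\). Then I would check that the result times \((1-t^2)(1-u^2)\) depends only on \(tu\), and that its Taylor coefficients are the \(\varepsilon_m^{(d)}\). The odd–odd case \eqref{g-odd-odd} is identical, with \((4t\partial_t+d+5)\) and the shifted sequence defining \(\widehat h_j^{(d)}\) in \eqref{def-hhat-jd}.

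The main obstacle is this last algebraic step: writing the closed form of \(\mathcal G\) in a form where the product structure in \(t\) and \(u\) becomes visible. If it proves unwieldy, a fallback is a direct recursion. Using the three-term recurrence and the derivative identities for \(L_n^{(\alpha)}\), one would show that \(g_{2j,2k}^{(d)}(d-1)^2/((4j+d+3)(4k+d+3))\) is independent of \(k\ge j\), and that its increment in \(j\) equals \(\varepsilon_j^{(d)}\). Checking the diagonal values against \eqref{eq-hj-gammaj} would then complete the proof.
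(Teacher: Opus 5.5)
Your first half coincides with the paper's proof. Your generating function for $\Phi_n^{(d)}$ is correct. The three Gamma integrals, together with $\beta(t)+\beta(u)=2(1-tu)/((1-t)(1-u))$, give
\[
\mathcal G(t,u)=\frac{d\,\Gamma(d-1)}{2^{d-1}(1-t^2)^2(1-u^2)^2(1-tu)^{d-1}}+\frac{\Gamma(d+1)}{2^{d+1}(1-t^2)(1-u^2)(1-tu)^{d+1}},
\]
which is the paper's \eqref{gf-g-d}. The prefactors do not cancel completely: the factors $(1-t^2)$ and $(1-u^2)$ survive. But $\mathcal G$ depends only on $t^2,u^2,tu$, so \eqref{g-even-odd} is immediate and no symmetrization is needed.

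As displayed, your operator identity is false, because you are mixing the variables $t$ and $t^2$. On $t^{2j}$, the operator $4t\partial_t+d+3$ produces $8j+d+3$, not $4j+d+3$. Your first version $2t\partial_t+\tfrac{d+3}{2}$ gives $4j+\tfrac{d+3}{2}$. Likewise $(4w\partial_w+d-1)(4w\partial_w+d+3)$ acting on $w^{2m}$ gives $(8m+d-1)(8m+d+3)$.

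The correct statement uses $H(w)=\sum_m\varepsilon_m^{(d)}w^{2m}$: the even--even part $\tfrac12[\mathcal G(t,u)+\mathcal G(t,-u)]$ equals $(d-1)^{-2}(2t\partial_t+d+3)(2u\partial_u+d+3)$ applied to $H(tu)/((1-t^2)(1-u^2))$. For the odd--odd part, replace $H$ by $\widetilde H(w)=\sum_m(\widehat h_m^{(d)}-\widehat h_{m-1}^{(d)})w^{2m+1}$. The operator stays the same $2t\partial_t+d+3$, since it gives $4j+d+5$ on $t^{2j+1}$; it is not $4t\partial_t+d+5$.

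The verification you flag as the main obstacle is easy in the forward direction. Inverse Euler integrations are considerably messier for general $d$. Put $w=tu$, $\theta=w\tfrac{d}{dw}$, $M=(2\theta+d-1)(2\theta+d+3)$ and $E_a(w)=(1-w)^{-a}+(1+w)^{-a}$. A direct computation gives
\[
(2t\partial_t+d+3)(2u\partial_u+d+3)\frac{H(tu)}{(1-t^2)(1-u^2)}=\frac{(2\theta+d+3)^2H-(t^2+u^2)MH+w^2(2\theta+d-1)^2H}{(1-t^2)^2(1-u^2)^2}.
\]
On the other side, write $(1-t^2)(1-u^2)=1-(t^2+u^2)+w^2$ and note $MH=\tfrac{(d-1)^2\Gamma(d+1)}{2^{d+2}}E_{d+1}$, which is immediate from \eqref{exp-epsilon-md}. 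Then the $t^2+u^2$ terms match. Using $(2\theta+d+3)^2=M+4(2\theta+d+3)$ and $(2\theta+d-1)^2=M-4(2\theta+d-1)$, everything reduces to
\[
4(4m+d+3)\varepsilon_m^{(d)}-4(4m+d-5)\varepsilon_{m-1}^{(d)}=\frac{(d-1)^2\,d\,\Gamma(2m+d-1)}{2^{d-1}\Gamma(2m+1)},
\]
a two-line check with \eqref{eq-gamma-functional-relation}. The odd case is identical with $\widetilde H$.

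The paper proceeds differently at this point. It expands \eqref{gf-g-d} by Cauchy products to obtain, for $j\le k$, the explicit finite sum \eqref{gd-even-even}. It then shows that the difference with the target is a telescoping sum $F_{j+1}-F_0=0$.

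Your corrected route encodes the same computation as a differential identity between generating functions. Its advantages are that it explains where the factorized form $(4j+d+3)(4k+d+3)h_{\min\{j,k\}}$ comes from, and it treats all $(j,k)$ at once without the case split $j\le k$. The cost is some operator bookkeeping. The paper's route stays with finite sums and is purely mechanical. Your three-term-recurrence fallback is unnecessary.
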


\begin{proof}
Using the classical generating function for the generalized Laguerre
polynomials (see, e.g., \cite{Szego1975}),
\[
\sum_{n=0}^{\infty}L_n^{(\alpha)}(x)z^n
=
\frac{1}{(1-z)^{\alpha+1}}
\exp\left(-\frac{xz}{1-z}\right),
\qquad |z|<1,
\]
with \(\alpha=d+2\) and \(x=2r\), we obtain
\[
\sum_{n=0}^{\infty}L_n^{(d+2)}(2r)z^n
=
\frac{1}{(1-z)^{d+3}}
\exp\left(-\frac{2rz}{1-z}\right).
\]
Hence, using \eqref{def-Phi-nd} and integrating term by term,
\begin{equation}\label{gf-Phi-d}
\sum_{n=0}^\infty \Phi_n^{(d)}(r)z^n
=
\frac{(1-z)+r(1+z)}
{(1-z)^{d+2}(1+z)^2}
\exp\left(
-\frac{1+z}{1-z}r
\right),
\qquad
r>0,\quad |z|<1.
\end{equation}
Using \eqref{def-g-nm-d} and \eqref{gf-Phi-d}, a direct computation gives,
for \(|z|,|w|<1\),
\begin{equation}\label{gf-g-d}
\begin{aligned}
\sum_{n,m=0}^\infty g_{n,m}^{(d)}z^nw^m
={}&
\frac{
d\,\Gamma(d-1)
}{
2^{d-1}
(1-z^2)^2(1-w^2)^2(1-zw)^{d-1}
}
\\
&+
\frac{
\Gamma(d+1)
}{
2^{d+1}
(1-z^2)(1-w^2)(1-zw)^{d+1}
}.
\end{aligned}
\end{equation}
The right-hand side of \eqref{gf-g-d} is invariant under
\[
(z,w)\longmapsto(-z,-w).
\]
Therefore,
\[
g_{n,m}^{(d)}
=
(-1)^{n+m}g_{n,m}^{(d)},
\qquad
n,m\in\mathbb N_0.
\]
In particular,
\[
g_{n,m}^{(d)}=0
\qquad
\text{whenever }n+m\text{ is odd},
\]
which proves \eqref{g-even-odd}.

We next prove \eqref{g-even-even}. By symmetry, it is enough to assume
\(j\le k\). Using
\[
\frac{1}{(1-z^2)^2}
=
\sum_{p=0}^\infty (p+1)z^{2p},
\qquad
\frac{1}{1-z^2}
=
\sum_{p=0}^\infty z^{2p},
\]
and
\[
\frac{1}{(1-zw)^\alpha}
=
\sum_{q=0}^\infty
\frac{\Gamma(q+\alpha)}
{\Gamma(\alpha)\Gamma(q+1)}
(zw)^q,
\qquad
\alpha\in\{d-1,d+1\},
\]
the Cauchy product of these absolutely convergent series, followed by
comparison of the coefficients of \(z^{2j}w^{2k}\) in
\eqref{gf-g-d}, gives
\begin{equation}\label{gd-even-even}
g_{2j,2k}^{(d)}
=
\frac{d}{2^{d-1}}
\sum_{m=0}^j
(j-m+1)(k-m+1)
\frac{\Gamma(2m+d-1)}{\Gamma(2m+1)}
+
\frac{1}{2^{d+1}}
\sum_{m=0}^j
\frac{\Gamma(2m+d+1)}{\Gamma(2m+1)}.
\end{equation}
By \eqref{eq-hj-gammaj},
\[
\begin{aligned}
&\frac{
(4j+d+3)(4k+d+3)
}{
(d-1)^2
}
h_j^{(d)}
\\
&\qquad=
\frac{
(4j+d+3)(4k+d+3)
}{
2^{d+1}
}
\sum_{m=0}^j
\frac{
\Gamma(2m+d+1)
}{
\Gamma(2m+1)(4m+d-1)(4m+d+3)
}.
\end{aligned}
\]
Set
\[
\rho_m
=
\frac{\Gamma(2m+d-1)}{\Gamma(2m+1)},
\qquad
m\in\mathbb N_0.
\]
Using \eqref{gd-even-even} and
\[
\Gamma(2m+d+1)
=
(2m+d)(2m+d-1)\Gamma(2m+d-1),
\]
we obtain
\begin{equation}\label{eq:g2j-2k}
g_{2j,2k}^{(d)}
-
\frac{
(4j+d+3)(4k+d+3)
}{
(d-1)^2
}
h_j^{(d)}
=
\frac{1}{2^{d+1}}\sum_{m=0}^j \Theta_m,
\end{equation}
where
\[
\Theta_m
=
\rho_m
\Bigg[
\frac{
4(2m+d)(2m+d-1)(j-m)(k-m)
}{
4m+d+3
}
-
\frac{
8m(2m-1)(j-m+1)(k-m+1)
}{
4m+d-1
}
\Bigg].
\]
Now set
\[
F_m
=
\frac{
8m(2m-1)(j-m+1)(k-m+1)
}{
4m+d-1
}
\rho_m,
\qquad
0\le m\le j+1.
\]
Since
\[
\rho_{m+1}
=
\frac{
(2m+d)(2m+d-1)
}{
(2m+2)(2m+1)
}
\rho_m,
\]
we obtain
\[
\Theta_m
=
F_{m+1}-F_m,
\qquad
0\le m\le j.
\]
Therefore,
\[
\sum_{m=0}^j\Theta_m
=
F_{j+1}-F_0
=
0,
\]
because
\[
F_0=F_{j+1}=0.
\]
Together with \eqref{eq:g2j-2k}, this yields
\[
g_{2j,2k}^{(d)}
=
\frac{
(4j+d+3)(4k+d+3)
}{
(d-1)^2
}
h_j^{(d)}.
\]
Since \(j\le k\), we have
\[
h_j^{(d)}
=
h_{\min\{j,k\}}^{(d)},
\]
and hence \eqref{g-even-even} follows.

We finally prove \eqref{g-odd-odd}. By symmetry, it is enough to assume
\(j\le k\). Using the same series expansions as above, the Cauchy
product, followed by comparison of the coefficients of
\(z^{2j+1}w^{2k+1}\) in \eqref{gf-g-d}, gives
\begin{equation}\label{gd-odd-odd}
g_{2j+1,2k+1}^{(d)}
=
\frac{d}{2^{d-1}}
\sum_{m=0}^j
(j-m+1)(k-m+1)
\frac{\Gamma(2m+d)}{\Gamma(2m+2)}
+
\frac{1}{2^{d+1}}
\sum_{m=0}^j
\frac{\Gamma(2m+d+2)}{\Gamma(2m+2)}.
\end{equation}
By \eqref{def-hhat-jd},
\[
\begin{aligned}
&\frac{
(4j+d+5)(4k+d+5)
}{
(d-1)^2
}
\widehat h_j^{(d)}
\\
&\qquad=
\frac{
(4j+d+5)(4k+d+5)
}{
2^{d+1}
}
\sum_{m=0}^j
\frac{
\Gamma(2m+d+2)
}{
\Gamma(2m+2)(4m+d+1)(4m+d+5)
}.
\end{aligned}
\]
Set
\[
\sigma_m
=
\frac{\Gamma(2m+d)}{\Gamma(2m+2)},
\qquad
m\in\mathbb N_0.
\]
Using \eqref{gd-odd-odd} and
\[
\Gamma(2m+d+2)
=
(2m+d+1)(2m+d)\Gamma(2m+d),
\]
we obtain
\begin{equation}\label{eq:g2j1-2k1}
g_{2j+1,2k+1}^{(d)}
-
\frac{
(4j+d+5)(4k+d+5)
}{
(d-1)^2
}
\widehat h_j^{(d)}
=
\frac{1}{2^{d+1}}
\sum_{m=0}^j\Phi_m,
\end{equation}
where
\[
\Phi_m
=
\sigma_m
\Bigg[
\frac{
4(2m+d+1)(2m+d)(j-m)(k-m)
}{
4m+d+5
}
-
\frac{
8m(2m+1)(j-m+1)(k-m+1)
}{
4m+d+1
}
\Bigg].
\]
Now set
\[
Z_m
=
\frac{
8m(2m+1)(j-m+1)(k-m+1)
}{
4m+d+1
}
\sigma_m,
\qquad
0\le m\le j+1.
\]
Since
\[
\sigma_{m+1}
=
\frac{
(2m+d+1)(2m+d)
}{
(2m+3)(2m+2)
}
\sigma_m,
\]
we obtain
\[
\Phi_m
=
Z_{m+1}-Z_m,
\qquad
0\le m\le j.
\]
Since
\[
Z_0=Z_{j+1}=0,
\]
\eqref{eq:g2j1-2k1} yields
\[
g_{2j+1,2k+1}^{(d)}
=
\frac{
(4j+d+5)(4k+d+5)
}{
(d-1)^2
}
\widehat h_j^{(d)}
=
\frac{
(4j+d+5)(4k+d+5)
}{
(d-1)^2
}
\widehat h_{\min\{j,k\}}^{(d)},
\]
and hence \eqref{g-odd-odd} follows.
\end{proof}

For \(y\in H_0^1(0,\infty)\), with expansion \eqref{exp-y-psi-d}, set
\[
y_{\mathrm e}
=
\sum_{j=0}^\infty
a_{2j}\psi_{2j}^{(d)},
\qquad
y_{\mathrm o}
=
\sum_{j=0}^\infty
a_{2j+1}\psi_{2j+1}^{(d)}.
\]

\begin{lemma}
The series defining \(y_{\mathrm e}\) and \(y_{\mathrm o}\) converge in
\(H_0^1(0,\infty)\), and
\begin{equation}\label{y-even-odd-decomposition}
y=y_{\mathrm e}+y_{\mathrm o}.
\end{equation}
Moreover, for every \(c>0\),
\begin{equation}\label{Qdc-even-odd}
Q_{d,c}(y)
=
Q_{d,c}(y_{\mathrm e})
+
Q_{d,c}(y_{\mathrm o}),
\end{equation}
and
\begin{equation}\label{Gd-even-odd}
G_d(y)
=
G_d(y_{\mathrm e})
+
G_d(y_{\mathrm o}).
\end{equation}
\end{lemma}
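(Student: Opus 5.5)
We work in the Hilbert space \(\bigl(H_0^1(0,\infty),\mathcal E_d\bigr)\), where the proof of the preceding lemma showed that the normalized functions \(\psi_n^{(d)}/\sqrt{\lambda_n^{(d)}\omega_n^{(d)}}\) form a complete orthonormal system. The plan has three steps: obtain convergence and the decomposition from orthonormality, split \(Q_{d,c}\) by the diagonal series, and split \(G_d\) by the parity structure of \(g_{n,m}^{(d)}\) together with a continuity argument.

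First we show convergence and \eqref{y-even-odd-decomposition}. The coefficient of \(y\) in this orthonormal system is
\[
\mathcal E_d\bigl(y,\psi_n^{(d)}\bigr)/\sqrt{\lambda_n^{(d)}\omega_n^{(d)}}=\sqrt{\lambda_n^{(d)}\omega_n^{(d)}}\,a_n .
\]
Hence \(y=\sum_n a_n\psi_n^{(d)}\) converges unconditionally in \(H_0^1(0,\infty)\), with \(\sum_n\lambda_n^{(d)}\omega_n^{(d)}|a_n|^2=E_d(y)<\infty\) by \eqref{Ed-series}. Any subseries of an orthonormal expansion with square-summable coefficients also converges. So \(y_{\mathrm e}\) and \(y_{\mathrm o}\) converge in \(H_0^1\), and \(y=y_{\mathrm e}+y_{\mathrm o}\) because the partial sums of \(y\) are sums of partial sums of the two subseries. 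Since \(H_0^1\)-convergence implies convergence in \(L^2(dr/r)\) (via \(D\le\|\cdot\|_{H^1}^2\)), the coefficients \eqref{def-an-d} of \(y_{\mathrm e}\) are \(a_{2j}\) at even indices and \(0\) at odd indices; the analogous statement holds for \(y_{\mathrm o}\).

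Second, we prove \eqref{Qdc-even-odd}. We apply \eqref{Qdc-series} to each of \(y\), \(y_{\mathrm e}\), \(y_{\mathrm o}\). The series \(\sum_n(\lambda_n^{(d)}-c)\omega_n^{(d)}|a_n|^2\) converges absolutely, being dominated by \((1+c/\lambda_0^{(d)})\sum_n\lambda_n^{(d)}\omega_n^{(d)}|a_n|^2\). It can therefore be split into even-index and odd-index parts, and these parts are exactly \(Q_{d,c}(y_{\mathrm e})\) and \(Q_{d,c}(y_{\mathrm o})\).

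Third, we prove \eqref{Gd-even-odd}, which is the only step needing care, since \eqref{Gd-series} is only a limit of square partial sums. Let \(G_d(\cdot,\cdot)\) be the bilinear form associated with \(G_d\), namely the \(L^2(r^{d-2}dr)\) inner product of the maps \(y\mapsto\int_r^\infty y(s)s^{-(d+1)/2}\,ds\). By \eqref{Gd-bound} this form is continuous on \(L^2(dr/r)\). For finite sums, \eqref{g-even-odd} gives \(G_d(\psi_{2j}^{(d)},\psi_{2k+1}^{(d)})=g_{2j,2k+1}^{(d)}=0\). Truncations of \(y_{\mathrm e}\) and \(y_{\mathrm o}\) converge in \(L^2(dr/r)\), so continuity yields \(G_d(y_{\mathrm e},y_{\mathrm o})=0\). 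Expanding \(G_d(y_{\mathrm e}+y_{\mathrm o})\) by bilinearity then gives \eqref{Gd-even-odd}.

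Thus the decomposition and the splitting of \(Q_{d,c}\) follow directly from orthonormality, and the only real point is passing the even--odd orthogonality of \(G_d\) from finite sums to the full series, which the continuity estimate \eqref{Gd-bound} handles.
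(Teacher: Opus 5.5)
Your proposal is correct and follows essentially the same route as the paper: convergence and the decomposition come from the $E_d$-Parseval identity \eqref{Ed-series}, the splitting of $Q_{d,c}$ comes from the diagonal series \eqref{Qdc-series}, and the splitting of $G_d$ comes from the vanishing of the mixed coefficients \eqref{g-even-odd} together with a limit argument based on \eqref{Gd-bound}. The differences are cosmetic. You pass to the limit through the continuous bilinear form, whereas the paper passes to the limit in the square partial sums of \eqref{Gd-series} applied to $y$, $y_{\mathrm e}$, $y_{\mathrm o}$. You also make explicit two points the paper uses tacitly: the identification of the coefficients of $y_{\mathrm e}$ and $y_{\mathrm o}$, and the absolute convergence needed to split \eqref{Qdc-series}.
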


\begin{proof}
By \eqref{Ed-series},
\[
E_d(y)
=
\sum_{n=0}^\infty
\lambda_n^{(d)}\omega_n^{(d)}|a_n|^2
<
\infty.
\]
Since \(E_d^{1/2}\) is an equivalent norm on \(H_0^1(0,\infty)\), the
even and odd subseries converge in \(H_0^1(0,\infty)\). Their sum is the
full expansion of \(y\), and hence \eqref{y-even-odd-decomposition}
follows. Moreover, \eqref{Qdc-series} gives
\[
Q_{d,c}(y)
=
Q_{d,c}(y_{\mathrm e})
+
Q_{d,c}(y_{\mathrm o}),
\]
which proves \eqref{Qdc-even-odd}.

On the other hand, by \eqref{g-even-odd}, all mixed even--odd terms
vanish in the finite sums appearing in \eqref{Gd-series}. Passing to the
limit as \(N\to\infty\) and applying \eqref{Gd-series} to
\(y_{\mathrm e}\) and \(y_{\mathrm o}\), we obtain
\eqref{Gd-even-odd}.
\end{proof}

\subsection{The odd component}

For \(0<c<d+5\) and \(j\in\mathbb N_0\), set
\begin{equation}\label{def-gammahat-jd}
\widehat\gamma_j^{(d)}(c)
=
\frac{
2^{d+3}c(4j+d+5)^2\Gamma(2j+2)
}{
(d-1)^2(4j+d+5-c)\Gamma(2j+d+4)
}.
\end{equation}
For \(j,k\in\mathbb N_0\), define
\begin{equation}\label{def-shat-jk}
\widehat s_{j,k}^{(d)}(c)
=
\sqrt{\widehat\gamma_j^{(d)}(c)}
\,\widehat h_{\min\{j,k\}}^{(d)}
\sqrt{\widehat\gamma_k^{(d)}(c)},
\end{equation}
where \(\widehat h_j^{(d)}\) is defined by \eqref{def-hhat-jd}.

\begin{lemma}\label{L5.5}
Let \(0<c<d+5\), and let \(y\in H_0^1(0,\infty)\) have the expansion
\eqref{exp-y-psi-d}. Let
\[
y_{\mathrm o}
=
\sum_{j=0}^\infty
a_{2j+1}\psi_{2j+1}^{(d)}
\]
be its odd component, and set
\begin{equation}\label{def-bj-odd}
b_j
=
\sqrt{
(4j+d+5-c)\omega_{2j+1}^{(d)}
}\,a_{2j+1},
\qquad
j\in\mathbb N_0.
\end{equation}
Then \(b=(b_j)_{j\in\mathbb N_0}\in\ell^2(\mathbb N_0)\), and
\begin{equation}\label{Qdc-odd-l2}
Q_{d,c}(y_{\mathrm o})
=
\sum_{j=0}^\infty |b_j|^2.
\end{equation}
Moreover,
\begin{equation}\label{Gd-odd-l2}
cG_d(y_{\mathrm o})
=
\lim_{N\to\infty}
\sum_{j,k=0}^N
\widehat s_{j,k}^{(d)}(c)b_jb_k.
\end{equation}
\end{lemma}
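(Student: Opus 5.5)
The plan is to verify the lemma coefficientwise, using only the series representations \eqref{Qdc-series}, \eqref{Gd-series} and the odd--odd formula \eqref{g-odd-odd}.

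\emph{Step 1: \(b\in\ell^2\) and \eqref{Qdc-odd-l2}.} Since \(\lambda_{2j+1}^{(d)}=4j+d+5\), we have \(\lambda_{2j+1}^{(d)}-c=4j+d+5-c>0\) for \(0<c<d+5\), so the square roots in \eqref{def-bj-odd} are real. The expansion of \(y_{\mathrm o}\) has coefficients \(a_{2j+1}\) at odd indices and zero at even indices. Applying \eqref{Qdc-series} to \(y_{\mathrm o}\) therefore gives
\[
Q_{d,c}(y_{\mathrm o})=\sum_{j\ge0}(4j+d+5-c)\,\omega_{2j+1}^{(d)}|a_{2j+1}|^2=\sum_{j\ge0}|b_j|^2.
\]
This sum is finite because \(0<4j+d+5-c\le\lambda_{2j+1}^{(d)}\) and \(E_d(y)<\infty\) by \eqref{Ed-series}. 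Hence \(b\in\ell^2(\mathbb N_0)\).

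\emph{Step 2: \(G_d\) of the odd part.} Apply \eqref{Gd-series} to \(y_{\mathrm o}\). Its coefficients vanish at even indices, so along the subsequence \(N=2M+1\) we obtain
\[
G_d(y_{\mathrm o})=\lim_{M\to\infty}\sum_{j,k=0}^{M}g^{(d)}_{2j+1,2k+1}a_{2j+1}a_{2k+1}.
\]
The full limit exists, so any subsequence gives the same value. Substituting \(a_{2j+1}=b_j/\sqrt{(4j+d+5-c)\omega_{2j+1}^{(d)}}\) and using \eqref{g-odd-odd}, it remains to check the identity
\[
\frac{c\,(4j+d+5)^2}{(d-1)^2(4j+d+5-c)\,\omega^{(d)}_{2j+1}}=\widehat\gamma_j^{(d)}(c).
\]
This follows from \(\omega_{2j+1}^{(d)}=\Gamma(2j+d+4)/(2^{d+3}\Gamma(2j+2))\), read off from \eqref{def-omega-nd}. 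Once the identity holds, each term becomes \(c\,g_{2j+1,2k+1}a_{2j+1}a_{2k+1}=\widehat s^{(d)}_{j,k}(c)\,b_jb_k\). Multiplying by \(c\) then gives \eqref{Gd-odd-l2}.

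The only real obstacle is bookkeeping: keeping track of the factors \((4j+d+5)(4k+d+5)\) and confirming the \(\Gamma\)-ratio in \(\widehat\gamma_j^{(d)}\). The limit step needs no extra work because the convergence in \eqref{Gd-series} was already established.
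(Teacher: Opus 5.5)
Your proposal is correct and follows the paper's proof essentially step for step. You use \eqref{Qdc-series} with $\lambda_{2j+1}^{(d)}=4j+d+5$ for the quadratic part, then \eqref{Gd-series} applied to $y_{\mathrm o}$ together with \eqref{g-odd-odd} and the termwise identity $c\,g_{2j+1,2k+1}^{(d)}a_{2j+1}a_{2k+1}=\widehat s_{j,k}^{(d)}(c)\,b_jb_k$. Your explicit bound $\sum_j|b_j|^2\le E_d(y)$ and the passage along $N=2M+1$ are slightly more careful versions of what the paper does implicitly when it says ``reindexing.''
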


\begin{proof}
By \eqref{def-lambda-nd} and \eqref{Qdc-series},
\[
Q_{d,c}(y_{\mathrm o})
=
\sum_{j=0}^\infty
(4j+d+5-c)\omega_{2j+1}^{(d)}
|a_{2j+1}|^2.
\]
Since \(0<c<d+5\), all the coefficients in this series are positive.
Thus, by \eqref{def-bj-odd},
\[
b\in\ell^2(\mathbb N_0),
\]
and \eqref{Qdc-odd-l2} follows.

On the other hand, applying \eqref{Gd-series} to \(y_{\mathrm o}\) and
using \eqref{g-odd-odd}, we obtain
\[
cG_d(y_{\mathrm o})
=
\lim_{N\to\infty}
\sum_{j,k=0}^N
c\,g_{2j+1,2k+1}^{(d)}
a_{2j+1}a_{2k+1}.
\]
By \eqref{def-bj-odd}, for \(0\le j,k\le N\),
\[
\begin{aligned}
c\,g_{2j+1,2k+1}^{(d)}
a_{2j+1}a_{2k+1}
&=
\frac{
c\,g_{2j+1,2k+1}^{(d)}b_jb_k
}{
\sqrt{
(4j+d+5-c)\omega_{2j+1}^{(d)}
}
\sqrt{
(4k+d+5-c)\omega_{2k+1}^{(d)}
}
}
\\
&=
\sqrt{\widehat\gamma_j^{(d)}(c)}
\,\widehat h_{\min\{j,k\}}^{(d)}
\sqrt{\widehat\gamma_k^{(d)}(c)}
\,b_jb_k
\\
&=
\widehat s_{j,k}^{(d)}(c)b_jb_k,
\end{aligned}
\]
where we used \eqref{def-omega-nd}, \eqref{g-odd-odd},
\eqref{def-gammahat-jd}, and \eqref{def-shat-jk}.
Therefore, \eqref{Gd-odd-l2} follows.
\end{proof}

\begin{lemma}\label{L5.6}
For every \(N\in\mathbb N_0\) and every
\((b_0,\ldots,b_N)\in\mathbb R^{N+1}\),
\begin{equation}\label{odd-form-bound-critical}
0
\le
\sum_{j,k=0}^N
\widehat s_{j,k}^{(d)}(d+1)b_jb_k
\le
\frac1d
\sum_{j=0}^N |b_j|^2.
\end{equation}
Consequently, for every \(0<c\le d+1\),
\begin{equation}\label{odd-form-bound}
0
\le
\sum_{j,k=0}^N
\widehat s_{j,k}^{(d)}(c)b_jb_k
\le
\frac1d
\sum_{j=0}^N |b_j|^2.
\end{equation}
\end{lemma}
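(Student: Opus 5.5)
The plan is to exploit the min-kernel structure of \(\widehat h_{\min\{j,k\}}^{(d)}\) and then reduce the upper bound to a discrete weighted Hardy inequality. First, set \(\widehat\varepsilon_m^{(d)}=\widehat h_m^{(d)}-\widehat h_{m-1}^{(d)}>0\), with \(\widehat h_{-1}^{(d)}=0\). These increments are positive by \eqref{def-hhat-jd}. Writing \(h_{\min\{j,k\}}=\sum_{m\le\min\{j,k\}}\widehat\varepsilon_m\), we get, with \(\beta_j=\sqrt{\widehat\gamma_j^{(d)}(c)}\,b_j\),
\[
\sum_{j,k=0}^N\widehat s_{j,k}^{(d)}(c)b_jb_k=\sum_{m=0}^N\widehat\varepsilon_m^{(d)}\Bigl(\sum_{j=m}^N\beta_j\Bigr)^2\ge0 .
\]
This gives the lower bound in \eqref{odd-form-bound-critical}. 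For the consequence \eqref{odd-form-bound}, note that \(c\mapsto\widehat\gamma_j^{(d)}(c)=\kappa_j\,c/(4j+d+5-c)\) is increasing on \((0,d+5)\). By the same representation, the form is then monotone increasing in \(c\) for each fixed \(b\). Hence the bound at \(c=d+1\) implies it for all \(0<c\le d+1\).

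For the upper bound at \(c=d+1\), the finite matrix is \(\Gamma^{1/2}H\Gamma^{1/2}\), where \(\Gamma=\mathrm{diag}(\widehat\gamma_j)\) and \(H=(\widehat h_{\min\{j,k\}})_{j,k\le N}\). The claim \(\Gamma^{1/2}H\Gamma^{1/2}\le\frac1d I\) is equivalent to \(H\le\frac1d\Gamma^{-1}\), hence to \(d\,\Gamma\le H^{-1}\). The min-matrix \(H\) has an explicit tridiagonal inverse, the quadratic form of a weighted discrete Laplacian. The inequality therefore becomes a discrete Hardy inequality for \(x\in\mathbb R^{N+1}\):
\[
d\sum_{j=0}^N\widehat\gamma_j x_j^2\le \frac{x_N^2}{\widehat h_N}+\sum_{m=1}^{N}\frac{(x_m-x_{m-1})^2}{\widehat\varepsilon_m}\quad\text{(in the form dual to the sums }\textstyle\sum_{j\ge m}\beta_j).
\]
More directly, I would set \(B_m=\sum_{j=m}^N\beta_j\), so that \(\beta_j=B_j-B_{j+1}\) with \(B_{N+1}=0\). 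The target is then
\[
d\sum_m\widehat\varepsilon_mB_m^2\le\sum_j\widehat\gamma_j^{-1}(B_j-B_{j+1})^2 .
\]

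I would prove this by a Schur test, equivalently ground-state substitution. The goal is a positive sequence \(w\) with
\[
\sum_k\widehat s_{j,k}^{(d)}(d+1)\,w_k\le\frac1d\,w_j\quad\text{for all } j .
\]
Since the form is monotone in \(N\), it suffices to do this for the infinite matrix with tails controlled by summability. The decay rates \(\widehat\gamma_j\sim j^{-(d+1)}\) and \(\widehat h_j\sim j^{d-1}\) make the diagonal terms \(O(j^{-2})\). The natural candidate is \(w_k=\sqrt{\widehat\gamma_k}\,\phi_k\), where \(\phi\) is a discrete analogue of the extremal profile \((1+r)e^{-r}\) from Lemma~\ref{L3.3}. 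As a first try I would take \(\phi_k\) constant, or \(\phi_k=1/(4k+d+5)\). With such a choice, \(\sum_k h_{\min\{j,k\}}\widehat\gamma_k\phi_k\) splits into \(\sum_{k\le j}\widehat h_k\widehat\gamma_k\phi_k+\widehat h_j\sum_{k>j}\widehat\gamma_k\phi_k\). The Gamma-ratio structure, as with the telescoping \(F_{m+1}-F_m\) used for \eqref{g-even-even}, should make both sums telescope into closed form. One then checks the resulting inequality between explicit rational and Gamma expressions.

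The main obstacle is exactly this choice of test sequence and the verification of the resulting inequality uniformly in \(j\) and \(d\ge2\). As a sanity check, the diagonal entry at \(j=0\) equals \((d+5)/((d+2)(d+3))\), which is \(<1/d\), so the constant \(1/d\) leaves room. If the constant-type weights fail, I would fall back on the functional interpretation: by Lemma~\ref{L5.5}, the bound is equivalent to \((d+1)G_d(y_{\mathrm o})\le\frac1dQ_{d,d+1}(y_{\mathrm o})\). One could then try to prove this continuous inequality via the weighted Hardy inequality \eqref{Gd-bound} combined with \(Q_{d,d+1}(y_{\mathrm o})\ge\frac{4}{d+5}E_d(y_{\mathrm o})\) on odd modes, which follows since \(\lambda_{2j+1}\ge d+5\).
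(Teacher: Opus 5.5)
The lower bound is the same as the paper's, and a Schur test is also the paper's route to the upper bound. But the step you call ``the main obstacle'' is the whole content of the lemma, and neither of your candidate weights works.

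The constant $1/d$ is sharp. The paper takes $p_j=\frac{j+1}{4j+d+5}\sqrt{\widehat\gamma_j^{(d)}(d+1)}$, a positive $\ell^2$ sequence with $\sum_k\widehat s_{j,k}^{(d)}(d+1)p_k=\frac1d p_j$ exactly. So the infinite matrix has norm exactly $1/d$, and there is no slack; your $(0,0)$-entry check is misleading. In your notation the row condition is
$\sum_k\widehat h_{\min\{j,k\}}^{(d)}\widehat\gamma_k^{(d)}(d+1)\phi_k\le\frac1d\phi_j$ for all $j$. Pairing it with $p$ and using symmetry shows that any bounded positive $\phi$ satisfying it must satisfy it with equality for every $j$. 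So $\phi$ must be an exact eigen-solution, and guessing cannot succeed.

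Concretely, both of your candidates fail:
\begin{itemize}
\item For $\phi_k=1/(4k+d+5)$, the left side is at least $\widehat h_0^{(d)}\widehat\gamma_0^{(d)}(d+1)\phi_0>0$ for all $j$, while $\phi_j/d\to0$.
\item For $\phi\equiv1$ and $d=2$, already at $j=2$ the left side is at least $\frac{7}{20}+\frac{33}{280}+\frac{5}{84}=\frac{443}{840}>\frac12$.
\end{itemize}

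The missing idea is the choice $\phi_k=\frac{k+1}{4k+d+5}$. At $c=d+1$ one has $4k+d+5-c=4(k+1)$, so
$\widehat\gamma_k^{(d)}(d+1)\phi_k=\frac{2^{d+1}(d+1)}{d(d-1)^2}(r_k-r_{k+1})$ with $r_k=\Gamma(2k+2)/\Gamma(2k+d+2)$. Hence the tails $\sum_{k\ge m}$ are proportional to $r_m$. Next, $\delta_m^{(d)}r_m$ is proportional to $\frac{1}{(4m+d+1)(4m+d+5)}$, which telescopes and gives the row sum $\frac{j+1}{d(4j+d+5)}=\frac1d\phi_j$ exactly.

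Your fallback does not close the gap either. Your comparison $Q_{d,d+1}(y_{\mathrm o})\ge\frac{4}{d+5}E_d(y_{\mathrm o})$, combined with $E_d(y_{\mathrm o})\ge(d+5)D(y_{\mathrm o})$ and \eqref{Gd-bound}, yields at best
$(d+1)G_d(y_{\mathrm o})\le\frac{d+1}{(d-1)^2}Q_{d,d+1}(y_{\mathrm o})$. But $\frac{d+1}{(d-1)^2}>\frac1d$ for every $d\ge2$. For $d=2$ it equals $3$, so not even the constant $1$ needed in Corollary~\ref{C5.7} follows.

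A smaller point concerns the step from $c=d+1$ to general $c$. The sum-of-squares representation does not show that the form increases in $c$ for fixed $b$ when $b$ changes sign. The $\widehat\gamma_j^{(d)}$ increase at different relative rates, so the inner sums $\sum_{j\ge m}\sqrt{\widehat\gamma_j^{(d)}(c)}\,b_j$ can shrink in absolute value as $c$ grows. Either pass to $|b|$ using the positivity of the entries, or write $\widehat s_{j,k}^{(d)}(c)=\eta_j\,\widehat s_{j,k}^{(d)}(d+1)\,\eta_k$ with $0<\eta_j\le1$, as the paper does.
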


\begin{proof}
We first prove \eqref{odd-form-bound-critical}. Set
\[
\widehat h_{-1}^{(d)}=0
\]
and
\[
\delta_m^{(d)}
=
\widehat h_m^{(d)}-\widehat h_{m-1}^{(d)},
\qquad
m\in\mathbb N_0.
\]
By \eqref{def-hhat-jd},
\begin{equation}\label{delta-md}
\delta_m^{(d)}
=
\frac{(d-1)^2}{2^{d+1}}
\frac{
\Gamma(2m+d+2)
}{
\Gamma(2m+2)(4m+d+1)(4m+d+5)
}
>0.
\end{equation}
Hence, for every \(j,k\in\mathbb N_0\),
\begin{equation}\label{htilde-min-series}
\widehat h_{\min\{j,k\}}^{(d)}
=
\sum_{m=0}^{\min\{j,k\}}
\delta_m^{(d)},
\end{equation}
and therefore, by \eqref{def-shat-jk},
\[
\begin{aligned}
\sum_{j,k=0}^N
\widehat s_{j,k}^{(d)}(d+1)b_jb_k
&=
\sum_{m=0}^N
\delta_m^{(d)}
\left(
\sum_{j=m}^N
\sqrt{\widehat\gamma_j^{(d)}(d+1)}\,b_j
\right)^2
\\
&\ge0.
\end{aligned}
\]

To prove the upper bound, define
\begin{equation}\label{pj-def}
p_j
=
\frac{j+1}{4j+d+5}
\sqrt{\widehat\gamma_j^{(d)}(d+1)},
\qquad
j\in\mathbb N_0.
\end{equation}
We claim that
\begin{equation}\label{Shat-Schur-identity}
\sum_{k=0}^\infty
\widehat s_{j,k}^{(d)}(d+1)p_k
=
\frac1d p_j,
\qquad
j\in\mathbb N_0.
\end{equation}
Indeed, by \eqref{def-gammahat-jd} and \eqref{pj-def},
\[
\sqrt{\widehat\gamma_k^{(d)}(d+1)}\,p_k
=
\frac{
2^{d+1}(d+1)
}{
(d-1)^2
}
(4k+d+5)
\frac{\Gamma(2k+2)}{\Gamma(2k+d+4)},
\qquad
k\in\mathbb N_0.
\]
Set
\[
r_k
=
\frac{\Gamma(2k+2)}{\Gamma(2k+d+2)},
\qquad
k\in\mathbb N_0.
\]
Using \eqref{eq-gamma-functional-relation}, we obtain
\[
r_k-r_{k+1}
=
d(4k+d+5)
\frac{\Gamma(2k+2)}{\Gamma(2k+d+4)}.
\]
Hence
\[
\sqrt{\widehat\gamma_k^{(d)}(d+1)}\,p_k
=
\frac{
2^{d+1}(d+1)
}{
d(d-1)^2
}
(r_k-r_{k+1}).
\]
On the other hand, by Stirling's formula,
\[
r_k\longrightarrow0
\qquad
\text{as }k\to\infty.
\]
Therefore,
\[
\begin{aligned}
\sum_{k=m}^\infty
\sqrt{\widehat\gamma_k^{(d)}(d+1)}\,p_k
&=
\frac{
2^{d+1}(d+1)
}{
d(d-1)^2
}
r_m
\\
&=
\frac{
2^{d+1}(d+1)
}{
d(d-1)^2
}
\frac{\Gamma(2m+2)}{\Gamma(2m+d+2)}.
\end{aligned}
\]
Thus, by \eqref{delta-md}, \eqref{htilde-min-series}, and \eqref{pj-def},
\[
\begin{aligned}
\sum_{k=0}^\infty
\widehat h_{\min\{j,k\}}^{(d)}
\sqrt{\widehat\gamma_k^{(d)}(d+1)}\,p_k
&=
\sum_{m=0}^j
\delta_m^{(d)}
\sum_{k=m}^\infty
\sqrt{\widehat\gamma_k^{(d)}(d+1)}\,p_k
\\
&=
\frac{d+1}{d}
\sum_{m=0}^j
\frac{1}{(4m+d+1)(4m+d+5)}
\\
&=
\frac{d+1}{4d}
\left(
\frac{1}{d+1}
-
\frac{1}{4j+d+5}
\right)
\\
&=
\frac{j+1}{d(4j+d+5)}
\\
&=
\frac{1}{\sqrt{\widehat\gamma_j^{(d)}(d+1)}}
\frac1d p_j.
\end{aligned}
\]
Multiplying by
\(\sqrt{\widehat\gamma_j^{(d)}(d+1)}\) and using
\eqref{def-shat-jk}, we obtain \eqref{Shat-Schur-identity}.

Since \(p_j>0\), Young's inequality gives
\[
2|b_jb_k|
\le
\frac{p_k}{p_j}|b_j|^2
+
\frac{p_j}{p_k}|b_k|^2.
\]
Using the symmetry and the nonnegativity of the entries
\(\widehat s_{j,k}^{(d)}(d+1)\), followed by
\eqref{Shat-Schur-identity}, we obtain
\[
\begin{aligned}
\sum_{j,k=0}^N
\widehat s_{j,k}^{(d)}(d+1)b_jb_k
&\le
\sum_{j,k=0}^N
\widehat s_{j,k}^{(d)}(d+1)|b_jb_k|
\\
&\le
\sum_{j=0}^N
\frac{|b_j|^2}{p_j}
\sum_{k=0}^N
\widehat s_{j,k}^{(d)}(d+1)p_k
\\
&\le
\frac1d
\sum_{j=0}^N|b_j|^2.
\end{aligned}
\]
This proves \eqref{odd-form-bound-critical}.

Now let \(0<c\le d+1\), and set
\[
\eta_j(c)
=
\left(
\frac{
\widehat\gamma_j^{(d)}(c)
}{
\widehat\gamma_j^{(d)}(d+1)
}
\right)^{1/2},
\qquad
j\in\mathbb N_0.
\]
Since
\[
t\longmapsto
\frac{t}{4j+d+5-t}
\]
is increasing on \((0,d+5)\), it follows from
\eqref{def-gammahat-jd} that
\[
t\longmapsto\widehat\gamma_j^{(d)}(t)
\]
is increasing on \((0,d+5)\). Hence
\[
0<\eta_j(c)\le1.
\]
Moreover, by \eqref{def-shat-jk},
\[
\widehat s_{j,k}^{(d)}(c)
=
\eta_j(c)
\widehat s_{j,k}^{(d)}(d+1)
\eta_k(c).
\]
Applying \eqref{odd-form-bound-critical} to
\[
\bigl(\eta_0(c)b_0,\ldots,\eta_N(c)b_N\bigr),
\]
we obtain
\[
0
\le
\sum_{j,k=0}^N
\widehat s_{j,k}^{(d)}(c)b_jb_k
\le
\frac1d
\sum_{j=0}^N
\eta_j(c)^2|b_j|^2
\le
\frac1d
\sum_{j=0}^N|b_j|^2,
\]
which proves \eqref{odd-form-bound}.
\end{proof}

\begin{corollary}\label{C5.7}
Let \(0<c\le d+1\), and let \(y\in H_0^1(0,\infty)\) have the
decomposition \eqref{y-even-odd-decomposition}. Then
\begin{equation}\label{odd-component-inequality}
Q_{d,c}(y_{\mathrm o})
\ge
cG_d(y_{\mathrm o}).
\end{equation}
Consequently,
\begin{equation}\label{Cd1-even-characterization}
\mathcal C_{d,1}\ge c
\end{equation}
if and only if
\begin{equation}\label{equiv-even-reduction}
Q_{d,c}(y_{\mathrm e})
\ge
cG_d(y_{\mathrm e})
\qquad
\text{for every }y\in H_0^1(0,\infty).
\end{equation}
\end{corollary}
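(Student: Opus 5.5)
The plan is to combine Lemma~\ref{L5.5} with Lemma~\ref{L5.6} to get the odd inequality, and then use the splittings \eqref{Qdc-even-odd}--\eqref{Gd-even-odd} together with Lemma~\ref{L4.4} to obtain the equivalence.

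\emph{Step 1: the odd inequality \eqref{odd-component-inequality}.} Fix \(0<c\le d+1\), so in particular \(0<c<d+5\). Define \(b=(b_j)\in\ell^2(\mathbb N_0)\) by \eqref{def-bj-odd}. Then Lemma~\ref{L5.5} gives \(Q_{d,c}(y_{\mathrm o})=\sum_j|b_j|^2\) and \(cG_d(y_{\mathrm o})=\lim_N\sum_{j,k\le N}\widehat s^{(d)}_{j,k}(c)b_jb_k\). By \eqref{odd-form-bound}, each truncated sum is bounded by
\[
\frac1d\sum_{j\le N}|b_j|^2\le \frac1d\sum_{j\ge0}|b_j|^2 .
\]
Passing to the limit yields
\[
cG_d(y_{\mathrm o})\le \frac1d\,Q_{d,c}(y_{\mathrm o})\le Q_{d,c}(y_{\mathrm o}),
\]
where the last step uses \(d\ge2\) and \(Q_{d,c}(y_{\mathrm o})\ge0\).

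\emph{Step 2: the equivalence.} By Lemma~\ref{L4.4}, \(\mathcal C_{d,1}\ge c\) is equivalent to \(Q_{d,c}(y)\ge cG_d(y)\) for all \(y\in H_0^1(0,\infty)\).

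For the direct implication, take any \(y\). The even part \(y_{\mathrm e}\) belongs to \(H_0^1(0,\infty)\) by the decomposition lemma, so applying (ii) of Lemma~\ref{L4.4} to \(y_{\mathrm e}\) itself gives \eqref{equiv-even-reduction}.

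For the converse, take any \(y\). By \eqref{Qdc-even-odd} and \eqref{Gd-even-odd},
\[
Q_{d,c}(y)-cG_d(y)=\bigl(Q_{d,c}(y_{\mathrm e})-cG_d(y_{\mathrm e})\bigr)+\bigl(Q_{d,c}(y_{\mathrm o})-cG_d(y_{\mathrm o})\bigr).
\]
The first bracket is nonnegative by the hypothesis \eqref{equiv-even-reduction}, and the second by Step~1. Hence \(Q_{d,c}(y)\ge cG_d(y)\) for all \(y\), and Lemma~\ref{L4.4} gives \(\mathcal C_{d,1}\ge c\).

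No step here is a genuine obstacle, since all the hard work is already contained in Lemma~\ref{L5.6}. The only point needing care is the limit passage in Step~1: the bound \(\frac1d\sum_{j\le N}|b_j|^2\) is dominated by the full \(\ell^2\) norm uniformly in \(N\), so the inequality survives the limit defining \(G_d(y_{\mathrm o})\).
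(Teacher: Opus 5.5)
Your proposal is correct and follows the paper's proof essentially verbatim: Step~1 combines \eqref{Gd-odd-l2}, \eqref{odd-form-bound}, \eqref{Qdc-odd-l2} and $d\ge2$ exactly as the paper does, and Step~2 is the paper's appeal to Lemma~\ref{L4.4} together with \eqref{Qdc-even-odd}--\eqref{Gd-even-odd}. The only difference is that you write out both directions of the equivalence explicitly (for the forward one, applying the inequality to $y_{\mathrm e}\in H_0^1(0,\infty)$), which the paper compresses into a single sentence.
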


\begin{proof}
By \eqref{Gd-odd-l2} and \eqref{odd-form-bound}, letting \(N\to\infty\),
we obtain
\[
cG_d(y_{\mathrm o})
\le
\frac1d
\sum_{j=0}^\infty |b_j|^2,
\]
where \(b=(b_j)_{j\in\mathbb N_0}\) is defined by
\eqref{def-bj-odd}. Hence, by \eqref{Qdc-odd-l2} and \(d\ge2\),
\[
cG_d(y_{\mathrm o})
\le
\frac1d Q_{d,c}(y_{\mathrm o})
\le
Q_{d,c}(y_{\mathrm o}),
\]
which proves \eqref{odd-component-inequality}.

By Lemma~\ref{L4.4}, \eqref{Cd1-even-characterization} is equivalent to
\[
Q_{d,c}(y)
\ge
cG_d(y)
\qquad
\text{for every }y\in H_0^1(0,\infty).
\]
Using \eqref{Qdc-even-odd}, \eqref{Gd-even-odd}, and
\eqref{odd-component-inequality}, this is equivalent to
\eqref{equiv-even-reduction}.
\end{proof}

\subsection{The even component}

In view of Corollary~\ref{C5.7}, it remains to analyze the even
component. This leads naturally to the operator
\[
S_c^{(d)}:\ell^2(\mathbb N_0)\to\ell^2(\mathbb N_0),
\]
defined by \eqref{eq-Sc}--\eqref{eq-sjk}.

We begin by establishing some basic properties of \(S_c^{(d)}\).

\begin{lemma}\label{L5.8}
Let \(0<c<d+3\). Then \(S_c^{(d)}\) is a positive Hilbert--Schmidt
operator on \(\ell^2(\mathbb N_0)\). In particular, \(S_c^{(d)}\) is
bounded and compact.
\end{lemma}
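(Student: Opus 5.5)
The plan is to verify three things directly from the explicit formulas \eqref{eq-hj-gammaj}--\eqref{eq-sjk}: symmetry, finiteness of the Hilbert--Schmidt norm, and nonnegativity of the quadratic form. Boundedness and compactness then follow from the general fact that Hilbert--Schmidt operators are compact.

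\emph{Symmetry and positivity.} Symmetry is immediate, since \(s_{jk}^{(d)}(c)=s_{kj}^{(d)}(c)\). For \(0<c<d+3\) every \(\gamma_j^{(d)}(c)\) is positive, because \(4j+d+3-c>0\). Hence the square roots in \eqref{eq-sjk} are well defined and all entries are nonnegative.

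For positivity, I will use the same telescoping device as in the proof of Lemma~\ref{L5.6}. Write
\[
h_{\min\{j,k\}}^{(d)}=\sum_{m=0}^{\min\{j,k\}}\varepsilon_m^{(d)},
\]
with \(\varepsilon_m^{(d)}>0\) as in \eqref{exp-epsilon-md}. Then, for finitely supported \(a\),
\[
\langle S_c^{(d)}a,a\rangle
=\sum_{m\ge0}\varepsilon_m^{(d)}\Bigl(\sum_{j\ge m}\sqrt{\gamma_j^{(d)}(c)}\,a_j\Bigr)^2\ge0 .
\]
This extends to all of \(\ell^2\) by density, once boundedness is known.

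\emph{Hilbert--Schmidt bound.} I need asymptotics as \(j\to\infty\), which I will get from \(\Gamma(2j+1)/\Gamma(2j+d+3)\asymp j^{-(d+2)}\). This gives
\[
\gamma_j^{(d)}(c)\asymp \frac{(4j)^2}{4j}\,j^{-(d+2)}\asymp j^{-(d+1)} .
\]
Similarly \(\varepsilon_m^{(d)}\asymp m^{d}/m^2=m^{d-2}\), so \(h_j^{(d)}\asymp j^{d-1}\); for \(d\ge2\) this sum is increasing and grows like \(j^{d-1}\).

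By symmetry it suffices to bound twice the sum over \(j\le k\). The key term is
\[
\sum_{j\le k}\gamma_j\,h_j^2\,\gamma_k\lesssim \sum_{j}(1+j)^{-(d+1)}(1+j)^{2d-2}\sum_{k\ge j}(1+k)^{-(d+1)} .
\]
The inner sum is \(\lesssim (1+j)^{-d}\), so the summand is \(\lesssim (1+j)^{-(d+1)+2d-2-d}=(1+j)^{-3}\). This is summable, so \(\|S_c^{(d)}\|_{\mathrm{HS}}<\infty\).

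\emph{Expected obstacle.} The main care point is making the Gamma-ratio asymptotics uniform for each fixed \(c\). For that I will use \(4j+d+3-c\ge d+3-c>0\) together with the elementary bound \(\Gamma(x+a)/\Gamma(x)\asymp x^{a}\), or a crude monotonicity bound valid for all \(j\ge0\) rather than only for large \(j\).
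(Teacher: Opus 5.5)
Your proposal is correct and follows the paper's proof essentially step by step. The Hilbert--Schmidt part uses the same growth bounds $h_j^{(d)}\lesssim (j+1)^{d-1}$ and $\gamma_j^{(d)}(c)\lesssim (j+1)^{-d-1}$, which reduce the double sum over $j\le k$ to $\sum_j (j+1)^{-3}$. Positivity uses the same sum-of-squares decomposition via $h_{\min\{j,k\}}^{(d)}=\sum_{m\le\min\{j,k\}}\varepsilon_m^{(d)}$ on finitely supported vectors, extended to $\ell^2(\mathbb N_0)$ by density once boundedness is known. Your worry about uniformity is harmless: only upper bounds are needed, and for fixed $c$ the finitely many initial indices can be absorbed into the constant, exactly as the paper does with Stirling's formula.
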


\begin{proof}
Throughout the proof, \(C>0\) denotes a constant whose value may change
from line to line and may depend only on \(d\) and \(c\).

We first establish the Hilbert--Schmidt property. By
\eqref{eq-hj-gammaj}, \eqref{eq-gammajd}, and Stirling's formula,
\[
h_j^{(d)}
\le
C(j+1)^{d-1},
\qquad
\gamma_j^{(d)}(c)
\le
C(j+1)^{-d-1},
\qquad
j\in\mathbb N_0.
\]
By \eqref{eq-sjk}, if \(j\le k\), then
\[
\begin{aligned}
\left|s_{j,k}^{(d)}(c)\right|^2
&=
\gamma_j^{(d)}(c)
\bigl(h_j^{(d)}\bigr)^2
\gamma_k^{(d)}(c)
\\
&\le
C
(j+1)^{d-3}
(k+1)^{-d-1}.
\end{aligned}
\]
Therefore, using the symmetry of \(s_{j,k}^{(d)}(c)\),
\[
\begin{aligned}
\sum_{j,k=0}^\infty
\left|s_{j,k}^{(d)}(c)\right|^2
&\le
C
\sum_{j=0}^\infty
(j+1)^{d-3}
\sum_{k=j}^\infty
(k+1)^{-d-1}
\\
&\le
C
\sum_{j=0}^\infty
(j+1)^{-3}
<
\infty.
\end{aligned}
\]
Thus \(S_c^{(d)}\) is Hilbert--Schmidt, and hence bounded and compact.

It remains to prove positivity. By \eqref{eq-hminus1-d} and
\eqref{eq-epsilon-d}, it follows that, for every
\(j,k\in\mathbb N_0\),
\begin{equation}\label{series-h}
h_{\min\{j,k\}}^{(d)}
=
\sum_{m=0}^{\min\{j,k\}}
\varepsilon_m^{(d)}.
\end{equation}
Moreover, by \eqref{exp-epsilon-md},
\[
\varepsilon_m^{(d)}>0,
\qquad
m\in\mathbb N_0.
\]

For \(N\in\mathbb N_0\) and \(x_0,\ldots,x_N\in\mathbb R\),
\eqref{eq-sjk} gives
\begin{equation}\label{pos-truncated}
\begin{aligned}
\sum_{j,k=0}^N
s_{j,k}^{(d)}(c)x_jx_k
&=
\sum_{m=0}^N
\varepsilon_m^{(d)}
\left(
\sum_{j=m}^N
\sqrt{\gamma_j^{(d)}(c)}\,x_j
\right)^2
\\
&\ge0.
\end{aligned}
\end{equation}
Now let \(x=(x_j)_{j\in\mathbb N_0}\in\ell^2(\mathbb N_0)\), and set
\[
x^{(N)}
=
(x_0,\ldots,x_N,0,\ldots).
\]
By \eqref{pos-truncated},
\[
\left\langle
S_c^{(d)}x^{(N)},x^{(N)}
\right\rangle_{\ell^2}
\ge0.
\]
Since \(S_c^{(d)}\) is bounded and
\[
x^{(N)}
\longrightarrow x
\qquad
\text{in }\ell^2(\mathbb N_0)
\text{ as }N\to\infty,
\]
we have
\[
\left\langle
S_c^{(d)}x^{(N)},x^{(N)}
\right\rangle_{\ell^2}
\longrightarrow
\left\langle
S_c^{(d)}x,x
\right\rangle_{\ell^2}
\qquad
\text{as }N\to\infty.
\]
Therefore,
\[
\left\langle S_c^{(d)}x,x\right\rangle_{\ell^2}
\ge0.
\]
Thus \(S_c^{(d)}\) is positive.
\end{proof}

We next study the dependence of \(S_c^{(d)}\) on the parameter \(c\).
 
\begin{lemma}\label{L5.9}
The map
\[
c\longmapsto
\|S_c^{(d)}\|_{\ell^2\to\ell^2}
\]
is continuous and strictly increasing on \((0,d+3)\). Moreover,
\[
\|S_c^{(d)}\|_{\ell^2\to\ell^2}
\longrightarrow0
\qquad
\text{as }c\to0^+,
\]
and
\[
\|S_c^{(d)}\|_{\ell^2\to\ell^2}
\longrightarrow\infty
\qquad
\text{as }c\to(d+3)^-.
\]
\end{lemma}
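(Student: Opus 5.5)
The plan rests on one structural observation: the dependence on \(c\) factors as a diagonal scaling. Write \(\gamma_j^{(d)}(c)=\beta_j\,\phi_j(c)\), where
\[
\beta_j=\frac{2^{d+3}(4j+d+3)^2\Gamma(2j+1)}{(d-1)^2\Gamma(2j+d+3)}>0,
\qquad
\phi_j(c)=\frac{c}{\lambda_{2j}^{(d)}-c},
\qquad
\lambda_{2j}^{(d)}=4j+d+3 .
\]
Each \(\phi_j\) is continuous and strictly increasing on \((0,d+3)\), with \(\phi_j(c)\to0\) as \(c\to0^+\). Setting \(D_c=\mathrm{diag}(\sqrt{\phi_j(c)})\) and \(S_c^{(d)}=D_cMD_c\), the matrix \(M=(\sqrt{\beta_j}\,h^{(d)}_{\min\{j,k\}}\sqrt{\beta_k})\) has nonnegative entries and, by the computation \eqref{pos-truncated}, defines a nonnegative quadratic form. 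Since \(S_c^{(d)}\) is positive (Lemma~\ref{L5.8}), its norm equals \(\sup_{\|x\|=1}\langle S_c^{(d)}x,x\rangle\).

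\textbf{Monotonicity.} Take \(c<c'\). Because all entries are nonnegative, the top of the spectrum is attained or approximated by vectors with nonnegative coordinates, since \(\langle Sx,x\rangle\le\langle S|x|,|x|\rangle\). For such \(x\) the ratios \(\phi_j(c)/\phi_j(c')<1\) give \(\langle S_cx,x\rangle\le\langle S_{c'}x,x\rangle\) entrywise, so the norm is nondecreasing. For strictness I use compactness. By Lemma~\ref{L5.8}, \(S_c\) is compact and positive, so its norm is attained at a top eigenvector \(x\), which we may take with \(x\ge0\). Every entry \(s_{jk}\) is strictly positive, so \(x=\|S_c\|^{-1}S_cx\) has all coordinates strictly positive. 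Then
\[
\langle S_{c'}x,x\rangle=\sum_{j,k}s_{jk}(c)\,\theta_j\theta_k\,x_jx_k>\langle S_cx,x\rangle,
\qquad
\theta_j=\sqrt{\phi_j(c')/\phi_j(c)}>1,
\]
and strict monotonicity follows.

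\textbf{Continuity.} I compare \(S_c\) and \(S_{c'}\) in Hilbert--Schmidt norm. On a compact interval \([a,b]\subset(0,d+3)\) one has \(\lambda_{2j}-c\ge d+3-b>0\), so \(\phi_j\) is Lipschitz on \([a,b]\) uniformly in \(j\). Hence \(|s_{jk}(c)-s_{jk}(c')|\le C|c-c'|\,|M_{jk}|\). The bound \(\sum_{j,k} M_{jk}^2<\infty\) comes from the Stirling estimate in Lemma~\ref{L5.8}, because \(\phi_j\) is bounded on \([a,b]\). This gives \(\|S_c-S_{c'}\|_{HS}\le C|c-c'|\), and continuity follows. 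The same estimate handles \(c\to0^+\): there \(\phi_j(c)\le c/(d+3-c)\) uniformly in \(j\), so \(\|S_c\|\le\|S_c\|_{HS}\le Cc\to0\).

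\textbf{Blow-up at \(d+3\).} Only \(\phi_0(c)=c/(d+3-c)\) degenerates as \(c\to(d+3)^-\). Testing against \(e_0\) gives
\[
\|S_c\|\ge s_{00}(c)=\gamma_0^{(d)}(c)\,h_0^{(d)}=\beta_0h_0^{(d)}\,\frac{c}{d+3-c}\to\infty,
\]
since \(h_0^{(d)}>0\).

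\textbf{Main obstacle.} The delicate step is strictness. It relies on the top eigenvector having all coordinates strictly positive, which uses two facts: the strict positivity of every entry \(s_{jk}\), which holds because \(h_m\ge h_0>0\), and the fact that compactness guarantees the norm is attained.
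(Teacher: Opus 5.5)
Your monotonicity and blow-up arguments are correct and are essentially the paper's. The paper needs only $y=|x|\neq0$ together with the strict entrywise inequality $s_{jk}^{(d)}(c_1)<s_{jk}^{(d)}(c_2)$. Your extra step, making every coordinate of the top eigenvector strictly positive, is harmless but unnecessary. It also needs the remark that $|x|$ is again a top eigenvector, because $\langle S|x|,|x|\rangle=\|S\|$ and $\|S\|I-S\ge0$.

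\textbf{The gap.} It sits in the continuity and $c\to0^+$ parts: the claim $\sum_{j,k}M_{jk}^2<\infty$ is false. Since $\phi_j(c)=c/(4j+d+3-c)\asymp c/(j+1)$, you have $\beta_j=\gamma_j^{(d)}(c)/\phi_j(c)\asymp(j+1)^{-d}$. This is one power worse than $\gamma_j^{(d)}(c)\asymp(j+1)^{-d-1}$. Hence, for $j\le k$,
\[
M_{jk}^2=\beta_j\bigl(h_j^{(d)}\bigr)^2\beta_k\asymp(j+1)^{d-2}(k+1)^{-d},
\qquad
\sum_{0\le j\le k}M_{jk}^2\asymp\sum_{k}(k+1)^{-1}=\infty .
\]
For $d=2$ one finds $M_{jk}\sim1/\max\{j,k\}$, which is bounded on $\ell^2$ but not Hilbert--Schmidt. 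Your justification runs the wrong way. The Stirling bound of Lemma~\ref{L5.8} controls $\gamma_j^{(d)}=\beta_j\phi_j$, and transferring it to $\beta_j$ would need $\phi_j$ bounded \emph{below} uniformly in $j$, whereas $\phi_j(c)\to0$ as $j\to\infty$. So neither $\|S_c^{(d)}-S_{c'}^{(d)}\|_{\mathrm{HS}}\le C|c-c'|$ nor $\|S_c^{(d)}\|_{\mathrm{HS}}\le Cc$ follows from what you wrote. In the small-$c$ bound, replacing $\phi_j(c)$ by the uniform bound $c/(d+3-c)$ discards exactly the decay that makes the sums converge. The delicate point is this summability, not the strict positivity you singled out.

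\textbf{The fix.} Keep the $j$-dependence. For $[a,b]\subset(0,d+3)$ and $c\in[a,b]$,
\[
\Bigl|\tfrac{d}{dc}\sqrt{\phi_j(c)}\Bigr|=\frac{4j+d+3}{2\sqrt c\,(4j+d+3-c)^{3/2}}\le C_{a,b}(j+1)^{-1/2},
\qquad
\sqrt{\phi_j(c)}\le C_{b}(j+1)^{-1/2}.
\]
This gives $|s_{jk}^{(d)}(c)-s_{jk}^{(d)}(c')|\le C|c-c'|\,M_{jk}(j+1)^{-1/2}(k+1)^{-1/2}$. The square of the right-hand side is at most $C|c-c'|^2(j+1)^{d-3}(k+1)^{-d-1}$ for $j\le k$, which is summable. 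Likewise, $\phi_j(c)\le 2c/(4j+d+3)$ for $c\le(d+3)/2$ gives $|s_{jk}^{(d)}(c)|^2\le Cc^2(j+1)^{d-3}(k+1)^{-d-1}$.

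This is what the paper does. For continuity it uses only this uniform bound on a compact $c$-interval, together with entrywise continuity and dominated convergence, and needs no Lipschitz estimate.

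Alternatively, you can keep the factorization $S_c^{(d)}=D_cMD_c$ and prove that $M$ is a bounded (not Hilbert--Schmidt) operator, for example by a Schur test with weights $(j+1)^{-1/2}$. You would then work in operator norm, using $\|S_c^{(d)}-S_{c'}^{(d)}\|\le(\|D_c\|+\|D_{c'}\|)\|M\|\,\|D_c-D_{c'}\|$ and $\|S_c^{(d)}\|\le\|D_c\|^2\|M\|$.
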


\begin{proof}
We first prove continuity. Fix \(c_0\in(0,d+3)\), and choose a compact
interval
\[
I\Subset(0,d+3)
\]
containing \(c_0\) in its interior.

Throughout this part of the proof, \(C>0\) denotes a generic constant,
possibly changing from line to line, independent of \(c\in I\) and
\(j,k\in\mathbb N_0\).

By \eqref{eq-hj-gammaj} and \eqref{eq-gammajd},
\[
h_j^{(d)}
\le
C(j+1)^{d-1},
\qquad
\gamma_j^{(d)}(c)
\le
C(j+1)^{-d-1},
\qquad
j\in\mathbb N_0.
\]
Together with \eqref{eq-sjk}, this gives, for \(j\le k\),
\[
\begin{aligned}
\left|s_{j,k}^{(d)}(c)\right|^2
&=
\gamma_j^{(d)}(c)
\bigl(h_j^{(d)}\bigr)^2
\gamma_k^{(d)}(c)
\\
&\le
C
(j+1)^{d-3}(k+1)^{-d-1}.
\end{aligned}
\]
Consequently,
\[
\left|
s_{j,k}^{(d)}(c)-s_{j,k}^{(d)}(c_0)
\right|^2
\le
C
(j+1)^{d-3}(k+1)^{-d-1},
\qquad
0\le j\le k.
\]
Moreover,
\[
\sum_{0\le j\le k<\infty}
(j+1)^{d-3}(k+1)^{-d-1}
<\infty.
\]
Indeed, if \(d=2\),
\[
\sum_{k=0}^\infty
(k+1)^{-3}
\sum_{j=0}^k(j+1)^{-1}
<\infty,
\]
whereas, if \(d\ge3\),
\[
\sum_{j=0}^k(j+1)^{d-3}
\le
(k+1)^{d-2},
\]
and therefore
\[
\sum_{k=0}^\infty
(k+1)^{-d-1}
\sum_{j=0}^k(j+1)^{d-3}
\le
\sum_{k=0}^\infty(k+1)^{-3}
<\infty.
\]

Moreover, by \eqref{eq-sjk}, for every \(j,k\in\mathbb N_0\), the map
\[
c\longmapsto s_{j,k}^{(d)}(c)
\]
is continuous on \((0,d+3)\).

By the dominated convergence theorem and the symmetry of the
matrix,
\[
\begin{aligned}
\left\|
S_c^{(d)}-S_{c_0}^{(d)}
\right\|_{\mathrm{HS}}^2
&=
\sum_{j,k=0}^\infty
\left|
s_{j,k}^{(d)}(c)-s_{j,k}^{(d)}(c_0)
\right|^2
\\
&\longrightarrow0
\qquad
\text{as }c\to c_0,
\end{aligned}
\]
which yields
\[
\|S_c^{(d)}\|_{\ell^2\to\ell^2}
\longrightarrow
\|S_{c_0}^{(d)}\|_{\ell^2\to\ell^2}
\qquad
\text{as }c\to c_0.
\]

We next prove strict monotonicity. Let
\[
0<c_1<c_2<d+3.
\]
By \eqref{eq-gammajd}, for every \(j\in\mathbb N_0\), the map
\[
c\longmapsto \gamma_j^{(d)}(c)
\]
is strictly increasing on \((0,d+3)\). Hence, by \eqref{eq-sjk},
\begin{equation}\label{sjk-strict-monotonicity}
s_{j,k}^{(d)}(c_1)
<
s_{j,k}^{(d)}(c_2),
\qquad
j,k\in\mathbb N_0.
\end{equation}
By Lemma~\ref{L5.8}, \(S_{c_1}^{(d)}\) is a positive
Hilbert--Schmidt operator on \(\ell^2(\mathbb N_0)\), and hence compact.
By the spectral theorem, there exists \(x\in\ell^2(\mathbb N_0)\), with
\[
\|x\|_{\ell^2}=1,
\]
such that
\[
S_{c_1}^{(d)}x
=
\|S_{c_1}^{(d)}\|_{\ell^2\to\ell^2}\,x.
\]
Set
\[
y=(|x_j|)_{j\in\mathbb N_0}.
\]
Then
\[
\|y\|_{\ell^2}=1.
\]
Since \(S_{c_1}^{(d)}\) is Hilbert--Schmidt and its entries are positive,
\[
\begin{aligned}
\|S_{c_1}^{(d)}\|_{\ell^2\to\ell^2}
&=
\left\langle S_{c_1}^{(d)}x,x\right\rangle_{\ell^2}
\\
&\le
\left\langle S_{c_1}^{(d)}y,y\right\rangle_{\ell^2}
\\
&\le
\|S_{c_1}^{(d)}\|_{\ell^2\to\ell^2}.
\end{aligned}
\]
Therefore,
\[
\left\langle S_{c_1}^{(d)}y,y\right\rangle_{\ell^2}
=
\|S_{c_1}^{(d)}\|_{\ell^2\to\ell^2}.
\]
Since \(y\neq0\), \eqref{sjk-strict-monotonicity} yields
\[
\left\langle S_{c_2}^{(d)}y,y\right\rangle_{\ell^2}
>
\left\langle S_{c_1}^{(d)}y,y\right\rangle_{\ell^2}.
\]
Consequently,
\[
\|S_{c_2}^{(d)}\|_{\ell^2\to\ell^2}
>
\|S_{c_1}^{(d)}\|_{\ell^2\to\ell^2}.
\]
Thus
\[
c\longmapsto
\|S_c^{(d)}\|_{\ell^2\to\ell^2}
\]
is strictly increasing on \((0,d+3)\).

Throughout the remainder of the proof, \(C>0\) denotes a generic
constant, possibly changing from line to line, independent of \(c\).

For \(0<c\le (d+3)/2\), by \eqref{eq-gammajd},
\[
\gamma_j^{(d)}(c)
\le
Cc(j+1)^{-d-1},
\qquad
j\in\mathbb N_0.
\]
Hence, using
\[
h_j^{(d)}
\le
C(j+1)^{d-1}\qquad
j\in\mathbb N_0,
\]
and \eqref{eq-sjk}, for \(0\le j\le k\),
\[
\begin{aligned}
\left|s_{j,k}^{(d)}(c)\right|^2
&=
\gamma_j^{(d)}(c)
\bigl(h_j^{(d)}\bigr)^2
\gamma_k^{(d)}(c)
\\
&\le
Cc^2
(j+1)^{d-3}(k+1)^{-d-1}.
\end{aligned}
\]
As in the proof of continuity,
\[
\sum_{0\le j\le k<\infty}
(j+1)^{d-3}(k+1)^{-d-1}
<\infty.
\]
Therefore, by symmetry,
\[
\|S_c^{(d)}\|_{\mathrm{HS}}^2
=
\sum_{j,k=0}^\infty
\left|s_{j,k}^{(d)}(c)\right|^2
\le
Cc^2.
\]
Consequently,
\[
\|S_c^{(d)}\|_{\ell^2\to\ell^2}
\le
\|S_c^{(d)}\|_{\mathrm{HS}}
\le
Cc
\longrightarrow0
\qquad
\text{as }c\to0^+.
\]

Finally,  since \(S_c^{(d)}\) is positive,
\[
\|S_c^{(d)}\|_{\ell^2\to\ell^2}
\ge
\left\langle S_c^{(d)}e_0,e_0\right\rangle_{\ell^2}
=
s_{0,0}^{(d)}(c).
\]
By \eqref{eq-sjk},
\[
s_{0,0}^{(d)}(c)
=
\gamma_0^{(d)}(c)h_0^{(d)}.
\]
Since \(h_0^{(d)}>0\) and, by \eqref{eq-gammajd},
\[
\gamma_0^{(d)}(c)
\longrightarrow\infty
\qquad
\text{as }c\to(d+3)^-,
\]
we obtain
\[
\|S_c^{(d)}\|_{\ell^2\to\ell^2}
\longrightarrow\infty
\qquad
\text{as }c\to(d+3)^-.
\]
This completes the proof. 
\end{proof}

We next establish the analogue of Lemma~\ref{L5.5} for the even
component of \(y\in H_0^1(0,\infty)\).

\begin{lemma}\label{L5.10}
Let \(0<c<d+3\), and let \(y\in H_0^1(0,\infty)\) have the expansion
\eqref{exp-y-psi-d}. Let
\[
y_{\mathrm e}
=
\sum_{j=0}^\infty
a_{2j}\psi_{2j}^{(d)}
\]
be its even component, and set
\begin{equation}\label{def-bj-even}
b_j
=
\sqrt{
(4j+d+3-c)\omega_{2j}^{(d)}
}\,a_{2j},
\qquad
j\in\mathbb N_0.
\end{equation}
Then \(b=(b_j)_{j\in\mathbb N_0}\in\ell^2(\mathbb N_0)\) and
\begin{equation}\label{Qdc-even-l2}
Q_{d,c}(y_{\mathrm e})
=
\|b\|_{\ell^2}^2.
\end{equation}
Moreover,
\begin{equation}\label{Gd-even-l2}
cG_d(y_{\mathrm e})
=
\left\langle
S_c^{(d)}b,b
\right\rangle_{\ell^2}.
\end{equation}
Conversely, for every \(b\in\ell^2(\mathbb N_0)\), there exists
\(y\in H_0^1(0,\infty)\) whose even component has coefficients
satisfying \eqref{def-bj-even}.
\end{lemma}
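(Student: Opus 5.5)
The plan is to copy the proof of Lemma~\ref{L5.5} almost verbatim, replacing odd indices by even ones. By \eqref{def-lambda-nd}, $\lambda_{2j}^{(d)}-c=4j+d+3-c$. This is strictly positive for every $j\in\mathbb N_0$ precisely because $c<d+3$. So \eqref{Qdc-series}, restricted to the even component, gives
\[
Q_{d,c}(y_{\mathrm e})=\sum_{j=0}^\infty (4j+d+3-c)\,\omega_{2j}^{(d)}|a_{2j}|^2=\sum_{j=0}^\infty |b_j|^2 .
\]
This series is finite: it is bounded by $E_d(y_{\mathrm e})\le E_d(y)<\infty$, using \eqref{Ed-series}. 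Hence $b\in\ell^2(\mathbb N_0)$, and \eqref{Qdc-even-l2} holds.

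For \eqref{Gd-even-l2}, I apply \eqref{Gd-series} to $y_{\mathrm e}$ and use \eqref{g-even-even}. Substituting $a_{2j}=b_j/\sqrt{(4j+d+3-c)\omega_{2j}^{(d)}}$ and using \eqref{def-omega-nd} in the form $\omega_{2j}^{(d)}=\Gamma(2j+d+3)/(2^{d+3}\Gamma(2j+1))$, one checks that
\[
\frac{c\,(4j+d+3)^2}{(d-1)^2(4j+d+3-c)\,\omega_{2j}^{(d)}}=\gamma_j^{(d)}(c).
\]
Consequently $c\,g_{2j,2k}^{(d)}a_{2j}a_{2k}=s_{jk}^{(d)}(c)b_jb_k$, and
\[
cG_d(y_{\mathrm e})=\lim_{N\to\infty}\sum_{j,k=0}^N s_{jk}^{(d)}(c)b_jb_k .
\]
The one point that goes beyond Lemma~\ref{L5.5} is identifying this limit with $\langle S_c^{(d)}b,b\rangle_{\ell^2}$. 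This follows from Lemma~\ref{L5.8}: since $S_c^{(d)}$ is bounded and the truncations $b^{(N)}\to b$ in $\ell^2$, we have $\langle S_c^{(d)}b^{(N)},b^{(N)}\rangle\to\langle S_c^{(d)}b,b\rangle$, exactly as in the positivity argument there.

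For the converse, given $b\in\ell^2(\mathbb N_0)$ I define $a_{2j}=b_j/\sqrt{(4j+d+3-c)\omega_{2j}^{(d)}}$, $a_{2j+1}=0$, and $y=\sum_j a_{2j}\psi_{2j}^{(d)}$. The step needing care is convergence in $H_0^1(0,\infty)$. The family $\{\psi_n^{(d)}\}$ is $\mathcal E_d$-orthogonal with $\mathcal E_d(\psi_n^{(d)},\psi_n^{(d)})=\lambda_n^{(d)}\omega_n^{(d)}$, so it suffices to show
\[
\sum_j \lambda_{2j}^{(d)}\omega_{2j}^{(d)}|a_{2j}|^2=\sum_j \frac{4j+d+3}{4j+d+3-c}|b_j|^2<\infty .
\]
This holds because the ratio $(4j+d+3)/(4j+d+3-c)$ is bounded by $(d+3)/(d+3-c)$ uniformly in $j$. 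Since $E_d^{1/2}$ is an equivalent norm, the series converges in $H_0^1$. Its $L^2(dr/r)$ coefficients are recovered via \eqref{def-an-d} and \eqref{psi-d-orthogonality}, because convergence in $H_0^1$ implies convergence in $L^2(dr/r)$ by the bound $D\le\|\cdot\|_{H^1}^2$. Therefore $y_{\mathrm e}=y$ has the prescribed coefficients.
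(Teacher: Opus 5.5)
Your proposal is correct and follows the paper's proof essentially step for step: the same positivity of $4j+d+3-c$ giving \eqref{Qdc-even-l2}, the same identification $c\,g_{2j,2k}^{(d)}a_{2j}a_{2k}=s_{jk}^{(d)}(c)b_jb_k$ followed by passage to the limit via the boundedness of $S_c^{(d)}$ from Lemma~\ref{L5.8}, and the same uniform bound $(4j+d+3)/(4j+d+3-c)\le(d+3)/(d+3-c)$ for the converse. Your extra remarks only make explicit details that the paper leaves implicit, namely finiteness via $E_d(y_{\mathrm e})\le E_d(y)$ and recovering the coefficients through $L^2(dr/r)$ convergence.
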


\begin{proof}
By \eqref{def-lambda-nd} and \eqref{Qdc-series},
\[
Q_{d,c}(y_{\mathrm e})
=
\sum_{j=0}^\infty
(4j+d+3-c)\omega_{2j}^{(d)}
|a_{2j}|^2.
\]
Since \(0<c<d+3\), all the coefficients in this series are positive.
Thus, by \eqref{def-bj-even},
\[
b\in\ell^2(\mathbb N_0),
\]
and \eqref{Qdc-even-l2} follows.

On the other hand, applying \eqref{Gd-series} to \(y_{\mathrm e}\) and
reindexing, we obtain
\[
cG_d(y_{\mathrm e})
=
\lim_{N\to\infty}
\sum_{j,k=0}^N
c\,g_{2j,2k}^{(d)}a_{2j}a_{2k}.
\]
Hence, by \eqref{eq-gammajd}, \eqref{eq-sjk}, \eqref{def-omega-nd},
\eqref{g-even-even}, and \eqref{def-bj-even},
\[
c\,g_{2j,2k}^{(d)}a_{2j}a_{2k}
=
s_{j,k}^{(d)}(c)b_jb_k,
\qquad
j,k\in\mathbb N_0.
\]
For
\[
b^{(N)}
=
(b_0,\ldots,b_N,0,\ldots),
\]
we therefore have
\[
cG_d(y_{\mathrm e})
=
\lim_{N\to\infty}
\left\langle
S_c^{(d)}b^{(N)},b^{(N)}
\right\rangle_{\ell^2}.
\]
Since
\[
b^{(N)}
\longrightarrow b
\qquad
\text{in }\ell^2(\mathbb N_0)
\text{ as }N\to\infty,
\]
and \(S_c^{(d)}\) is bounded by Lemma~\ref{L5.8}, we obtain
\eqref{Gd-even-l2}.

Conversely, let \(b=(b_j)_{j\in\mathbb N_0}\in\ell^2(\mathbb N_0)\), and set
\[
a_{2j}
=
\frac{
b_j
}{
\sqrt{
(4j+d+3-c)\omega_{2j}^{(d)}
}
},
\qquad
a_{2j+1}=0,
\qquad
j\in\mathbb N_0.
\]
Since \(0<c<d+3\),
\[
\frac{4j+d+3}{4j+d+3-c}
\le
\frac{d+3}{d+3-c},
\qquad
j\in\mathbb N_0.
\]
Hence
\[
\begin{aligned}
\sum_{j=0}^\infty
(4j+d+3)\omega_{2j}^{(d)}|a_{2j}|^2
&=
\sum_{j=0}^\infty
\frac{4j+d+3}{4j+d+3-c}|b_j|^2
\\
&\le
\frac{d+3}{d+3-c}
\|b\|_{\ell^2}^2
<\infty.
\end{aligned}
\]
For \(N\in\mathbb N_0\), set
\[
Y_N
=
\sum_{j=0}^N
a_{2j}\psi_{2j}^{(d)}.
\]
Then, for \(M>N\), by \eqref{def-lambda-nd} and \eqref{Ed-series},
\[
E_d(Y_M-Y_N)
=
\sum_{j=N+1}^M
(4j+d+3)\omega_{2j}^{(d)}
|a_{2j}|^2.
\]
Therefore,
\[
E_d(Y_M-Y_N)\longrightarrow0
\qquad
\text{as }M,N\to\infty.
\]
Since \(E_d^{1/2}\) is an equivalent norm on \(H_0^1(0,\infty)\),
there exists \(y\in H_0^1(0,\infty)\) such that
\[
Y_N\longrightarrow y
\qquad
\text{in }H_0^1(0,\infty)
\text{ as }N\to\infty.
\]
Thus
\[
y
=
\sum_{j=0}^\infty
a_{2j}\psi_{2j}^{(d)},
\]
its odd component vanishes, and its coefficients satisfy
\eqref{def-bj-even}. This completes the proof.
\end{proof}

\begin{corollary}\label{C5.11}
Let \(0<c\le d+1\). Then
\[
\mathcal C_{d,1}\ge c
\quad\Longleftrightarrow\quad
\|S_c^{(d)}\|_{\ell^2\to\ell^2}\le1.
\]
\end{corollary}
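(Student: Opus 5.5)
Since $0<c\le d+1<d+3$, Lemma~\ref{L5.10} applies, and Corollary~\ref{C5.7} reduces the statement $\mathcal C_{d,1}\ge c$ to the even-component inequality \eqref{equiv-even-reduction}. Combining the two, the plan is to identify \eqref{equiv-even-reduction} with the operator inequality $\langle S_c^{(d)}b,b\rangle_{\ell^2}\le\|b\|_{\ell^2}^2$ for all $b\in\ell^2(\mathbb N_0)$. Because $S_c^{(d)}$ is positive and self-adjoint (Lemma~\ref{L5.8}), we have
\[
\|S_c^{(d)}\|_{\ell^2\to\ell^2}=\sup_{\|b\|_{\ell^2}=1}\langle S_c^{(d)}b,b\rangle_{\ell^2},
\]
so this operator inequality is exactly $\|S_c^{(d)}\|_{\ell^2\to\ell^2}\le1$.

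For the forward direction, assume $\mathcal C_{d,1}\ge c$ and let $b\in\ell^2(\mathbb N_0)$ be arbitrary. The converse part of Lemma~\ref{L5.10} gives $y\in H_0^1(0,\infty)$ with vanishing odd component, so $y=y_{\mathrm e}$, whose coefficients satisfy \eqref{def-bj-even}. By Corollary~\ref{C5.7}, $Q_{d,c}(y_{\mathrm e})\ge cG_d(y_{\mathrm e})$. Rewriting both sides with \eqref{Qdc-even-l2} and \eqref{Gd-even-l2} yields $\|b\|_{\ell^2}^2\ge\langle S_c^{(d)}b,b\rangle_{\ell^2}$. Taking the supremum over unit vectors gives $\|S_c^{(d)}\|_{\ell^2\to\ell^2}\le1$.

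For the reverse direction, assume $\|S_c^{(d)}\|_{\ell^2\to\ell^2}\le1$ and let $y\in H_0^1(0,\infty)$. Form $b$ from the even coefficients of $y$ via \eqref{def-bj-even}; Lemma~\ref{L5.10} guarantees $b\in\ell^2(\mathbb N_0)$. Then \eqref{Gd-even-l2} and \eqref{Qdc-even-l2} give
\[
cG_d(y_{\mathrm e})=\langle S_c^{(d)}b,b\rangle_{\ell^2}\le\|b\|_{\ell^2}^2=Q_{d,c}(y_{\mathrm e}).
\]
This is \eqref{equiv-even-reduction}, so Corollary~\ref{C5.7} yields $\mathcal C_{d,1}\ge c$.

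All substantive analysis is already contained in Lemma~\ref{L5.10}, Lemma~\ref{L5.8} and Corollary~\ref{C5.7}. The main point to check is the surjectivity statement in Lemma~\ref{L5.10}: it is what lets every $b\in\ell^2(\mathbb N_0)$ be tested in the forward direction. A second point is the restriction $c\le d+1$, which is needed for the odd-component control in Corollary~\ref{C5.7} and is assumed in the statement.
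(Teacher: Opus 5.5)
Your proposal is correct and follows the paper's proof. It reduces to the even-component inequality via Corollary~\ref{C5.7}, translates that inequality through Lemma~\ref{L5.10} (including its surjectivity part) into $\langle S_c^{(d)}b,b\rangle_{\ell^2}\le\|b\|_{\ell^2}^2$ for all $b\in\ell^2(\mathbb N_0)$, and uses the positivity of $S_c^{(d)}$ to identify this with $\|S_c^{(d)}\|_{\ell^2\to\ell^2}\le1$. The only difference is that you write out the two directions separately, whereas the paper compresses them into a single chain of equivalences.
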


\begin{proof}
By Corollary~\ref{C5.7},
\[
\mathcal C_{d,1}\ge c
\]
if and only if
\[
Q_{d,c}(y_{\mathrm e})
\ge
cG_d(y_{\mathrm e})
\qquad
\text{for every }y\in H_0^1(0,\infty).
\]
By Lemma~\ref{L5.10}, this is equivalent to
\[
\|b\|_{\ell^2}^2
\ge
\left\langle S_c^{(d)}b,b\right\rangle_{\ell^2}
\qquad
\text{for every }b\in\ell^2(\mathbb N_0).
\]
Since \(S_c^{(d)}\) is positive by Lemma~\ref{L5.8}, the latter condition
is equivalent to
\[
\|S_c^{(d)}\|_{\ell^2\to\ell^2}\le1.
\]
This completes the proof.
\end{proof}

\subsection{Proof of Theorem~\ref{thm:spectral-characterization}}

\begin{proof}
By Lemma~\ref{L5.9}, the map
\[
c\longmapsto
\|S_c^{(d)}\|_{\ell^2\to\ell^2}
\]
is continuous and strictly increasing on \((0,d+3)\), with
\[
\|S_c^{(d)}\|_{\ell^2\to\ell^2}
\longrightarrow0
\qquad
\text{as }c\to0^+,
\]
and
\[
\|S_c^{(d)}\|_{\ell^2\to\ell^2}
\longrightarrow\infty
\qquad
\text{as }c\to(d+3)^-.
\]
Hence there exists a unique \(c_d^*\in(0,d+3)\) such that
\begin{equation}\label{def-c*}
\|S_{c_d^*}^{(d)}\|_{\ell^2\to\ell^2}=1.
\end{equation}

Let \(0<c\le d+1\). By Corollary~\ref{C5.11},
\[
\mathcal C_{d,1}\ge c
\quad\Longleftrightarrow\quad
\|S_c^{(d)}\|_{\ell^2\to\ell^2}\le1.
\]
Since
\[
c\longmapsto
\|S_c^{(d)}\|_{\ell^2\to\ell^2}
\]
is strictly increasing, it follows from \eqref{def-c*} that
\[
\|S_c^{(d)}\|_{\ell^2\to\ell^2}\le1
\quad\Longleftrightarrow\quad
c\le c_d^*.
\]
Thus
\[
\{c\in(0,d+1]:c\le\mathcal C_{d,1}\}
=
\{c\in(0,d+1]:c\le c_d^*\}.
\]
Taking the suprema of these two sets, we obtain
\[
\min\{d+1,\mathcal C_{d,1}\}
=
\min\{d+1,c_d^*\}.
\]
By \eqref{Cd-critical-sectors},
\[
\mathcal C_d
=
\min\{d+1,\mathcal C_{d,1}\},
\]
and hence
\[
\mathcal C_d
=
\min\{d+1,c_d^*\}.
\]
Finally, by \eqref{Cd-Hd-relation},
\[
\mathcal H_d
=
\frac{\mathcal C_d}{2}
=
\frac12\min\{d+1,c_d^*\},
\]
which proves \eqref{Hd-characterization}. This completes the proof.
\end{proof}

\subsection{Proof of Corollary~\ref{cor:Hd-dge4}}

\begin{proof}
Let \(d\ge4\). We first show that
\begin{equation}\label{claim-Cor}
\|S_{d+1}^{(d)}\|_{\ell^2\to\ell^2}<1.
\end{equation}

For \(j\in\mathbb N_0\), set
\begin{equation}\label{expression-pj}
p_j
=
\frac{2j+1}{4j+d+3}
\sqrt{\gamma_j^{(d)}(d+1)}.
\end{equation}
By \eqref{eq-gammajd}, for \(k\in\mathbb N_0\),
\[
\begin{aligned}
\sqrt{\gamma_k^{(d)}(d+1)}\,p_k
&=
\gamma_k^{(d)}(d+1)
\frac{2k+1}{4k+d+3}
\\
&=
\frac{2^{d+2}(d+1)}{(d-1)^2}
(4k+d+3)
\frac{\Gamma(2k+1)}{\Gamma(2k+d+3)}.
\end{aligned}
\]
Set
\[
r_k
=
\frac{\Gamma(2k+1)}{\Gamma(2k+d+1)},
\qquad
k\in\mathbb N_0.
\]
Using \eqref{eq-gamma-functional-relation}, we obtain
\[
r_k-r_{k+1}
=
d(4k+d+3)
\frac{\Gamma(2k+1)}{\Gamma(2k+d+3)}.
\]
Hence
\[
\sqrt{\gamma_k^{(d)}(d+1)}\,p_k
=
\frac{2^{d+2}(d+1)}{d(d-1)^2}
(r_k-r_{k+1}).
\]
Since, by Stirling's formula,
\[
r_k\longrightarrow0
\qquad
\text{as }k\to\infty,
\]
it follows that
\[
\begin{aligned}
\sum_{k=m}^\infty
\sqrt{\gamma_k^{(d)}(d+1)}\,p_k
&=
\frac{2^{d+2}(d+1)}
{d(d-1)^2}
r_m
\\
&=
\frac{2^{d+2}(d+1)}
{d(d-1)^2}
\frac{\Gamma(2m+1)}{\Gamma(2m+d+1)}.
\end{aligned}
\]
Together with \eqref{exp-epsilon-md} and \eqref{series-h}, this gives,
for every \(j\in\mathbb N_0\),
\[
\begin{aligned}
\sum_{k=0}^\infty
h_{\min\{j,k\}}^{(d)}
\sqrt{\gamma_k^{(d)}(d+1)}\,p_k
&=
\sum_{m=0}^j
\varepsilon_m^{(d)}
\sum_{k=m}^\infty
\sqrt{\gamma_k^{(d)}(d+1)}\,p_k
\\
&=
\frac{2(d+1)}{d}
\sum_{m=0}^j
\frac{1}
{(4m+d-1)(4m+d+3)}
\\
&=
\frac{d+1}{2d}
\left(
\frac{1}{d-1}
-
\frac{1}{4j+d+3}
\right)
\\
&=
\frac{
2(d+1)(j+1)
}{
d(d-1)(4j+d+3)
}.
\end{aligned}
\]
Therefore, by \eqref{eq-sjk} and \eqref{expression-pj}, for every
\(j\in\mathbb N_0\),
\[
\begin{aligned}
\sum_{k=0}^\infty
s_{j,k}^{(d)}(d+1)p_k
&=
\sqrt{\gamma_j^{(d)}(d+1)}
\sum_{k=0}^\infty
h_{\min\{j,k\}}^{(d)}
\sqrt{\gamma_k^{(d)}(d+1)}\,p_k
\\
&=
\frac{
2(d+1)(j+1)
}{
d(d-1)(4j+d+3)
}
\sqrt{\gamma_j^{(d)}(d+1)}
\\
&=
\frac{
2(d+1)(j+1)
}{
d(d-1)(2j+1)
}
p_j.
\end{aligned}
\]
Consequently,
\begin{equation}\label{weighted-row-estimate}
\sum_{k=0}^\infty
s_{j,k}^{(d)}(d+1)p_k
\le
\frac{2(d+1)}{d(d-1)}p_j,
\qquad
j\in\mathbb N_0.
\end{equation}

For \(x\in\ell^2(\mathbb N_0)\), by
\eqref{weighted-row-estimate} and the Cauchy--Schwarz inequality, for
every \(j\in\mathbb N_0\),
\[
\begin{aligned}
\left|
\sum_{k=0}^\infty
s_{j,k}^{(d)}(d+1)x_k
\right|^2
&\le
\left(
\sum_{k=0}^\infty
s_{j,k}^{(d)}(d+1)p_k
\right)
\left(
\sum_{k=0}^\infty
s_{j,k}^{(d)}(d+1)
\frac{|x_k|^2}{p_k}
\right)
\\
&\le
\frac{2(d+1)}{d(d-1)}p_j
\sum_{k=0}^\infty
s_{j,k}^{(d)}(d+1)
\frac{|x_k|^2}{p_k}.
\end{aligned}
\]
Summing over \(j\in\mathbb N_0\), we obtain
\[
\begin{aligned}
\|S_{d+1}^{(d)}x\|_{\ell^2}^2
&=
\sum_{j=0}^\infty
\left|
\sum_{k=0}^\infty
s_{j,k}^{(d)}(d+1)x_k
\right|^2
\\
&\le
\frac{2(d+1)}{d(d-1)}
\sum_{j=0}^\infty
p_j
\sum_{k=0}^\infty
s_{j,k}^{(d)}(d+1)
\frac{|x_k|^2}{p_k}.
\end{aligned}
\]
By Tonelli's theorem and the symmetry of \(s_{j,k}^{(d)}(d+1)\),
\[
\|S_{d+1}^{(d)}x\|_{\ell^2}^2
\le
\frac{2(d+1)}{d(d-1)}
\sum_{k=0}^\infty
\frac{|x_k|^2}{p_k}
\left(
\sum_{j=0}^\infty
s_{k,j}^{(d)}(d+1)p_j
\right).
\]
Applying \eqref{weighted-row-estimate} again, we obtain
\[
\begin{aligned}
\|S_{d+1}^{(d)}x\|_{\ell^2}^2
&\le
\left(
\frac{2(d+1)}{d(d-1)}
\right)^2
\sum_{k=0}^\infty |x_k|^2
\\
&=
\left(
\frac{2(d+1)}{d(d-1)}
\right)^2
\|x\|_{\ell^2}^2.
\end{aligned}
\]
Hence
\[
\|S_{d+1}^{(d)}\|_{\ell^2\to\ell^2}
\le
\frac{2(d+1)}{d(d-1)}.
\]
Since \(d\ge4\),
\[
\frac{2(d+1)}{d(d-1)}<1,
\]
which proves \eqref{claim-Cor}.

Finally, by \eqref{claim-Cor}, Lemma~\ref{L5.9}, and
\[
\|S_{c_d^*}^{(d)}\|_{\ell^2\to\ell^2}=1,
\]
we obtain
\[
c_d^*>d+1.
\]
Therefore, by \eqref{Hd-characterization},
\[
\mathcal H_d
=
\frac12\min\{d+1,c_d^*\}
=
\frac{d+1}{2}.
\]
This completes the proof.
\end{proof}

\subsection{Proof of Corollary~\ref{cor:Hd-d23}}

\begin{proof}
Let \(d\in\{2,3\}\). Since \(S_{d+1}^{(d)}\) is positive, we have
\[
\|S_{d+1}^{(d)}\|_{\ell^2\to\ell^2}
\ge
\left\langle S_{d+1}^{(d)}e_0,e_0\right\rangle_{\ell^2}
=
s_{0,0}^{(d)}(d+1).
\]
By \eqref{eq-hj-gammaj}, \eqref{eq-gammajd}, and \eqref{eq-sjk},
\[
s_{0,0}^{(d)}(d+1)
=
\gamma_0^{(d)}(d+1)h_0^{(d)}
=
\frac{2(d+3)}{(d-1)(d+2)}
>1.
\]
Therefore,
\[
\|S_{d+1}^{(d)}\|_{\ell^2\to\ell^2}>1.
\]
Since
\[
\|S_{c_d^*}^{(d)}\|_{\ell^2\to\ell^2}=1
\]
and the map
\[
c\longmapsto
\|S_c^{(d)}\|_{\ell^2\to\ell^2}
\]
is strictly increasing by Lemma~\ref{L5.9}, it follows that
\[
c_d^*<d+1.
\]
Finally, \eqref{Hd-characterization} gives
\[
\mathcal H_d
=
\frac12\min\{d+1,c_d^*\}
=
\frac{c_d^*}{2},
\]
which proves \eqref{Hd-d-2-3}. This completes the proof.
\end{proof}

\section{Quantitative approximation of the spectral parameters}\label{sec6}

In this section, we prove Theorem~\ref{thm:quantitative-estimate} and
derive its consequences. We first develop the discrete recursive scheme
used to approximate the critical spectral parameters \(c_2^*\) and
\(c_3^*\), and establish the auxiliary estimates needed for the proof.
We then obtain effective two-sided bounds for the sharp constants
\(\mathcal H_2\) and \(\mathcal H_3\), including the explicit numerical
estimates stated in Corollary~\ref{cor:numerical-Hd}.

\subsection{The discrete recursive scheme}

We begin with an identity that relates the recursive sequences
\(\mathbb A_j^{(d)}(c)\) and \(\mathbb F_j^{(d)}(c)\), defined by
\eqref{eq-shooting-initial-d}--\eqref{eq-shooting-A-d}, to the kernel of
\(S_c^{(d)}\).

\begin{lemma}\label{L6.1}
Let \(d\in\{2,3\}\), \(0<c<d+1\), and \(N\in\mathbb N_0\). Then
\begin{equation}\label{eq:recursive-identity}
\sum_{k=0}^{N}
h_{\min\{j,k\}}^{(d)}
\gamma_k^{(d)}(c)
\mathbb A_k^{(d)}(c)
=
\mathbb A_j^{(d)}(c)
-
h_j^{(d)}
\mathbb F_{N+1}^{(d)}(c),
\qquad
0\le j\le N.
\end{equation}
\end{lemma}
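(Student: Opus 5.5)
The plan is to prove the identity by direct summation. We use the telescoping representation \eqref{series-h}, $h_{\min\{j,k\}}^{(d)}=\sum_{m=0}^{\min\{j,k\}}\varepsilon_m^{(d)}$, and then the recursion \eqref{eq-shooting-F-d}--\eqref{eq-shooting-A-d}. The idea is that the recursion is a discrete two-step shooting scheme: $\mathbb F$ plays the role of a "tail sum" of $\gamma_k\mathbb A_k$, and $\mathbb A$ is the discrete integral of $\varepsilon\mathbb F$.

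First, iterating \eqref{eq-shooting-F-d} from $m$ to $N$ gives
\[
\sum_{k=m}^{N}\gamma_k^{(d)}(c)\mathbb A_k^{(d)}(c)=\mathbb F_m^{(d)}(c)-\mathbb F_{N+1}^{(d)}(c),\qquad 0\le m\le N+1 .
\]
Next, inserting \eqref{series-h} into the left-hand side of \eqref{eq:recursive-identity} and exchanging the finite sums (for $0\le j\le N$, the pairs $(k,m)$ with $m\le\min\{j,k\}$ are exactly those with $0\le m\le j$ and $m\le k\le N$) gives
\[
\sum_{k=0}^{N}h_{\min\{j,k\}}^{(d)}\gamma_k^{(d)}\mathbb A_k^{(d)}
=\sum_{m=0}^{j}\varepsilon_m^{(d)}\bigl(\mathbb F_m^{(d)}-\mathbb F_{N+1}^{(d)}\bigr)
=\sum_{m=0}^{j}\varepsilon_m^{(d)}\mathbb F_m^{(d)}-h_j^{(d)}\mathbb F_{N+1}^{(d)} .
\]
Here we have used $\sum_{m=0}^j\varepsilon_m^{(d)}=h_j^{(d)}$ from \eqref{eq-epsilon-d} and \eqref{eq-hminus1-d}.

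It remains to show that $\sum_{m=0}^{j}\varepsilon_m^{(d)}\mathbb F_m^{(d)}=\mathbb A_j^{(d)}$. Iterating \eqref{eq-shooting-A-d} gives
\[
\mathbb A_j^{(d)}=\mathbb A_0^{(d)}+\sum_{m=1}^{j}\varepsilon_m^{(d)}\mathbb F_m^{(d)} .
\]
The $m=0$ term is handled by the initial conditions \eqref{eq-shooting-initial-d}: since $\varepsilon_0^{(d)}=h_0^{(d)}$ and $\mathbb F_0^{(d)}=1/h_0^{(d)}$, we get $\varepsilon_0^{(d)}\mathbb F_0^{(d)}=1=\mathbb A_0^{(d)}$. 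Combining this with the previous display yields \eqref{eq:recursive-identity}.

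There is no genuine analytic obstacle, since all sums are finite and the restriction $0<c<d+1$ only ensures that $\gamma_k^{(d)}(c)$ is well defined. The only delicate point is the bookkeeping in the interchange of summation, in particular checking that the index set is exactly $0\le m\le j$, $m\le k\le N$ when $j\le N$. The other point to watch is the matching of the initial data, which is precisely why $\mathbb F_0^{(d)}$ is normalized to $1/h_0^{(d)}$.
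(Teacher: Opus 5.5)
Your proof is correct and is essentially the paper's argument: both rest on the tail identity $\sum_{k=m}^{N}\gamma_k^{(d)}(c)\mathbb A_k^{(d)}(c)=\mathbb F_m^{(d)}(c)-\mathbb F_{N+1}^{(d)}(c)$, the decomposition $h_j^{(d)}=\sum_{m=0}^{j}\varepsilon_m^{(d)}$, and the $\mathbb A$-recursion with the normalization $\varepsilon_0^{(d)}\mathbb F_0^{(d)}=\mathbb A_0^{(d)}$. The only difference is presentational: the paper works with the increments $T_j-T_{j-1}=\varepsilon_j^{(d)}\bigl(\mathbb F_j^{(d)}-\mathbb F_{N+1}^{(d)}\bigr)$ and inducts on $j$, treating $N=0$ separately, whereas you sum everything at once by interchanging the finite sums, which handles all $N$ uniformly.
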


\begin{proof}
For \(0\le j\le N\), set
\[
T_j
=
\sum_{k=0}^{N}
h_{\min\{j,k\}}^{(d)}
\gamma_k^{(d)}(c)\mathbb A_k^{(d)}(c).
\]

Suppose first that \(N=0\). By
\eqref{eq-shooting-F-d},
\[
\mathbb F_1^{(d)}(c)
=
\mathbb F_0^{(d)}(c)
-
\gamma_0^{(d)}(c)\mathbb A_0^{(d)}(c).
\]
Together with \eqref{eq-shooting-initial-d}, this yields
\[
\mathbb A_0^{(d)}(c)
-
h_0^{(d)}\mathbb F_1^{(d)}(c)
=
h_0^{(d)}\gamma_0^{(d)}(c)
\mathbb A_0^{(d)}(c)
=
T_0,
\]
which proves \eqref{eq:recursive-identity} when \(N=0\).

Assume now that \(N\ge1\). By \eqref{eq-shooting-F-d}, for
\(0\le j\le N\),
\begin{equation}\label{eq:F-finite-tail}
\sum_{k=j}^{N}
\gamma_k^{(d)}(c)\mathbb A_k^{(d)}(c)
=
\mathbb F_j^{(d)}(c)
-
\mathbb F_{N+1}^{(d)}(c).
\end{equation}
Using \eqref{eq-shooting-initial-d} and
\eqref{eq:F-finite-tail} with \(j=0\), we obtain
\begin{equation}\label{eq-sum-h-gamma-A}
\begin{aligned}
T_0
&=
h_0^{(d)}
\sum_{k=0}^{N}
\gamma_k^{(d)}(c)\mathbb A_k^{(d)}(c)
\\
&=
h_0^{(d)}
\left(
\mathbb F_0^{(d)}(c)
-
\mathbb F_{N+1}^{(d)}(c)
\right)
\\
&=
\mathbb A_0^{(d)}(c)
-
h_0^{(d)}
\mathbb F_{N+1}^{(d)}(c).
\end{aligned}
\end{equation}
Moreover, for \(1\le j\le N\), by \eqref{eq-epsilon-d} and
\eqref{eq:F-finite-tail},
\[
\begin{aligned}
T_j-T_{j-1}
&=
\varepsilon_j^{(d)}
\sum_{k=j}^{N}
\gamma_k^{(d)}(c)\mathbb A_k^{(d)}(c)
\\
&=
\varepsilon_j^{(d)}
\left(
\mathbb F_j^{(d)}(c)
-
\mathbb F_{N+1}^{(d)}(c)
\right).
\end{aligned}
\]
On the other hand, by \eqref{eq-epsilon-d} and
\eqref{eq-shooting-A-d},
\[
\varepsilon_j^{(d)}\mathbb F_j^{(d)}(c)
=
\mathbb A_j^{(d)}(c)
-
\mathbb A_{j-1}^{(d)}(c),
\qquad
\varepsilon_j^{(d)}
=
h_j^{(d)}-h_{j-1}^{(d)}.
\]
Hence
\[
T_j-T_{j-1}
=
\mathbb A_j^{(d)}(c)
-
\mathbb A_{j-1}^{(d)}(c)
-
\left(
h_j^{(d)}-h_{j-1}^{(d)}
\right)
\mathbb F_{N+1}^{(d)}(c).
\]
Therefore,
\[
T_j-
\left(
\mathbb A_j^{(d)}(c)
-
h_j^{(d)}\mathbb F_{N+1}^{(d)}(c)
\right)
=
T_{j-1}-
\left(
\mathbb A_{j-1}^{(d)}(c)
-
h_{j-1}^{(d)}\mathbb F_{N+1}^{(d)}(c)
\right).
\]
By \eqref{eq-sum-h-gamma-A}, the expression on the right-hand side is
zero when \(j=1\). It follows by induction that
\[
T_j
=
\mathbb A_j^{(d)}(c)
-
h_j^{(d)}\mathbb F_{N+1}^{(d)}(c),
\qquad
0\le j\le N,
\]
which is \eqref{eq:recursive-identity}.
\end{proof}

We next use Lemma~\ref{L6.1} to characterize the spectral threshold
\(c_d^*\) in terms of the sign of the recursive sequence
\(\mathbb F_j^{(d)}(c)\).

\begin{lemma}\label{L6.2}
Let \(d\in\{2,3\}\) and \(0<c<d+1\). The following statements hold:
\begin{itemize}
	\item[\rm(i)] For every \(j\in\mathbb N_0\),
	\begin{equation}\label{eq:A-F-representation}
	\mathbb A_j^{(d)}(c)
	=
	\sum_{k=0}^{j}
	\varepsilon_k^{(d)}
	\mathbb F_k^{(d)}(c).
	\end{equation}
	In particular, if
	\[
	\mathbb F_k^{(d)}(c)>0,
	\qquad
	0\le k\le j,
	\]
	then
	\[
	\mathbb A_j^{(d)}(c)>0.
	\]

	\item[\rm(ii)] We have
	\[
	c\le c_d^*
	\]
	if and only if
	\[
	\mathbb F_j^{(d)}(c)>0,
	\qquad
	j\in\mathbb N_0.
	\]

	\item[\rm(iii)] At the critical value,
	\begin{equation}\label{eq:F-critical-limit}
	\mathbb F_j^{(d)}(c_d^*)\downarrow0
	\qquad
	\text{as }j\to\infty.
	\end{equation}
\end{itemize}
\end{lemma}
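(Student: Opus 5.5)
The plan is to get (i) by telescoping, (ii) by a Perron--Frobenius / Collatz--Wielandt argument on the finite sections of \(S_c^{(d)}\), using Lemma~\ref{L6.1}, and (iii) by a Schur test with the recursion itself supplying the weight.

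\textbf{Part (i).} Summing \eqref{eq-shooting-A-d} gives
\[
\mathbb A_j^{(d)}(c)=\mathbb A_0^{(d)}(c)+\sum_{k=1}^j\varepsilon_k^{(d)}\mathbb F_k^{(d)}(c).
\]
By \eqref{eq-shooting-initial-d} and \(\varepsilon_0^{(d)}=h_0^{(d)}\), we have \(\mathbb A_0^{(d)}(c)=1=\varepsilon_0^{(d)}\mathbb F_0^{(d)}(c)\), which gives \eqref{eq:A-F-representation}. Positivity of \(\mathbb A_j^{(d)}(c)\) then follows from \(\varepsilon_k^{(d)}>0\).

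\textbf{Part (ii): setup.} Write \(S_N\) for the \((N+1)\times(N+1)\) top-left section of \(S_c^{(d)}\). Its entries are strictly positive, so it is irreducible and Perron--Frobenius applies. Since \(S_c^{(d)}\) is positive with positive entries, \(\|S_c^{(d)}\|=\sup_N\|S_N\|=\sup_N\rho(S_N)\), and \(\rho(S_N)\) is strictly increasing in \(N\). Set \(x_k=\sqrt{\gamma_k^{(d)}(c)}\,\mathbb A_k^{(d)}(c)\). Multiplying \eqref{eq:recursive-identity} by \(\sqrt{\gamma_j^{(d)}(c)}\) gives
\[
(S_Nx)_j=x_j-\sqrt{\gamma_j^{(d)}(c)}\,h_j^{(d)}\,\mathbb F_{N+1}^{(d)}(c),\qquad 0\le j\le N.
\]

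\textbf{Part (ii): the two directions.}
\begin{itemize}
\item[\(\Leftarrow\)] Suppose all \(\mathbb F_j^{(d)}(c)>0\). By (i), \(x>0\), and the identity gives \(S_Nx<x\) componentwise. Collatz--Wielandt then yields \(\rho(S_N)<1\) for every \(N\). Hence \(\|S_c^{(d)}\|\le1\), and by Lemma~\ref{L5.9} and the definition of \(c_d^*\), \(c\le c_d^*\).
\item[\(\Rightarrow\)] Let \(c\le c_d^*\), so \(\|S_c^{(d)}\|\le 1\), and suppose some \(\mathbb F\) is nonpositive. Take the first such index, \(N+1\). Then \(\mathbb F_k^{(d)}(c)>0\) for \(k\le N\), so \(x>0\) on \(\{0,\dots,N\}\), and the identity gives \(S_Nx\ge x\).
\begin{itemize}
\item[\(\circ\)] If \(\mathbb F_{N+1}^{(d)}(c)<0\), then \(S_Nx\ge x\) with strict inequality, so \(\rho(S_N)>1\).
\item[\(\circ\)] If \(\mathbb F_{N+1}^{(d)}(c)=0\), then \(\rho(S_N)=1\), and the strict monotonicity of the Perron root under irreducible enlargement gives \(\rho(S_{N+1})>1\).
\end{itemize}
Either case contradicts \(\|S_c^{(d)}\|\le1\).
\end{itemize}
The main care needed in (ii) is justifying \(\|S_c^{(d)}\|=\sup_N\rho(S_N)\) and these strict Perron--Frobenius comparisons. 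Both are standard for entrywise-positive symmetric matrices together with the compactness from Lemma~\ref{L5.8}.

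\textbf{Part (iii).} At \(c=c_d^*\), part (ii) gives \(\mathbb F_j>0\) and hence \(\mathbb A_j>0\). Then \(\mathbb F_{j+1}=\mathbb F_j-\gamma_j\mathbb A_j<\mathbb F_j\), so \(\mathbb F_j\downarrow L\ge0\). Suppose \(L>0\).
\begin{itemize}
\item[\(\circ\)] By (i) and monotonicity, \(\mathbb A_j\le\mathbb F_0 h_j\). Also \(\sum_k\gamma_k\mathbb A_k=\mathbb F_0-L<\infty\).
\item[\(\circ\)] These bounds justify letting \(N\to\infty\) in \eqref{eq:recursive-identity}, giving \(\sum_k h_{\min\{j,k\}}\gamma_k\mathbb A_k=\mathbb A_j-h_jL\).
\item[\(\circ\)] Multiplying by \(\sqrt{\gamma_j}\) gives \((S_{c_d^*}^{(d)}x)_j=x_j(1-Lh_j/\mathbb A_j)\le(1-L/\mathbb F_0)\,x_j\).
\item[\(\circ\)] The Schur test with the positive weight \(x\), exactly as in the proof of Corollary~\ref{cor:Hd-dge4} (\(x\) need not lie in \(\ell^2\)), gives \(\|S_{c_d^*}^{(d)}\|\le1-L/\mathbb F_0<1\).
\end{itemize}
This contradicts \eqref{def-c*}, so \(L=0\), proving \eqref{eq:F-critical-limit}. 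The step needing most care is the passage to the infinite identity; it rests on the bound \(\mathbb A_j\le\mathbb F_0h_j\) and the summability of \(\gamma_k\mathbb A_k\).
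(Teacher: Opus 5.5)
Your proof is correct. Part (i) and the forward direction of (ii) are in substance the paper's argument. The paper uses the same vector $(\sqrt{\gamma_j}\mathbb A_j)_{j\le N}$, padded by zeros, and compares $\|S_c^{(d)}p^{(N)}\|$ with $\|p^{(N)}\|$ directly. In the case $\mathbb F_{N+1}=0$, the zero entries beyond $N$ do the work for which you invoke strict monotonicity of the Perron root.

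The real differences are in the other two steps.

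\emph{Reverse direction of (ii).} The paper first lets $N\to\infty$ in \eqref{eq:recursive-identity}, using $\mathbb F_j\downarrow L\ge0$ and $\sum_k\gamma_k\mathbb A_k<\infty$. It then runs an infinite Schur test with weight $\sqrt{\gamma_j}\mathbb A_j$. You observe instead that each finite section already satisfies $S_Nx<x$ strictly, so $\rho(S_N)<1$ and $\|S_c^{(d)}\|=\sup_N\rho(S_N)\le1$. This avoids the limit passage entirely and is shorter.

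\emph{Part (iii).} The paper takes an $\ell^2$ eigenvector $\tau$ of the compact operator $S_{c_d^*}^{(d)}$ for the eigenvalue $1$ and shows it may be chosen strictly positive. It then identifies $\tau_j/\sqrt{\gamma_j}$ with $\mathbb A_j^{(d)}(c_d^*)$ through the recursion, so $\mathbb F_j^{(d)}(c_d^*)$ is the tail of a convergent positive series. Your argument uses $\mathbb A_j\le\mathbb F_0h_j$ to turn $L>0$ into the uniform gap $\sum_k s_{jk}x_k\le(1-L/\mathbb F_0)x_j$. The Schur test then forces $\|S_{c_d^*}^{(d)}\|<1$, a contradiction. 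This is more elementary and needs no eigenvector.

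What the paper's route buys is information reused later. It produces a strictly positive $\ell^2$ eigenvector at $c_d^*$, which is invoked in the proof of Lemma~\ref{L6.3}. It also gives the tail formula $\mathbb F_{m+1}^{(d)}(c_d^*)=\sum_{k>m}\gamma_k^{(d)}(c_d^*)\mathbb A_k^{(d)}(c_d^*)$, used in Lemma~\ref{L6.5}. Your version yields the tail formula as soon as $L=0$. It does not show that $(\sqrt{\gamma_j}\mathbb A_j^{(d)}(c_d^*))_j$ is square-summable, so the positive eigenvector would have to be obtained separately from compactness.
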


\begin{proof}
(i) By \eqref{eq-hminus1-d}, \eqref{eq-epsilon-d}, and
\eqref{eq-shooting-initial-d},
\[
\varepsilon_0^{(d)}
=
h_0^{(d)},
\qquad
\mathbb A_0^{(d)}(c)
=
1,
\qquad
\mathbb F_0^{(d)}(c)
=
\frac{1}{h_0^{(d)}},
\]
and hence
\[
\mathbb A_0^{(d)}(c)
=
\varepsilon_0^{(d)}\mathbb F_0^{(d)}(c).
\]
This shows that \eqref{eq:A-F-representation} holds for \(j=0\).
Using \eqref{eq-shooting-A-d}, it follows by induction that
\eqref{eq:A-F-representation} holds for every \(j\in\mathbb N_0\).

Moreover, by \eqref{exp-epsilon-md},
\[
\varepsilon_k^{(d)}>0,
\qquad
k\in\mathbb N_0.
\]
Hence, if
\[
\mathbb F_k^{(d)}(c)>0,
\qquad
0\le k\le j,
\]
then \eqref{eq:A-F-representation} yields
\[
\mathbb A_j^{(d)}(c)>0.
\]
This proves (i).

\medskip

\noindent (ii) Assume first that
\[
0<c\le c_d^*.
\]
By Theorem~\ref{thm:spectral-characterization} and Lemma~\ref{L5.9},
\begin{equation}\label{eq-conseq-norm-Sc}
\|S_c^{(d)}\|_{\ell^2\to\ell^2}
\le
\|S_{c_d^*}^{(d)}\|_{\ell^2\to\ell^2}
=
1.
\end{equation}

Suppose, by contradiction, that
\(\mathbb F_j^{(d)}(c)\) is not positive for every
\(j\in\mathbb N_0\). Since
\[
\mathbb F_0^{(d)}(c)=\frac1{h_0^{(d)}}>0,
\]
there exists \(N\in\mathbb N_0\) such that \(N+1\) is the first index
for which
\(\mathbb F_{N+1}^{(d)}(c)\le0\). Hence
\begin{equation}\label{eq2-L6.2}
\mathbb F_j^{(d)}(c)>0,
\qquad
0\le j\le N,
\end{equation}
whereas
\begin{equation}\label{sign-F-N+1-c}
\mathbb F_{N+1}^{(d)}(c)\le0.
\end{equation}
By \eqref{eq2-L6.2} and part (i),
\begin{equation}\label{sign-Aj-c}
\mathbb A_j^{(d)}(c)>0,
\qquad
0\le j\le N.
\end{equation}

Define \(p^{(N)}=(p_j^{(N)})_{j\in\mathbb N_0}\) by
\[
p_j^{(N)}
=
\begin{cases}
\sqrt{\gamma_j^{(d)}(c)}\,
\mathbb A_j^{(d)}(c),
& 0\le j\le N,\\
0,
& j>N.
\end{cases}
\]
By \eqref{eq:recursive-identity} and \eqref{sign-F-N+1-c}, for
\(0\le j\le N\),
\begin{align}
\bigl(S_c^{(d)}p^{(N)}\bigr)_j
&=
\sqrt{\gamma_j^{(d)}(c)}
\sum_{k=0}^{N}
h_{\min\{j,k\}}^{(d)}
\gamma_k^{(d)}(c)
\mathbb A_k^{(d)}(c)
\nonumber\\
&=
\sqrt{\gamma_j^{(d)}(c)}
\left(
\mathbb A_j^{(d)}(c)
-
h_j^{(d)}
\mathbb F_{N+1}^{(d)}(c)
\right)
\nonumber\\
&\ge
p_j^{(N)}.
\label{key-ineq}
\end{align}

If
\[
\mathbb F_{N+1}^{(d)}(c)<0,
\]
then the inequality in \eqref{key-ineq} is strict for every
\(0\le j\le N\), and hence
\[
\|S_c^{(d)}p^{(N)}\|_{\ell^2}
>
\|p^{(N)}\|_{\ell^2}.
\]

If
\[
\mathbb F_{N+1}^{(d)}(c)=0,
\]
then
\[
\bigl(S_c^{(d)}p^{(N)}\bigr)_j
=
p_j^{(N)},
\qquad
0\le j\le N,
\]
whereas, for \(j>N\), by \eqref{sign-Aj-c},
\[
\bigl(S_c^{(d)}p^{(N)}\bigr)_j
=
\sqrt{\gamma_j^{(d)}(c)}
\sum_{k=0}^{N}
h_k^{(d)}
\gamma_k^{(d)}(c)
\mathbb A_k^{(d)}(c)
>
0
=
p_j^{(N)}.
\]
Thus again,
\[
\|S_c^{(d)}p^{(N)}\|_{\ell^2}
>
\|p^{(N)}\|_{\ell^2}.
\]

Hence, in both cases,
\[
\|S_c^{(d)}\|_{\ell^2\to\ell^2}>1,
\]
which contradicts \eqref{eq-conseq-norm-Sc}. Therefore,
\[
\mathbb F_j^{(d)}(c)>0,
\qquad
j\in\mathbb N_0.
\]

Conversely, assume that
\[
\mathbb F_j^{(d)}(c)>0,
\qquad
j\in\mathbb N_0.
\]
By part (i),
\begin{equation}\label{pos-Aj-every-j}
\mathbb A_j^{(d)}(c)>0,
\qquad
j\in\mathbb N_0.
\end{equation}
Suppose, by contradiction, that
\[
c>c_d^*.
\]
By \eqref{eq-shooting-F-d} and \eqref{pos-Aj-every-j}, the sequence
\(\bigl(\mathbb F_j^{(d)}(c)\bigr)_{j\in\mathbb N_0}\) is strictly
decreasing. Hence there exists \(L\ge0\) such that
\[
\mathbb F_j^{(d)}(c)\longrightarrow L
\qquad\text{as }j\to\infty.
\]
Moreover,
\[
\sum_{k=0}^{N}
\gamma_k^{(d)}(c)\mathbb A_k^{(d)}(c)
=
\mathbb F_0^{(d)}(c)
-
\mathbb F_{N+1}^{(d)}(c),
\]
and therefore
\[
\sum_{k=0}^{\infty}
\gamma_k^{(d)}(c)\mathbb A_k^{(d)}(c)
=
\mathbb F_0^{(d)}(c)-L
<\infty.
\]
Fix \(j\in\mathbb N_0\). Letting \(N\to\infty\) in
\eqref{eq:recursive-identity}, we obtain
\begin{equation}\label{eq1-converse-L6.2}
\sum_{k=0}^{\infty}
h_{\min\{j,k\}}^{(d)}
\gamma_k^{(d)}(c)\mathbb A_k^{(d)}(c)
=
\mathbb A_j^{(d)}(c)-h_j^{(d)}L
\le
\mathbb A_j^{(d)}(c).
\end{equation}

Set
\[
q_j
=
\sqrt{\gamma_j^{(d)}(c)}
\mathbb A_j^{(d)}(c),
\qquad
j\in\mathbb N_0.
\]
By \eqref{eq1-converse-L6.2}, for every \(j\in\mathbb N_0\),
\begin{equation}\label{est-sum-sjk-qk}
\begin{aligned}
\sum_{k=0}^{\infty}
s_{j,k}^{(d)}(c)q_k
&=
\sqrt{\gamma_j^{(d)}(c)}
\sum_{k=0}^{\infty}
h_{\min\{j,k\}}^{(d)}
\gamma_k^{(d)}(c)
\mathbb A_k^{(d)}(c)
\\
&\le q_j.
\end{aligned}
\end{equation}

Let \(x=(x_j)_{j\in\mathbb N_0}\in\ell^2(\mathbb N_0)\). Using the symmetry of \(S_c^{(d)}\), Tonelli's theorem, Young's inequality, and
\eqref{est-sum-sjk-qk}, we obtain 
\[
\begin{aligned}
\sum_{j,k=0}^{\infty}
s_{j,k}^{(d)}(c)|x_jx_k|
&\le
\sum_{j=0}^{\infty}
\frac{|x_j|^2}{q_j}
\sum_{k=0}^{\infty}
s_{j,k}^{(d)}(c)q_k
\\
&\le
\|x\|_{\ell^2}^2.
\end{aligned}
\]
Consequently,
\[
\left\langle S_c^{(d)}x,x\right\rangle_{\ell^2}
\le
\|x\|_{\ell^2}^2.
\]
Since \(S_c^{(d)}\) is positive and self-adjoint,
\[
\|S_c^{(d)}\|_{\ell^2\to\ell^2}\le1.
\]
This contradicts \(c>c_d^*\), in view of
Theorem~\ref{thm:spectral-characterization} and Lemma~\ref{L5.9}.
Consequently,
\[
c\le c_d^*.
\]
This completes the proof of part (ii).

\medskip

\noindent (iii) By Theorem~\ref{thm:spectral-characterization} and
Lemma~\ref{L5.9},
\[
\|S_{c_d^*}^{(d)}\|_{\ell^2\to\ell^2}=1,
\]
and \(S_{c_d^*}^{(d)}\) is positive and compact.
Hence, by the spectral theorem, there exists a nonzero
\(\tau=(\tau_j)_{j\in\mathbb N_0}\in\ell^2(\mathbb N_0)\) such that
\begin{equation}\label{eigenvalue-eq}
S_{c_d^*}^{(d)}\tau=\tau.
\end{equation}
Since the entries of \(S_{c_d^*}^{(d)}\) are nonnegative,
\[
\left\langle
S_{c_d^*}^{(d)}|\tau|,|\tau|
\right\rangle_{\ell^2}
\ge
\left\langle
S_{c_d^*}^{(d)}\tau,\tau
\right\rangle_{\ell^2}
=
\|\tau\|_{\ell^2}^2,
\]
where
\[
|\tau|=(|\tau_j|)_{j\in\mathbb N_0}.
\]
On the other hand,
\[
\left\langle
S_{c_d^*}^{(d)}|\tau|,|\tau|
\right\rangle_{\ell^2}
\le
\|S_{c_d^*}^{(d)}\|_{\ell^2\to\ell^2}
\|\tau\|_{\ell^2}^2
=
\|\tau\|_{\ell^2}^2.
\]
Hence
\[
\left\langle
\left(I-S_{c_d^*}^{(d)}\right)|\tau|,|\tau|
\right\rangle_{\ell^2}
=0.
\]
Since \(I-S_{c_d^*}^{(d)}\) is positive, it follows that
\[
S_{c_d^*}^{(d)}|\tau|=|\tau|.
\]
Replacing \(\tau\) by \(|\tau|\), we may therefore assume that
\[
\tau_j\ge0,
\qquad
j\in\mathbb N_0.
\]
Since \(\tau\neq0\), there exists \(k_0\in\mathbb N_0\) such that
\(\tau_{k_0}>0\). Hence, by \eqref{eigenvalue-eq} and the strict
positivity of the entries of \(S_{c_d^*}^{(d)}\),
\[
\tau_j
=
\sum_{k=0}^{\infty}
s_{j,k}^{(d)}(c_d^*)\tau_k
\ge
s_{j,k_0}^{(d)}(c_d^*)\tau_{k_0}
>0,
\qquad
j\in\mathbb N_0.
\]
After multiplying \(\tau\) by a positive constant, we may assume that
\begin{equation}\label{eq-normal-tau0}
\frac{\tau_0}{\sqrt{\gamma_0^{(d)}(c_d^*)}}=1.
\end{equation}

Set
\[
\widetilde{\mathbb A}_j
=
\frac{\tau_j}{\sqrt{\gamma_j^{(d)}(c_d^*)}},
\qquad
j\in\mathbb N_0.
\]
By \eqref{eq-normal-tau0},
\begin{equation}\label{eq-Atilde0}
\widetilde{\mathbb A}_0=1.
\end{equation}
Dividing \eqref{eigenvalue-eq} componentwise by
\(\sqrt{\gamma_j^{(d)}(c_d^*)}\), we obtain
\begin{equation}\label{eq-eigen-Atilde}
\widetilde{\mathbb A}_j
=
\sum_{k=0}^{\infty}
h_{\min\{j,k\}}^{(d)}
\gamma_k^{(d)}(c_d^*)
\widetilde{\mathbb A}_k,
\qquad
j\in\mathbb N_0.
\end{equation}
Taking \(j=0\) in \eqref{eq-eigen-Atilde} and using
\eqref{eq-Atilde0}, we obtain
\[
1
=
h_0^{(d)}
\sum_{k=0}^{\infty}
\gamma_k^{(d)}(c_d^*)
\widetilde{\mathbb A}_k.
\]
Hence
\begin{equation}\label{eq-Ftilde0}
\sum_{k=0}^{\infty}
\gamma_k^{(d)}(c_d^*)
\widetilde{\mathbb A}_k
=
\frac{1}{h_0^{(d)}}.
\end{equation}

For \(j\in\mathbb N_0\), set
\begin{equation}\label{eq-Ftilde-def}
\widetilde{\mathbb F}_j
=
\sum_{k=j}^{\infty}
\gamma_k^{(d)}(c_d^*)
\widetilde{\mathbb A}_k.
\end{equation}
Then \eqref{eq-Ftilde0} gives
\[
\widetilde{\mathbb F}_0
=
\frac{1}{h_0^{(d)}},
\]
and, by \eqref{eq-Ftilde-def},
\begin{equation}\label{eq-Ftilde-recursion}
\widetilde{\mathbb F}_{j+1}
=
\widetilde{\mathbb F}_j
-
\gamma_j^{(d)}(c_d^*)
\widetilde{\mathbb A}_j,
\qquad
j\in\mathbb N_0.
\end{equation}

Moreover, subtracting \eqref{eq-eigen-Atilde} at the indices \(j+1\)
and \(j\), and using \eqref{eq-epsilon-d}, we obtain
\begin{align}
\widetilde{\mathbb A}_{j+1}
-
\widetilde{\mathbb A}_j
&=
\left(
h_{j+1}^{(d)}-h_j^{(d)}
\right)
\sum_{k=j+1}^{\infty}
\gamma_k^{(d)}(c_d^*)
\widetilde{\mathbb A}_k
\nonumber\\
&=
\varepsilon_{j+1}^{(d)}
\widetilde{\mathbb F}_{j+1},
\qquad
j\in\mathbb N_0.
\label{eq-Atilde-recursion}
\end{align}
Thus, by \eqref{eq-Atilde0}, \eqref{eq-Ftilde0},
\eqref{eq-Ftilde-recursion}, and \eqref{eq-Atilde-recursion}, the
sequences
\[
\bigl(\widetilde{\mathbb A}_j\bigr)_{j\in\mathbb N_0},
\qquad
\bigl(\widetilde{\mathbb F}_j\bigr)_{j\in\mathbb N_0},
\]
satisfy the same initial conditions and recursive relations as
\[
\bigl(\mathbb A_j^{(d)}(c_d^*)\bigr)_{j\in\mathbb N_0},
\qquad
\bigl(\mathbb F_j^{(d)}(c_d^*)\bigr)_{j\in\mathbb N_0}.
\]
Therefore,
\begin{equation}\label{eq-identification-tilde-AF}
\widetilde{\mathbb A}_j
=
\mathbb A_j^{(d)}(c_d^*),
\qquad
\widetilde{\mathbb F}_j
=
\mathbb F_j^{(d)}(c_d^*),
\qquad
j\in\mathbb N_0.
\end{equation}

Since all the terms in \eqref{eq-Ftilde-def} are strictly positive and
the series in \eqref{eq-Ftilde0} converges, its tails satisfy
\[
\widetilde{\mathbb F}_j\downarrow0
\qquad\text{as }j\to\infty.
\]
Together with \eqref{eq-identification-tilde-AF}, this yields \eqref{eq:F-critical-limit}.
This proves (iii).
\end{proof}

For \(m\in\mathbb N_0\), define
\begin{equation}\label{eq-alpha-2m}
\alpha_{2,m}(c)
=
\frac{
20m+45-(2m+7)c
}{
2(8m+18-c)
},
\qquad
0<c<3,
\end{equation}
and
\begin{equation}\label{eq-alpha-3m}
\alpha_{3,m}(c)
=
\frac{
36m+90-(2m+11)c
}{
4(6m+15-c)
},
\qquad
0<c<4.
\end{equation}

For \(j\in\mathbb N_0\) with \(j\ge m\), set
\begin{equation}\label{eq-tail-aR-d2}
a_j^{(2,m)}(c)
=
\frac{j+\alpha_{2,m}(c)}{4j+5},
\qquad
R_j^{(2,m)}(c)
=
\frac{
4\bigl(5-4\alpha_{2,m}(c)\bigr)
}{
(j+1)(2j+1)
},
\qquad
0<c<3,
\end{equation}
and
\begin{equation}\label{eq-tail-aR-d3}
a_j^{(3,m)}(c)
=
\frac{j+\alpha_{3,m}(c)}{2j+3},
\qquad
R_j^{(3,m)}(c)
=
\frac{
8\bigl(3-2\alpha_{3,m}(c)\bigr)
}{
(j+1)(2j+1)(2j+3)
},
\qquad
0<c<4.
\end{equation}
Using \eqref{eq-frakb-d}, a direct computation gives
\begin{equation}\label{eq-frakb-R-a}
\mathfrak b_m^{(d)}(c)
=
\frac{
R_{m+1}^{(d,m)}(c)
}{
a_m^{(d,m)}(c)
},
\qquad
m\in\mathbb N_0,\quad
d\in\{2,3\},\quad
0<c<d+1.
\end{equation}

\begin{lemma}\label{L6.3}
Let \(d\in\{2,3\}\), \(m\in\mathbb N_0\), and \(0<c<d+1\). Assume that
\begin{equation}\label{criterion-Aj}
\mathbb A_j^{(d)}(c)>0,
\qquad
0\le j\le m,
\end{equation}
and
\begin{equation}\label{eq-finite-criterion}
\mathbb F_{m+1}^{(d)}(c)
>
\mathfrak b_m^{(d)}(c)
\mathbb A_m^{(d)}(c).
\end{equation}
Then
\[
c<c_d^*.
\]
\end{lemma}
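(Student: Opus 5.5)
The plan is to show that, under \eqref{criterion-Aj} and \eqref{eq-finite-criterion}, one can build a strictly positive vector \(q\in\ell^2(\mathbb N_0)\)-compatible ``supersolution'' with \(\sum_k s_{j,k}^{(d)}(c)q_k\le q_j\) for all \(j\). This reuses the Schur-test argument from the converse part of Lemma~\ref{L6.2}(ii) to get \(\|S_c^{(d)}\|_{\ell^2\to\ell^2}\le1\), hence \(c\le c_d^*\). Strictness then follows from openness: both hypotheses are strict inequalities between continuous functions of \(c\) on \((0,d+1)\). They therefore persist for \(c+\delta\) with small \(\delta>0\), giving \(c+\delta\le c_d^*\) and so \(c<c_d^*\).

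\textbf{Construction of the supersolution.} Keep the exact recursion for \(0\le j\le m\): \(\widehat{\mathbb A}_j=\mathbb A_j^{(d)}(c)\) and \(\widehat{\mathbb F}_j=\mathbb F_j^{(d)}(c)\). For the tail, use the explicit profiles \eqref{eq-tail-aR-d2}--\eqref{eq-tail-aR-d3}, scaled by \(\kappa=\mathbb A_m^{(d)}(c)/a_m^{(d,m)}(c)>0\). Set
\[
\widehat{\mathbb A}_j=\kappa\, a_j^{(d,m)}(c)\quad (j\ge m),
\qquad
\widehat{\mathbb F}_j=\kappa\, R_j^{(d,m)}(c)\quad (j\ge m+1).
\]
By \eqref{eq-frakb-R-a}, \(\widehat{\mathbb F}_{m+1}=\mathfrak b_m^{(d)}(c)\mathbb A_m^{(d)}(c)<\mathbb F_{m+1}^{(d)}(c)\). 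The tail must satisfy two relations for \(j\ge m\):
\[
a_{j+1}-a_j=\varepsilon_{j+1}^{(d)}R_{j+1},
\qquad
R_j-R_{j+1}\ge\gamma_j^{(d)}(c)\,a_j .
\]
The first is the exact \(\mathbb A\)-recursion \eqref{eq-shooting-A-d}; the second is a supersolution version of \eqref{eq-shooting-F-d}, where at \(j=m\) the left-hand \(R_m\) is replaced by the true value \(\mathbb F_m\) via \(\mathbb F_{m+1}\ge\widehat{\mathbb F}_{m+1}\). Positivity of \(\widehat{\mathbb A}_j\) on the tail requires \(\alpha_{d,m}(c)>0\), and we also need \(R_j\downarrow0\). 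Both are read off from \eqref{eq-alpha-2m}--\eqref{eq-alpha-3m}, \eqref{eq-tail-aR-d2}--\eqref{eq-tail-aR-d3} and \eqref{eq-epsilon-d-explicit}, using \(0<c<d+1\).

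\textbf{Verifying the row inequality.} With these sequences, repeat the telescoping computation of Lemma~\ref{L6.1}, now with inequalities. Define the ``defect'' \(\widetilde{\mathbb F}_j=\sum_{k\ge j}\gamma_k\widehat{\mathbb A}_k\). The \(F\)-supersolution inequalities and \(R_j\to0\) give \(\widetilde{\mathbb F}_j\le\widehat{\mathbb F}_j\), with \(\widehat{\mathbb F}_j\) taken as the true \(\mathbb F_j\) for \(j\le m\). Writing \(T_j=\sum_k h^{(d)}_{\min\{j,k\}}\gamma_k\widehat{\mathbb A}_k\), we have \(T_0=h_0\widetilde{\mathbb F}_0\le h_0\mathbb F_0=1=\widehat{\mathbb A}_0\) and \(T_j-T_{j-1}=\varepsilon_j\widetilde{\mathbb F}_j\le\varepsilon_j\widehat{\mathbb F}_j=\widehat{\mathbb A}_j-\widehat{\mathbb A}_{j-1}\). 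Hence \(T_j\le\widehat{\mathbb A}_j\) for all \(j\). Setting \(q_j=\sqrt{\gamma_j^{(d)}(c)}\,\widehat{\mathbb A}_j>0\) gives \((S_c^{(d)}q)_j\le q_j\). The Young/Tonelli argument of Lemma~\ref{L6.2}(ii) then yields \(\langle S_c^{(d)}x,x\rangle\le\|x\|^2\), and positivity gives \(\|S_c^{(d)}\|\le1\).

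\textbf{Main obstacle.} The hard step is the explicit tail verification: the exact identity \(a_{j+1}-a_j=\varepsilon_{j+1}R_{j+1}\) and the inequality \(R_j-R_{j+1}\ge\gamma_j(c)a_j\) for all \(j\ge m\). These are rational-function identities and inequalities in \(j\), \(c\) and \(\alpha_{d,m}(c)\), where \(\gamma_j\) carries the factor \(1/(4j+d+3-c)\). I would first check the \(A\)-identity directly; it should fix the numerators \(5-4\alpha\) and \(3-2\alpha\). Then I would clear denominators in the \(F\)-inequality to get a polynomial in \(j\) that must be nonnegative for \(j\ge m\). The choice of \(\alpha_{d,m}(c)\) should make it vanish (or be nonnegative) at \(j=m\), and I would verify that its coefficients, viewed as a polynomial in \(j-m\), are nonnegative. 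If equality at \(j=m\) holds only through \(\mathfrak b_m\), the strict hypothesis \eqref{eq-finite-criterion} supplies the needed slack.
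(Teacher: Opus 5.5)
Your proof is correct. Its core is the paper's. You use the same glued test vector: the exact recursion up to index $m$, then the explicit profile $a_j^{(d,m)}(c)$, with your $\kappa$ equal to the paper's $1/\lambda$. You also use the same tail identity, the same tail majorization, the same telescoping comparison of $T_j$ with the test vector, and the same Schur-test argument giving $\|S_c^{(d)}\|_{\ell^2\to\ell^2}\le1$, hence $c\le c_d^*$.

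The genuine difference is the strictness step. The paper argues as follows:
\begin{itemize}
\item it assumes $c=c_d^*$ and takes a strictly positive eigenvector $\tau$ of $S_{c_d^*}^{(d)}$, as in Lemma~\ref{L6.2}(iii);
\item it checks that $p=(\sqrt{\gamma_j^{(d)}(c)}\,\rho_j)_j$ lies in $\ell^2(\mathbb N_0)$, using boundedness of $\rho_j$ and decay of $\gamma_j^{(d)}$;
\item it obtains the contradiction $0<\langle\tau,p-S_{c_d^*}^{(d)}p\rangle_{\ell^2}=0$, which uses the strict row inequality $(S_c^{(d)}p)_j<p_j$.
\end{itemize}
Your openness argument is shorter and equally rigorous. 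The hypotheses are finitely many strict inequalities between functions continuous on $(0,d+1)$, so they persist at some $c+\delta<d+1$. The non-strict conclusion at $c+\delta$ then gives $c<c_d^*$. This needs neither the eigenvector, nor $p\in\ell^2$, nor strictness in the row inequality. What it uses instead is that the tail construction is valid at every point of $(0,d+1)$, which it is. The paper's route, by contrast, extracts strictness from the data at a single value of $c$.

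One correction to your verification plan. The majorization $R_j^{(d,m)}(c)-R_{j+1}^{(d,m)}(c)\ge\gamma_j^{(d)}(c)\,a_j^{(d,m)}(c)$ holds, and is needed, only for $j\ge m+1$, not for $j\ge m$. Clearing denominators turns it into a linear inequality in $j$. For $d=2$ it reads $(5-4\alpha)(4j+5-c)\ge 2c(j+\alpha)$ with $\alpha=\alpha_{2,m}(c)$; for $d=3$ it reads $3(3-2\alpha)(4j+6-c)\ge 2c(j+\alpha)$ with $\alpha=\alpha_{3,m}(c)$. The choice of $\alpha_{d,m}(c)$ makes it an equality exactly at $j=m+1$. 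Its slope is $2c^2/(8m+18-c)$ for $d=2$ and $2c^2/(6m+15-c)$ for $d=3$, so it fails at $j=m$. You should therefore anchor the polynomial check at $j=m+1$, expanding in $j-m-1$ rather than $j-m$. Your skeleton is unaffected, because at $j=m$ you correctly replace this inequality by the hypothesis $\mathbb F_{m+1}^{(d)}(c)>\mathfrak b_m^{(d)}(c)\mathbb A_m^{(d)}(c)$.
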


\begin{proof}
If \(d=2\), then by \eqref{eq-alpha-2m} and \(0<c<3\),
\[
\alpha_{2,m}(c)>0,
\]
while, if \(d=3\), then by \eqref{eq-alpha-3m} and \(0<c<4\),
\[
\alpha_{3,m}(c)>0.
\]
Consequently, by \eqref{eq-tail-aR-d2} and \eqref{eq-tail-aR-d3},
\begin{equation}\label{eq-tail-aR-positive}
a_j^{(d,m)}(c)>0,
\qquad
j\ge m.
\end{equation}

A direct computation using \eqref{eq-epsilon-d-explicit},
\eqref{eq-tail-aR-d2}, and \eqref{eq-tail-aR-d3} gives
\begin{equation}\label{eq-tail-increment}
a_j^{(d,m)}(c)-a_{j-1}^{(d,m)}(c)
=
\varepsilon_j^{(d)}R_j^{(d,m)}(c),
\qquad
j\ge m+1.
\end{equation}
Moreover, using \eqref{eq-gammajd}, \eqref{eq-alpha-2m},
\eqref{eq-alpha-3m}, \eqref{eq-tail-aR-d2}, and
\eqref{eq-tail-aR-d3}, we obtain
\begin{equation}\label{eq-tail-majorization}
\gamma_j^{(d)}(c)a_j^{(d,m)}(c)
\le
R_j^{(d,m)}(c)-R_{j+1}^{(d,m)}(c),
\qquad
j\ge m+1.
\end{equation}

For \(j\ge m+1\), set
\[
W_j
=
\sum_{k=j}^{\infty}
\gamma_k^{(d)}(c)a_k^{(d,m)}(c).
\]
By \eqref{eq-tail-aR-d2} and \eqref{eq-tail-aR-d3},
\[
R_j^{(d,m)}(c)\longrightarrow0
\qquad\text{as }j\to\infty.
\]
Hence, by \eqref{eq-tail-majorization},
\begin{equation}\label{eq-Wj-Rj}
\begin{aligned}
W_j
&\le
\sum_{k=j}^{\infty}
\left(
R_k^{(d,m)}(c)-R_{k+1}^{(d,m)}(c)
\right)
\\
&=
R_j^{(d,m)}(c),
\qquad
j\ge m+1.
\end{aligned}
\end{equation}

Set
\begin{equation}\label{eq-lambda-L63}
\lambda
=
\frac{
a_m^{(d,m)}(c)
}{
\mathbb A_m^{(d)}(c)
},
\end{equation}
and define
\begin{equation}\label{eq-rho-L63}
\rho_j
=
\begin{cases}
\lambda\,\mathbb A_j^{(d)}(c),
& 0\le j\le m,\\[1mm]
a_j^{(d,m)}(c),
& j\ge m+1.
\end{cases}
\end{equation}
By \eqref{criterion-Aj} and \eqref{eq-tail-aR-positive},
\[
\lambda>0,
\qquad
\rho_j>0,
\qquad
j\in\mathbb N_0.
\]
Moreover, by \eqref{eq-lambda-L63},
\begin{equation}\label{eq-rho-m-match}
\rho_m
=
\lambda\mathbb A_m^{(d)}(c)
=
a_m^{(d,m)}(c).
\end{equation}

For \(j\in\mathbb N_0\), set
\[
T_j
=
\sum_{k=0}^{\infty}
h_{\min\{j,k\}}^{(d)}
\gamma_k^{(d)}(c)\rho_k.
\]
This series is convergent. Indeed, by \eqref{eq-Wj-Rj} and \eqref{eq-rho-L63},
\[
\begin{aligned}
\sum_{k=m+1}^{\infty}
h_{\min\{j,k\}}^{(d)}
\gamma_k^{(d)}(c)\rho_k
&\le
h_j^{(d)}
\sum_{k=m+1}^{\infty}
\gamma_k^{(d)}(c)a_k^{(d,m)}(c)
\\
&=
h_j^{(d)}W_{m+1}
\\
&\le
h_j^{(d)}R_{m+1}^{(d,m)}(c)
<\infty.
\end{aligned}
\]

Let \(0\le j\le m\). By Lemma~\ref{L6.1},
\eqref{eq-rho-L63}, and the definition of \(W_{m+1}\),
\begin{equation}\label{eq-Tj-prefix}
\begin{aligned}
T_j
&=
\lambda
\sum_{k=0}^{m}
h_{\min\{j,k\}}^{(d)}
\gamma_k^{(d)}(c)
\mathbb A_k^{(d)}(c)
+
h_j^{(d)}W_{m+1}
\\
&=
\lambda\mathbb A_j^{(d)}(c)
+
h_j^{(d)}
\left(
W_{m+1}
-
\lambda\mathbb F_{m+1}^{(d)}(c)
\right)
\\
&=
\rho_j+
h_j^{(d)}
\left(
W_{m+1}
-
\lambda\mathbb F_{m+1}^{(d)}(c)
\right).
\end{aligned}
\end{equation}
By \eqref{eq-frakb-R-a}, \eqref{eq-finite-criterion}, and \eqref{eq-lambda-L63}, 
\[
\begin{aligned}
\lambda\mathbb F_{m+1}^{(d)}(c)
&>
\lambda\mathfrak b_m^{(d)}(c)
\mathbb A_m^{(d)}(c)
\\
&=
a_m^{(d,m)}(c)\mathfrak b_m^{(d)}(c)
\\
&=
R_{m+1}^{(d,m)}(c).
\end{aligned}
\]
Together with \eqref{eq-Wj-Rj}, this gives
\[
W_{m+1}
-
\lambda\mathbb F_{m+1}^{(d)}(c)
<0.
\]
Hence, by \eqref{eq-Tj-prefix},
\begin{equation}\label{eq-Tj-prefix-bound}
T_j<\rho_j,
\qquad
0\le j\le m.
\end{equation}

For \(j\ge m+1\),
\[
T_j-T_{j-1}
=
\varepsilon_j^{(d)}W_j.
\]
Using \eqref{eq-tail-increment} and \eqref{eq-Wj-Rj}, we obtain
\begin{equation}\label{eq-Tj-tail-increment}
T_j-T_{j-1}
\le
\varepsilon_j^{(d)}R_j^{(d,m)}(c)
=
a_j^{(d,m)}(c)-a_{j-1}^{(d,m)}(c).
\end{equation}
Since, by \eqref{eq-rho-m-match} and \eqref{eq-Tj-prefix-bound},
\[
T_m
<
\rho_m
=
a_m^{(d,m)}(c),
\]
it follows by induction from \eqref{eq-Tj-tail-increment} that
\[
T_j
<
a_j^{(d,m)}(c)
=
\rho_j,
\qquad
j\ge m+1.
\]
Together with \eqref{eq-Tj-prefix-bound}, this yields
\begin{equation}\label{eq-Tj-global-bound}
T_j<\rho_j,
\qquad
j\in\mathbb N_0.
\end{equation}

Finally, set
\[
p_j
=
\sqrt{\gamma_j^{(d)}(c)}\,\rho_j,
\qquad
j\in\mathbb N_0.
\]
Then \(p_j>0\) for every \(j\in\mathbb N_0\). Moreover, by
\eqref{eq-sjk} and \eqref{eq-Tj-global-bound},
\begin{equation}\label{eq-Sp-strict-p}
\sum_{k=0}^{\infty}
s_{j,k}^{(d)}(c)p_k
=
\sqrt{\gamma_j^{(d)}(c)}\,T_j
<
p_j,
\qquad
j\in\mathbb N_0.
\end{equation}
Arguing as in the proof of part (ii) of Lemma~\ref{L6.2}, we obtain
\[
\|S_c^{(d)}\|_{\ell^2\to\ell^2}\le1.
\]
By Theorem~\ref{thm:spectral-characterization} and Lemma~\ref{L5.9},
it follows that
\[
c\le c_d^*.
\]

Suppose, by contradiction, that
\[
c=c_d^*.
\]
By the proof of part (iii) of Lemma~\ref{L6.2}, there exists
\(\tau=(\tau_j)_{j\in\mathbb N_0}\in\ell^2(\mathbb N_0)\) such that
\begin{equation}\label{tau-j-pos}
\tau_j>0,
\qquad
j\in\mathbb N_0,
\end{equation}
and
\begin{equation}\label{tau-eq}
S_{c_d^*}^{(d)}\tau=\tau.
\end{equation}
Moreover, by 
\eqref{eq-tail-aR-d2}, \eqref{eq-tail-aR-d3}, and \eqref{eq-rho-L63}, 
the sequence \((\rho_j)_{j\in\mathbb N_0}\) is bounded. On the other
hand, by \eqref{eq-gammajd},
\[
\gamma_j^{(2)}(c)=O(j^{-3}),
\qquad
\gamma_j^{(3)}(c)=O(j^{-4})
\qquad\text{as }j\to\infty.
\]
Hence
\[
\sum_{j=0}^{\infty}p_j^2
=
\sum_{j=0}^{\infty}
\gamma_j^{(d)}(c)\rho_j^2
<\infty,
\]
and therefore
\[
p\in\ell^2(\mathbb N_0).
\]
By \eqref{eq-Sp-strict-p},
\[
p_j-\bigl(S_{c_d^*}^{(d)}p\bigr)_j>0,
\qquad
j\in\mathbb N_0.
\]
Together with \eqref{tau-j-pos}, this yields 
\[
\left\langle
\tau,p-S_{c_d^*}^{(d)}p
\right\rangle_{\ell^2}
=
\sum_{j=0}^{\infty}
\tau_j
\left(
p_j-\bigl(S_{c_d^*}^{(d)}p\bigr)_j
\right)
>0.
\]
On the other hand, since \(S_{c_d^*}^{(d)}\) is self-adjoint, by \eqref{tau-eq}, 
\[
\begin{aligned}
\left\langle
\tau,p-S_{c_d^*}^{(d)}p
\right\rangle_{\ell^2}
&=
\langle\tau,p\rangle_{\ell^2}
-
\left\langle
S_{c_d^*}^{(d)}\tau,p
\right\rangle_{\ell^2}
\\
&=0,
\end{aligned}
\]
which is a contradiction. Therefore,
\[
c<c_d^*.
\]
This completes the proof.
\end{proof}

For \(d\in\{2,3\}\), \(m\in\mathbb N_0\), and \(0<c\le c_d^*\), set
\begin{equation}\label{eq-Phi-dm}
\Phi_{d,m}(c)
=
\frac{
\mathbb F_{m+1}^{(d)}(c)
}{
\mathbb A_m^{(d)}(c)
}
-
\mathfrak b_m^{(d)}(c).
\end{equation}
By parts (i)--(ii) of Lemma~\ref{L6.2}, the function
\(\Phi_{d,m}\) is well defined on \((0,c_d^*]\).

\begin{lemma}\label{L6.4}
Let \(d\in\{2,3\}\) and \(m\in\mathbb N_0\). Then
\begin{equation}\label{eq-Phi-zero-limit}
\lim_{c\to0^+}\Phi_{d,m}(c)
=
\frac{1}{h_m^{(d)}}>0.
\end{equation}
Moreover, if \(0<c\le c_d^*\) and
\[
\Phi_{d,m}(c)=0,
\]
then
\begin{equation}\label{eq-Phi-next-positive}
\Phi_{d,m+1}(c)>0.
\end{equation}
\end{lemma}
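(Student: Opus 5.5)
The first claim \eqref{eq-Phi-zero-limit} will come from letting \(c\to0^+\) in the finite recursion \eqref{eq-shooting-initial-d}--\eqref{eq-shooting-A-d}. By \eqref{eq-gammajd}, each \(\gamma_j^{(d)}(c)\) is \(c\) times a function continuous at \(c=0\), so \(\gamma_j^{(d)}(c)\to0\). Since \(\mathbb F_{m+1}^{(d)}\) and \(\mathbb A_m^{(d)}\) are obtained after finitely many steps, induction on \(j\) gives
\[
\mathbb F_j^{(d)}(c)\to\frac1{h_0^{(d)}},
\qquad
\mathbb A_j^{(d)}(c)\to\sum_{k=0}^j\varepsilon_k^{(d)}\frac1{h_0^{(d)}}=\frac{h_j^{(d)}}{h_0^{(d)}},
\]
where the second limit uses \eqref{eq:A-F-representation} and \eqref{series-h}. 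Likewise, \(\mathfrak b_m^{(d)}(c)\to0\) by \eqref{eq-frakb-d}, because the factor \(c\) appears in the numerator and the denominator stays bounded away from zero. Therefore \(\Phi_{d,m}(c)\to(1/h_0^{(d)})/(h_m^{(d)}/h_0^{(d)})=1/h_m^{(d)}\), and this is positive by \eqref{exp-epsilon-md}.

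For the second claim I fix \(0<c\le c_d^*\) with \(\Phi_{d,m}(c)=0\), and abbreviate \(A_m=\mathbb A_m^{(d)}(c)\), \(b_m=\mathfrak b_m^{(d)}(c)\), and so on. Lemma~\ref{L6.2}(i)--(ii) gives \(A_m>0\) and \(A_{m+1}>0\), so both \(\Phi_{d,m}\) and \(\Phi_{d,m+1}\) are defined. The hypothesis says \(F_{m+1}=b_mA_m\). One step of the recursion then gives
\[
A_{m+1}=A_m\bigl(1+\varepsilon_{m+1}^{(d)}b_m\bigr),
\]
\[
F_{m+2}=F_{m+1}-\gamma_{m+1}A_{m+1}
=A_m\Bigl[b_m-\gamma_{m+1}\bigl(1+\varepsilon_{m+1}^{(d)}b_m\bigr)\Bigr].
\]
Hence \(\Phi_{d,m+1}(c)>0\) is equivalent to the purely explicit inequality
\[
b_m>\bigl(\gamma_{m+1}+b_{m+1}\bigr)\bigl(1+\varepsilon_{m+1}^{(d)}b_m\bigr).
\]
It involves only the closed forms \eqref{eq-epsilon-d-explicit}, \eqref{eq-gammajd} and \eqref{eq-frakb-d}, and all unknown recursion data has disappeared.

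The main work, and the expected obstacle, is verifying this rational inequality for all \(m\in\mathbb N_0\) and all \(c\in(0,d+1)\), separately for \(d=2\) and \(d=3\).
\begin{itemize}
\item[(a)] Reduce to a polynomial. Every quantity carries a factor \(c\), so after dividing by \(c\) and multiplying through by the positive denominators (positive by the remark after \eqref{eq-frakb-d}, and \(4j+d+3-c>0\)), the inequality becomes \(P_{d}(m,c)>0\) for an explicit polynomial \(P_d\).
\item[(b)] Use the tail structure. Since \(\mathfrak b_m=R_{m+1}^{(d,m)}/a_m^{(d,m)}\) by \eqref{eq-frakb-R-a}, I would rewrite the inequality in terms of \(a^{(d,m)}\), \(a^{(d,m+1)}\) and \(R\). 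The identities \eqref{eq-tail-increment}--\eqref{eq-tail-majorization} should then absorb most of the terms, leaving a remainder with a visible sign.
\item[(c)] Fallback. If (b) does not close, expand \(P_d(m,c)\) in powers of \(c\) and check positivity on \(0\le c\le d+1\) term by term, with \(m\ge0\) as a parameter. One way is to substitute \(c=(d+1)t\) with \(t\in[0,1]\) and show that all coefficients in the Bernstein basis in \(t\) are polynomials in \(m\) with positive coefficients.
\end{itemize}
Strict positivity should hold at \(c=d+1\) because of the slack already present in \eqref{eq-tail-majorization}.
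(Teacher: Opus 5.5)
Your proposal is correct and follows the paper's proof. Both use the same finite-step limit as \(c\to0^+\), and both use the same reduction, under \(\mathbb F_{m+1}^{(d)}=\mathfrak b_m^{(d)}\mathbb A_m^{(d)}\), of \(\Phi_{d,m+1}(c)\) to the explicit quantity \(\mathfrak b_m^{(d)}/(1+\varepsilon_{m+1}^{(d)}\mathfrak b_m^{(d)})-\gamma_{m+1}^{(d)}-\mathfrak b_{m+1}^{(d)}\), whose positivity is equivalent to your inequality. You leave step (a) uncarried, and the paper does it directly: the expression collapses to \(16c^2(4m+9)/\bigl[(m+3)(2m+5)(4m+9-c)(16m^2+88m+117-(4m+11)c)\bigr]\) for \(d=2\) and \(64c^2/\bigl[3(m+3)(2m+7)(4m+10-c)(24m^2+144m+210-(6m+17)c)\bigr]\) for \(d=3\). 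Both are visibly positive for \(0<c<d+1\), so your fallbacks (b) and (c) are not needed.
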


\begin{proof}
By \eqref{eq-gammajd}, for every fixed \(j\in\mathbb N_0\),
\[
\gamma_j^{(d)}(c)\longrightarrow0
\qquad\text{as }c\to0^+.
\]
Moreover, by \eqref{eq-shooting-initial-d},
\[
\mathbb A_0^{(d)}(c)=1,
\qquad
\mathbb F_0^{(d)}(c)=\frac{1}{h_0^{(d)}}.
\]
Using \eqref{eq-epsilon-d}, \eqref{eq-shooting-F-d}, and
\eqref{eq-shooting-A-d}, a simultaneous induction on \(j\) gives, for
every fixed \(j\in\mathbb N_0\),
\[
\mathbb F_j^{(d)}(c)
\longrightarrow
\frac{1}{h_0^{(d)}},
\qquad
\mathbb A_j^{(d)}(c)
\longrightarrow
\frac{h_j^{(d)}}{h_0^{(d)}}
\qquad\text{as }c\to0^+.
\]
Hence,
\[
\frac{
\mathbb F_{m+1}^{(d)}(c)
}{
\mathbb A_m^{(d)}(c)
}
\longrightarrow
\frac{1}{h_m^{(d)}}
\qquad\text{as }c\to0^+.
\]
On the other hand, by \eqref{eq-frakb-d},
\[
\mathfrak b_m^{(d)}(c)\longrightarrow0
\qquad\text{as }c\to0^+.
\]
Therefore, letting \(c\to 0^+\) in \eqref{eq-Phi-dm}, we obtain \eqref{eq-Phi-zero-limit}.

Assume now that
\[
\Phi_{d,m}(c)=0
\]
for some \(0<c\le c_d^*\). By \eqref{eq-Phi-dm},
\begin{equation}\label{eq-Phi-zero-relation}
\mathbb F_{m+1}^{(d)}(c)
=
\mathfrak b_m^{(d)}(c)\mathbb A_m^{(d)}(c).
\end{equation}
Using \eqref{eq-shooting-A-d} and \eqref{eq-Phi-zero-relation}, we obtain
\begin{equation}\label{eq-Amplus1-Phi-zero}
\mathbb A_{m+1}^{(d)}(c)
=
\mathbb A_m^{(d)}(c)
\left(
1+\varepsilon_{m+1}^{(d)}
\mathfrak b_m^{(d)}(c)
\right).
\end{equation}
Similarly, by \eqref{eq-shooting-F-d},
\[
\mathbb F_{m+2}^{(d)}(c)
=
\mathbb F_{m+1}^{(d)}(c)
-
\gamma_{m+1}^{(d)}(c)
\mathbb A_{m+1}^{(d)}(c).
\]
Hence, by \eqref{eq-Phi-zero-relation} and
\eqref{eq-Amplus1-Phi-zero},
\[
\frac{
\mathbb F_{m+2}^{(d)}(c)
}{
\mathbb A_{m+1}^{(d)}(c)
}
=
\frac{
\mathfrak b_m^{(d)}(c)
}{
1+\varepsilon_{m+1}^{(d)}
\mathfrak b_m^{(d)}(c)
}
-
\gamma_{m+1}^{(d)}(c).
\]
Therefore, by \eqref{eq-Phi-dm},
\begin{equation}\label{eq-Phi-next-identity}
\Phi_{d,m+1}(c)
=
\frac{
\mathfrak b_m^{(d)}(c)
}{
1+\varepsilon_{m+1}^{(d)}
\mathfrak b_m^{(d)}(c)
}
-
\gamma_{m+1}^{(d)}(c)
-
\mathfrak b_{m+1}^{(d)}(c).
\end{equation}

For \(d=2\), using  \eqref{eq-gammajd}, \eqref{eq-epsilon-d-explicit}, 
\eqref{eq-frakb-d}, and \eqref{eq-Phi-next-identity}, a direct computation gives
\[
\Phi_{2,m+1}(c)
=
\frac{
16c^2(4m+9)
}{
(m+3)(2m+5)(4m+9-c)
\left(
16m^2+88m+117-(4m+11)c
\right)
}
>0.
\]
Indeed, \(0<c\le c_2^*<3\), so all the factors in the denominator
are strictly positive.

For \(d=3\),  we obtain
\[
\Phi_{3,m+1}(c)
=
\frac{
64c^2
}{
3(m+3)(2m+7)(4m+10-c)
\left(
24m^2+144m+210-(6m+17)c
\right)
}
>0.
\]
Here \(0<c\le c_3^*<4\), and all the factors in the denominator are
strictly positive. Thus \eqref{eq-Phi-next-positive} follows.
\end{proof}

We next derive quantitative estimates for \(\Phi_{d,m}(c_d^*)\).

\begin{lemma}\label{L6.5}
Let \(d\in\{2,3\}\) and \(m\in\mathbb N_0\). Then
\begin{equation}\label{eq-Phi-critical-negative}
\Phi_{d,m}(c_d^*)<0.
\end{equation}
Moreover, for every \(m\in\mathbb N\),
\begin{align}
0<-\Phi_{2,m}(c_2^*)
&<
\frac{9}{8m^3},
\label{eq-Phi-critical-d2}\\
0<-\Phi_{3,m}(c_3^*)
&<
\frac{3}{5m^4}.
\label{eq-Phi-critical-d3}
\end{align}
\end{lemma}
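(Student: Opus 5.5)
We first prove \eqref{eq-Phi-critical-negative} by combining Lemmas~\ref{L6.3} and~\ref{L6.4}. By Lemma~\ref{L6.2}(ii), applied at \(c=c_d^*\), we have \(\mathbb F_j^{(d)}(c_d^*)>0\) for all \(j\). Part (i) of the same lemma then gives \(\mathbb A_j^{(d)}(c_d^*)>0\) for all \(j\), so \(\Phi_{d,m}(c_d^*)\) is well defined and hypothesis \eqref{criterion-Aj} holds for every \(m\).

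If \(\Phi_{d,m}(c_d^*)>0\), then \eqref{eq-finite-criterion} holds at \(c=c_d^*\), and Lemma~\ref{L6.3} yields \(c_d^*<c_d^*\), which is absurd. If \(\Phi_{d,m}(c_d^*)=0\), then Lemma~\ref{L6.4} gives \(\Phi_{d,m+1}(c_d^*)>0\). Lemma~\ref{L6.3} with \(m+1\) in place of \(m\) then gives the same contradiction. Hence \(\Phi_{d,m}(c_d^*)<0\), which is also the left inequality in \eqref{eq-Phi-critical-d2}--\eqref{eq-Phi-critical-d3}.

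For the quantitative upper bounds, write \(c=c_d^*\). By Lemma~\ref{L6.2}(iii) and the identification \eqref{eq-identification-tilde-AF},
\[
\mathbb F_{m+1}^{(d)}(c)=\sum_{k\ge m+1}\gamma_k^{(d)}(c)\mathbb A_k^{(d)}(c).
\]
The recursion \eqref{eq-shooting-A-d} with \(\mathbb F_j>0\) shows that \(\mathbb A_k^{(d)}\) is increasing. A first estimate is therefore \(\mathbb A_k\ge\mathbb A_m\), which gives
\[
-\Phi_{d,m}(c)\le \mathfrak b_m^{(d)}(c)-\sum_{k\ge m+1}\gamma_k^{(d)}(c).
\]
Both terms on the right are of order \(m^{-2}\) with the same leading coefficient. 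For \(d=2\), for instance, \(\mathfrak b_m\sim 4c/m^2\) and \(\gamma_k\sim 8c/k^3\). The difference is thus \(O(m^{-3})\), which should suffice for \eqref{eq-Phi-critical-d2}. To make this explicit, we would bound the tail \(\sum_{k\ge m+1}\gamma_k\) from below by a telescoping sum \(\sum_k(\widetilde R_k-\widetilde R_{k+1})\) with an explicit rational \(\widetilde R_k\), in the same spirit as \eqref{eq-tail-majorization} but with the inequality reversed. We would then use \(0<c<d+1\) (Corollary~\ref{cor:Hd-d23}) to estimate the resulting rational function of \(m\) and \(c\) uniformly in \(c\), arriving at \(9/(8m^3)\).

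For \(d=3\) the crude bound \(\mathbb A_k\ge\mathbb A_m\) only gives order \(m^{-3}\), so a sharper lower comparison is needed. We would use
\[
\mathbb A_k=\mathbb A_m+\sum_{i=m+1}^{k}\varepsilon_i\mathbb F_i .
\]
Here \(\mathbb F_i\) would be bounded below via the lower telescoping bound of the tail combined with \(\mathbb A\ge\mathbb A_m\). This produces a lower bound of the form \(\mathbb A_k\ge\mathbb A_m\,a_k/a_m\) for an explicit comparison profile, analogous to \(a_j^{(3,m)}\) but satisfying a reversed inequality; we expect this to cancel the \(m^{-3}\) term. Substituting back and simplifying, again using \(c<4\), should yield \(3/(5m^4)\).

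The main obstacle is this last step. We must find explicit sub-solution profiles \(\underline a_k\) and remainders \(\underline R_k\) with
\[
\gamma_k\underline a_k\ge \underline R_k-\underline R_{k+1},
\qquad
\underline a_k-\underline a_{k-1}\le\varepsilon_k\underline R_k,
\]
accurate to one order beyond the leading asymptotics, and then verify the resulting rational inequalities in \((m,c)\) with the stated constants. We would first try ansätze of the form \(\underline a_k=(k+\beta)/(2k+3)\) with \(\beta\) tuned to \(m\) and \(c\), mirroring \eqref{eq-tail-aR-d2}--\eqref{eq-tail-aR-d3}.
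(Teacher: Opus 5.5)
Your proof of \eqref{eq-Phi-critical-negative} is exactly the paper's argument. So is your starting inequality
\[
-\Phi_{d,m}(c_d^*)<\mathfrak b_m^{(d)}(c_d^*)-\sum_{k\ge m+1}\gamma_k^{(d)}(c_d^*),
\]
which comes from the tail identity for $\mathbb F_{m+1}^{(d)}(c_d^*)$ and the monotonicity of $\mathbb A_k^{(d)}(c_d^*)$.

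\textbf{The gap.} It lies in the next step, where you propose to control the right-hand side using only $0<c<d+1$. Expanding at fixed $c$ gives
\[
\mathfrak b_m^{(2)}(c)-\sum_{k\ge m+1}\gamma_k^{(2)}(c)=\frac{c^2}{3m^3}+O(m^{-4}),
\qquad
\mathfrak b_m^{(3)}(c)-\sum_{k\ge m+1}\gamma_k^{(3)}(c)=\frac{c^2}{24m^4}+O(m^{-5}).
\]
Any lower bound for the tail only increases your right-hand side, so no estimate uniform in $c\in(0,d+1)$ can give the stated constants. Near $c=3$ the first quantity is about $3/m^3>9/(8m^3)$, and near $c=4$ the second is about $2/(3m^4)>3/(5m^4)$.

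\textbf{The missing ingredient.} You need an a priori localization of $c_d^*$; in particular $c_2^*<\sqrt{27/8}$ and $c_3^*<\sqrt{72/5}$, with some room for lower-order terms. The paper proves $5/3<c_2^*<7/4$ and $18/5<c_3^*<11/3$.
\begin{itemize}
\item The lower bounds come from Lemma~\ref{L6.3} with $m=0$, by directly evaluating $\mathbb F_1^{(d)}-\mathfrak b_0^{(d)}$.
\item The upper bounds come from Corollary~\ref{cor:Hd-d23} together with the upper bounds $\mathcal H_2\le\sqrt3/2$ and $\mathcal H_3\le2\sqrt{21}/5$ of Theorem~C.
\item Through the monotonicity of $\gamma_k^{(d)}(c)/c$, the lower bound yields the $c$-free minorants $\gamma_k^{(2)}(c_2^*)>8c_2^*/(k+10/9)^3$ and $\gamma_k^{(3)}(c_3^*)>2c_3^*/(k+51/40)^4$. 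Their tails are then bounded below by the trapezoidal inequality.
\item Through the monotonicity of $D_{d,m}(c)/c$, the upper bound reduces everything to the single values $c=7/4$ and $c=11/3$. There, explicit polynomial inequalities give $9/(8m^3)$ and $3/(5m^4)$.
\end{itemize}

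\textbf{The case $d=3$.} Your treatment rests on a miscalculation. Both $\mathfrak b_m^{(3)}(c)$ and $\sum_{k\ge m+1}\gamma_k^{(3)}(c)$ equal $2c/(3m^3)+O(m^{-4})$, so the leading terms cancel. The crude bound $\mathbb A_k^{(3)}\ge\mathbb A_m^{(3)}$ therefore already gives order $m^{-4}$, and the paper uses it for $d=3$ exactly as for $d=2$. The sub-solution profiles you call the main obstacle are thus unnecessary. They are also not constructed, and the resulting rational inequalities are not verified, so that part of the proposal is not a proof as it stands. Even with the crude bound, the a priori bound $c_3^*<11/3$ (or anything below $\sqrt{72/5}$) is still required, for the reason given above.
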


\begin{proof}
We first prove \eqref{eq-Phi-critical-negative}. By parts (i)--(ii) of
Lemma~\ref{L6.2},
\begin{equation}\label{eq-A-positive-critical}
\mathbb A_j^{(d)}(c_d^*)>0,
\qquad
j\in\mathbb N_0.
\end{equation}

Assume, by contradiction, that
\[
\Phi_{d,m}(c_d^*)\ge0.
\]
If \(\Phi_{d,m}(c_d^*)>0\), then \eqref{eq-Phi-dm} gives
\[
\mathbb F_{m+1}^{(d)}(c_d^*)
>
\mathfrak b_m^{(d)}(c_d^*)
\mathbb A_m^{(d)}(c_d^*).
\]
Together with \eqref{eq-A-positive-critical}, Lemma~\ref{L6.3}
implies \(c_d^*<c_d^*\), a contradiction.

If \(\Phi_{d,m}(c_d^*)=0\), Lemma~\ref{L6.4} yields
\[
\Phi_{d,m+1}(c_d^*)>0.
\]
Hence, by \eqref{eq-Phi-dm},
\[
\mathbb F_{m+2}^{(d)}(c_d^*)
>
\mathfrak b_{m+1}^{(d)}(c_d^*)
\mathbb A_{m+1}^{(d)}(c_d^*).
\]
Applying Lemma~\ref{L6.3} with \(m+1\) in place of \(m\), together
with \eqref{eq-A-positive-critical}, again gives \(c_d^*<c_d^*\), a
contradiction.

Therefore,
\[
\Phi_{d,m}(c_d^*)<0,
\]
which proves \eqref{eq-Phi-critical-negative}.

We next prove \eqref{eq-Phi-critical-d2}. We first claim that
\begin{equation}\label{eq-coarse-c2}
\frac53<c_2^*<\frac74.
\end{equation}
At \(c=5/3\), \eqref{eq-shooting-initial-d},
\eqref{eq-shooting-F-d}, and \eqref{eq-frakb-d} give
\[
\mathbb A_0^{(2)}\left(\frac53\right)=1,
\qquad
\mathbb F_1^{(2)}\left(\frac53\right)
-
\mathfrak b_0^{(2)}\left(\frac53\right)
=
\frac13>0.
\]
Hence Lemma~\ref{L6.3} yields
\[
\frac53<c_2^*.
\]
On the other hand, by Corollary~\ref{cor:Hd-d23} and Theorem~C,
\[
c_2^*
=
2\mathcal H_2
\le
\sqrt3
<
\frac74.
\]
Thus \eqref{eq-coarse-c2} follows.

By \eqref{eq-Ftilde-def} and \eqref{eq-identification-tilde-AF}, for
every \(m\in\mathbb N\),
\begin{equation}\label{eq-critical-tail-d2}
\mathbb F_{m+1}^{(2)}(c_2^*)
=
\sum_{k=m+1}^{\infty}
\gamma_k^{(2)}(c_2^*)
\mathbb A_k^{(2)}(c_2^*).
\end{equation}
Moreover, part (ii) of Lemma~\ref{L6.2}, together with
\eqref{exp-epsilon-md} and \eqref{eq-shooting-A-d}, shows that
\[
\mathbb A_{j+1}^{(2)}(c_2^*)
>
\mathbb A_j^{(2)}(c_2^*),
\qquad
j\in\mathbb N_0.
\]
Hence \eqref{eq-critical-tail-d2} gives
\begin{equation}\label{eq-critical-ratio-lower-d2}
\frac{
\mathbb F_{m+1}^{(2)}(c_2^*)
}{
\mathbb A_m^{(2)}(c_2^*)
}
>
\sum_{k=m+1}^{\infty}
\gamma_k^{(2)}(c_2^*).
\end{equation}

By \eqref{eq-gammajd},
\[
\frac{\gamma_k^{(2)}(c)}{c}
=
\frac{
8(4k+5)^2
}{
(k+1)(k+2)(2k+1)(2k+3)(4k+5-c)
},
\qquad
0<c<3,
\]
which is strictly increasing in \(c\). Hence, by \eqref{eq-coarse-c2},
\[
\frac{\gamma_k^{(2)}(c_2^*)}{c_2^*}
>
\frac{\gamma_k^{(2)}(5/3)}{5/3}.
\]
Moreover,
\[
\frac{\gamma_k^{(2)}(5/3)}{5/3}
-
\frac{8}{(k+10/9)^3}
=
\frac{
12\left(
7965k^3+26800k^2+29254k+10420
\right)
}{
(k+1)(k+2)(2k+1)(2k+3)(6k+5)(9k+10)^3
}
>0.
\]
Therefore,
\begin{equation}\label{eq-gamma-lower-d2}
\gamma_k^{(2)}(c_2^*)
>
\frac{8c_2^*}{(k+10/9)^3},
\qquad
k\in\mathbb N_0.
\end{equation}

Since \(x\mapsto(a+x)^{-3}\) is strictly convex for \(a>0\), the
trapezoidal inequality yields
\[
\int_0^\infty \frac{dx}{(a+x)^3}
<
\frac{1}{2a^3}
+
\sum_{n=1}^{\infty}\frac{1}{(a+n)^3}.
\]
Hence
\[
\sum_{n=0}^{\infty}\frac{1}{(a+n)^3}
>
\frac{1}{2a^2}
+
\frac{1}{2a^3}.
\]
Taking
\[
a=m+\frac{19}{9}
\]
and using \eqref{eq-gamma-lower-d2}, we obtain
\begin{equation}\label{eq-gamma-tail-lower-d2}
\sum_{k=m+1}^{\infty}\gamma_k^{(2)}(c_2^*)
>
8c_2^*
\left[
\frac{1}{2(m+19/9)^2}
+
\frac{1}{2(m+19/9)^3}
\right].
\end{equation}
Combining \eqref{eq-critical-ratio-lower-d2} and
\eqref{eq-gamma-tail-lower-d2}, we obtain
\begin{equation}\label{ineq-FA}
\frac{
\mathbb F_{m+1}^{(2)}(c_2^*)
}{
\mathbb A_m^{(2)}(c_2^*)
}
>
8c_2^*
\left[
\frac{1}{2(m+19/9)^2}
+
\frac{1}{2(m+19/9)^3}
\right].
\end{equation}

Set
\[
D_{2,m}(c)
=
\mathfrak b_m^{(2)}(c)
-
8c
\left[
\frac{1}{2(m+19/9)^2}
+
\frac{1}{2(m+19/9)^3}
\right],
\qquad
0<c<3.
\]
By \eqref{eq-Phi-dm}, \eqref{eq-Phi-critical-negative}, and
\eqref{ineq-FA},
\begin{equation}\label{eq-Phi-D2}
0<-\Phi_{2,m}(c_2^*)<D_{2,m}(c_2^*).
\end{equation}
Moreover,
\[
\frac{\mathfrak b_m^{(2)}(c)}{c}
=
\frac{
8(4m+5)(4m+9)
}{
(m+2)(2m+3)
\left(
16m^2+56m+45-(4m+7)c
\right)
},
\qquad
0<c<3,
\]
is strictly increasing in \(c\). Hence
\[
c\longmapsto \frac{D_{2,m}(c)}{c}
\]
is strictly increasing on \((0,3)\). By \eqref{eq-coarse-c2} and
\eqref{eq-Phi-D2},
\[
0<
\frac{D_{2,m}(c_2^*)}{c_2^*}
<
\frac{D_{2,m}(7/4)}{7/4}.
\]
Therefore,
\begin{equation}\label{eq-D2-upper}
0<D_{2,m}(c_2^*)
<
\frac{c_2^*}{7/4}
D_{2,m}\left(\frac74\right)
<
D_{2,m}\left(\frac74\right).
\end{equation}

Finally, a direct computation gives
\[
\frac{9}{8m^3}
-
D_{2,m}\left(\frac74\right)
=
\frac{P_2(m)}
{
8m^3(m+2)(2m+3)(9m+19)^3
(64m^2+196m+131)
},
\]
where
\[
\begin{aligned}
P_2(m)
={}&
m^5
\left(
5184m^2-494856m+14343170
\right)
\\
&+
108512311m^4
+
282557277m^3
+
340838055m^2
\\
&+
198153261m
+
48520566.
\end{aligned}
\]
The quadratic polynomial
\[
5184m^2-494856m+14343170
\]
is strictly positive on \(\mathbb R\), since its leading coefficient is
positive and its discriminant satisfies
\[
(-494856)^2
-
4(5184)(14343170)
=
-52537512384<0.
\]
Since all the remaining coefficients of \(P_2\) are strictly positive,
we have
\[
P_2(m)>0,
\qquad
m\in\mathbb N.
\]
Therefore,
\begin{equation}\label{eq-D2-final}
D_{2,m}\left(\frac74\right)
<
\frac{9}{8m^3},
\qquad
m\in\mathbb N.
\end{equation}
Combining \eqref{eq-Phi-D2}, \eqref{eq-D2-upper}, and
\eqref{eq-D2-final} yields \eqref{eq-Phi-critical-d2}.

We finally prove \eqref{eq-Phi-critical-d3}. As in the case \(d=2\),
we first establish
\begin{equation}\label{eq-coarse-c3}
\frac{18}{5}<c_3^*<\frac{11}{3}.
\end{equation}
Indeed, at \(c=18/5\), \eqref{eq-shooting-initial-d},
\eqref{eq-shooting-F-d}, and \eqref{eq-frakb-d} give
\[
\mathbb A_0^{(3)}\left(\frac{18}{5}\right)=1,
\qquad
\mathbb F_1^{(3)}\left(\frac{18}{5}\right)
-
\mathfrak b_0^{(3)}\left(\frac{18}{5}\right)
=
\frac{8}{35}>0.
\]
Hence Lemma~\ref{L6.3} yields
\[
\frac{18}{5}<c_3^*.
\]
On the other hand, Corollary~\ref{cor:Hd-d23} and Theorem~C give
\[
c_3^*
=
2\mathcal H_3
\le
\frac{4\sqrt{21}}{5}
<
\frac{11}{3},
\]
and \eqref{eq-coarse-c3} follows.

By \eqref{eq-Ftilde-def} and \eqref{eq-identification-tilde-AF}, for
every \(m\in\mathbb N\),
\begin{equation}\label{eq-critical-tail-d3}
\mathbb F_{m+1}^{(3)}(c_3^*)
=
\sum_{k=m+1}^{\infty}
\gamma_k^{(3)}(c_3^*)
\mathbb A_k^{(3)}(c_3^*).
\end{equation}
As in the case \(d=2\), part (ii) of Lemma~\ref{L6.2}, together with
\eqref{exp-epsilon-md} and \eqref{eq-shooting-A-d}, yields
\[
\mathbb A_{j+1}^{(3)}(c_3^*)
>
\mathbb A_j^{(3)}(c_3^*),
\qquad
j\in\mathbb N_0.
\]
Hence \eqref{eq-critical-tail-d3} gives
\begin{equation}\label{eq-critical-ratio-lower-d3}
\frac{
\mathbb F_{m+1}^{(3)}(c_3^*)
}{
\mathbb A_m^{(3)}(c_3^*)
}
>
\sum_{k=m+1}^{\infty}
\gamma_k^{(3)}(c_3^*).
\end{equation}

By \eqref{eq-gammajd},
\[
\frac{\gamma_k^{(3)}(c)}{c}
=
\frac{
16(2k+3)
}{
(k+1)(k+2)(2k+1)(2k+5)(4k+6-c)
},
\qquad
0<c<4,
\]
which is strictly increasing in \(c\). Hence, by
\eqref{eq-coarse-c3},
\[
\frac{\gamma_k^{(3)}(c_3^*)}{c_3^*}
>
\frac{\gamma_k^{(3)}(18/5)}{18/5}.
\]
Moreover,
\[
\frac{\gamma_k^{(3)}(18/5)}{18/5}
-
\frac{2}{(k+51/40)^4}
=
\frac{
20\left(
7955200k^3+29805120k^2+34450882k+12615603
\right)
}{
(k+1)(k+2)(2k+1)(2k+5)(5k+3)(40k+51)^4
}
>0.
\]
Therefore,
\begin{equation}\label{eq-gamma-lower-d3}
\gamma_k^{(3)}(c_3^*)
>
\frac{2c_3^*}{(k+51/40)^4},
\qquad
k\in\mathbb N_0.
\end{equation}

Since \(x\mapsto(a+x)^{-4}\) is strictly convex for \(a>0\), the
trapezoidal inequality yields
\[
\int_0^\infty \frac{dx}{(a+x)^4}
<
\frac{1}{2a^4}
+
\sum_{n=1}^{\infty}\frac{1}{(a+n)^4}.
\]
Hence
\[
\sum_{n=0}^{\infty}\frac{1}{(a+n)^4}
>
\frac{1}{3a^3}
+
\frac{1}{2a^4}.
\]
Taking
\[
a=m+\frac{91}{40}
\]
and using \eqref{eq-gamma-lower-d3}, we obtain
\begin{equation}\label{eq-gamma-tail-lower-d3}
\sum_{k=m+1}^{\infty}\gamma_k^{(3)}(c_3^*)
>
2c_3^*
\left[
\frac{1}{3(m+91/40)^3}
+
\frac{1}{2(m+91/40)^4}
\right].
\end{equation}
Combining \eqref{eq-critical-ratio-lower-d3} and
\eqref{eq-gamma-tail-lower-d3}, we obtain
\begin{equation}\label{ineq-FA-d3}
\frac{
\mathbb F_{m+1}^{(3)}(c_3^*)
}{
\mathbb A_m^{(3)}(c_3^*)
}
>
2c_3^*
\left[
\frac{1}{3(m+91/40)^3}
+
\frac{1}{2(m+91/40)^4}
\right].
\end{equation}

Set
\[
D_{3,m}(c)
=
\mathfrak b_m^{(3)}(c)
-
2c
\left[
\frac{1}{3(m+91/40)^3}
+
\frac{1}{2(m+91/40)^4}
\right],
\qquad
0<c<4.
\]
By \eqref{eq-Phi-dm}, \eqref{eq-Phi-critical-negative}, and
\eqref{ineq-FA-d3},
\begin{equation}\label{eq-Phi-D3}
0<-\Phi_{3,m}(c_3^*)<D_{3,m}(c_3^*).
\end{equation}

Moreover,
\[
\frac{\mathfrak b_m^{(3)}(c)}{c}
=
\frac{
16
}{
(m+2)
\left(
24m^2+96m+90-(6m+11)c
\right)
},
\qquad
0<c<4,
\]
is strictly increasing in \(c\). Hence
\[
c\longmapsto\frac{D_{3,m}(c)}{c}
\]
is strictly increasing on \((0,4)\). By \eqref{eq-coarse-c3} and
\eqref{eq-Phi-D3},
\[
0<
\frac{D_{3,m}(c_3^*)}{c_3^*}
<
\frac{D_{3,m}(11/3)}{11/3}.
\]
Consequently,
\begin{equation}\label{eq-D3-upper}
0<D_{3,m}(c_3^*)
<
\frac{c_3^*}{11/3}
D_{3,m}\left(\frac{11}{3}\right)
<
D_{3,m}\left(\frac{11}{3}\right).
\end{equation}

Finally, a direct computation gives
\[
\frac{3}{5m^4}
-
D_{3,m}\left(\frac{11}{3}\right)
=
\frac{P_3(m)}
{
45m^4(m+2)(40m+91)^4
(72m^2+222m+149)
},
\]
where
\[
\begin{aligned}
P_3(m)
={}&
m^5
\left(
76800000m^2
-
2974208000m
+
185116915200
\right)
\\
&+
1187248251920m^4
+
2785071593304m^3
+
3247763499162m^2
\\
&+
2068070863131m
+
551754136206.
\end{aligned}
\]
The quadratic polynomial
\[
76800000m^2
-
2974208000m
+
185116915200
\]
is strictly positive on \(\mathbb R\), since its leading coefficient is
positive and its discriminant satisfies
\[
(-2974208000)^2
-
4(76800000)(185116915200)
=
-48022003122176000000<0.
\]
Since all the remaining coefficients of \(P_3\) are strictly positive,
we have
\[
P_3(m)>0,
\qquad
m\in\mathbb N.
\]
Consequently,
\begin{equation}\label{eq-D3-final}
D_{3,m}\left(\frac{11}{3}\right)
<
\frac{3}{5m^4},
\qquad
m\in\mathbb N.
\end{equation}
Combining \eqref{eq-Phi-D3}, \eqref{eq-D3-upper}, and
\eqref{eq-D3-final} yields \eqref{eq-Phi-critical-d3}.
This completes the proof.
\end{proof}

We next establish the monotonicity of \(\Phi_{d,m}\).

\begin{lemma}\label{L6.6}
Let \(d\in\{2,3\}\) and \(m\in\mathbb N_0\). Then
\(\Phi_{d,m}\) is continuously differentiable and strictly decreasing
on \((0,c_d^*]\). More precisely,
\[
\Phi_{d,m}'(c)
=
-
\frac{
\displaystyle
\sum_{j=0}^{m}
\bigl(\gamma_j^{(d)}\bigr)'(c)
\bigl(\mathbb A_j^{(d)}(c)\bigr)^2
}{
\bigl(\mathbb A_m^{(d)}(c)\bigr)^2
}
-
\bigl(\mathfrak b_m^{(d)}\bigr)'(c)
<0,
\qquad
0<c\le c_d^*.
\]
\end{lemma}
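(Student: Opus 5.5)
The approach is to differentiate the recursion \eqref{eq-shooting-initial-d}--\eqref{eq-shooting-A-d} in \(c\) and identify a discrete Wronskian-type quantity that telescopes. By Lemma~\ref{L6.2}(i)--(ii), \(\mathbb A_j^{(d)}(c)>0\) for all \(j\) and all \(0<c\le c_d^*\), so \(\Phi_{d,m}\) is well defined there. Each \(\gamma_j^{(d)}\) is a rational function of \(c\), smooth on \((0,d+3)\), so by induction every \(\mathbb A_j^{(d)}\) and \(\mathbb F_j^{(d)}\) is a polynomial in finitely many \(\gamma_k^{(d)}(c)\) and is smooth in \(c\). The function \(\mathfrak b_m^{(d)}\) is smooth on \((0,d+1)\) by \eqref{eq-frakb-d}. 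Hence \(\Phi_{d,m}\) is \(C^1\) on \((0,c_d^*]\), since \(c_d^*<d+1\) by Corollary~\ref{cor:Hd-d23}.

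For the derivative formula, drop the superscripts, write \(\dot{}\) for \(d/dc\), and set
\[
W_j=\mathbb F_{j+1}\dot{\mathbb A}_j-\mathbb A_j\dot{\mathbb F}_{j+1},\qquad j\in\mathbb N_0 .
\]
Since \(\mathbb A_0\) and \(\mathbb F_0\) do not depend on \(c\), we have \(\dot{\mathbb A}_0=\dot{\mathbb F}_0=0\). Then \(\dot{\mathbb F}_1=-\dot\gamma_0\mathbb A_0\), which gives \(W_0=\dot\gamma_0\mathbb A_0^2\).

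Differentiating the recursion gives \(\dot{\mathbb F}_{j+2}=\dot{\mathbb F}_{j+1}-\dot\gamma_{j+1}\mathbb A_{j+1}-\gamma_{j+1}\dot{\mathbb A}_{j+1}\) and \(\dot{\mathbb A}_{j+1}=\dot{\mathbb A}_j+\varepsilon_{j+1}\dot{\mathbb F}_{j+1}\). Substituting these relations, together with \(\mathbb A_{j+1}=\mathbb A_j+\varepsilon_{j+1}\mathbb F_{j+1}\) and \(\mathbb F_{j+2}=\mathbb F_{j+1}-\gamma_{j+1}\mathbb A_{j+1}\), into \(W_{j+1}-W_j\) should make all terms containing \(\varepsilon_{j+1}\) and \(\gamma_{j+1}\) undifferentiated cancel. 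This would leave
\[
W_{j+1}-W_j=\dot\gamma_{j+1}\mathbb A_{j+1}^2 .
\]
This algebraic verification is the main step to check carefully. It is the discrete analogue of the Wronskian identity for a Sturm--Liouville problem with spectral parameter \(c\). Summing from \(0\) to \(m\) gives \(W_m=\sum_{j=0}^m\dot\gamma_j\mathbb A_j^2\).

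Next, the quotient rule gives
\[
\frac{d}{dc}\,\frac{\mathbb F_{m+1}}{\mathbb A_m}=\frac{\mathbb A_m\dot{\mathbb F}_{m+1}-\mathbb F_{m+1}\dot{\mathbb A}_m}{\mathbb A_m^2}=-\frac{W_m}{\mathbb A_m^2},
\]
and this yields the stated formula for \(\Phi_{d,m}'\).

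Finally, for strict negativity: by \eqref{eq-gammajd}, each \(\gamma_j^{(d)}(c)\) equals a positive constant times \(c/(4j+d+3-c)\), so \(\bigl(\gamma_j^{(d)}\bigr)'(c)>0\) on \((0,d+3)\). Combined with \(\mathbb A_j^{(d)}(c)>0\), the first term is strictly negative; the \(j=0\) term alone already suffices. By \eqref{eq-frakb-d}, \(\mathfrak b_m^{(d)}(c)\) has numerator proportional to \(c\) and a positive denominator that is strictly decreasing in \(c\). Hence \(\bigl(\mathfrak b_m^{(d)}\bigr)'(c)>0\), and therefore \(\Phi_{d,m}'(c)<0\) on \((0,c_d^*]\).
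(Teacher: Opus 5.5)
Your proposal is correct and follows essentially the same route as the paper. The paper uses the same discrete Wronskian, with the opposite sign convention \(W_j=(\mathbb F_{j+1}^{(d)})'\mathbb A_j^{(d)}-\mathbb F_{j+1}^{(d)}(\mathbb A_j^{(d)})'\), derives the same telescoping relation \(W_j=W_{j-1}-(\gamma_j^{(d)})'(c)(\mathbb A_j^{(d)}(c))^2\), and finishes with the same sign argument. The cancellation you flagged as the step to check does work out as you predicted, using only that \(\varepsilon_{j+1}^{(d)}\) does not depend on \(c\).
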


\begin{proof}
Fix \(d\in\{2,3\}\) and \(m\in\mathbb N_0\). For each
\(j\in\mathbb N_0\), the function \(\gamma_j^{(d)}\) is smooth on
\((0,d+1)\). Hence, by the recursions
\eqref{eq-shooting-F-d}--\eqref{eq-shooting-A-d},
the functions \(\mathbb A_j^{(d)}\) and \(\mathbb F_j^{(d)}\) are
continuously differentiable on \((0,d+1)\).

Moreover, by part (ii) of Lemma~\ref{L6.2},
\[
\mathbb F_j^{(d)}(c)>0,
\qquad
j\in\mathbb N_0,\quad 0<c\le c_d^*.
\]
Therefore, \eqref{eq:A-F-representation} gives
\[
\mathbb A_j^{(d)}(c)>0,
\qquad
j\in\mathbb N_0,\quad 0<c\le c_d^*.
\]
Since \(\mathfrak b_m^{(d)}\) is smooth on \((0,d+1)\), it follows from
\eqref{eq-Phi-dm} that \(\Phi_{d,m}\) is continuously differentiable on
\((0,c_d^*]\).

To simplify the notation, throughout the remainder of the proof we write
\[
\mathbb A_j^{(d)}
=
\mathbb A_j^{(d)}(c),
\qquad
\mathbb F_j^{(d)}
=
\mathbb F_j^{(d)}(c),
\qquad
\gamma_j^{(d)}
=
\gamma_j^{(d)}(c),
\qquad
0<c\le c_d^*,
\]
and denote differentiation with respect to \(c\) by a prime.

Set
\[
W_j
=
\bigl(\mathbb F_{j+1}^{(d)}\bigr)'
\mathbb A_j^{(d)}
-
\mathbb F_{j+1}^{(d)}
\bigl(\mathbb A_j^{(d)}\bigr)',
\qquad
j\in\mathbb N_0.
\]
Since
\[
\mathbb A_0^{(d)}=1,
\qquad
\mathbb F_0^{(d)}=\frac{1}{h_0^{(d)}},
\]
are independent of \(c\), \eqref{eq-shooting-F-d} gives
\begin{equation}\label{W0-eq}
W_0
=
-
\bigl(\gamma_0^{(d)}\bigr)'
\bigl(\mathbb A_0^{(d)}\bigr)^2.
\end{equation}
For \(j\in\mathbb N\), differentiating \eqref{eq-shooting-F-d} with
respect to \(c\) gives
\[
\bigl(\mathbb F_{j+1}^{(d)}\bigr)'
=
\bigl(\mathbb F_j^{(d)}\bigr)'
-
\bigl(\gamma_j^{(d)}\bigr)'
\mathbb A_j^{(d)}
-
\gamma_j^{(d)}
\bigl(\mathbb A_j^{(d)}\bigr)'.
\]
Therefore,
\[
\begin{aligned}
W_j
&=
\left[
\bigl(\mathbb F_j^{(d)}\bigr)'
-
\bigl(\gamma_j^{(d)}\bigr)'
\mathbb A_j^{(d)}
-
\gamma_j^{(d)}
\bigl(\mathbb A_j^{(d)}\bigr)'
\right]
\mathbb A_j^{(d)}
\\
&\quad
-
\left(
\mathbb F_j^{(d)}
-
\gamma_j^{(d)}\mathbb A_j^{(d)}
\right)
\bigl(\mathbb A_j^{(d)}\bigr)'
\\
&=
\bigl(\mathbb F_j^{(d)}\bigr)'
\mathbb A_j^{(d)}
-
\mathbb F_j^{(d)}
\bigl(\mathbb A_j^{(d)}\bigr)'
-
\bigl(\gamma_j^{(d)}\bigr)'
\bigl(\mathbb A_j^{(d)}\bigr)^2.
\end{aligned}
\]
On the other hand, \eqref{eq-shooting-A-d} yields
\[
\mathbb A_j^{(d)}
=
\mathbb A_{j-1}^{(d)}
+
\varepsilon_j^{(d)}\mathbb F_j^{(d)},
\]
and hence
\[
\bigl(\mathbb A_j^{(d)}\bigr)'
=
\bigl(\mathbb A_{j-1}^{(d)}\bigr)'
+
\varepsilon_j^{(d)}
\bigl(\mathbb F_j^{(d)}\bigr)'.
\]
It follows that
\[
\begin{aligned}
\bigl(\mathbb F_j^{(d)}\bigr)'
\mathbb A_j^{(d)}
-
\mathbb F_j^{(d)}
\bigl(\mathbb A_j^{(d)}\bigr)'
&=
\bigl(\mathbb F_j^{(d)}\bigr)'
\mathbb A_{j-1}^{(d)}
-
\mathbb F_j^{(d)}
\bigl(\mathbb A_{j-1}^{(d)}\bigr)'
\\
&=
W_{j-1}.
\end{aligned}
\]
Consequently,
\[
W_j
=
W_{j-1}
-
\bigl(\gamma_j^{(d)}\bigr)'
\bigl(\mathbb A_j^{(d)}\bigr)^2,
\qquad
j\in\mathbb N.
\]
Together with \eqref{W0-eq}, this yields, by induction,
\[
W_m
=
-
\sum_{j=0}^{m}
\bigl(\gamma_j^{(d)}\bigr)'
\bigl(\mathbb A_j^{(d)}\bigr)^2.
\]
Since \(\mathbb A_m^{(d)}>0\), we conclude that
\[
\begin{aligned}
\frac{d}{dc}
\left(
\frac{
\mathbb F_{m+1}^{(d)}
}{
\mathbb A_m^{(d)}
}
\right)
&=
\frac{
\bigl(\mathbb F_{m+1}^{(d)}\bigr)'
\mathbb A_m^{(d)}
-
\mathbb F_{m+1}^{(d)}
\bigl(\mathbb A_m^{(d)}\bigr)'
}{
\bigl(\mathbb A_m^{(d)}\bigr)^2
}
\\
&=
\frac{W_m}{\bigl(\mathbb A_m^{(d)}\bigr)^2}
\\
&=
-
\frac{
\displaystyle
\sum_{j=0}^{m}
\bigl(\gamma_j^{(d)}\bigr)'
\bigl(\mathbb A_j^{(d)}\bigr)^2
}{
\bigl(\mathbb A_m^{(d)}\bigr)^2
}.
\end{aligned}
\]

Returning to the original notation and differentiating
\eqref{eq-Phi-dm}, we obtain
\[
\Phi_{d,m}'(c)
=
-
\frac{
\displaystyle
\sum_{j=0}^{m}
\bigl(\gamma_j^{(d)}\bigr)'(c)
\bigl(\mathbb A_j^{(d)}(c)\bigr)^2
}{
\bigl(\mathbb A_m^{(d)}(c)\bigr)^2
}
-
\bigl(\mathfrak b_m^{(d)}\bigr)'(c),
\qquad
0<c\le c_d^*.
\]
Finally, \eqref{eq-gammajd} and \eqref{eq-frakb-d} show that
\begin{equation}\label{sign-dervs}
\bigl(\gamma_j^{(d)}\bigr)'(c)>0,
\qquad
\bigl(\mathfrak b_m^{(d)}\bigr)'(c)>0,
\qquad
0\le j\le m,
\qquad 
0<c<d+1.
\end{equation}
Hence
\[
\Phi_{d,m}'(c)<0,
\qquad
0<c\le c_d^*,
\]
which completes the proof.
\end{proof}

\begin{lemma}\label{L6.7}
For each \(d\in\{2,3\}\) and \(m\in\mathbb N_0\), the function
\(\Phi_{d,m}\) has a unique zero
\[
\underline c_{d,m}\in(0,c_d^*).
\]
Moreover, \(\underline c_{d,m}\) is the smallest solution of
\eqref{eq-cdm-def} in \((0,d+1)\), and
\begin{equation}\label{eq-cdm-monotone}
\underline c_{d,m}
<
\underline c_{d,m+1}
<
c_d^*.
\end{equation}
In addition,
\begin{equation}\label{eq-cdm-coarse-lower}
\frac53<\underline c_{2,m},
\qquad
\frac{18}{5}<\underline c_{3,m}.
\end{equation}
\end{lemma}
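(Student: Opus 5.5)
The plan is to apply the intermediate value theorem to \(\Phi_{d,m}\) on \((0,c_d^*]\). By Lemma~\ref{L6.6}, \(\Phi_{d,m}\) is continuous and strictly decreasing there. By Lemma~\ref{L6.4}, its limit as \(c\to0^+\) is \(1/h_m^{(d)}>0\). By Lemma~\ref{L6.5}, \(\Phi_{d,m}(c_d^*)<0\). Hence \(\Phi_{d,m}\) has exactly one zero \(\underline c_{d,m}\in(0,c_d^*)\).

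\emph{Smallest solution.} First I would show that \(\underline c_{d,m}\) is the smallest solution of \eqref{eq-cdm-def} in \((0,d+1)\). Let \(c\in(0,d+1)\) be any solution and suppose \(c<\underline c_{d,m}\). Then \(c<c_d^*\), so by Lemma~\ref{L6.2} we have \(\mathbb A_m^{(d)}(c)>0\). Dividing \eqref{eq-cdm-def} by \(\mathbb A_m^{(d)}(c)\) gives \(\Phi_{d,m}(c)=0\), which contradicts uniqueness of the zero on \((0,c_d^*]\). Conversely, \(\underline c_{d,m}\) itself solves \eqref{eq-cdm-def}, since \(\mathbb A_m^{(d)}>0\) there.

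\emph{Monotonicity in \(m\).} For \eqref{eq-cdm-monotone}, at \(c=\underline c_{d,m}\) we have \(\Phi_{d,m}(c)=0\) with \(c\le c_d^*\). Lemma~\ref{L6.4} then gives \(\Phi_{d,m+1}(\underline c_{d,m})>0\). Since \(\Phi_{d,m+1}\) is strictly decreasing and vanishes only at \(\underline c_{d,m+1}\), it follows that \(\underline c_{d,m}<\underline c_{d,m+1}\). The inequality \(\underline c_{d,m+1}<c_d^*\) is the first part applied with \(m+1\) in place of \(m\).

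\emph{Coarse lower bounds.} By the monotonicity just proved, it suffices to show \(5/3<\underline c_{2,0}\) and \(18/5<\underline c_{3,0}\). Both \(5/3\) and \(18/5\) lie below \(c_d^*\) by \eqref{eq-coarse-c2} and \eqref{eq-coarse-c3}. The computations in the proof of Lemma~\ref{L6.5}, with \(\mathbb A_0^{(d)}=1\), give \(\Phi_{2,0}(5/3)=1/3>0\) and \(\Phi_{3,0}(18/5)=8/35>0\). Strict decrease of \(\Phi_{d,0}\) then places its zero to the right of these points.

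I do not expect a serious obstacle. The only delicate point is the domain: \(\Phi_{d,m}\) is defined only on \((0,c_d^*]\), while the equation \eqref{eq-cdm-def} lives on \((0,d+1)\). So when ruling out smaller solutions, I must first confirm that any candidate \(c<\underline c_{d,m}\) lies in \((0,c_d^*)\), which guarantees \(\mathbb A_m^{(d)}(c)>0\) and justifies the division.
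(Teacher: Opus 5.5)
Your proposal is correct and follows the paper's proof essentially step by step. Existence and uniqueness come from the intermediate value theorem and strict monotonicity (Lemmas~\ref{L6.4}--\ref{L6.6}), the inequality \(\underline c_{d,m}<\underline c_{d,m+1}\) comes from the second assertion of Lemma~\ref{L6.4}, and the coarse bounds come from the explicit values \(\Phi_{2,0}(5/3)=1/3\) and \(\Phi_{3,0}(18/5)=8/35\) combined with monotonicity in \(m\). Your explicit check that any smaller candidate solution lies in \((0,c_d^*)\), where \(\mathbb A_m^{(d)}(c)>0\) and division is legitimate, makes precise a point the paper leaves implicit.
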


\begin{proof}
Fix \(d\in\{2,3\}\) and \(m\in\mathbb N_0\). By
Lemma~\ref{L6.4},
\[
\lim_{c\to0^+}\Phi_{d,m}(c)
=
\frac{1}{h_m^{(d)}}>0,
\]
whereas Lemma~\ref{L6.5} gives
\[
\Phi_{d,m}(c_d^*)<0.
\]
Since \(\Phi_{d,m}\) is continuous on \((0,c_d^*]\) by
Lemma~\ref{L6.6}, the intermediate value theorem ensures the existence of
\[
\underline c_{d,m}\in(0,c_d^*)
\]
such that
\begin{equation}\label{zero-Phi-d-m}
\Phi_{d,m}(\underline c_{d,m})=0.
\end{equation}
Moreover, Lemma~\ref{L6.6} shows that \(\Phi_{d,m}\) is strictly
decreasing on \((0,c_d^*]\). Hence this zero is unique in
\((0,c_d^*)\).

By \eqref{eq-Phi-dm}, \eqref{zero-Phi-d-m} is equivalent to
\[
\mathbb F_{m+1}^{(d)}(\underline c_{d,m})
=
\mathfrak b_m^{(d)}(\underline c_{d,m})
\mathbb A_m^{(d)}(\underline c_{d,m}),
\]
so \(\underline c_{d,m}\) is a solution of \eqref{eq-cdm-def}.
Furthermore, the strict monotonicity of \(\Phi_{d,m}\) gives
\[
\Phi_{d,m}(c)>0,
\qquad
0<c<\underline c_{d,m},
\]
and hence \eqref{eq-cdm-def} has no solution in
\((0,\underline c_{d,m})\). Since
\[
\underline c_{d,m}\in(0,c_d^*)\subset(0,d+1),
\]
it follows that \(\underline c_{d,m}\) is the smallest solution of
\eqref{eq-cdm-def} in \((0,d+1)\).

By \eqref{zero-Phi-d-m} and Lemma~\ref{L6.4},
\[
\Phi_{d,m+1}(\underline c_{d,m})>0.
\]
On the other hand, applying \eqref{zero-Phi-d-m} with \(m+1\) in place
of \(m\), we have
\[
\Phi_{d,m+1}(\underline c_{d,m+1})=0.
\]
Since \(\Phi_{d,m+1}\) is strictly decreasing on \((0,c_d^*]\) by
Lemma~\ref{L6.6}, it follows that
\[
\underline c_{d,m}
<
\underline c_{d,m+1}
<
c_d^*,
\]
which proves \eqref{eq-cdm-monotone}.

By \eqref{eq-Phi-dm}, a direct computation gives
\[
\Phi_{2,0}\left(\frac53\right)=\frac13>0,
\qquad
\Phi_{3,0}\left(\frac{18}{5}\right)=\frac{8}{35}>0.
\]
Since \(\Phi_{2,0}\) and \(\Phi_{3,0}\) are strictly decreasing and vanish
at \(\underline c_{2,0}\) and \(\underline c_{3,0}\), respectively, we obtain
\[
\frac53<\underline c_{2,0},
\qquad
\frac{18}{5}<\underline c_{3,0}.
\]
Together with \eqref{eq-cdm-monotone}, this yields
\eqref{eq-cdm-coarse-lower}. This completes the proof.
\end{proof}

We next derive uniform bounds for the recursive coefficients and for the
derivatives of \(\Phi_{d,m}\).

\begin{lemma}\label{L6.8}
Let \(m\in\mathbb N\). Then
\begin{align}
\mathbb A_m^{(2)}(c)
&<
\frac43,
\qquad
\underline c_{2,m}<c<c_2^*,
\label{eq-Am-upper-d2}\\
\mathbb A_m^{(3)}(c)
&<
\frac32,
\qquad
\underline c_{3,m}<c<c_3^*.
\label{eq-Am-upper-d3}
\end{align}
Consequently,
\begin{align}
-\Phi_{2,m}'(c)
&>
\frac{135}{16},
\qquad
\underline c_{2,m}<c<c_2^*,
\label{eq-Phi-slope-d2}\\
-\Phi_{3,m}'(c)
&>
\frac{20}{9},
\qquad
\underline c_{3,m}<c<c_3^*.
\label{eq-Phi-slope-d3}
\end{align}
\end{lemma}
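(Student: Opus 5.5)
The slope bounds \eqref{eq-Phi-slope-d2}--\eqref{eq-Phi-slope-d3} will be derived from \eqref{eq-Am-upper-d2}--\eqref{eq-Am-upper-d3} together with the derivative formula of Lemma~\ref{L6.6}, so most of the work goes into the bound on \(\mathbb A_m^{(d)}\). For the slopes, I will keep only the \(j=0\) term of the sum in Lemma~\ref{L6.6}, using \(\mathbb A_0^{(d)}=1\). Since \(\bigl(\gamma_j^{(d)}\bigr)'>0\) and \(\bigl(\mathfrak b_m^{(d)}\bigr)'>0\) by \eqref{sign-dervs}, this gives
\[
-\Phi_{d,m}'(c)\ge\frac{\bigl(\gamma_0^{(d)}\bigr)'(c)}{\bigl(\mathbb A_m^{(d)}(c)\bigr)^2}.
\]
From \eqref{eq-gammajd} we have \(\gamma_0^{(2)}(c)=\frac{100c}{3(5-c)}\), hence \(\bigl(\gamma_0^{(2)}\bigr)'(c)=\frac{500}{3(5-c)^2}\). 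Similarly \(\gamma_0^{(3)}(c)=\frac{24c}{5(6-c)}\), hence \(\bigl(\gamma_0^{(3)}\bigr)'(c)=\frac{144}{5(6-c)^2}\). Both derivatives are increasing in \(c\). On the window we have \(c>\underline c_{d,m}\), and \eqref{eq-cdm-coarse-lower} gives \(c>5/3\), respectively \(c>18/5\). This yields \(\bigl(\gamma_0^{(2)}\bigr)'(c)>15\) and \(\bigl(\gamma_0^{(3)}\bigr)'(c)>5\). Dividing by \((4/3)^2\), respectively \((3/2)^2\), gives exactly \(135/16\) and \(20/9\).

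For the bound on \(\mathbb A_m^{(d)}(c)\), I work on \(\underline c_{d,m}<c<c_d^*\). There Lemma~\ref{L6.2}(ii) gives \(\mathbb F_j^{(d)}(c)>0\) for all \(j\). Hence \(\mathbb A_j^{(d)}(c)\) is positive and increasing in \(j\), and \(\mathbb F_j^{(d)}(c)\) is decreasing. By Lemma~\ref{L6.6}, \(\Phi_{d,m}(c)<0\), that is,
\[
\mathbb F_{m+1}^{(d)}(c)<\mathfrak b_m^{(d)}(c)\mathbb A_m^{(d)}(c).
\]
For \(1\le j\le m\), \eqref{eq-shooting-F-d} gives \(\mathbb F_j^{(d)}=\mathbb F_{m+1}^{(d)}+\sum_{k=j}^m\gamma_k^{(d)}\mathbb A_k^{(d)}\). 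Inserting this into \eqref{eq:A-F-representation}, \(\mathbb A_m^{(d)}=1+\sum_{j=1}^m\varepsilon_j^{(d)}\mathbb F_j^{(d)}\), produces a self-referential inequality
\[
\mathbb A_m^{(d)}\le 1+\mathbb A_m^{(d)}X_m(c).
\]
If I bound every \(\mathbb A_k^{(d)}\) crudely by \(\mathbb A_m^{(d)}\), then
\[
X_m(c)=\mathfrak b_m^{(d)}(c)\bigl(h_m^{(d)}-h_0^{(d)}\bigr)+\sum_{k=1}^m\gamma_k^{(d)}(c)\bigl(h_k^{(d)}-h_0^{(d)}\bigr),
\]
and the goal becomes \(X_m\le 1/4\), respectively \(X_m\le 1/3\), uniformly in \(m\). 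Each term is increasing in \(c\), so it suffices to evaluate at \(c=7/4\), respectively \(c=11/3\), using the coarse enclosures \eqref{eq-coarse-c2}--\eqref{eq-coarse-c3}.

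The main obstacle is uniformity in \(m\). Since \(h_k^{(2)}\sim k^2/16\) and \(\gamma_k^{(2)}\sim c/(2k^3)\), the sum \(\sum_k\gamma_k^{(2)}h_k^{(2)}\) diverges logarithmically, so the crude replacement \(\mathbb A_k\le\mathbb A_m\) fails for \(d=2\). Dimension \(3\) is probably fine, but a sharper comparison is needed at least for \(d=2\). My plan is to compare \(\mathbb A_j^{(d)}\) with the explicit tail profile \(a_j^{(d,m)}\) of \eqref{eq-tail-aR-d2}--\eqref{eq-tail-aR-d3}, which is bounded (by \(1/4\), respectively \(1/2\), up to the \(\alpha\)-correction). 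These profiles satisfy the exact increment identity \eqref{eq-tail-increment} and the supersolution inequality \eqref{eq-tail-majorization}. I would then show, by a discrete comparison (shooting) argument on the pair \((\mathbb A,\mathbb F)\) in the spirit of the proof of Lemma~\ref{L6.3}, that \(\mathbb A_j^{(d)}(c)/\mathbb A_m^{(d)}(c)\) is controlled by the corresponding ratio of the tail profile. This turns the divergent sum into a convergent telescoping one. If that comparison proves awkward, the fallback is to use the bound \(\mathbb A_k\le\mathbb A_m\) only for \(k\ge m/2\), and to control the remaining range by \(\mathbb A_k\le\mathbb A_{\lfloor m/2\rfloor}\) inductively. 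The constants \(4/3\) and \(3/2\) should then follow after a finite check of small \(m\).
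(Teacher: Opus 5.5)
Your derivation of the slope bounds from the bounds on $\mathbb A_m^{(d)}$ is correct and coincides with the paper's. The gap is in the bounds $\mathbb A_m^{(2)}(c)<4/3$ and $\mathbb A_m^{(3)}(c)<3/2$ themselves. These are the real content of the lemma, and your proposal never establishes them.

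You use only the single inequality $\Phi_{d,m}(c)<0$ and derive the valid inequality $\mathbb A_m^{(d)}\le 1+\mathbb A_m^{(d)}X_m(c)$. You then discard it for $d=2$ and offer only a plan: a comparison with the profile $a_j^{(d,m)}$, or a dyadic fallback plus ``a finite check of small $m$''. Neither produces an inequality that actually gives $4/3$ or $3/2$.

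Moreover, your reason for discarding the crude route is wrong. From \eqref{eq-epsilon-d-explicit}, $\varepsilon_j^{(2)}=\frac1{32}+\frac{3}{32(4j+1)(4j+5)}$, so $h_k^{(2)}\sim k/32$ (not $k^2/16$). Also $\gamma_k^{(2)}(c)\sim 8c/k^3$. Hence $\gamma_k^{(2)}h_k^{(2)}=O(k^{-2})$; indeed the bounds in the proof of Lemma~\ref{L5.8} give $\gamma_k^{(d)}h_k^{(d)}=O(k^{-2})$ for every $d$. So the crude route is not doomed: by my rough numerical estimate, $\sup_m X_m$ is about $0.2$ for $d=2$ at $c=7/4$ and about $0.26$ for $d=3$ at $c=11/3$. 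Turning it into a proof, however, would require explicit estimates of infinite tails, which you have not supplied.

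The idea you are missing is to use the zeros at all earlier indices. For $\underline c_{d,m}<c<c_d^*$, Lemma~\ref{L6.7} gives $\underline c_{d,j}<\underline c_{d,m}<c$ for every $0\le j\le m-1$. By the strict decrease of $\Phi_{d,j}$ (Lemma~\ref{L6.6}), $\Phi_{d,j}(c)<0$ at every earlier index, not only at $j=m$. Each of these says $\mathbb F_{j+1}^{(d)}(c)<\mathfrak b_j^{(d)}(c)\mathbb A_j^{(d)}(c)$. By \eqref{eq-shooting-A-d} this gives the one-step bound
\[
\mathbb A_{j+1}^{(d)}<\mathbb A_j^{(d)}\bigl(1+\varepsilon_{j+1}^{(d)}\mathfrak b_j^{(d)}(c)\bigr).
\]
The products simplify to
\[
\varepsilon_{j+1}^{(2)}\mathfrak b_j^{(2)}(c)=\frac{2c}{16j^2+56j+45-(4j+7)c},
\qquad
\varepsilon_{j+1}^{(3)}\mathfrak b_j^{(3)}(c)=\frac{2c}{24j^2+96j+90-(6j+11)c}.
\]
These are below $\frac1{4(j+1)(j+2)}$ and $\frac1{3(j+1)(j+2)}$, respectively, whenever $c<2-\frac1{23}$, respectively $c<4-\frac2{23}$. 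By \eqref{eq-coarse-c2}--\eqref{eq-coarse-c3}, this holds on the window. Multiplying from $\mathbb A_0^{(d)}=1$ and using $1+x<e^x$ and $\sum_{j\ge0}\frac1{(j+1)(j+2)}=1$ gives $\mathbb A_m^{(2)}<e^{1/4}<4/3$ and $\mathbb A_m^{(3)}<e^{1/3}<3/2$. This argument is finite and tail-free, and it needs neither the profiles $a_j^{(d,m)}$ nor a case check in $m$.
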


\begin{proof}
We prove \eqref{eq-Am-upper-d2} and \eqref{eq-Am-upper-d3}
simultaneously. Let \(d\in\{2,3\}\) and
\[
\underline c_{d,m}<c<c_d^*.
\]
For \(0\le j\le m-1\), Lemma~\ref{L6.7} gives
\[
\underline c_{d,j}
\le
\underline c_{d,m}
<
c.
\]
Since \(\Phi_{d,j}\) is strictly decreasing on \((0,c_d^*]\) and
\[
\Phi_{d,j}(\underline c_{d,j})=0,
\]
we have
\[
\Phi_{d,j}(c)<0.
\]
Hence, by \eqref{eq-Phi-dm},
\[
\mathbb F_{j+1}^{(d)}(c)
<
\mathfrak b_j^{(d)}(c)\mathbb A_j^{(d)}(c).
\]
Using \eqref{eq-shooting-A-d}, we obtain
\begin{equation}\label{iterative-ineq-d23}
\mathbb A_{j+1}^{(d)}(c)
<
\mathbb A_j^{(d)}(c)
\left(
1+
\varepsilon_{j+1}^{(d)}
\mathfrak b_j^{(d)}(c)
\right),
\qquad
0\le j\le m-1.
\end{equation}

Set
\[
\kappa_2=4,
\qquad
\kappa_3=3.
\]
By \eqref{eq-epsilon-d-explicit}, \eqref{eq-frakb-d},
\eqref{eq-coarse-c2}, and \eqref{eq-coarse-c3}, a direct computation
gives
\begin{equation}\label{eq-eps-b-unified}
\varepsilon_{j+1}^{(d)}
\mathfrak b_j^{(d)}(c)
<
\frac{1}{\kappa_d(j+1)(j+2)},
\qquad
0\le j\le m-1.
\end{equation}
Combining \eqref{iterative-ineq-d23} and
\eqref{eq-eps-b-unified}, and using
\(\mathbb A_0^{(d)}(c)=1\), yields
\[
\mathbb A_m^{(d)}(c)
<
\prod_{j=0}^{m-1}
\left(
1+\frac{1}{\kappa_d(j+1)(j+2)}
\right).
\]
Since \(1+x<e^x\) for \(x>0\),
\[
\begin{aligned}
\mathbb A_m^{(d)}(c)
&<
\exp\left(
\frac{1}{\kappa_d}
\sum_{j=0}^{m-1}
\frac{1}{(j+1)(j+2)}
\right)
\\
&=
\exp\left(
\frac{1}{\kappa_d}
\left(1-\frac{1}{m+1}\right)
\right)
<
e^{1/\kappa_d}.
\end{aligned}
\]
Therefore,
\[
\mathbb A_m^{(2)}(c)
<
e^{1/4}
<
\frac43,
\qquad
\mathbb A_m^{(3)}(c)
<
e^{1/3}
<
\frac32,
\]
which proves \eqref{eq-Am-upper-d2} and
\eqref{eq-Am-upper-d3}.

It remains to prove \eqref{eq-Phi-slope-d2} and
\eqref{eq-Phi-slope-d3}. Let \(d\in\{2,3\}\). By Lemma~\ref{L6.6} and
\eqref{sign-dervs}, all the terms in the expression of
\(-\Phi_{d,m}'(c)\) are positive. Hence, using
\(\mathbb A_0^{(d)}(c)=1\), we obtain
\begin{equation}\label{eq-Phi-derivative-j0}
-\Phi_{d,m}'(c)
>
\frac{
\bigl(\gamma_0^{(d)}\bigr)'(c)
}{
\bigl(\mathbb A_m^{(d)}(c)\bigr)^2
},
\qquad
\underline c_{d,m}<c<c_d^*.
\end{equation}

For \(d=2\) and \(\underline c_{2,m}<c<c_2^*\),
\eqref{eq-gammajd} gives
\[
\bigl(\gamma_0^{(2)}\bigr)'(c)
=
\frac{500}{3(5-c)^2},
\]
which is strictly increasing on \((0,3)\). By Lemma~\ref{L6.7},
\[
c>\underline c_{2,m}>\frac53,
\]
and therefore
\[
\bigl(\gamma_0^{(2)}\bigr)'(c)
>
\bigl(\gamma_0^{(2)}\bigr)'\left(\frac53\right)
=
15.
\]
Combining this with \eqref{eq-Am-upper-d2} and
\eqref{eq-Phi-derivative-j0}, we obtain
\[
-\Phi_{2,m}'(c)
>
15\left(\frac34\right)^2
=
\frac{135}{16},
\]
which proves \eqref{eq-Phi-slope-d2}.

For \(d=3\) and \(\underline c_{3,m}<c<c_3^*\),
\eqref{eq-gammajd} gives
\[
\bigl(\gamma_0^{(3)}\bigr)'(c)
=
\frac{144}{5(6-c)^2},
\]
which is strictly increasing on \((0,4)\). By Lemma~\ref{L6.7},
\[
c>\underline c_{3,m}>\frac{18}{5},
\]
and therefore
\[
\bigl(\gamma_0^{(3)}\bigr)'(c)
>
\bigl(\gamma_0^{(3)}\bigr)'\left(\frac{18}{5}\right)
=
5.
\]
Combining this with \eqref{eq-Am-upper-d3} and
\eqref{eq-Phi-derivative-j0}, we obtain
\[
-\Phi_{3,m}'(c)
>
5\left(\frac23\right)^2
=
\frac{20}{9},
\]
which proves \eqref{eq-Phi-slope-d3}. This completes the proof.
\end{proof}

\subsection{Proof of Theorem~\ref{thm:quantitative-estimate}}

\begin{proof}
(i) It follows directly from Lemma~\ref{L6.7}.

\medskip

\noindent (iii) Fix \(m\in\mathbb N\) and \(d\in\{2,3\}\). By
Lemma~\ref{L6.6}, \(\Phi_{d,m}\) is continuously differentiable and
strictly decreasing on \((0,c_d^*]\). Hence, by
\eqref{zero-Phi-d-m} and the mean value theorem, there exists
\[
\xi_{d,m}\in(\underline c_{d,m},c_d^*)
\]
such that
\[
\Phi_{d,m}(c_d^*)
=
\Phi_{d,m}'(\xi_{d,m})
\bigl(c_d^*-\underline c_{d,m}\bigr),
\]
with
\[
\Phi_{d,m}'(\xi_{d,m})<0.
\]
Since \(\underline c_{d,m}<c_d^*\), we obtain
\begin{equation}\label{eq-error-MVT}
0<c_d^*-\underline c_{d,m}
=
\frac{
-\Phi_{d,m}(c_d^*)
}{
-\Phi_{d,m}'(\xi_{d,m})
}.
\end{equation}

For \(d=2\), Lemmas~\ref{L6.5} and~\ref{L6.8} give
\[
0<-\Phi_{2,m}(c_2^*)
<
\frac{9}{8m^3}
\]
and
\[
-\Phi_{2,m}'(\xi_{2,m})
>
\frac{135}{16}.
\]
Hence, by \eqref{eq-error-MVT},
\[
0<c_2^*-\underline c_{2,m}
<
\frac{9}{8m^3}\frac{16}{135}
=
\frac{2}{15m^3},
\]
which proves \eqref{eq-c2-effective-error}.

Similarly, for \(d=3\), Lemmas~\ref{L6.5} and~\ref{L6.8} yield
\[
0<-\Phi_{3,m}(c_3^*)
<
\frac{3}{5m^4}
\]
and
\[
-\Phi_{3,m}'(\xi_{3,m})
>
\frac{20}{9}.
\]
Therefore, \eqref{eq-error-MVT} gives
\[
0<c_3^*-\underline c_{3,m}
<
\frac{3}{5m^4}\frac{9}{20}
=
\frac{27}{100m^4},
\]
which proves \eqref{eq-c3-effective-error}. Thus part~(iii) is proved.

\medskip

\noindent (ii) By Lemma~\ref{L6.7},
\[
\underline c_{d,m}
<
\underline c_{d,m+1}
<
c_d^*,
\qquad
m\in\mathbb N_0.
\]
On the other hand, part~(iii) implies
\[
c_d^*-\underline c_{d,m}\longrightarrow0
\qquad
\text{as }m\to\infty.
\]
Consequently,
\[
\underline c_{d,m}\uparrow c_d^*
\qquad
\text{as }m\to\infty,
\]
which proves \eqref{eq-cdm-convergence} and completes the proof.
\end{proof}

\subsection{Proof of Corollary~\ref{cor:effective-Hd}}

\begin{proof}
By Corollary~\ref{cor:Hd-d23},
\[
\mathcal H_d=\frac{c_d^*}{2},
\qquad
d\in\{2,3\}.
\]
Hence, for \(m\in\mathbb N\), \eqref{eq-c2-effective-error} gives
\[
0<
\mathcal H_2-\frac{\underline c_{2,m}}{2}
=
\frac{c_2^*-\underline c_{2,m}}{2}
<
\frac{1}{15m^3},
\]
which proves \eqref{eq-effective-H2}. Similarly,
\eqref{eq-c3-effective-error} yields
\[
0<
\mathcal H_3-\frac{\underline c_{3,m}}{2}
=
\frac{c_3^*-\underline c_{3,m}}{2}
<
\frac{27}{200m^4},
\]
and therefore \eqref{eq-effective-H3} follows.
\end{proof}

\subsection{Proof of Corollary~\ref{cor:numerical-Hd}}

\begin{proof}
We apply Corollary~\ref{cor:effective-Hd} with \(m=2\).

For \(d=2\) and \(0<c<3\), the recursion
\eqref{eq-shooting-initial-d}--\eqref{eq-shooting-A-d} gives
\[
\mathbb A_2^{(2)}(c)
=
\frac{
50(4c^2-42c+81)
}{
39(c-9)(c-5)
}
\]
and
\[
\mathbb F_3^{(2)}(c)
=
\frac{
20(283c^3-5565c^2+30096c-36855)
}{
63(c-13)(c-9)(c-5)
}.
\]
Substituting these expressions into \eqref{eq-cdm-def} yields
\[
P(c)=0,
\]
where
\[
P(c)
=
405c^4
-
13076c^3
+
141630c^2
-
574812c
+
626535,
\qquad
0<c<3.
\]
We have
\[
P'''(c)=9720c-78456<0,
\qquad
0<c<3.
\]
Hence \(P''\) is strictly decreasing on \((0,3)\), and
\[
P''(c)>P''(3)=91632>0.
\]
Thus \(P'\) is strictly increasing on \((0,3)\), and
\[
P'(c)<P'(3)=-34344<0,
\qquad
0<c<3.
\]
Therefore, \(P\) is strictly decreasing on \((0,3)\). Moreover,
\[
P(1.6890046012)>0,
\qquad
P(1.6890046013)<0.
\]
Hence \(P\) has a unique zero in \((0,3)\), and
\[
1.6890046012
<
\underline c_{2,2}
<
1.6890046013.
\]
Using \eqref{eq-effective-H2} with \(m=2\), we obtain
\[
\frac{1.6890046012}{2}
<
\mathcal H_2
<
\frac{1.6890046013}{2}
+
\frac{1}{120}.
\]
Consequently,
\[
0.8445023006
<
\mathcal H_2
<
0.8528356340,
\]
which proves \eqref{H2-est-m2}.

We now consider \(d=3\). For \(0<c<4\), the recursion
\eqref{eq-shooting-initial-d}--\eqref{eq-shooting-A-d} gives
\[
\mathbb A_2^{(3)}(c)
=
\frac{
2(35c^2-456c+1260)
}{
7(c-10)(c-6)
}
\]
and
\[
\mathbb F_3^{(3)}(c)
=
\frac{
16(1036c^3-25455c^2+187740c-396900)
}{
945(c-14)(c-10)(c-6)
}.
\]
Substituting these expressions into \eqref{eq-cdm-def} yields
\[
Q(c)=0,
\]
where
\[
Q(c)
=
7483c^4
-
297408c^3
+
4130280c^2
-
23224320c
+
42865200,
\qquad
0<c<4.
\]
We have
\[
Q'''(c)
=
179592c-1784448<0,
\qquad
0<c<4.
\]
Hence \(Q''\) is strictly decreasing on \((0,4)\), and
\[
Q''(c)>Q''(4)=2559504>0.
\]
Thus \(Q'\) is strictly increasing on \((0,4)\), and
\[
Q'(c)<Q'(4)=-2542016<0,
\qquad
0<c<4.
\]
Therefore, \(Q\) is strictly decreasing on \((0,4)\). Moreover,
\[
Q(3.6466209429)>0,
\qquad
Q(3.6466209430)<0.
\]
Hence \(Q\) has a unique zero in \((0,4)\), and
\[
3.6466209429
<
\underline c_{3,2}
<
3.6466209430.
\]
Using \eqref{eq-effective-H3} with \(m=2\), we obtain
\[
\frac{3.6466209429}{2}
<
\mathcal H_3
<
\frac{3.6466209430}{2}
+
\frac{27}{3200}.
\]
Consequently,
\[
1.8233104714
<
\mathcal H_3
<
1.8317479715,
\]
which proves \eqref{H3-est-m2}. This completes the proof.
\end{proof}

\vspace{1cm}

\noindent {\bf Data availability.} No data was used for the research described in the article.

\medskip

\noindent {\bf Conflict of interest.} The authors declare that they have no conflict of interest.

\end{document}